\documentclass[3p]{elsarticle}
\journal{Computer Methods in Applied Mechanics and Engineering}

\usepackage{graphicx}
\usepackage{mathtools}
\usepackage{amssymb}
\usepackage{amsthm}
\usepackage{latexsym}
\usepackage{bm}
\usepackage{color}
\usepackage{stmaryrd}
\usepackage{mathrsfs}
\usepackage{array}
\usepackage{algorithm}
\usepackage{algpseudocode}
\usepackage{multirow}
\usepackage{url}
\usepackage{mdframed}

\makeatletter
\newcommand*{\rom}[1]{\expandafter\@slowromancap\romannumeral #1@}
\makeatother

\newenvironment{myenv}[1]
  {\mdfsetup{
    frametitle={\colorbox{white}{\space#1\space}},
    innertopmargin=10pt,
    frametitleaboveskip=-\ht\strutbox,
    frametitlealignment=\center
    }
  \begin{mdframed}
  }
  {\end{mdframed}}

\DeclareMathAlphabet\mathbfcal{OMS}{cmsy}{b}{n}

\SetSymbolFont{stmry}{bold}{U}{stmry}{m}{n}

\newtheorem{definition}{Definition}

\newtheorem{theorem}{Theorem}
\newtheorem{remark}{Remark}

\AfterEndEnvironment{proof}{\noindent\ignorespaces}

\numberwithin{equation}{section}

\begin{document}
\begin{frontmatter}
\title{A continuum and computational framework for viscoelastodynamics: finite deformation linear models}

\author[label1,label2,label3]{Ju Liu}
\ead{liuj36@sustech.edu.cn,liujuy@gmail.com}

\author[label4]{Marcos Latorre}
\ead{marcos.latorre@yale.edu}

\author[label3,label5]{Alison L. Marsden}
\ead{amarsden@stanford.edu}

\address[label1]{Department of Mechanics and Aerospace Engineering, Southern University of Science and Technology, Shenzhen, Guangdong 518055, P.R.China}

\address[label2]{Guangdong-Hong Kong-Macao Joint Laboratory for Data-Driven Fluid Mechanics and Engineering Applications, Southern University of Science and Technology, Shenzhen, Guangdong, 518055, P.R.China}

\address[label3]{Department of Pediatrics (Cardiology) and Institute for Computational and Mathematical Engineering, Stanford University, Clark Center E1.3, 318 Campus Drive, Stanford, CA 94305, USA}

\address[label4]{Department of Biomedical Engineering, Yale University, New Haven, CT 06520, USA}

\address[label5]{Department of Bioengineering, Stanford University, Clark Center E1.3, 318 Campus Drive, Stanford, CA 94305, USA}

\begin{abstract}
This work concerns the continuum basis and numerical formulation for deformable materials with viscous dissipative mechanisms. We derive a viscohyperelastic modeling framework based on fundamental thermomechanical principles. Since most large deformation problems exhibit isochoric properties, our modeling work is constructed based on the Gibbs free energy in order to develop a continuum theory using pressure-primitive variables, which is known to be well-behaved in the incompressible limit. A set of general evolution equations for the internal state variables is derived. With that, we focus on a family of free energies that leads to the so-called finite deformation linear model. Our derivation elucidates the origin of the evolution equations of that model, which was originally proposed heuristically and thus lacked formal compatibility with the underlying thermodynamics. In our derivation, the thermodynamic inconsistency is clarified and rectified. A classical model based on the identical polymer chain assumption is revisited and is found to have non-vanishing viscous stresses in the equilibrium limit, which is counter-intuitive in the physical sense. Because of that, we then discuss the relaxation property of the non-equilibrium stress in the thermodynamic equilibrium limit and its implication on the form of free energy. A modified version of the identical polymer chain model is then proposed, with a special case being the model proposed by G. Holzapfel and J. Simo.  Based on the consistent modeling framework, a provably energy stable numerical scheme is constructed for incompressible viscohyperelasticity using inf-sup stable elements. In particular, we adopt a suite of smooth generalization of the Taylor-Hood element based on Non-Uniform Rational B-Splines (NURBS) for spatial discretization. The temporal discretization is performed via the generalized-$\alpha$ scheme. We present a suite of numerical results to corroborate the proposed numerical properties, including the nonlinear stability, robustness under large deformation, and the stress accuracy resolved by the higher-order elements. Additionally, the pathological behavior of the original identical polymer chain model is numerically identified with an unbounded energy decaying. This again underlines the importance of demanding vanishing non-equilibrium stress in the equilibrium limit.
\end{abstract}

\begin{keyword}
Continuum mechanics \sep Gibbs free energy \sep Viscoelasticity \sep Incompressible solids \sep Isogeometric analysis \sep Nonlinear stability
\end{keyword}
\end{frontmatter}

\section{Introduction}
Many polymeric and biological materials may undergo large deformations, with mechanical behavior characterized by hyperelasticity with intrinsic viscous dissipative mechanisms \cite{Ferry1980,Humphrey2013,Shaw2018,Benitez2017,Holzapfel2002}. To incorporate the viscous dissipative mechanism into the solid model, there have been several different modeling approaches developed to date. From a big picture perspective, prior modeling work can be categorized into at least two different groups: one based on the concept of internal state variables \cite{Coleman1967,Horstemeyer2010,Maugin2015} and one based on the hereditary integral \cite{Coleman1961,Gurtin1962,Christensen1980,Drapaca2007}. The former approach invokes a set of internal state variables, which is associated with the physical process occurring at the microscopic level and is manifested at the macroscopic scale. The evolution of the internal state variables is constrained by the second law of thermodynamics and generally results in dissipative behavior. This concept has been successfully applied to modeling inelasticity and phase transitions in general \cite{Horstemeyer2010,Maugin2015}. In the second approach, the model works directly on the stress-strain relationship. The viscous effect is modeled via convolution of a relaxation function with the strain history to describe the fading memory effect on the stress \cite{Coleman1964}, and the convolution is also known as the hereditary integral. This approach includes the notable quasi-linear viscoelasticity theory that has been implemented by the finite element method \cite{Puso1998} and applied to investigating soft tissues \cite[Chapter~7]{Fung2013}. It has also been generalized by utilizing fractional-order derivatives to capture the continuous relaxation spectrum, giving rise to a propitious alternative candidate for viscoelasticity modeling \cite{Yu2016,Perdikaris2014,Craiem2008}. The constrained mixture theory, which has been developed and applied for vascular growth and remodeling, can be categorized into this general modeling framework, in which the fading memory effect due to the continual turnover is fitted into the hereditary integral \cite{Humphrey2002,Valentin2013}. We also mention that some formulations based on internal state variables can be equivalently written into the hereditary integral formulation. The model discussed in this article, as will be shown, can be represented in terms of the hereditary integral with exponential relaxation kernels \cite{Park2001,Zeng2017}.

Focusing on the internal state variable approach, different underlying modeling assumptions have given rise to different models and computational procedures for viscoelasticity at finite strains. In recent works, it has been common to employ a \textit{multiplicative} decomposition of the deformation gradient to characterize elastic and viscous deformations \cite{Sidoroff1974}. With that, viscoelastic materials at finite strains may be established \cite{LeTallec1994,Reese1998,Reese1998b,Peric2003,Nedjar2007,Liu2019b,Hong2008}, and when combined with Ogden's hyperelastic function \cite{Ogden1972}, are able to describe responses that may adequately deviate from the thermodynamic equilibrium. Among those works, the model constructed by Reese and Govindjee \cite{Reese1998,Reese1998b} is rather representative. In their works, the non-equilibrium stresses are consistently derived from a Helmholtz free energy, that can be additively split into equilibrium and non-equilibrium parts \cite{Lubliner1985}. The derivation is similar to that of the finite strain elastoplasticity theory \cite{Simo1992b}, and material isotropy was assumed in their derivation. Recently, with the isotropy assumption released, the modeling approach of \cite{Reese1998} was further extended to account for material anisotropy \cite{Latorre2015,Latorre2016}.

Alternatively, a model proposed by Simo in \cite{Simo1987} gained popularity over the years and inspires numerous subsequent modeling works \cite{Holzapfel1996a,Holzapfel2001,Gasser2011,Govindjee1992,Gueltekin2016}. In his approach, the non-equilibrium stresses due to the viscous effect, which are also termed the ``over-stress", are governed by a set of linear evolution equations, with the elastic stress derived from strain energy. The evolution equations were proposed in a \textit{heuristic} manner as a straightforward generalization of the standard Zener solid model. Due to the linear nature, the modeling approach is often termed as \textit{finite deformation linear viscoelasticity}, or \textit{finite linear viscoelasticity} for short. The linear evolution equations also enable one to express the non-equilibrium stresses in terms of a simple convolution integral, which can then be computed via a one-step second-order accurate recurrence formula. The algorithm incrementally integrates the constitutive laws by only using the information from the past one step \cite{Simo1987,Simo2006,Taylor1970}. Thanks to this recurrence formula, the viscoelasticity model becomes amenable to finite element implementations \cite{Simo2006,Holzapfel2000,Kaliske1997}. Another appealing feature is that the strain energy function may conveniently account for material anisotropy \cite{Gasser2011,Holzapfel1996,Pena2008,Pena2011}, as the over-stresses are governed by a set of linear evolution equations.  On the other side, although large deformation is allowed for such models, the inherent linearity assumed for the evolution equation is regarded as a drawback of this approach, as it is regarded to be suited only for small deviations from thermodynamic equilibrium \cite[Section~6.10]{Holzapfel2000} (see also \cite[Chapter~10]{Haupt2013}). Yet, at least under physiological settings, we may reasonably expect the small deviation assumption to remain sound, and thus the model may still be well-suited to many biomechanical applications. A more critical issue is its lack of a rational thermodynamic foundation. Indeed, the linear evolution equations have not been explained by a rational thermomenchanical theory, although some related discussions were made in \cite{Holzapfel1996}. In recent work, a finite-time blow-up solution has been identified for this type of model \cite{Govindjee2014}, signifying its inconsistent nature. This alerting evidence hinders further adoption of this model for viscoelastic materials.


In this work, we consider a general continuum formulation for fully coupled thermomechanical models with viscous-type dissipation. The viscous deformation is characterized by a set of internal state variables. Unlike the prior approach \cite{Simo1987}, the non-equilibrium stresses are distinguished from the variables conjugating to the internal state variables. A set of fully nonlinear evolution equations for those conjugate variables are also obtained following the standard Coleman-Noll argument \cite{Coleman1963}. The origin and relations of the conjugate variables with the non-equilibrium stresses are elucidated through a careful derivation. In particular, it is shown that the non-equilibrium stresses and the conjugate variables can be identical only under a very special circumstance, an issue that has long been ignored in the literature \cite{Simo1987,Simo2006,Holzapfel2000}. Next, we consider a special form of the non-equilibrium part of the free energy, or the configurational free energy, which governs the viscous responses. The particular form of the energy can be viewed as a generalization of the form proposed in \cite{Holzapfel1996}. If one further demands the energy to be quadratic in terms of the internal state variables, a set of linear evolution equations for the conjugate variables is obtained, without invoking any linearization technique \cite{Reese1998}. To the best of our knowledge, this is the first time that the evolution equations' thermodynamic origin gets elucidated. We also mention that the right-hand side of the evolution equations is governed by the \textit{fictitious} second Piola-Kirchhoff stress, which is slightly different from the classical model \cite[Chapter~10]{Simo2006}. As the first instantiation of this framework, we considered the identical polymer chain model proposed in \cite{Holzapfel1996}. Interestingly, it can be easily seen that the non-equilibrium stresses of this model do not relax to zero in the thermodynamic equilibrium limit, which is physically counter-intuitive. It is indeed a requirement that the viscous stresses should vanish for static processes in general \cite[Chapter~10]{Haupt2013}. Without a prior multiplicative decomposition of the deformation, the non-equilibrium stresses are not guaranteed to fully relax in the limit. Therefore, to characterize the relaxation, we discuss the necessary and sufficient conditions from the perspective of the free energy design. Following that, we analyze first a simple model that has been considered in \cite[Section~4.2]{Holzapfel1996}. This model can be viewed as an extension of the St. Venant-Kirchhoff model to the viscous part and is shown to be the only model in which the non-equilibrium stresses equal the corresponding conjugate variables. Furthermore, the configurational free energy of this model suggests that there is an \textit{additive} split of the quadratic strain, making it parallel to the elastoplasticity theory developed by Green and Naghdi \cite{Green1965,Green1971}. The last example considered is a modified version of the identical polymer chain model, inspired from the aforesaid one \cite{Govindjee1992,Holzapfel1996} and is guaranteed to have the non-equilibrium stresses fully relaxed in the thermodynamic equilibrium limit.

\begin{figure}[!htb]
\begin{center}
	\begin{tabular}{c}
		\includegraphics[angle=0, trim=0 0 0 0, clip=true, scale = 0.32]{./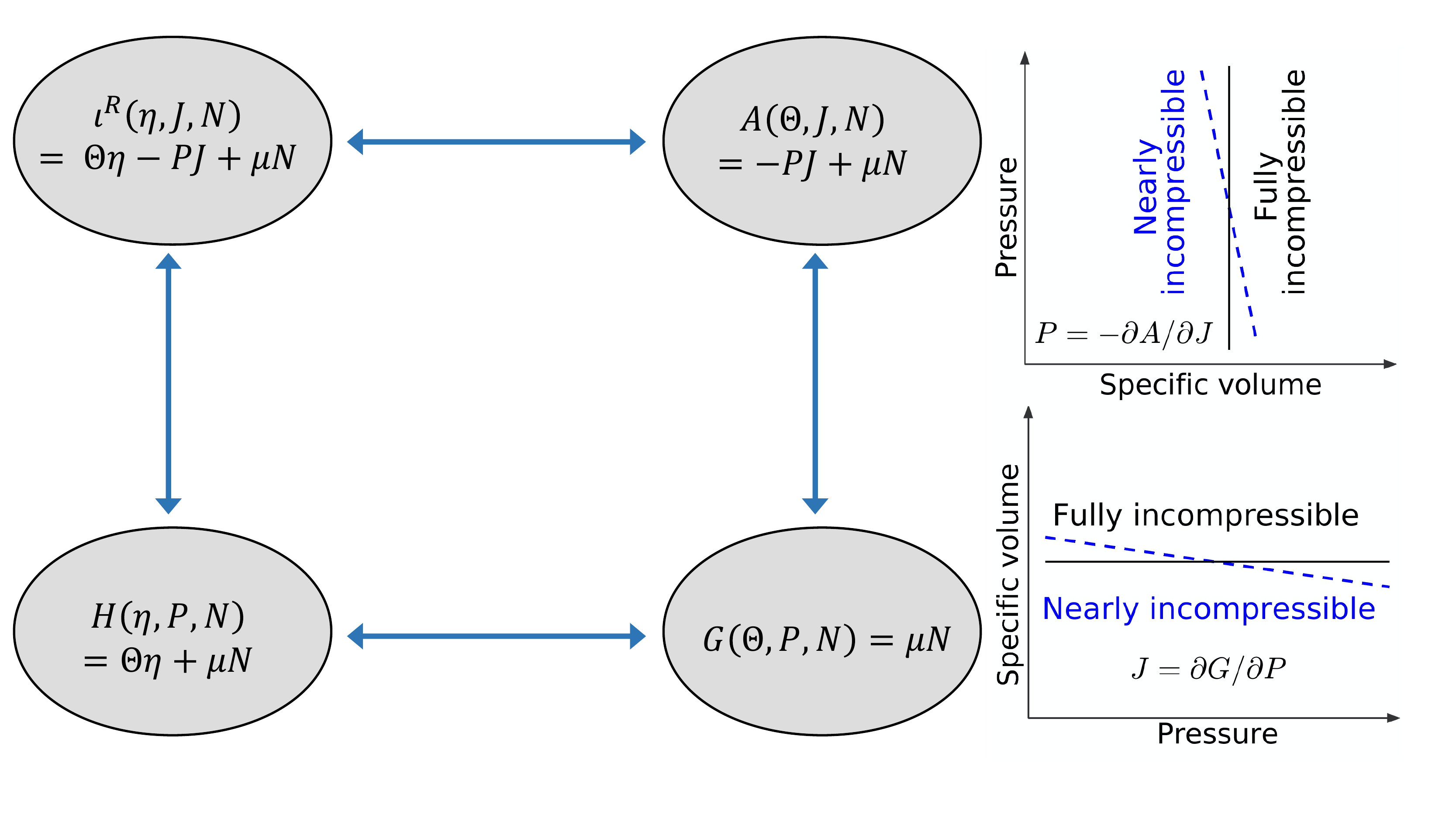} 
	\end{tabular}
	\caption{Illustration of the Legendre transformation of thermodynamic potentials: the internal energy $\iota^R$, Helmholtz free energy $A$, Gibbs free energy $G$, and enthalpy $H$. In particular, the Helmholtz free energy $A\left(\Theta, J, N\right)$ can be transformed to $G\left(\Theta, P, N\right)$ assuming convexity of $A$ with respect to $J$. The resulting constitutive laws based on $A$ and $G$ are given in the right two figures. In the theory based on the Helmholtz free energy, one has $P = - \partial A / \partial J$. In the incompressible limit, the pressure-volume (i.e., stress-strain) curve becomes a vertical line, and the corresponding components in the elasticity tensor blow up to infinity, which in turn causes numerical issues. This partly explains that, for most displacement-based formulations, the tangent matrix is ill-conditioned and demands a direct solver oftentimes. In the constitutive laws based on $G$, the volume is given by $J = \partial G / \partial P$, which becomes a horizontal line in the thermodynamic limit. The resulting continuum model constitutes a saddle-point problem and requires inf-sup stable or stabilized discretization techniques.} 
	\label{fig:thermo-commuting-diagram}
\end{center}
\end{figure}

For finite deformation problems, the deformation is typically highly isochoric. If one considers the Helmholtz free energy as the thermodynamic potential, the resulting system becomes singular in the incompressible limit (see Figure \ref{fig:thermo-commuting-diagram}). The pressure-volume curve, as part of the stress-strain curve, has a very large slope and becomes a vertical line in the incompressible limit. The value of the slope enters into the elasticity tensor and engenders an ill-conditioned stiffness matrix in the incompressible limit, a well-known issue in the displacement-based elasticity formulation \cite{Auricchio2013}. To handle this issue, one may consider performing a Legendre transformation for the free energy and switch the independent variable from the specific volume to its conjugate counterpart, the pressure \cite{Liu2018}. In doing so, the problem enjoys a saddle-point nature with the pressure acting as an independent variable. Correspondingly, in the incompressible limit, the volume-pressure curve becomes horizontal with a slope approaching zero. With the Legendre transformation performed, the resulting thermodynamic potential becomes the Gibbs free energy, and it is, therefore, a well-suited potential for thermomechanical analysis. This argument is supported by an analysis made in the realm of computational fluid dynamics (CFD). It has been shown that the pressure primitive variable is among the two sets of variables for interpolation if one wants to have a compressible CFD code that survives in the low Mach number limit \cite{Hauke1994,Hauke1998}. The aforementioned Gibbs free energy can be viewed as the thermodynamic explanation of the effectiveness of the pressure primitive variables. A theory for finite elasticity based on the Gibbs free energy was established \cite{Liu2018}; the saddle-point nature has also been exploited to design preconditioners \cite{Liu2019,Liu2020a}; it can also be shown to enjoy a provably nonlinearly stable semi-discrete formulation using inf-sup stable elements \cite{Liu2019a}; it can be conveniently utilized to construct a strongly-coupled fluid-structure interaction formulation \cite{Liu2018,Liu2020b}. In this work, we follow our prior approach and use the Gibbs free energy to construct a mixed formulation for viscoelastodynamics. The thermodynamic consistent nature of the continuum model naturally allows us to devise a discretization that inherits the energy stability property. The resulting mixed formulation necessitates inf-sup stable element pairs, and we adopt a suite of smooth generalization of the Taylor-Hood element by NURBS that has been numerically demonstrated to be stable \cite{Liu2019a,Buffa2011,Hosseini2015,Liu2021}. In particular, the same set of basis functions is utilized to represent the geometry and to construct the spaces for the displacement and velocity fields, which fits well into the paradigm of NURBS-based isogeometric analysis \cite{Hughes2005,Cottrell2009}. One appealing feature of this choice is that it achieves higher-order accuracy \cite{Evans2009} without sacrificing robustness \cite{Lipton2010}, in contrast to conventional $C^0$-continuous finite elements that often becomes ``fragile" when raising the polynomial order \cite{Elguedj2008}. Lastly, we mention other promising candidates for the spatial discretization, such as the smooth generalization of the Raviart-Thomas elements \cite{Buffa2011,Evans2013} as well as divergence-free discontinuous Galerkin methods \cite{Cockburn2004}.


The body of this work starts in Section \ref{sec:continuum} where we derive a general continuum theory for viscoelasticity with the Gibbs free energy as the thermodynamic potential. Following, we restrict the discussion to a special form of the free energy and provide a definition for the finite linear viscoelasticity. After revisiting the identical polymer chain model, the relaxation condition for the viscous stress is discussed. Two material models are then presented, which completes the continuum modeling section. In Section \ref{sec:numerical_formulation}, we consider the spatial and temporal discretization of the constructed continuum model. In particular, we show the energy stability of the proposed numerical formulation. In Section \ref{sec:numerical_results}, a suite of three numerical tests is performed as an examination of the continuum model and verification of the numerical scheme. We give concluding remarks in Section \ref{sec:conclusion}. In Appendices, the implementation details of the three material models are presented.

\section{Continuum Basis}
\label{sec:continuum}
\subsection{Kinematics}
Let $\Omega_{\bm X}$ and $\Omega_{\bm x}^t$ be bounded open sets in $\mathbb R^{3}$ with Lipschitz boundaries. The motion of the body is described by a family of smooth mappings parameterized by the time coordinate $t$,
\begin{align*}
\bm\varphi_t(\cdot) = \bm\varphi(\cdot, t) : \Omega_{\bm X} &\rightarrow \Omega_{\bm x}^t = \bm \varphi(\Omega_{\bm X}, t) = \bm \varphi_t(\Omega_{\bm X}), \quad \forall t \geq 0, \quad
\bm X \mapsto \bm x = \bm \varphi(\bm X, t) = \bm \varphi_t(\bm X), \quad \forall \bm X \in \Omega_{\bm X}.
\end{align*}
In the above, $\bm x$ represents the current position of a material particle originally located at $\bm X$, which implies $\bm \varphi(\bm X, 0) = \bm X$. The displacement and velocity of the material particle are defined as
\begin{align*}
\bm U := \bm \varphi(\bm X, t) - \bm \varphi(\bm X, 0) = \bm \varphi(\bm X, t) - \bm X, \qquad
\bm V := \left. \frac{\partial \bm \varphi}{\partial t}\right|_{\bm X}= \left. \frac{\partial \bm U}{\partial t}\right|_{\bm X} = \frac{d\bm U}{dt}.
\end{align*}
In this work, we use $d\left( \cdot \right)/dt$ to denote a total time derivative. The spatial velocity is defined as $\bm v := \bm V \circ \bm \varphi_t^{-1}$. Analogously, we define $\bm u := \bm U \circ \bm \varphi_t^{-1}$. The deformation gradient, the Jacobian determinant, and the right Cauchy-Green tensor are defined as
\begin{align*}
\bm F := \frac{\partial \bm \varphi}{\partial \bm X}, \qquad
J := \textup{det}\left(\bm F \right), \qquad \bm C := \bm F^T \bm F.
\end{align*}
Since most materials of interest behaves differently in bulk and shear under large strains, the deformation is multiplicatively decomposed into a volumetric part $J^{1/3}\bm I$ and an isochoric part $\tilde{\bm F} := J^{-1/3}\bm F$ \cite{Flory1961,Simo1985}. Clearly, by construction one has the multiplicative decomposition of the deformation gradient as
\begin{align}
\label{eq:Flory-decomposition}
\bm F = \left( J^{\frac13} \bm I \right) \tilde{\bm F}.
\end{align}
The corresponding modified right Cauchy-Green tensor $\tilde{\bm C}$ is defined as
\begin{align*}
\tilde{\bm C} := J^{-\frac23}\bm C.
\end{align*}
To facilitate the following discussion, we note the differentiation relation
\begin{align*}
\frac{\partial \tilde{\bm C}}{\partial \bm C} = J^{-\frac23} \mathbb P^T \quad \mbox{ with } \quad \mathbb P := \mathbb I - \frac{1}{3} \bm C^{-1} \otimes \bm C,
\end{align*}
wherein $\mathbb I$ is the fourth-order identity tensor. The projection tensor $\mathbb P$ furnishes deviatoric behaviors in the Lagrangian description \cite{Holzapfel2000}. Furthermore, in this work, the magnitude of a second-order tensor $\bm A$ is denoted as 
\begin{align}
\label{eq:def_mag_2nd_tensor}
\left\lvert \bm A \right\rvert^2 := \mathrm{tr}[\bm A \bm A^T ].
\end{align}

\subsection{Balance equations and constitutive relations}
The motion of the continuum body has to satisfy the local balance equations as well as the second law of thermodynamics \cite{Marsden1994,Scovazzi2007}. The advective form of the mass balance equation can be written as
\begin{align}
\label{eq:current-mass-eqn}
\frac{d\rho}{dt} + \rho \nabla_{\bm x} \cdot \bm v = 0,
\end{align}
wherein $\rho$ is the mass density in the current configuration. This mass balance equation is equivalent to the equation for the volumetric strain $J$,
\begin{align}
\label{eq:reference-mass-eqn}
\frac{dJ}{dt} = J \nabla_{\bm x} \cdot \bm v, \quad \mbox{ or equivalently, } \quad \frac{dJ}{dt} = J \nabla_{\bm X} \bm V : \bm F^{-T}.
\end{align}
The balance of linear momentum can be written as
\begin{align}
\label{eq:current-linear-momentum-eqn}
\rho \frac{d \bm{v} }{dt} = \nabla_{\bm x} \cdot \bm \sigma + \rho \bm b, \quad \mbox{ or equivalently, } \quad \rho_0 \frac{d \bm V}{dt} = \nabla_{\bm X} \cdot \bm P + \rho_0 \bm B,
\end{align}
wherein $\bm \sigma$ denote the Cauchy stress, $\bm b$ represents the body force per unit mass, $\bm P := J \bm \sigma \bm F^{-T}$ is the first Piola-Kirchhoff stress, and $\bm B := \bm b \circ \bm \varphi_t$. The balance of angular momentum is satisfied by imposing symmetry on the Cauchy stress, i.e., $\bm \sigma = \bm \sigma^T$. The balance of internal energy is stated as
\begin{align}
\label{eq:current-internal-energy-eqn}
\rho \frac{d\iota}{dt} = \bm \sigma : \nabla_{\bm x} \bm v - \nabla_{\bm x} \cdot \bm q + \rho r,
\end{align}
in which $\iota$ is the internal energy per unit mass, $\bm q$ denote the heat flux, and $r$ is the heat source per unit mass. Further, we introduce the following quantities: the internal energy $\iota^R$ defined with respect to the reference volume is related to $\iota$ by $\iota^R := \rho_0 \iota$; the heat flux defined per unit referential surface area is denoted as $\mathbfcal Q$, and it is related to $\bm q$ by the Piola transformation $\mathbfcal Q = J \bm F^{-1} \bm q$; the heat supply per unit referential volume $R := \rho_0 r$. With these definitions, the balance of the internal energy can also be expressed as
\begin{align}
\label{eq:reference-internal-energy-eqn}
\frac{d \iota^R}{dt} = \bm P : \frac{d\bm F }{dt}- \nabla_{\bm X} \cdot \mathbfcal Q + R.
\end{align}
Let us introduce $\mathfrak S$ as the entropy per unit volume in the referential configuration, $\Theta$ as the absolute temperature field in the reference configuration, and $\mathcal D$ as the dissipation. The second law of thermodynamics states the dissipation of the system is non-negative, i.e.,
\begin{align}
\label{eq:2nd_law_thermodynamics}
\mathcal D := \frac{d\mathfrak S}{dt} + \nabla_{\bm X} \cdot \left( \frac{\mathbfcal Q}{\Theta} \right) - \frac{R}{\Theta} \geq 0.
\end{align}
The Gibbs free energy \textit{per unit volume in the referential configuration} is defined as \cite[Chapter~5]{Schroeder1999}
\begin{align}
\label{eq:def_Gibbs_free_energy}
G := \iota^R - \Theta \mathfrak S + PJ,
\end{align}
wherein $P$ is the thermodynamic pressure defined on the referential configuration, which is the conjugate variable to $J$. The thermodynamic pressure defined on the current configuration is denoted by $p:= P \circ \bm \varphi_t^{-1}$. Taking material time derivatives at both sides of \eqref{eq:def_Gibbs_free_energy} results in
\begin{align*}
\frac{dG}{dt} + \mathfrak S \frac{d\Theta}{dt} - J \frac{dP}{dt} = \frac{d\iota^R}{dt} - \Theta \frac{d\mathfrak S}{dt} + P\frac{dJ}{dt}.
\end{align*}
Substituting the balance equation of the internal energy \eqref{eq:reference-internal-energy-eqn}, the second law of thermodynamics \eqref{eq:2nd_law_thermodynamics}, and the mass balance equation \eqref{eq:reference-mass-eqn} into the above equation, one readily obtains
\begin{align}
\label{eq:dissipation_form_1}
\Theta \mathcal D = J \bm \sigma : \nabla_{\bm x} \bm v - \frac{\mathbfcal Q \cdot \nabla_{\bm X} \Theta}{\Theta} + P J \nabla_{\bm x} \cdot \bm v  - \mathfrak S \frac{d\Theta}{dt} + J \frac{dP}{dt} - \frac{dG}{dt} .
\end{align}
We may additively split the Cauchy stress into deviatoric and hydrostatic parts,
\begin{align*}
\bm \sigma = \bm \sigma_{\mathrm{dev}} + \frac13 \left( \mathrm{tr}\left[ \bm \sigma \right]\right) \bm I,
\end{align*}
with which one may show that 
\begin{align*}
J \bm \sigma : \nabla_{\bm x} \bm v = \frac12 J \tilde{\bm F}^{-1} \bm \sigma_{\mathrm{dev}} \tilde{\bm F}^{-T} : \frac{d}{dt} \tilde{\bm C} + \frac13 J \mathrm{tr}\left[ \bm \sigma \right] \nabla_{\bm x} \cdot \bm v.
\end{align*}
Consequently, the relation \eqref{eq:dissipation_form_1} can be rewritten as
\begin{align}
\label{eq:gibbs_truesdell_hint}
\Theta \mathcal D = \frac12 J \tilde{\bm F}^{-1} \bm \sigma_{\mathrm{dev}} \tilde{\bm F}^{-T} : \frac{d}{dt} \tilde{\bm C} +  J \left(\frac{1}{3}\textup{tr}\left[ \bm \sigma \right] + p \right)\nabla_{\bm x}\cdot \bm v - \frac{\mathbfcal Q \cdot \nabla_{\bm X} \Theta}{\Theta} - \mathfrak S \frac{d\Theta}{dt} + J \frac{dP}{dt} - \frac{dG}{dt} .
\end{align}
From the above, we postulate that the Gibbs free energy $G$ is a function of $\tilde{\bm C}$, $P$, and $\Theta$ by invoking Truesdell's principle of equipresence \cite{Truesdell1965}. Additioanlly we assume that $G$ also depends on a set of strain-like internal state variables $\{\bm \Gamma^{\alpha}\}_{\alpha=1}^{m}$,
\begin{align}
\label{eq:Gibbs_free_energy_function}
G = G(\tilde{\bm C}, P, \Theta, \bm \Gamma^1, \cdots, \bm \Gamma^m).
\end{align}
Here $m$ is the number of relaxation processes characterizing the viscous property of the material. Oftentimes, the internal state variables $\bm \Gamma^{\alpha}$ is viewed as a strain tensor akin to the right Cauchy-Green strain tensor \cite{Holzapfel1996a,Holzapfel1996}. We demand that for a given homogeneous reference temperature $\Theta_0 > 0$,
\begin{align}
\label{eq:Gibbs_free_energy_normalization_condition}
G(\bm I, \textit{0}, \Theta_0, \bm I, \cdots, \bm I) = 0,
\end{align}
which is commonly known as the normalization condition \cite{Holzapfel2000}. With the above function form of the Gibbs free energy, material time derivative of the Gibbs free energy can be written explicitly as
\begin{align}
\label{eq:dG_dt_chain_rule}
\frac{dG}{dt} = \frac12 \tilde{\bm S} : \frac{d}{dt} \tilde{\bm C} + \frac{\partial G}{\partial P} \frac{dP}{dt} + \frac{\partial G}{\partial \Theta} \frac{d\Theta}{dt} + \sum_{\alpha =1}^{m} \frac{\partial G}{\partial \bm \Gamma^{\alpha}} : \frac{d}{dt}\bm \Gamma^{\alpha},
\end{align}
in which we introduced the fictitious second Piola-Kirchhoff stress
\begin{align*}
\tilde{\bm S} := 2\frac{\partial G(\tilde{\bm C}, P, \Theta, \bm \Gamma^1, \cdots, \bm \Gamma^m)}{\partial \tilde{\bm C}}.
\end{align*}
Substituting \eqref{eq:dG_dt_chain_rule} into \eqref{eq:dissipation_form_1} leads to 
\begin{align}
\label{eq:dissipation_form_2}
\Theta \mathcal D =& \left( \frac{1}{2}J \tilde{\bm F}^{-1} \bm \sigma_{\mathrm{dev}} \tilde{\bm F}^{-T} - \frac{1}{2} \tilde{\bm S} \right) : \frac{d}{dt}\tilde{\bm C} +  J \left(\frac{1}{3}\textup{tr}\left[ \bm \sigma \right] + p \right)\nabla_{\bm x}\cdot \bm v - \frac{\mathbfcal Q \cdot \nabla_{\bm X} \Theta}{\Theta} \nonumber \\
& - \left( \mathfrak S + \frac{\partial G}{\partial \Theta}\right) \frac{d\Theta}{dt} + \left( J - \frac{\partial G}{\partial P} \right)\frac{dP}{dt} - 2 \sum_{\alpha =1}^{m} \frac{\partial G}{\partial \bm \Gamma^{\alpha}} : \frac12 \frac{d}{dt}\bm \Gamma^{\alpha}.
\end{align}
We introduce $\bm Q^{\alpha}$ as the conjugate variables to the internal state variables $\bm \Gamma^{\alpha}$,
\begin{align}
\label{eq:definition_Q}
\bm Q^{\alpha} := - 2 \frac{\partial G(\tilde{\bm C}, P, \Theta, \bm \Gamma^1, \cdots, \bm \Gamma^m)}{\partial \bm \Gamma^{\alpha}}.
\end{align}
The relation \eqref{eq:dissipation_form_2} furnishes a natural means of choosing constitutive relations that ensure the satisfaction of the second law of thermodynamics. We may thereby make the following choices for the constitutive relations,
\begin{align}
\label{eq:constitutive_dev_sigma}
& \bm \sigma_{\mathrm{dev}} = J^{-1} \tilde{\bm F} \left( \mathbb P : \tilde{\bm S} \right) \tilde{\bm F}^T = J^{-1} \tilde{\bm F} \left( \mathbb P : \left( 2 \frac{\partial G}{\partial \tilde{\bm C}} \right) \right) \tilde{\bm F}^T, \displaybreak[2]\\
\label{eq:constitutive_tr_sigma}
& \frac{1}{3}\mathrm{tr}\left[\bm \sigma \right] = -p, \displaybreak[2] \\
\label{eq:constitutive_heat_flux}
& \mathbfcal Q = - \bar{\kappa} \nabla_{\bm X} \Theta, \displaybreak[2] \\
\label{eq:constitutive_s}
& \mathfrak S = -\frac{\partial G}{\partial \Theta}, \displaybreak[2] \\
\label{eq:constitutive_rho}
& \rho = \rho_0 \left( \frac{\partial G}{\partial P} \right)^{-1}, \displaybreak[2] \\
\label{eq:constitutive_Q}
& \bm Q^{\alpha} = \mathbb V^{\alpha} : \left( \frac12 \frac{d}{dt}\bm \Gamma^{\alpha} \right).
\end{align}
In the above, $\bar{\kappa}$ is the thermal conductivity and $\mathbb V^{\alpha}$ is a positive definite fourth-order viscosity tensor. Here, we also assume that there exists a fourth-order tensor $\left(\mathbb V^{\alpha}\right)^{-1}$ such that an inverse relation for \eqref{eq:constitutive_Q} holds,
\begin{align}
\label{eq:constitutive_Q_inverse}
\frac{1}{2} \frac{d}{dt} \bm \Gamma^{\alpha} = \left( \mathbb V^{\alpha} \right)^{-1} : \bm Q^{\alpha}.
\end{align}
Based on \eqref{eq:constitutive_dev_sigma} and \eqref{eq:constitutive_tr_sigma}, the Cauchy stress $\bm \sigma$ can be represented as
\begin{align*}
\bm \sigma := \bm \sigma_{\mathrm{dev}} + \frac{1}{3}\mathrm{tr}\left[\bm \sigma \right] \bm I = J^{-1} \tilde{\bm F} \left( \mathbb P : \tilde{\bm S} \right) \tilde{\bm F}^T - p \bm I = J^{-1} \tilde{\bm F} \left( \mathbb P : \left( 2 \frac{\partial G}{\partial \tilde{\bm C}} \right) \right) \tilde{\bm F}^T - p \bm I.
\end{align*}
Accordingly, the second Piola-Kirchhoff stress $\bm S$, as the pull-back operation performed on $J\bm \sigma$, can be written as
\begin{align*}
& \bm S := J \bm F^{-1} \bm \sigma \bm F^{-T} = J \bm F^{-1} \left( \bm \sigma_{\mathrm{dev}} + \frac{1}{3}\mathrm{tr}\left[\bm \sigma \right] \bm I \right) \bm F^{-T} = \bm S_{\mathrm{iso}} + \bm S_{\mathrm{vol}}, \\
& \bm S_{\mathrm{iso}} := J \bm F^{-1} \bm \sigma_{\mathrm{dev}} \bm F^{-T} = J^{-\frac23} \mathbb P : \tilde{\bm S} = J^{-\frac23} \mathbb P : 2 \frac{\partial G}{\partial \tilde{\bm C}}, \\
& \bm S_{\mathrm{vol}} := \frac13 \mathrm{tr}\left[ \bm \sigma \right] J \bm F^{-1} \bm F^{-T} = -J P \bm C^{-1}.
\end{align*}
With the above choices, it can be shown that the dissipation relation \eqref{eq:dissipation_form_2} reduces to
\begin{align}
\label{eq:dissipation-relation}
\Theta \mathcal D = \bar{\kappa} |\nabla_{\bm X} \Theta |^2 - \sum_{\alpha=1}^{m} \bm Q^{\alpha} : \frac12 \left(\frac{d}{dt}\bm \Gamma^{\alpha}\right) &= \bar{\kappa} |\nabla_{\bm X} \Theta |^2 + \sum_{\alpha=1}^{m} \frac14 \left(\frac{d}{dt}\bm \Gamma^{\alpha}\right) : \mathbb V^{\alpha} : \left(\frac{d}{dt}\bm \Gamma^{\alpha}\right) \nonumber \\
&= \bar{\kappa} |\nabla_{\bm X} \Theta |^2 + \sum_{\alpha=1}^{m} \bm Q^{\alpha} : \left( \mathbb V^{\alpha} \right)^{-1} : \bm Q^{\alpha},
\end{align}
which remains non-negative for arbitrary kinematic processes, thereby automatically satisfying the second law of thermodynamics. The choices of the constitutive relations \eqref{eq:constitutive_dev_sigma}-\eqref{eq:constitutive_Q} are certainly not a unique way for ensuring the second law of thermodynamics. Yet, as will be shown, these choices are general enough to describe the mechanical behavior of a variety of materials. The definition of $\bm Q^{\alpha}$ \eqref{eq:definition_Q} suggests that it is a function of $\bm \Gamma^{\alpha}$, $\tilde{\bm C}$, $P$, and $\Theta$. Thereby, taking material time derivatives of $\bm Q^{\alpha}$ yields
\begin{align*}
\frac{d}{dt} \bm Q^{\alpha} = 2 \frac{\partial \bm Q^{\alpha}}{\partial \bm \Gamma^{\alpha}} : \frac12 \frac{d}{dt} \bm \Gamma^{\alpha} + 2 \frac{\partial \bm Q^{\alpha}}{\partial \tilde{\bm C}} : \frac12 \frac{d}{dt} \tilde{\bm C} + \frac{\partial \bm Q^{\alpha}}{\partial \Theta} \frac{d}{dt} \Theta + \frac{\partial \bm Q^{\alpha}}{\partial P} \frac{d}{dt} P.
\end{align*}
Invoking the constitutive relation \eqref{eq:constitutive_Q_inverse} to replace the rate of $\bm \Gamma^{\alpha}$ in the above relation, we obtain a set of evolution equations for $\bm Q^{\alpha}$ after rearranging terms,
\begin{align}
\label{eq:evolution_eqn_Q}
\frac{d}{dt} \bm Q^{\alpha} - 2 \frac{\partial \bm Q^{\alpha}}{\partial \bm \Gamma^{\alpha}} : \left( \mathbb V^{\alpha} \right)^{-1} : \bm Q^{\alpha} = 2 \frac{\partial \bm Q^{\alpha}}{\partial \tilde{\bm C}} : \frac12 \frac{d}{dt} \tilde{\bm C} + \frac{\partial \bm Q^{\alpha}}{\partial \Theta} \frac{d}{dt}\Theta + \frac{\partial \bm Q^{\alpha}}{\partial P} \frac{d}{dt} P.
\end{align}
The above constitutes a set of general evolution equations for $\bm Q^{\alpha}$, and in the latter part of this work we will reveal how it may recover the familiar linear evolution equations \cite[Chapter~6]{Holzapfel2000}. In the last, we characterize the thermodynamic equilibrium state by the following two conditions, 
\begin{align}
\label{eq:characterization-thermodynamic-equilibrium}
\nabla_{\bm X}\Theta \Big|_{\mathrm{eq}} = \bm 0, \quad \mbox{ and } \quad \frac{d}{dt}\bm \Gamma^{\alpha} \Big|_{\mathrm{eq}} = \bm O, \quad \mbox{ for } \alpha = 1, \cdots, m.
\end{align}
In the above, the notation $\big|_{\mathrm{eq}}$ indicates that equality holds at the thermodynamic equilibrium limit. The conditions \eqref{eq:characterization-thermodynamic-equilibrium} suggest that, in the equilibrium limit, there is no more dissipation in the system. In particular, \eqref{eq:characterization-thermodynamic-equilibrium}$_2$ implies that there is no more change of the internal state variables in the thermodynamic equilibrium limit. According to the constitutive relation \eqref{eq:constitutive_Q}, \eqref{eq:characterization-thermodynamic-equilibrium}$_2$ is equivalent to
\begin{align}
\bm Q^{\alpha} \Big|_{\mathrm{eq}} = \bm O, \quad \mbox{ for } \alpha = 1, \cdots, m.
\end{align}

\subsection{An additive split of the free energy and refined constitutive relations}
Here we consider a more refined structure of the thermodynamic potential $G$. First, we note that the density $\rho$ has to be independent of $\tilde{\bm C}$, since the modified right Cauchy-Green tensor describes volume-preserving deformations. Therefore, one may reasonably demand that the density is a function of the pressure and temperature. Then the constitutive relation \eqref{eq:constitutive_rho} implies that
\begin{align}
\label{eq:partial-G-partial-P-rho}
\frac{\partial G(\tilde{\bm C}, P, \Theta, \bm \Gamma^1, \cdots, \bm \Gamma^m)}{\partial P} = \rho_0 \rho^{-1}(p , \theta, \bm \Gamma^1, \cdots, \bm \Gamma^m),
\end{align}
with the equality holds by recalling that $P(\bm X) = p(\bm \varphi_t(\bm X), t)$ and $\Theta(\bm X) = \theta(\bm \varphi_t(\bm X), t)$. The relation \eqref{eq:partial-G-partial-P-rho} suggests that the free energy can be additively split as
\begin{align}
\label{eq:G_first_additive_split}
G = G(\tilde{\bm C}, P, \Theta, \bm \Gamma^1, \cdots, \bm \Gamma^m) = G^{\infty}_{\textup{vol}}(P,\Theta, \bm \Gamma^1, \cdots, \bm \Gamma^m) + G_{\mathrm{iso}}(\tilde{\bm C}, \Theta, \bm \Gamma^1, \cdots, \bm \Gamma^m),
\end{align}
by performing a partial integration of \eqref{eq:partial-G-partial-P-rho} with respect to $P$ (see also \cite[p.~559]{Liu2018}). This split structure of the free energy has also been justified based on physical observations \cite{Criscione2000,Sansour2008}. It needs to be pointed out that the split is a logical consequence of the multiplicative decomposition \eqref{eq:Flory-decomposition}. Second, there is a widely-adopted assumption with experimental justifications that viscous effects only affect the isochoric motion, which suggests that the volumetric energy $G^{\infty}_{\mathrm{vol}}$ is independent of the internal state variables. Third, as a commonly adopted approach \cite{Holzapfel2000}, the isochoric part of the free energy $G_{\mathrm{iso}}$ can be further split into an equilibrium part as well as a dissipative, or non-equilibrium, part. The last two assumptions further lead to the following form of the Gibbs free energy,
\begin{align}
\label{eq:Gibbs-free-energy-structure}
G = G(\tilde{\bm C}, P, \Theta, \bm \Gamma^1, \cdots, \bm \Gamma^m) = G^{\infty}_{\textup{vol}}(P,\Theta) + G^{\infty}_{\textup{iso}}(\tilde{\bm C},\Theta) + \sum_{\alpha=1}^{m}\Upsilon^{\alpha}\left( \tilde{\bm C}, \Theta, \bm \Gamma^1, \cdots, \bm \Gamma^m \right),
\end{align}
The two terms $G^{\infty}_{\textup{vol}}$ and $G^{\infty}_{\textup{iso}}$ characterize the hyperelastic material behavior at the equilibrium state as time approaches infinity. Given the explicit structure of the Gibbs free energy \eqref{eq:Gibbs-free-energy-structure}, we may introduce the equilibrium and non-equilibrium parts of the fictitious second Piola-Kirchhoff stress as
\begin{align}
\label{eq:Piola-Kirchhoff-stress-original}
\tilde{\bm S} = \tilde{\bm S}^{\infty}_{\mathrm{iso}} + \sum_{\alpha=1}^{m} \tilde{\bm S}^{\alpha}_{\mathrm{neq}}, \quad \tilde{\bm S}^{\infty}_{\mathrm{iso}} := 2\frac{\partial G^{\infty}_{\mathrm{iso}}(\tilde{\bm C},\Theta)}{\partial \tilde{\bm C}}, \quad \tilde{\bm S}^{\alpha}_{\mathrm{neq}} := 2\frac{\partial \Upsilon^{\alpha}( \tilde{\bm C}, \Theta, \bm \Gamma^1, \cdots, \bm \Gamma^m)}{\partial \tilde{\bm C}}.
\end{align}
Correspondingly, the isochoric part of the second Piola-Kirchhoff stress can be expressed as
\begin{align*}
\bm S_{\mathrm{iso}} = \bm S^{\infty}_{\mathrm{iso}} + \sum_{\alpha=1}^{m} \bm S^{\alpha}_{\mathrm{neq}}, \quad \bm S^{\infty}_{\mathrm{iso}} := J^{-\frac23} \mathbb P : \tilde{\bm S}^{\infty}_{\mathrm{iso}}, \quad \bm S^{\alpha}_{\mathrm{neq}} := J^{-\frac23} \mathbb P : \tilde{\bm S}^{\alpha}_{\mathrm{neq}}.
\end{align*}
Due to \eqref{eq:Gibbs-free-energy-structure}, $\bm Q^{\alpha}$ can be defined in terms of the configurational free energy $\Upsilon^{\alpha}$ as
\begin{align}
\label{eq:definition_Q_2}
\bm Q^{\alpha} := - 2 \frac{\partial G(\tilde{\bm C}, P, \Theta, \bm \Gamma^1, \cdots, \bm \Gamma^m)}{\partial \bm \Gamma^{\alpha}} = -2 \frac{\partial \Upsilon^{\alpha}(\tilde{\bm C}, \Theta, \bm \Gamma^1, \cdots, \bm \Gamma^m)}{\partial \bm \Gamma^{\alpha}}.
\end{align}
Correspondingly, the evolution equations \eqref{eq:evolution_eqn_Q} can be reduced to
\begin{align}
\label{eq:evolution_eqn_Q_reduced}
\frac{d}{dt} \bm Q^{\alpha} - 2 \frac{\partial \bm Q^{\alpha}}{\partial \bm \Gamma^{\alpha}} : \left( \mathbb V^{\alpha} \right)^{-1} : \bm Q^{\alpha} = 2 \frac{\partial \bm Q^{\alpha}}{\partial \tilde{\bm C}} : \frac12 \frac{d}{dt} \tilde{\bm C} + \frac{\partial \bm Q^{\alpha}}{\partial \Theta} \frac{d}{dt}\Theta,
\end{align}
since $\bm Q^{\alpha}$ is independent of $P$ from \eqref{eq:definition_Q_2}. Here the evolution equations \eqref{eq:evolution_eqn_Q_reduced} together with \eqref{eq:definition_Q_2} can be viewed as a set of nonlinear rate equations characterizing the evolution of the internal state variables $\bm \Gamma^{\alpha}$ as well. Finally, to complete the thermomechanical theory, the entropy per unit reference volume adopts the form,
\begin{align*}
\mathfrak S = \mathfrak S^{\infty} + \sum_{\alpha=1}^{m} \mathfrak S^{\alpha}, \quad \mathfrak S^{\infty} := -\frac{\partial G^{\infty}_{\mathrm{vol}}}{\partial \Theta} -\frac{\partial G^{\infty}_{\mathrm{iso}}}{\partial \Theta}, \quad \mathfrak S^{\alpha} := -\frac{\partial \Upsilon^{\alpha}}{\partial \Theta}.
\end{align*}

\begin{remark}
In our derivation, we start from the classical definition of the Gibbs free energy \eqref{eq:def_Gibbs_free_energy} without assuming which variables it should depend on. Subsequently, the derived relation \eqref{eq:gibbs_truesdell_hint} together with the Truesdell's principle of equipresence suggests that it should be a function of $\tilde{\bm C}$, $P$, and $\Theta$. With the additive split of the free energy into isochoric and volumetric parts, we may observe from \eqref{eq:Piola-Kirchhoff-stress-original}$_2$ that the isochoric part of the Gibbs free energy plays the same role as the isochoric part of the conventional strain energy of the Helmholtz type \cite[Chapter~6]{Holzapfel2000}. Therefore, we may view that the Gibbs free energy is related to the Helmholtz free energy by performing the Legendre transformation on the volumetric part of the free energy only \cite[Section~2.4]{Liu2018}. Indeed, the volumetric energy is commonly convex with respect to $J$, unless there are multiple stable phases (e.g. the van der Waals model \cite{Liu2015,Sengers2003}), and this convexity property guarantees the validity of the Legendre transformation. We note that some adopt a different notion of the Gibbs free energy which depends on the whole stress \cite{Rajagopal2013,Surana2013}. That type of theory requires the convexity of the free energy with respect to the whole strain, which does not hold for many nonlinear materials \cite{Ball1976}.
\end{remark}

\begin{remark}
There is a requirement for the non-equilibrium stresses based on physical intuitions, that is $\bm S^{\alpha}_{\mathrm{neq}} = J^{-\frac23}\mathbb P : \tilde{\bm S}^{\alpha}_{\mathrm{neq}}$ should fully relax, or vanish, in the thermodynamic equilibrium limit. This condition apparently poses an additional constraint for the configurational free energy $\Upsilon^{\alpha}$, and we will revisit this point in Section \ref{sec:isotropic_response_quadratic_H}.
\end{remark}

\begin{remark}
It has recently been observed that the additive split of the energy \eqref{eq:G_first_additive_split} may lead to non-physical responses for anisotropic materials \cite{Sansour2008,Helfenstein2010,Gueltekin2019}. In this work, we restrict our discussion to isotropic materials, and the split \eqref{eq:G_first_additive_split} should be sound in both mathematics and physics \cite{Criscione2000,Sansour2008}.
\end{remark}


\begin{remark}
If we assume the viscosity tensors $\mathbb V^{\alpha}$ are isotropic, they adopt explicit forms as
\begin{align}
\label{eq:isotropic_V_alpha}
\mathbb V^{\alpha} = 2 \eta^{\alpha}_{D}(\Theta) \left( \mathbb I - \frac13 \bm I \otimes \bm I \right) + \frac23 \eta^{\alpha}_{V}(\Theta) \bm I \otimes \bm I,
\end{align}
where $\eta^{\alpha}_{D}(\Theta)$ and $\eta^{\alpha}_{V}(\Theta)$ are non-negative and represent the deviatoric and volumetric viscosities respectively \cite{Reese1998,Fung1994,Gurtin1982}.
The conjugate variables  $\bm Q^{\alpha}$ are related to the internal state variables $\bm \Gamma^{\alpha}$ by a linear relation,
\begin{align}
\label{eq:Q_eta_dot_Gamma_relationship}
\bm Q^{\alpha} = \eta^{\alpha}_{D}(\Theta) \left( \frac{d}{dt} \bm \Gamma^{\alpha} - \frac13 \mathrm{tr}[\frac{d}{dt} \bm \Gamma^{\alpha}] \bm I \right) + \frac13\eta^{\alpha}_{V}(\Theta) \mathrm{tr}[\frac{d}{dt} \bm \Gamma^{\alpha}] \bm I.
\end{align}
Consequently, the dissipation relation \eqref{eq:dissipation-relation} reduces to
\begin{align*}
\Theta \mathcal D = \bar{\kappa} |\nabla_{\bm X} \Theta |^2 + \sum_{\alpha=1}^{m} \left( \frac{\eta^{\alpha}_D(\Theta)}{2} \left\lvert \frac{d}{dt} \bm \Gamma^{\alpha} - \frac13 \mathrm{tr}[\frac{d}{dt} \bm \Gamma^{\alpha}] \bm I  \right\rvert^2 + \frac{\eta^{\alpha}_V(\Theta)}{6} \mathrm{tr}[\frac{d}{dt} \bm \Gamma^{\alpha}]^2 \right).
\end{align*}
\end{remark}

\subsection{Finite linear viscoelasticity}
\label{sec:linear-viscoelasticity}
In this section, we consider specifically the finite deformation linear viscoelasticity, the precise definition of which will be given momentarily. We start by restricting the configurational free energy to the following form,
\begin{align}
\label{eq:configuration-free-energy}
\Upsilon^{\alpha}\left(\tilde{\bm C}, \Theta, \bm \Gamma^1, \cdots, \bm \Gamma^m \right) = H^{\alpha}(\bm \Gamma^{\alpha}, \Theta) + \left( \hat{\bm S}^{\alpha}_{0} - 2 \frac{\partial G^{\alpha}(\tilde{\bm C}, \Theta)}{\partial \tilde{\bm C}} \right) : \frac{\bm \Gamma^{\alpha} - \bm I}{2} + F^{\alpha}( \tilde{\bm C}, \Theta ).
\end{align}
In the above choice, the $\alpha$-th configurational free energy $\Upsilon^{\alpha}$ depends on $\bm \Gamma^{\alpha}$ only; $H^{\alpha}$, $G^{\alpha}$, and $F^{\alpha}$ are scalar-valued functions to be provided for material modeling; $\hat{\bm S}^{\alpha}_{0}$ is a constant stress-like tensor whose physical significance will be revealed in Section \ref{sec:isotropic_response_quadratic_H}. A similar form of the configurational free energy was proposed in \cite{Holzapfel1996} with minor differences. To satisfy the normalization condition for $\Upsilon^{\alpha}$, we demand that
\begin{align*}
H^{\alpha}(\bm I, \Theta_0) = 0, \quad F^{\alpha}(\bm I, \Theta_0) = 0,
\end{align*}
for a given homogeneous temperature $\Theta_0 > 0$. It is worth pointing out that the essence of the particular configurational free energy is that the elastic strain $\tilde{\bm C}$ and the internal state variable $\bm \Gamma^{\alpha}$ are coupled through the second term in \eqref{eq:configuration-free-energy} only, where the internal state variables $\bm \Gamma^{\alpha}$ are present in linear forms. Also, recalling that $\bm \Gamma^{\alpha}$ are akin to the right Cauchy-Green strain tensor, the terms $\left( \bm \Gamma^{\alpha}- \bm I \right) / 2$ are analogous to the Green-Lagrange strain tensor. It will facilitate our subsequent discussion by introducing a stress term defined as
\begin{align*}
\tilde{\bm S}^{\alpha}_{\mathrm{iso}}(\tilde{\bm C}, \Theta) := 2 \frac{\partial G^{\alpha}(\tilde{\bm C}, \Theta)}{\partial \tilde{\bm C}},
\end{align*}
with which the configurational free energy \eqref{eq:configuration-free-energy} can be rewritten as
\begin{align*}
\Upsilon^{\alpha}\left(\tilde{\bm C}, \Theta, \bm \Gamma^1, \cdots, \bm \Gamma^m \right) = H^{\alpha}(\bm \Gamma^{\alpha}, \Theta) + \left( \hat{\bm S}^{\alpha}_{0} - \tilde{\bm S}^{\alpha}_{\mathrm{iso}}(\tilde{\bm C}, \Theta) \right) : \frac{\bm \Gamma^{\alpha} - \bm I}{2} + F^{\alpha}( \tilde{\bm C}, \Theta ).
\end{align*}
Due to the definition \eqref{eq:definition_Q_2} and the form of the configurational free energy \eqref{eq:configuration-free-energy}, we now have
\begin{align}
\label{eq:Q-relation-linear-viscoelasticity}
\bm Q^{\alpha}(\tilde{\bm C}, \Theta, \bm \Gamma^1, \cdots, \bm \Gamma^m) =& -2 \frac{\partial \Upsilon^{\alpha}(\tilde{\bm C}, \Theta, \bm \Gamma^1, \cdots, \bm \Gamma^m )}{\partial \bm \Gamma^{\alpha}}
= 2 \left( \frac{\partial G^{\alpha}(\tilde{\bm C}, \Theta)}{\partial \tilde{\bm C}} - \frac{\partial H^{\alpha}(\bm \Gamma^{\alpha},\Theta)}{\partial \bm \Gamma^{\alpha}}\right) - \hat{\bm S}^{\alpha}_{0} \nonumber \\
=& \tilde{\bm S}^{\alpha}_{\mathrm {iso}}(\tilde{\bm C},\Theta) - \hat{\bm S}^{\alpha}_{0} - 2 \frac{\partial H^{\alpha}(\bm \Gamma^{\alpha}, \Theta)}{\partial \bm \Gamma^{\alpha}}.
\end{align}
From the above relation, the material time derivative of $\tilde{\bm S}^{\alpha}_{\mathrm{iso}}$ can be expressed as
\begin{align*}
\frac{d}{dt}\tilde{\bm S}^{\alpha}_{\mathrm{iso}} = \frac{\partial \bm Q^{\alpha}}{\partial \tilde{\bm C}} : \frac{d}{dt} \tilde{\bm C} + \frac{\partial \bm Q^{\alpha}}{\partial \Theta} \frac{d}{dt}\Theta + 2 \frac{\partial^2 H^{\alpha}}{\partial \bm \Gamma^{\alpha}\partial \Theta} \frac{d}{dt}\Theta,
\end{align*}
with which the right-hand side of the evolution equations \eqref{eq:evolution_eqn_Q_reduced} can be written compactly as
\begin{align}
\label{eq:evolution-eqn-Upsilon-special}
\frac{d}{dt} \bm Q^{\alpha} - 2 \frac{\partial \bm Q^{\alpha}}{\partial \bm \Gamma^{\alpha}} : \left( \mathbb V^{\alpha} \right)^{-1} : \bm Q^{\alpha} = \frac{d}{dt}\tilde{\bm S}^{\alpha}_{\mathrm{iso}} - 2 \frac{\partial^2 H^{\alpha}}{\partial \bm \Gamma^{\alpha}\partial \Theta} \frac{d}{dt}\Theta.
\end{align}
The non-equilibrium stresses \eqref{eq:Piola-Kirchhoff-stress-original}$_{3}$ now can be represented as
\begin{align}
\label{eq:non_equilibrium_stress_2}
\tilde{\bm S}^{\alpha}_{\mathrm{neq}} = 2 \frac{\partial \Upsilon^{\alpha}}{\partial \tilde{\bm C}} = 2 \left( \frac{\partial F_{\alpha}(\tilde{\bm C}, \Theta)}{\partial \tilde{\bm C}} - \frac{\partial \tilde{\bm S}^{\alpha}_{\mathrm{iso}}}{\partial \tilde{\bm C}} : \frac{\bm \Gamma^{\alpha} - \bm I}{2}\right),
\end{align}
in which we have made use of the major symmetry of the fourth-order tensor $\partial \tilde{\bm S}^{\alpha}_{\mathrm{iso}}/\partial \tilde{\bm C}$.

\subsubsection{Definition}
\label{sec:isotropic_response_quadratic_H}
Based on the above discussion, we now provide the definition of finite linear viscoelasticity here.
\begin{definition}
\label{def:finite-linear-viscoelasticity}
A finite linear viscoelastic material is described by the configurational free energy \eqref{eq:configuration-free-energy} with $\mathbb V^{\alpha} = 2 \eta^{\alpha}(\Theta) \mathbb I$ and a quadratic form for $H^{\alpha}$ in terms of the internal strain variables,
\begin{align}
\label{eq:quadratic-H-energy}
H^{\alpha}(\bm \Gamma^{\alpha},\Theta) = \mu^{\alpha}(\Theta) \left\lvert \frac{\bm \Gamma^{\alpha} - \bm I}{2} \right\rvert^2 ,
\end{align}
where $\mu^{\alpha}(\Theta)$ is a temperature dependent shear modulus associated with the $\alpha$-th relaxation process.
\end{definition}
The form $\mathbb V^{\alpha} = 2 \eta^{\alpha}(\Theta) \mathbb I$ is a special case for an isotropic tensor of order four by taking $\eta^{\alpha} = \eta^{\alpha}_D = \eta^{\alpha}_V$ in \eqref{eq:isotropic_V_alpha}. According to \eqref{eq:Q-relation-linear-viscoelasticity}, one can show that the quadratic form \eqref{eq:quadratic-H-energy} implies
\begin{align}
\label{eq:quadratic_H_Q_def}
\bm Q^{\alpha} = \tilde{\bm S}^{\alpha}_{\mathrm{iso}} - \hat{\bm S}^{\alpha}_{0} - 2\mu^{\alpha}(\Theta) \frac{ \bm \Gamma^{\alpha} - \bm I }{2}, \quad \mbox{ and } \quad
\frac{\partial \bm Q^{\alpha}}{\partial \bm \Gamma^{\alpha}} = -\mu^{\alpha}(\Theta) \mathbb I.
\end{align}
Apparently, the choice of the quadratic form for $H^{\alpha}$ leads to the above linear relation between $\bm Q^{\alpha}$ and $\bm \Gamma^{\alpha}$, which further simplifies the evolution equations \eqref{eq:evolution-eqn-Upsilon-special} as
\begin{align}
\label{eq:evolution-eqn-Q-linear-viscoelasticity}
\frac{d}{dt} \bm Q^{\alpha} + \frac{ \bm Q^{\alpha} }{\tau^{\alpha}} = \frac{d}{dt}\tilde{\bm S}^{\alpha}_{\mathrm{iso}} - \frac{d\mu^{\alpha}}{dt} \left(\bm \Gamma^{\alpha}-\bm I\right) ,
\end{align}
with 
\begin{align*}
\tau^{\alpha}(\Theta) := \eta^{\alpha}(\Theta) / \mu^{\alpha}(\Theta).
\end{align*}
The term $\tau^{\alpha}$ has the dimension of time and is commonly referred to as the relaxation time for the $\alpha$-th process. For most polymers, the relaxation time $\tau^{\alpha}$ can be modeled by the Arrhenius equation, which characterizes faster viscoelastic effects with the increase of temperature \cite{Tobolsky1944}. The second term on the right-hand side of \eqref{eq:evolution-eqn-Q-linear-viscoelasticity} arises due to the temperature dependent material parameters \cite[p.~365]{Holzapfel2000}. Given an initial condition $\bm Q^{\alpha}_0 := \bm Q^{\alpha}|_{t=0}$, the solution of \eqref{eq:evolution-eqn-Q-linear-viscoelasticity} can be obtained in a hereditary integral form,
\begin{align}
\label{eq:linear_hereditary_integral}
\bm Q^{\alpha} = \exp\left( - t / \tau^{\alpha} \right) \bm Q^{\alpha}_0 + \int^t_{0^+} \exp\left( -(t-s)/\tau^{\alpha} \right) \left( \frac{d}{ds} \tilde{\bm S}^{\alpha}_{\mathrm{iso}} - \left(\bm \Gamma^{\alpha} - \bm I \right) \frac{d\mu^{\alpha}}{ds}\right) ds.
\end{align}
We recall that the general evolution equations \eqref{eq:evolution-eqn-Upsilon-special} is nonlinear for $\bm Q^{\alpha}$. For finite \textit{linear} viscoelastic materials, the evolution equations reduce to a linear system, whose solution can be conveniently represented in terms of the hereditary integral form \eqref{eq:linear_hereditary_integral}. Within this work, we assume that $\bm \Gamma^{\alpha} = \bm I$ for $\alpha =1,\cdots, m$ at time $t=0$. Consequently, the relation \eqref{eq:quadratic_H_Q_def}$_{1}$ implies
\begin{align*}
\bm Q^{\alpha}_0 = \tilde{\bm S}^{\alpha}_{\mathrm{iso} \: 0} - \hat{\bm S}^{\alpha}_0, \quad \mbox{ with } \tilde{\bm S}^{\alpha}_{\mathrm{iso} \: 0} := \tilde{\bm S}^{\alpha}_{\mathrm{iso}}|_{t=0}.
\end{align*}
Therefore, the constant tensor $\hat{\bm S}^{\alpha}_{0}$ is determined by the initial values of the stresses,
\begin{align}
\label{eq:def_hat_S_alpha_0}
\hat{\bm S}^{\alpha}_0 := \tilde{\bm S}^{\alpha}_{\mathrm{iso} \: 0} - \bm Q^{\alpha}_{0},
\end{align}
which ensures the consistency of the initial condition. With \eqref{eq:non_equilibrium_stress_2} and \eqref{eq:quadratic_H_Q_def}, the non-equilibrium stresses for finite linear viscoelasticity are given by
\begin{align*}
\tilde{\bm S}^{\alpha}_{\mathrm{neq}} = 2 \left( \frac{\partial F_{\alpha}(\tilde{\bm C}, \Theta)}{\partial \tilde{\bm C}} - \frac{\partial \tilde{\bm S}^{\alpha}_{\mathrm{iso}}}{\partial \tilde{\bm C}} : \frac{\bm \Gamma^{\alpha} - \bm I}{2}\right) = 2 \frac{\partial F_{\alpha}(\tilde{\bm C}, \Theta)}{\partial \tilde{\bm C}} - \frac{1}{\mu^{\alpha}(\Theta)} \frac{\partial \tilde{\bm S}^{\alpha}_{\mathrm{iso}}}{\partial \tilde{\bm C}} : \left( \tilde{\bm S}^{\alpha}_{\mathrm{iso}} - \hat{\bm S}^{\alpha}_0 - \bm Q^{\alpha} \right).
\end{align*}
\begin{remark}
The evolution equations \eqref{eq:evolution-eqn-Q-linear-viscoelasticity} derived here are similar to those introduced and used in \cite{Simo1987,Holzapfel1996a,Holzapfel2001,Simo2006,Holzapfel1996,Gueltekin2016a}, which can be expressed as
\begin{align*}
\frac{d}{dt} \bm Q^{\alpha} + \frac{ \bm Q^{\alpha} }{\tau^{\alpha}} = \frac{d}{dt} \left( J^{-\frac23} \mathbb P : \tilde{\bm S}^{\alpha}_{\mathrm{iso}} \right) - 2 \frac{d\mu^{\alpha}}{dt} \bm \Gamma^{\alpha}.
\end{align*}
We emphasize that the evolution equations in those works were proposed based on a purely heuristic argument with inspiration coming from the standard linear solid model \cite{Valanis1972}. The right-hand side terms of the above evolution equations are different from those in \eqref{eq:evolution-eqn-Q-linear-viscoelasticity}, in which a deviatoric projected stress from $\tilde{\bm S}^{\alpha}_{\mathrm{iso}}$ was utilized to drive the evolution of $\bm Q^{\alpha}$. 
\end{remark}

\subsubsection{A model based on the identical polymer chain assumption}
\label{sec:IPC-model}
Now for a finite linear viscoelasticity model, considering the form for $H^{\alpha}$ given in Definition \ref{def:finite-linear-viscoelasticity}, the modeling work reduces to designing the form of $G^{\alpha}$ and $F^{\alpha}$ in \eqref{eq:configuration-free-energy}. In \cite{Govindjee1992,Holzapfel1996}, a model was proposed by taking
\begin{align*}
G^{\alpha}(\tilde{\bm C}, \Theta) = F^{\alpha}(\tilde{\bm C}, \Theta) = \beta^{\alpha}_{\infty} G^{\infty}_{\mathrm{iso}}(\tilde{\bm C}, \Theta),
\end{align*}
with $\beta^{\infty}_{\alpha} \in (0,\infty)$ being non-dimensional constants. This model is motivated by the observation that the phenomenological viscoelastic behavior is induced by a medium composed of identical polymer chains. With this assumption, the viscoelastic material is completely characterized by the potential $G^{\infty}_{\mathrm{iso}}$ together with non-dimensional parameters $\beta^{\infty}_{\alpha}$. The configurational free energy for this model can be rewritten as
\begin{align}
\label{eq:configuration-free-energy-IPC}
\Upsilon^{\alpha}\left(\tilde{\bm C}, \Theta, \bm \Gamma^1, \cdots, \bm \Gamma^m \right) = \mu^{\alpha}(\Theta) \left\lvert \frac{\bm \Gamma^{\alpha} - \bm I}{2} \right\rvert^2  + \left( \hat{\bm S}^{\alpha}_{0} - \beta^{\alpha}_{\infty} \tilde{\bm S}^{\infty}_{\mathrm{iso}} \right) : \frac{\bm \Gamma^{\alpha} - \bm I}{2} + \beta^{\alpha}_{\infty}G^{\infty}_{\mathrm{iso}}( \tilde{\bm C}, \Theta ).
\end{align}
The evolution equations can be expressed as
\begin{align}
\label{eq:evolution-eqn-Q-linear-viscoelasticity-IPC}
\frac{d}{dt} \bm Q^{\alpha} + \frac{ \bm Q^{\alpha} }{\tau^{\alpha}} = \beta^{\alpha}_{\infty}\frac{d}{dt}\tilde{\bm S}^{\infty}_{\mathrm{iso}} - \frac{d\mu^{\alpha}}{dt} \left(\bm \Gamma^{\alpha}-\bm I\right).
\end{align}
With $\bm Q^{\alpha}$ calculated, one may obtain the non-equilibrium stresses as
\begin{align}
\label{eq:S_neq_IPC_1}
\tilde{\bm S}^{\alpha}_{\mathrm{neq}} = \beta^{\alpha}_{\infty} \tilde{\bm S}^{\infty}_{\mathrm{iso}} - \frac{\beta^{\alpha}_{\infty}}{\mu^{\alpha}(\Theta)} \frac{\partial \tilde{\bm S}^{\infty}_{\mathrm{iso}}}{\partial \tilde{\bm C}} : \left( \beta^{\alpha}_{\infty} \tilde{\bm S}^{\infty}_{\mathrm{iso}} - \hat{\bm S}^{\alpha}_0 - \bm Q^{\alpha} \right),
\end{align}
with $\hat{\bm S}^{\alpha}_0 = \beta^{\alpha}_{\infty}\tilde{\bm S}^{\infty}_{\mathrm{iso} \: 0} - \bm Q^{\alpha}_{0}$ and $\tilde{\bm S}^{\infty}_{\mathrm{iso} \: 0} := \tilde{\bm S}^{\infty}_{\mathrm{iso}}|_{t=0}$. For convenience, we introduce the fictitious elasticity tensor $\tilde{\mathbb C}_{\mathrm{iso}}^{\infty}$ defined as
\begin{align}
\label{eq:def-tilde-mathbb-C-iso-infty}
\tilde{\mathbb C}_{\mathrm{iso}}^{\infty} := 2J^{-\frac43}\frac{\partial \tilde{\bm S}^{\infty}_{\mathrm{iso}}}{\partial \tilde{\bm C}} = 4 J^{-\frac43}\frac{\partial^2 G^{\infty}_{\mathrm{iso}}(\tilde{\bm C},\Theta)}{\partial \tilde{\bm C} \partial \tilde{\bm C}},
\end{align}
with which the non-equilibrium stresses \eqref{eq:S_neq_IPC_1} can be expressed as
\begin{align*}
\tilde{\bm S}^{\alpha}_{\mathrm{neq}} = \beta^{\alpha}_{\infty} \tilde{\bm S}^{\infty}_{\mathrm{iso}} - \frac{\beta^{\alpha}_{\infty}}{2\mu^{\alpha}(\Theta)} J^{\frac43} \tilde{\mathbb C}^{\infty}_{\mathrm{iso}} : \left( \beta^{\alpha}_{\infty} \tilde{\bm S}^{\infty}_{\mathrm{iso}} - \hat{\bm S}^{\alpha}_0 - \bm Q^{\alpha} \right).
\end{align*}
As will be revealed in Section \ref{subsubsec:condition-vanish-non-equilibrium-stress}, in the thermodynamic equilibrium limit, the non-equilibrium stresses given by this model will \textit{not} vanish in general. This behavior is counter-intuitive as the stress from the dissipative potential is expected to be fully relaxed in the limit from the material modeling perspective. In our numerical experiences, a non-vanishing $\bm S^{\alpha}_{\mathrm{neq}}$ often leads the body to deform to an unexpected state and sometimes gives unstable material behavior (see Figures \ref{fig:compare-IPC-MIPC} and \ref{fig:compare-IPC-MPIC-energy}). It is therefore necessary to discuss the condition characterizing the relaxation of the non-equilibrium stresses.

\subsubsection{Condition for the vanishment of the non-equilibrium stress $\bm S^{\alpha}_{\mathrm{neq}}$ in the thermodynamic equilibrium limit}
\label{subsubsec:condition-vanish-non-equilibrium-stress}
Intuitively, the non-equilibrium stress $\bm S^{\alpha}_{\mathrm{neq}}$ \eqref{eq:non_equilibrium_stress_2} shall vanish in the thermodynamic equilibrium limit. This property can be conveniently achieved in the multiplicative viscoelasticity theory \cite{Reese1998}. Without having the multiplicative decomposition of the deformation gradient, we need to characterize this property by analyzing necessary and sufficient conditions for the vanishment of $\bm S^{\alpha}_{\mathrm{neq}}$ in the equilibrium limit. First, the relaxation of the non-equilibrium stresses in the limit is given by
\begin{align*}
\bm O = \bm S^{\alpha}_{\mathrm{neq}} \Big|_{\mathrm{eq}} = \mathbb P : \tilde{\bm S}^{\alpha}_{\mathrm{neq}}\Big|_{\mathrm{eq}}.
\end{align*}
It can be proved that the above relation is equivalent to the vanishment of the fictitious stresses $\tilde{\bm S}^{\alpha}_{\mathrm{neq}}$,
\begin{align*}
\bm O = \tilde{\bm S}^{\alpha}_{\mathrm{neq}}\Big|_{\mathrm{eq}},
\end{align*}
with the proof detailed in \ref{appendix:an-analysis-of-the-null-space-of-P}. Recall that the equilibrium \cite{Simo2006} is characterized by,
\begin{align*}
\bm Q^{\alpha} \Big|_{\mathrm{eq}} = \bm O, \quad \mbox{or equivalently } \frac{d}{dt} \bm \Gamma^{\alpha} \Big|_{\mathrm{eq}} = \bm O, \quad \mbox{for } \alpha=1, \cdots, m.
\end{align*}
For finite linear viscoelastic materials, the above conditions imply that
\begin{align*}
\bm O = \bm Q^{\alpha} \Big|_{\mathrm{eq}} = \tilde{\bm S}^{\alpha}_{\mathrm{iso}} \Big|_{\mathrm{eq}} - \hat{\bm S}^{\alpha}_{0} - 2\mu^{\alpha}(\Theta) \frac{\bm \Gamma^{\alpha} - \bm I}{2} \Big|_{\mathrm{eq}},
\end{align*}
which is equivalent to
\begin{align*}
\frac{\bm \Gamma^{\alpha} - \bm I}{2} \Big|_{\mathrm{eq}} = \frac{1}{2\mu^{\alpha}(\Theta)} \left( \tilde{\bm S}^{\alpha}_{\mathrm{iso}} \Big|_{\mathrm{eq}} - \hat{\bm S}^{\alpha}_{0} \right).
\end{align*}
From \eqref{eq:non_equilibrium_stress_2} and the above relation, we know that in the equilibrium limit,
\begin{align*}
\tilde{\bm S}^{\alpha}_{\mathrm{neq}} \Big|_{\mathrm{eq}} &= 2 \left( \frac{\partial F^{\alpha}(\tilde{\bm C}, \Theta)}{\partial \tilde{\bm C}} \Big|_{\mathrm{eq}} - \frac{\partial \tilde{\bm S}^{\alpha}_{\mathrm{iso}}}{\partial \tilde{\bm C}} \Big|_{\mathrm{eq}} : \frac{\bm \Gamma^{\alpha} - \bm I}{2}\Big|_{\mathrm{eq}} \right) \\
&= 2 \left( \frac{\partial F^{\alpha}(\tilde{\bm C}, \Theta)}{\partial \tilde{\bm C}} \Big|_{\mathrm{eq}} - \frac{1}{2\mu^{\alpha}(\Theta)} \frac{\partial \tilde{\bm S}^{\alpha}_{\mathrm{iso}}}{\partial \tilde{\bm C}} \Big|_{\mathrm{eq}} : \left( \tilde{\bm S}^{\alpha}_{\mathrm{iso}} - \hat{\bm S}^{\alpha}_{0} \right)  \Big|_{\mathrm{eq}} \right) \\
&= 2 \frac{\partial}{\partial \tilde{\bm C}} \left( F^{\alpha}(\tilde{\bm C}, \Theta) - \frac{1}{4\mu^{\alpha}(\Theta)} \left\lvert \tilde{\bm S}^{\alpha}_{\mathrm{iso}} - \hat{\bm S}^{\alpha}_0 \right\lvert^2 \right) \Big|_{\mathrm{eq}}.
\end{align*}
The last equality in the above derivation makes use of the symmetry property of $\partial \tilde{\bm S}^{\alpha}_{\mathrm{iso}}/\partial \tilde{\bm C}$. Therefore, the \textit{necessary} condition for ensuring $\tilde{\bm S}^{\alpha}_{\mathrm{neq}} \Big|_{\mathrm{eq}} = \bm O$ is
\begin{align*}
\bm O = \frac{\partial}{\partial \tilde{\bm C}} \left( F^{\alpha}(\tilde{\bm C}, \Theta) - \frac{1}{4\mu^{\alpha}(\Theta)} \left\lvert \tilde{\bm S}^{\alpha}_{\mathrm{iso}} - \hat{\bm S}^{\alpha}_0 \right\lvert^2 \right) \Big|_{\mathrm{eq}},
\end{align*} 
or equivalently, after a partial integration with respect to $\tilde{\bm C}$,
\begin{align*}
F^{\alpha}(\tilde{\bm C}, \Theta) \Big|_{\mathrm{eq}} = \left( \frac{1}{4\mu^{\alpha}(\Theta)} \left\lvert \tilde{\bm S}^{\alpha}_{\mathrm{iso}} - \hat{\bm S}^{\alpha}_0 \right\lvert^2  + T^{\alpha}(\Theta) \right)\Big|_{\mathrm{eq}},
\end{align*}
wherein $T^{\alpha}$ is an arbitrary function of the temperature. The configurational free energy $\Upsilon^{\alpha}$ \eqref{eq:configuration-free-energy} for finite linear viscoelasticity in the equilibrium limit is
\begin{align}
\label{eq:configuration-free-energy-thermo-limit}
\Upsilon^{\alpha}\left(\tilde{\bm C}, \Theta, \bm \Gamma^1, \cdots, \bm \Gamma^m \right)\Big|_{\mathrm{eq}} =& H^{\alpha}(\bm \Gamma^{\alpha}, \Theta)\Big|_{\mathrm{eq}} + \left( \hat{\bm S}^{\alpha}_{0} - \tilde{\bm S}^{\alpha}_{\mathrm{iso}} \right) : \frac{\bm \Gamma^{\alpha} - \bm I}{2}\Big|_{\mathrm{eq}} + F^{\alpha}( \tilde{\bm C}, \Theta )\Big|_{\mathrm{eq}} \nonumber \displaybreak[2] \\
=& \left( \mu^{\alpha}(\Theta) \left\lvert \frac{\bm \Gamma^{\alpha} - \bm I}{2} \right\rvert^2 - 2\mu^{\alpha}(\Theta) \left\lvert \frac{\bm \Gamma^{\alpha} - \bm I}{2} \right\rvert^2 \right) \Big|_{\mathrm{eq}} \nonumber \displaybreak[2] \\
& + \left( \frac{1}{4\mu^{\alpha}(\Theta)} \left\lvert \tilde{\bm S}^{\alpha}_{\mathrm{iso}} - \hat{\bm S}^{\alpha}_0 \right\lvert^2  + T^{\alpha}(\Theta) \right)\Big|_{\mathrm{eq}} \nonumber \displaybreak[2] \\
=& -\mu^{\alpha}(\Theta) \left\lvert \frac{\bm \Gamma^{\alpha} - \bm I}{2} \right\rvert^2 \Big|_{\mathrm{eq}} + \mu^{\alpha}(\Theta) \left\lvert \frac{\bm \Gamma^{\alpha} - \bm I}{2} \right\rvert^2 \Big|_{\mathrm{eq}}  + T^{\alpha}(\Theta) \Big|_{\mathrm{eq}}  \nonumber \displaybreak[2] \\
=& T^{\alpha}(\Theta) \Big|_{\mathrm{eq}}.
\end{align}
This suggests that, for the finite linear viscoelastic model considered, the mechanical part of the configurational free energy will eventually vanish in the thermodynamic equilibrium limit, if we demand the non-equilibrium stresses to vanish in the limit. Based on the above discussion, a convenient modeling choice can be made by demanding that $F^{\alpha}$ always satisfies the following relation,
\begin{align}
\label{eq:constraint-F-alpha}
F^{\alpha}(\tilde{\bm C}, \Theta) = \frac{1}{4\mu^{\alpha}(\Theta)} \left\lvert \tilde{\bm S}^{\alpha}_{\mathrm{iso}} - \hat{\bm S}^{\alpha}_0 \right\lvert^2  + T^{\alpha}(\Theta),
\end{align}
from which we readily have
\begin{align}
\label{eq:tilde-S-alpha-neq-form}
\tilde{\bm S}^{\alpha}_{\mathrm{neq}} = 2 \frac{\partial \tilde{\bm S}^{\alpha}_{\mathrm{iso}}}{\partial \tilde{\bm C}} : \left( \frac{1}{2\mu^{\alpha}(\Theta)} \left( \tilde{\bm S}^{\alpha}_{\mathrm{iso}} - \hat{\bm S}^{\alpha}_{0} \right) - \frac{\bm \Gamma^{\alpha} - \bm I}{2} \right) = \frac{1}{\mu^{\alpha}(\Theta)} \frac{\partial \tilde{\bm S}^{\alpha}_{\mathrm{iso}}}{\partial \tilde{\bm C}} : \bm Q^{\alpha}.
\end{align}
Therefore, the non-equilibrium stresses $\bm S^{\alpha}_{\mathrm{neq}}$ automatically vanish when reaching the equilibrium limit, and the relation \eqref{eq:constraint-F-alpha} serves as a \textit{sufficient} condition for the configurational free energy that guarantees the stress relaxation. In this work, we adopt \eqref{eq:constraint-F-alpha} for the analytic form of $F^{\alpha}$, thereby leaving $G^{\alpha}$ as the only undetermined part of the configurational free energy for modelers. 



\begin{remark}
In the literature, the configurational free energy is often proposed with $F^{\alpha} = G^{\alpha}$ in \eqref{eq:configuration-free-energy} \cite{Simo2006,Holzapfel1996}. Based on the above analysis, the non-equilibrium stress arising from that configurational free energy will not vanish in the thermodynamic equilibrium limit, except for one special case to be discussed in the next section.
\end{remark}

\subsubsection{The Holzapfel-Simo-Saint Venant-Kirchhoff model}
\label{sec:HS-model}
For finite linear viscoelastic materials that satisfy the condition \eqref{eq:constraint-F-alpha}, the stresses are completely determined in terms of $G^{\alpha}$. Here, as an illustrative example, we consider a special form of $G^{\alpha}$,
\begin{align}
\label{eq:HS-model-G-alpha-form}
G^{\alpha}(\tilde{\bm C}, \Theta) = \mu^{\alpha}(\Theta) \left\lvert \frac{\tilde{\bm C} - \bm I}{2} \right\rvert^2 .
\end{align}
Based on \eqref{eq:Q-relation-linear-viscoelasticity}, one readily has
\begin{align*}
\tilde{\bm S}^{\alpha}_{\mathrm{iso}} = 2 \frac{\partial G^{\alpha}(\tilde{\bm C}, \Theta)}{\partial \tilde{\bm C}} = \mu^{\alpha}(\Theta) \left( \tilde{\bm C} - \bm I \right), \mbox{ and }
\bm Q^{\alpha} =  \tilde{\bm S}^{\alpha}_{\mathrm{iso}} - \hat{\bm S}^{\alpha}_{0} - \mu^{\alpha}(\Theta) \left( \bm \Gamma^{\alpha} - \bm I \right) = \mu^{\alpha}(\Theta) \left( \tilde{\bm C} - \bm \Gamma^{\alpha} \right) + \bm Q^{\alpha}_{0}.
\end{align*}
The evolution eqation \eqref{eq:evolution-eqn-Q-linear-viscoelasticity} can be written as
\begin{align*}
\frac{d}{dt} \bm Q^{\alpha} + \frac{ \bm Q^{\alpha} }{\tau^{\alpha}} =& \frac{d}{dt}\tilde{\bm S}^{\alpha}_{\mathrm{iso}} - \frac{d\mu^{\alpha}(\Theta)}{dt} \left( \bm \Gamma^{\alpha} - \bm I \right) = \mu^{\alpha}(\Theta) \frac{d}{dt} \tilde{\bm C} + \frac{d\mu^{\alpha}(\Theta)}{dt} \left( \tilde{\bm C} - \bm I \right)  - \frac{d\mu^{\alpha}(\Theta)}{dt} \left( \bm \Gamma^{\alpha} - \bm I \right) \\
=& \mu^{\alpha}(\Theta) \frac{d}{dt}\tilde{\bm C} + \frac{d\mu^{\alpha}(\Theta)}{dt} \left( \tilde{\bm C} - \bm \Gamma^{\alpha} \right).
\end{align*}
The fictitious non-equilibrium stress $\tilde{\bm S}^{\alpha}_{\mathrm{neq}}$ can be represented as
\begin{align*}
\tilde{\bm S}^{\alpha}_{\mathrm{neq}} = \frac{1}{\mu^{\alpha}(\Theta)} \frac{\partial \tilde{\bm S}^{\alpha}_{\mathrm{iso}}}{\partial \tilde{\bm C}} : \bm Q^{\alpha} = \frac{1}{\mu^{\alpha}(\Theta)} \mu^{\alpha}(\Theta) \bm Q^{\alpha} = \bm Q^{\alpha}.
\end{align*}
Consequently, we have the fictitious second Piola-Kirchhoff stress represented as
\begin{align*}
\tilde{\bm S} = \tilde{\bm S}^{\infty}_{\mathrm{iso}} + \sum_{\alpha=1}^{m} \tilde{\bm S}^{\alpha}_{\mathrm{neq}} = \tilde{\bm S}^{\infty}_{\mathrm{iso}} + \sum_{\alpha=1}^{m} \bm Q^{\alpha}.
\end{align*}
This recovers one specific model proposed by Holzapfel and Simo in \cite[Section~4.2]{Holzapfel1996}, which can be viewed as a generalization of the Saint Venant-Kirchhoff model to the viscous regime. This is the reason for its name used here. We should note that directly using $\bm Q^{\alpha}$ in place of $\tilde{\bm S}^{\alpha}_{\mathrm{neq}}$ is fairly common in the literature \cite{Holzapfel1996a,Gueltekin2016,Simo2006,Holzapfel2000}, and the above analysis shows that this choice implicitly implies a special form of the configurational free energy. We note that, to ensure $\tilde{\bm S}^{\alpha}_{\mathrm{neq}} = \bm Q^{\alpha}$, one needs
\begin{align*}
\frac{\partial \tilde{\bm S}^{\alpha}_{\mathrm{iso}}}{\partial \tilde{\bm C}} = 2 \frac{\partial G^{\alpha}(\tilde{\bm C},\Theta)}{\partial \tilde{\bm C} \otimes \tilde{\bm C}} = \mu^{\alpha}(\Theta) \mathbb I,
\end{align*}
which implies a quadratic form of $G^{\alpha}$ in terms of $\tilde{\bm C}$. Therefore, enforcing $\tilde{\bm S}^{\alpha}_{\mathrm{neq}} = \bm Q^{\alpha}$ implicitly requires $G^{\alpha}$ to be quadratic in terms of $\tilde{\bm C}$. This also suggests a linear relationship between $\tilde{\bm S}^{\alpha}_{\mathrm{iso}}$ and $\tilde{\bm C}$. For general nonlinear models, apparently one should distinguish $\bm Q^{\alpha}$ from $\tilde{\bm S}^{\alpha}_{\mathrm{neq}}$. Furthermore, according to \eqref{eq:constraint-F-alpha}, we have
\begin{align*}
F^{\alpha}(\tilde{\bm C}, \Theta) = \frac{1}{4\mu^{\alpha}(\Theta)} \left\lvert \tilde{\bm S}^{\alpha}_{\mathrm{iso}} - \hat{\bm S}^{\alpha}_0 \right\lvert^2  + T^{\alpha}(\Theta) = \frac{1}{4\mu^{\alpha}(\Theta)} \left\lvert \mu^{\alpha}(\Theta) \left( \tilde{\bm C} - \bm I \right) - \hat{\bm S}^{\alpha}_0 \right\lvert^2  + T^{\alpha}(\Theta).
\end{align*}
The configurational free energy \eqref{eq:configuration-free-energy} can be written as
\begin{align}
\label{eq:HS-model-configurational-energy-additive-split}
\Upsilon^{\alpha}\left(\tilde{\bm C}, \Theta, \bm \Gamma^1, \cdots, \bm \Gamma^m \right) =& H^{\alpha}(\bm \Gamma^{\alpha},\Theta) + \left( \hat{\bm S}^{\alpha}_0 - 2 \frac{\partial G^{\alpha}(\tilde{\bm C}, \Theta)}{\partial \tilde{\bm C}} \right) : \frac{\bm \Gamma^{\alpha} - \bm I}{2} + F^{\alpha}( \tilde{\bm C}, \Theta )  \nonumber \displaybreak[2] \\
=& \mu^{\alpha}(\Theta) \left\lvert \frac{\bm \Gamma^{\alpha} - \bm I}{2} \right\rvert^2 + \left( \hat{\bm S}^{\alpha}_0 - 2 \mu^{\alpha}(\Theta) \frac{\tilde{\bm C} - \bm I}{2} \right) : \frac{\bm \Gamma^{\alpha} - \bm I}{2}  \nonumber \displaybreak[2] \\
& + \frac{1}{4\mu^{\alpha}(\Theta)} \left\lvert \mu^{\alpha}(\Theta) \left( \tilde{\bm C} - \bm I \right) - \hat{\bm S}^{\alpha}_0 \right\lvert^2 + T^{\alpha}(\Theta) \nonumber \displaybreak[2] \\
=& \mu^{\alpha}(\Theta) \left\lvert \frac{\tilde{\bm C} - \bm \Gamma^{\alpha}}{2} \right\rvert^2 - \hat{\bm S}^{\alpha}_0 : \frac{ \tilde{\bm C} - \bm \Gamma^{\alpha}}{2} + \frac{1}{4\mu^{\alpha}(\Theta)} \left\lvert \hat{\bm S}^{\alpha}_0 \right\rvert^2 + T^{\alpha}(\Theta) \nonumber \displaybreak[2] \\
=& \frac{1}{4\mu^{\alpha}(\Theta)} \left\lvert \mu^{\alpha}(\Theta)\left( \tilde{\bm C} - \Gamma^{\alpha} \right) - \hat{\bm S}^{\alpha}_0 \right\rvert^2 + T^{\alpha}(\Theta).
\end{align}
One can see that for the model considered above, the configurational free energy is a function of $(\tilde{\bm C} - \bm \Gamma^{\alpha})/2$, which can be interpreted as the elastic strain measure in the standard solid model \cite[p.~286]{Holzapfel2000}, under an additive split of the strain. Although is less frequently used in inelasticity when compared with the multiplicative decomposition \cite{Reese1998,Lee1969}, the additive split has been shown to be supported by thermodynamic principles \cite{Green1965,Green1971} and has been utilized to model elastoplasticity \cite{Meng2002}. We may regard the Holzapfel-Simo-Saint Venant-Kirchhoff model as a viscoelasticity theory based on a similar additive strain split concept (see also \cite{Eidel2011}), in contrast to the multiplicative viscoelasticity theory \cite{Reese1998,Latorre2015,Latorre2016,Bonet2001,Liu2019c}.
\begin{remark}
It is convenient to make a rheological interpretation of this model. It can be seen that in this model $\tilde{\bm S}^{\alpha}_{\mathrm{iso},0} = \bm O$ as $\tilde{\bm C}|_{t=0} = \bm I$. If we choose $\bm Q^{\alpha}_0 = \bm O$, then $\hat{\bm S}^{\alpha}_{0} = \bm O$ according to \eqref{eq:def_hat_S_alpha_0}. We may introduce an ininitesimal strain $\bm \varepsilon$ as an approximation of $\left(\tilde{\bm C} - \bm I \right)/2$,
\begin{align*}
\bm \varepsilon \approx \frac{\tilde{\bm C} - \bm I}{2}.
\end{align*}
Similarily, there is a small-strain approximation of $\left(\bm \Gamma^{\alpha} - \bm I \right)/2$,
\begin{align*}
\bm \varepsilon^{\alpha}_{\mathrm{v}} \approx \frac{\tilde{\bm \Gamma}^{\alpha} - \bm I}{2}.
\end{align*}
If we denote $\bm \varepsilon^{\alpha}_e := \bm \varepsilon - \bm \varepsilon^{\alpha}_{\mathrm{v}}$, it can be shown that under the infinitesimal strain approximation, the configurational free energy \eqref{eq:HS-model-configurational-energy-additive-split} can be approximated by
\begin{align*}
\Upsilon^{\alpha} \approx \mu^{\alpha}(\Theta) \left\lvert \bm \varepsilon^{\alpha}_e \right\rvert^2 + T^{\alpha}(\Theta).
\end{align*}
This can be interpreted as the elastic energy of the springs within the Maxwell element of the standard Zener model.
\end{remark}

\subsubsection{A modified model based on the identical-polymer-chain assumption}
\label{sec:MIPC-model}
From the analysis in Section \ref{subsubsec:condition-vanish-non-equilibrium-stress}, we can see that the identical polymer chain model stated in Section \ref{sec:IPC-model} will engender counter-intuitive behavior of the non-equilibrium stress. In this section, we revisit the identical polymer chain model, by adopting the form \eqref{eq:constraint-F-alpha} for $F^{\alpha}$. The remaining component for the configurational free energy is given by
\begin{align}
\label{eq:G-alpha-beta-G-infty}
G^{\alpha}(\tilde{\bm C},\Theta) = \beta^{\infty}_{\alpha} G^{\infty}_{\mathrm{iso}}(\tilde{\bm C},\Theta),
\end{align}
with $\beta^{\infty}_{\alpha} \in (0, \infty)$ being non-dimensional constants \cite{Simo1987,Govindjee1992}. With this modeling assumption, the material behavior is completely characterized by the form of $G^{\infty}_{\mathrm{iso}}$, and we have
\begin{align*}
\tilde{\bm S}^{\alpha}_{\mathrm{iso}} = 2 \frac{\partial G^{\alpha}(\tilde{\bm C},\Theta)}{\partial \tilde{\bm C}} = \beta^{\infty}_{\alpha} 2 \frac{\partial G^{\infty}_{\mathrm{iso}}(\tilde{\bm C},\Theta)}{\partial \tilde{\bm C}} = \beta^{\infty}_{\alpha} \tilde{\bm S}^{\infty}_{\mathrm{iso}}.
\end{align*}
With this elasticity tensor, the non-equilibrium stress \eqref{eq:tilde-S-alpha-neq-form} can be represented as
\begin{align*}
\tilde{\bm S}^{\alpha}_{\mathrm{neq}} = \frac{1}{\mu^{\alpha}} \frac{\partial \tilde{\bm S}^{\alpha}_{\mathrm{iso}}}{\partial \tilde{\bm C}} : \bm Q^{\alpha} = \frac{J^{\frac43} \beta^{\infty}_{\alpha}}{2\mu^{\alpha}} \tilde{\mathbb C}^{\infty}_{\mathrm{iso}} : \bm Q^{\alpha} = \frac{2\beta^{\infty}_{\alpha}}{\mu^{\alpha}} \frac{\partial^2 G^{\infty}_{\mathrm{iso}}(\tilde{\bm C}, \Theta)}{\partial \tilde{\bm C} \partial \tilde{\bm C}} : \bm Q^{\alpha}.
\end{align*}

\begin{remark}
In the literature, the terminology ``ground-stress" is often used to refer the fictitious stress $\bm S^{\infty}_{\mathrm{iso}}$ \cite{Miehe2000,Miehe2005}; ``over-stress" is used to refer $\bm Q^{\alpha}$ \cite{Reese1998,Gasser2011} or $\bm S^{\alpha}_{\mathrm{neq}}$ \cite{Miehe2005}. The prevailing custom is to use $\bm Q^{\alpha}$ and $\tilde{\bm S}^{\alpha}_{\mathrm{neq}}$ interchangeably \cite{Simo1987,Govindjee1992,Holzapfel2000}. However, the above analysis indicates that, for a general nonlinear form of $G^{\alpha}$ like \eqref{eq:G-alpha-beta-G-infty}, $\tilde{\bm S}^{\alpha}_{\mathrm{neq}} \neq \bm Q^{\alpha}$.
\end{remark}

\begin{remark}
Interestingly, if the functional form of $G^{\infty}_{\mathrm{iso}}$ is given by the Neo-Hookean model, we have
\begin{align*}
\frac{\partial^2 G^{\infty}_{\mathrm{iso}}(\tilde{\bm C}, \Theta)}{\partial \tilde{\bm C} \partial \tilde{\bm C}} = \mathbb O,
\end{align*}
which indicates that $\bm S^{\alpha}_{\mathrm{neq}} = \bm O$. In other words, under the identical-polymer-chain assumption, the Neo-Hookean material cannot have viscous stress in finite linear viscoelasticity.
\end{remark}

We may summarize the thermodynamic potential and the resulting constitutive relations for finite linear thermal-visco-hyperelastic materials discussed in Section \ref{sec:continuum} as follows.

\vspace{3mm}

\begin{myenv}{Constitutive laws for finite linear thermal-visco-elasticity}
Gibbs free energy:
\begin{align*}
G(\tilde{\bm C}, P, \Theta, \bm \Gamma^1, \cdots, \bm \Gamma^m) =& G^{\infty}_{\textup{vol}}(P,\Theta) + G^{\infty}_{\textup{iso}}(\tilde{\bm C},\Theta) + \sum_{\alpha=1}^{m}\Upsilon^{\alpha}\left( \tilde{\bm C}, \Theta, \bm \Gamma^1, \cdots, \bm \Gamma^m \right), \\
\Upsilon^{\alpha}\left(\tilde{\bm C}, \Theta, \bm \Gamma^1, \cdots, \bm \Gamma^m \right) =& H^{\alpha}(\bm \Gamma^{\alpha},\Theta) - 2 \frac{\partial G^{\alpha}(\tilde{\bm C}, \Theta)}{\partial \tilde{\bm C}} : \frac{\bm \Gamma^{\alpha} - \bm I}{2} + \hat{\bm S}^{\alpha}_0 : \frac{\bm \Gamma^{\alpha} - \bm I}{2} + F^{\alpha}( \tilde{\bm C}, \Theta ), \\
H^{\alpha}(\bm \Gamma^{\alpha},\Theta) =& \mu^{\alpha}(\Theta) \left\lvert \frac{\bm \Gamma^{\alpha} - \bm I}{2} \right\rvert^2, \quad F^{\alpha}(\tilde{\bm C}, \Theta) = \frac{1}{4\mu^{\alpha}(\Theta)} \left\lvert \tilde{\bm S}^{\alpha}_{\mathrm{iso}} - \hat{\bm S}^{\alpha}_0 \right\lvert^2  + T^{\alpha}(\Theta).
\end{align*}
Second Piola-Kirchhoff stress:
\begin{align*}
& \bm S = \bm S_{\mathrm{iso}} + \bm S_{\mathrm{vol}}, \quad \bm S_{\mathrm{iso}} = J^{-\frac23} \mathbb P : \tilde{\bm S}, \quad \bm S_{\mathrm{vol}} = -J P \bm C^{-1}, \quad \tilde{\bm S} = \tilde{\bm S}_{\mathrm{iso}}^{\infty} + \sum_{\alpha =1}^{m} \tilde{\bm S}^{\alpha}_{\mathrm{neq}}, \displaybreak[2]  \\
& \tilde{\bm S}_{\mathrm{iso}}^{\infty} = 2 \frac{\partial G^{\infty}_{\mathrm{iso}}}{\partial \tilde{\bm C}}, \quad \tilde{\bm S}^{\alpha}_{\mathrm{neq}} = \frac{1}{\mu^{\alpha}(\Theta)} \frac{\partial \tilde{\bm S}^{\alpha}_{\mathrm{iso}}}{\partial \tilde{\bm C}} : \bm Q^{\alpha}, \quad \tilde{\bm S}^{\alpha}_{\mathrm{iso}} = 2 \frac{\partial G^{\alpha}}{\partial \tilde{\bm C}}, \quad \tilde{\mathbb C}_{\mathrm{iso}}^{\infty} = 4 J^{-\frac43}\frac{\partial^2 G^{\infty}_{\mathrm{iso}}}{\partial \tilde{\bm C} \partial \tilde{\bm C}}.
\end{align*}
Cauchy stress:
\begin{align*}
\bm \sigma = \bm \sigma_{\mathrm{dev}} - p \bm I, \quad \bm \sigma_{\mathrm{dev}} = \frac{1}{J} \bm F \bm S_{\mathrm{iso}} \bm F^{T}.
\end{align*}
Entropy:
\begin{align*}
\mathfrak S = \mathfrak S^{\infty} + \sum_{\alpha=1}^{m} \mathfrak S^{\alpha}, \quad \mathfrak S^{\infty} := -\frac{\partial G^{\infty}_{\mathrm{vol}}}{\partial \Theta} -\frac{\partial G^{\infty}_{\mathrm{iso}}}{\partial \Theta}, \quad \mathfrak S^{\alpha} := -\frac{\partial \Upsilon^{\alpha}}{\partial \Theta}.
\end{align*}
Dissipation:
\begin{align*}
\mathcal D = \frac{\bar{\kappa}}{\Theta} |\nabla_{\bm X} \Theta |^2 + \sum_{\alpha=1}^{m} \frac{\eta^{\alpha}}{2\Theta} \left\lvert \frac{d}{dt}\bm \Gamma^{\alpha}\right\rvert^2 = \frac{\bar{\kappa}}{\Theta} |\nabla_{\bm X} \Theta |^2 + \sum_{\alpha=1}^{m} \frac{1}{2\Theta \eta^{\alpha}} \left\lvert \bm Q^{\alpha}\right\rvert^2 .
\end{align*}
Evolution equations for $\bm Q^{\alpha}$:
\begin{align*}
\begin{dcases}
& \frac{d}{dt} \bm Q^{\alpha} + \frac{1}{\tau^{\alpha}} \bm Q^{\alpha} = \frac{d}{dt} \tilde{\bm S}^{\alpha}_{\mathrm{iso}} - \frac{d\mu^{\alpha}(\Theta)}{dt}\left(\bm \Gamma^{\alpha} - \bm I \right), \\
& \bm Q^{\alpha}|_{t=0} = \bm Q^{\alpha}_0.
\end{dcases}
\end{align*}
Hereditary integral form for $\bm Q^{\alpha}$:
\begin{align*}
\bm Q^{\alpha} =& \exp\left( - t / \tau^{\alpha} \right) \bm Q^{\alpha}_0 + \int^t_{0^+} \exp\left( -(t-s)/\tau^{\alpha} \right) \left( \frac{d}{ds} \tilde{\bm S}^{\alpha}_{\mathrm{iso}} - \left(\bm \Gamma^{\alpha} - \bm I \right) \frac{d\mu^{\alpha}(\Theta)}{ds} \right) ds.
\end{align*}
\end{myenv}

\section{Numerical formulation}
\label{sec:numerical_formulation}
In this section, we develop a suite of discretization methods for the material model developed in Section \ref{sec:continuum} by restricting our discussion to \textit{fully incompressible} materials under the \textit{isothermal} condition. We will prove that the proposed numerical formulation is stable in energy. Henceforth, we use the acronyms IPC, HS, and MIPC models to stand for the identical polymer chain model stated in Section \ref{sec:IPC-model}, the Holzapfel-Simo-Saint Venant-Kirchhoff model stated in Section \ref{sec:HS-model}, and the modified identical polymer chain model stated in Section \ref{sec:MIPC-model}, respectively.

\subsection{Strong-form problem}
Under isothermal conditions, the energy equation is decoupled from the mechanical system, and the motion of an incompressible continuum body is governed by the following system of equations,
\begin{align}
\label{eq:strong-form-current-kinematic}
& \bm 0 = \frac{d\bm u}{dt} - \bm v, && \mbox{ in } \Omega_{\bm x}^t, \\
& 0 = \nabla_{\bm x} \cdot \bm v, && \mbox{ in } \Omega_{\bm x}^t, \\
\label{eq:strong-form-current-momentum}
& \bm 0 = \rho(J) \frac{d\bm v}{dt} - \nabla_{\bm x} \cdot \bm \sigma_{\mathrm{dev}} + \nabla_{\bm x} p - \rho(J) \bm b, && \mbox{ in } \Omega_{\bm x}^t.
\end{align}
The boundary $\Gamma_{\bm x}^t := \partial \Omega^t_{\bm x}$ enjoys a non-overlapping subdivision: $\Gamma_{\bm x}^t = \Gamma_{\bm x}^{g,t} \cup \Gamma_{\bm x}^{h,t}$, wherein $\Gamma_{\bm x}^{g,t}$ and $\Gamma_{\bm x}^{h,t}$ are the Dirichlet and Neumann parts of the boundary, respectively. The boundary conditions can be stated as
\begin{align}
\label{eq:strong-form-current-bc}
& \bm u = \bm g, \mbox{ on } \Gamma_{\bm x}^{g,t}, \qquad
\bm v = \frac{d\bm g}{dt}, \mbox{ on } \Gamma_{\bm x}^{g,t}, \qquad (\bm \sigma_{\mathrm{dev}} - p\bm I) \bm n = \bm h, \mbox{ on } \Gamma_{\bm x}^{h,t}. 
\end{align}
Given the initial data $\bm u_0$, $p_0$, and $\bm v_0$, the initial-boundary value problem is to solve for $\bm u$, $p$, and $\bm v$ that satisfy \eqref{eq:strong-form-current-kinematic}-\eqref{eq:strong-form-current-bc}, and
\begin{align}
\label{eq:initial_condition_v}
\bm u(\bm x, 0) = \bm u_0(\bm x), \qquad p(\bm x, 0) = p_0(\bm x), \qquad \bm v(\bm x, 0) = \bm v_0(\bm x).
\end{align}
The equations \eqref{eq:strong-form-current-kinematic}-\eqref{eq:initial_condition_v} constitute an initial-boundary value problem. The constitutive relations for the material has been given in Section \ref{sec:continuum}. For fully incompressible materials, it can be shown that the volumetric part of the free energy adopts the form $G^{\infty}_{\mathrm{vol}} = P / \rho_0$ \cite{Liu2018}, which leads to $\rho = \rho_0$. In the formulation, we adopt a modified constitutive relation for the density, that is $\rho(J)= \rho_0 / J$ \cite{Liu2019a}. Apparently, at the continuum level, this relation is equivalent to $\rho(J) = \rho_0$, as the divergence-free condition for the velocity guarantees $J=1$. Yet, we note that at the discrete level, the condition $J=1$ in general rarely holds in the pointwise sense \cite{Auricchio2013}. The adoption of the relation $\rho(J)=\rho_0 / J$ facilitates our discussion of the nonlinear numerical stability, as will be shown in the next section.

\subsection{Spline spaces}
Before introducing the semi-discrete formulation, we state the construction of B-splines and NURBS basis functions, which are utilized to build the discrete function spaces. Given the polynomial degree $\mathsf p$ and the dimensionality of the B-spline space $\mathsf n$, the knot vector can be represented by $\Xi := \left\{\xi_1, \xi_2 \cdots, \xi_{\mathsf n + \mathsf p + 1} \right\}$, wherein $0=\xi_1 \leq \xi_2 \leq \cdots \leq \xi_{\mathsf n + \mathsf p + 1}=1$.
The B-spline basis functions of degree $\mathsf p$, denoted as $\mathsf N_{i}^{\mathsf p}(\xi)$, for $i=1,\cdots, \mathsf n$, can then be defined recursively from the knot vector using the Cox-de Boor recursion formula \cite[Chapter~2]{Hughes2005}. The definition starts with the case of $\mathsf p=0$, where the basis functions are defined as piecewise constants,
\begin{align*}
\mathsf N_{i}^0(\xi) = 
\begin{cases}
1 & \mbox{ if } \xi_{i} \leq \xi < \xi_{i+1}, \\
0 & \mbox{ otherwise}.
\end{cases}
\end{align*}
For $\mathsf p\geq 1$, the basis functions are defined recursively as
\begin{align*}
\mathsf N_{i}^{\mathsf p}(\xi)= \frac{\xi - \xi_{i}}{\xi_{i+ \mathsf p} - \xi_{i}} \mathsf N_{i}^{\mathsf p-1}(\xi) + \frac{\xi_{i+\mathsf p+1} - \xi}{\xi_{i+ \mathsf p+1} - \xi_{i+1}} \mathsf N_{i+1}^{\mathsf p-1}(\xi).
\end{align*} Given a set of weights $\{\mathsf w_1, \mathsf w_2, \cdots , \mathsf w_{\mathsf n} \}$, the NURBS basis functions of degree $\mathsf p$ can be defined as
\begin{align*}
\mathsf R_i^{\mathsf p}(\xi) := \frac{\mathsf w_i \mathsf N^{\mathsf p}_i(\xi)}{\mathsf W(\xi)}, \quad \mbox{ and } \quad \mathsf W(\xi) := \sum\limits_{j=1}^{\mathsf n}\mathsf w_j \mathsf N^{\mathsf p}_j(\xi).
\end{align*}
Importantly, the knots can also be represented with two vectors, one of the unique knots $\{\zeta_1, \zeta_2, \cdots, \zeta_{\mathsf m} \}$ and another of the corresponding knot multiplicities $\{r_1, r_2, \cdots, r_{\mathsf m}\}$. As is standard in the literature of computer-aided design, we consider open knot vectors in this work, meaning that $r_1 = r_{\mathsf m} = \mathsf p+1$. We further assume that $r_i \leq \mathsf p$ for $i=2,\cdots, \mathsf m-1$. Across any given knot $\zeta_i$, the B-spline basis functions have $\alpha_i := \mathsf p - r_i$ continuous derivatives. The vector $\bm \alpha := \{ \alpha_1, \alpha_2, \cdots, \alpha_{\mathsf m-1}, \alpha_m \} = \{ -1, \alpha_2, \cdots, \alpha_{\mathsf m-1}, -1 \}$ is referred to as the regularity vector. A value of $-1$ for $\alpha_i$ indicates discontinuity of the basis functions at $\zeta_i$. We introduce the function space $\mathcal R^{\mathsf p}_{\bm \alpha} := \textup{span}\{\mathsf R_{i}^{\mathsf p}\}_{i=1}^n$, where the notation $\mathcal R^{\mathsf p}_{\alpha}$ is used to indicate that $\alpha_i = \alpha$ for $i=2,\cdots, \mathsf m-1$, suggesting continuity $C^{\alpha}$ for the spline function spaces. The construction of multivariate B-spline and NURBS basis functions follows a tensor-product manner. For $l=1, 2, 3$, given $\mathsf p_{l}$, $\mathsf n_{l}$, and the knot vectors $\Xi_{l} = \{\xi_{1,l}, \xi_{2,l}, \cdots , \xi_{\mathsf n_{l}+\mathsf p_{l}+1, l}\}$, the univariate B-spline basis functions $\mathsf N^{\mathsf p_l}_{i_l,l}$ are well-defined. Consequently, the multivariate B-spline basis functions can be defined by exploiting the tensor product structure,
\begin{align*}
\mathsf N^{\mathsf p_1, \mathsf p_2 \mathsf p_3}_{i_1, i_2, i_3}(\xi_1,\xi_2 ,\xi_3) := \mathsf N^{\mathsf p_1}_{i_1, 1}(\xi_1) \otimes \mathsf N^{\mathsf p_2}_{i_2, 2}(\xi_2) \otimes \mathsf N^{\mathsf p_{3}}_{i_{3}, 3}(\xi_3), \mbox{ for } i_l = 1,2, \cdots, \mathsf n_l \mbox{ and } l = 1, 2, 3.
\end{align*}
Given a set of weights $\{\mathsf w_{i_{1}, i_{2}, i_{3}}\}$, the NURBS basis functions are defined by
\begin{align*}
\mathsf R^{\mathsf p_1, \mathsf p_2, \mathsf p_3}_{i_1, i_2, i_3}(\xi_1,\xi_2 ,\xi_3) := \frac{\mathsf w_{i_1, i_2, i_3} \mathsf N^{\mathsf p_1, \mathsf p_2 \mathsf p_3}_{i_1, i_2, i_3}(\xi_1,\xi_2 ,\xi_3)}{\mathsf W(\xi_1,\xi_2 ,\xi_3)}, \quad \mathsf W(\xi_1, \xi_2, \xi_3) := \sum_{l=1}^3 \sum_{i_l=1}^{\mathsf n_l} \mathsf w_{i_1, i_2, i_3} \mathsf N^{\mathsf p_1, \mathsf p_2 \mathsf p_3}_{i_1, i_2, i_3}(\xi_1,\xi_2 ,\xi_3).
\end{align*}
Correspondingly, the NURBS function space is defined as
\begin{align*}
\mathcal R^{\mathsf p_1, \mathsf p_2, \mathsf p_3}_{\bm \alpha_1, \bm \alpha_2, \bm \alpha_3} :=\textup{span}\{ \mathsf R^{\mathsf p_1, \mathsf p_2, \mathsf p_3}_{i_1, i_2, i_3} \}_{i_1=1, i_2=1, i_3 =1}^{\mathsf n_1, \mathsf n_2, \mathsf n_3}.
\end{align*}

\begin{figure}
\begin{center}
	\begin{tabular}{c}
		\includegraphics[angle=0, trim=180 220 130 200, clip=true, scale = 0.38]{./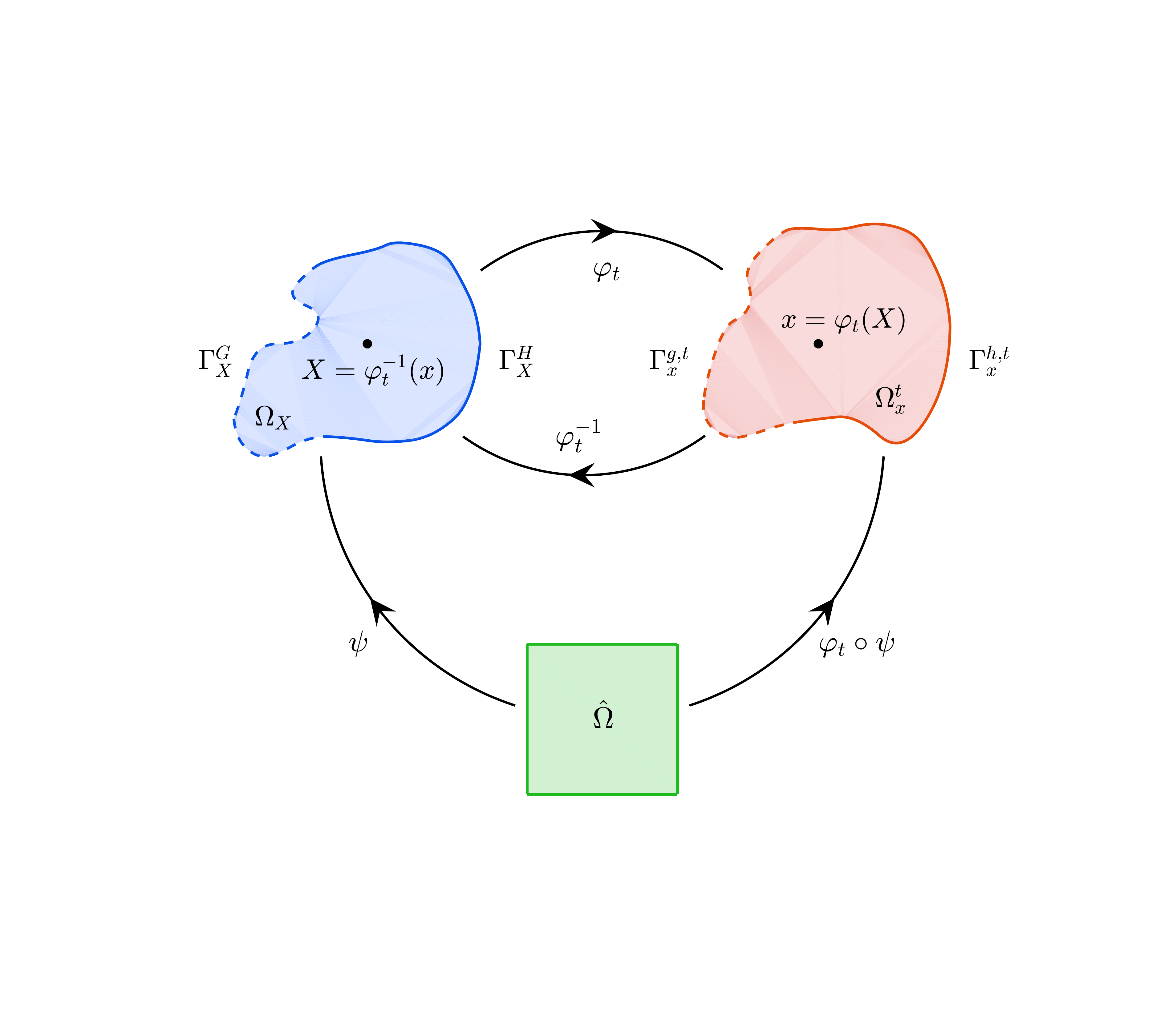} 
	\end{tabular}
	\caption{Illustration of the referential and current configurations with the boundary subdivisions. The spline functions are defined on $\hat{\Omega}$ and are pushed forward to the two configurations via the mapping $\psi$ and $\varphi_t \circ \psi$.} 
	\label{fig:continuum-problem-mapping}
\end{center}
\end{figure}

\subsection{Semi-discrete formulation}
\label{subsec:semi_discrete_formulation}
We first define two discrete function spaces on $\hat{\Omega}:=(0,1)^3$,
\begin{align*}
\hat{\mathcal S}_h :=
\mathcal R^{\mathsf p+\mathsf a, \mathsf p+\mathsf a, \mathsf p+\mathsf a}_{\bm \alpha_1+\mathsf b, \bm \alpha_2+\mathsf b, \bm \alpha_3+\mathsf b} \times \mathcal R^{\mathsf p+\mathsf a, \mathsf p+\mathsf a, \mathsf p+\mathsf a}_{\bm \alpha_1+\mathsf b, \bm \alpha_2+\mathsf b, \bm \alpha_3+\mathsf b} \times \mathcal R^{\mathsf p+\mathsf a, \mathsf p+\mathsf a, \mathsf p+\mathsf a}_{\bm \alpha_1+\mathsf b, \bm \alpha_2+\mathsf b, \bm \alpha_3+\mathsf b}, \quad
\hat{\mathcal P}_h := \mathcal R^{\mathsf p, \mathsf p, \mathsf p}_{\bm \alpha_1, \bm \alpha_2, \bm \alpha_3},
\end{align*}
with integer parameters $1 \leq \mathsf a $ and $0 \leq \mathsf b \leq \mathsf a$. Assuming the referential configuration of the body can be parametrized by a geometrical mapping $\bm \psi : \hat{\Omega} \rightarrow \Omega_{\bm X}$. The boundary of $\Omega_{\bm X}$ can be partitioned into two non-overlapping subdivisions as $\partial \Omega_{\bm x} := \Gamma^{G}_{\bm X} \cup \Gamma^{H}_{\bm X}$, and the two subdivisions satisfy $\Gamma^{G}_{\bm X} = \bm \varphi_t^{-1}\left(\Gamma^{g,t}_{\bm x} \right)$ and $\Gamma^{H}_{\bm X} = \bm \varphi_t^{-1}\left(\Gamma^{h,t}_{\bm x} \right)$. The relation between the two configurations and the boundary subdivisions are illustrated in Figure \ref{fig:continuum-problem-mapping}. The discrete function spaces on $\Omega_{\bm X}$ can be defined through the pull-back operation,
\begin{align*}
& \mathcal S_h :=  \{ \bm w : \bm w \circ \bm \psi \in \hat{\mathcal S}_h \}, \quad \mathcal P_h := \{ q : q \circ \bm \psi \in \hat{\mathcal P}_h \}.
\end{align*}
With the spaces $\mathcal S_h$ and $\mathcal P_h$ defined, we may specify the trial solution spaces on the referential configuration as
\begin{align*}
\mathcal S_{\bm U_h} &= \Big\lbrace \bm U_h : \bm U_h(\cdot, t) \in \mathcal S_h, t \in [0,T], \quad \bm U_h(\cdot,t) = \bm G \mbox{ on } \Gamma_{\bm X}^{G}  \Big\rbrace , \displaybreak[2] \\
\mathcal S_{P_h} &= \Big\lbrace P_h : P_h(\cdot, t) \in \mathcal P_h, t \in [0,T] \Big\rbrace , \displaybreak[2] \\
 \mathcal S_{\bm V_h} &= \left\lbrace \bm V_h : \bm V_h(\cdot, t) \in \mathcal S_h, t \in [0,T], \quad \bm V_h(\cdot,t) = \frac{d\bm G}{dt} \mbox{ on } \Gamma_{\bm X}^{G}  \right\rbrace ,
\end{align*}
and the corresponding test function spaces are defined as
\begin{align*}
\mathcal V_{P_h} &= \Big\lbrace Q_h : Q_h(\cdot, t) \in \mathcal P_h, t \in [0,T] \Big\rbrace, \qquad
\mathcal V_{\bm V_h} = \left\lbrace \bm W_h : \bm W_h(\cdot, t) \in \mathcal S_h, t \in [0,T], \quad \bm W_h(\cdot,t) = \bm 0 \mbox{ on } \Gamma_{\bm X}^{G}  \right\rbrace.
\end{align*}
Given the displacement $\bm U_h$, the placement field is given by $\bm \varphi_h = \bm U_h + \bm X$. Consequently, we may also state the trial solution space defined on the current configuration as
\begin{align*}
\mathcal S_{\bm u_h} &= \Big\lbrace \bm u_h : \bm u_h \circ \bm \varphi_h \in \mathcal S_h, t \in [0,T], \bm u_h(\cdot,t) = \bm g \mbox{ on } \Gamma_{\bm x}^{g,t}  \Big\rbrace , \displaybreak[2] \\
\mathcal S_{p_h} &= \Big\lbrace p_h : p_h \circ \bm \varphi_h \in \mathcal P_h, t \in [0,T] \Big\rbrace , \displaybreak[2] \\
 \mathcal S_{\bm v_h} &= \left\lbrace \bm v_h : \bm v_h \circ \bm \varphi_h \in \mathcal S_h, t \in [0,T], \bm v_h(\cdot,t) = \frac{d\bm g}{dt} \mbox{ on } \Gamma_{\bm x}^{g,t}  \right\rbrace ,
\end{align*}
and the test function spaces are defined as
\begin{align*}
\mathcal V_{p_h} &= \Big\lbrace q_h : q_h \circ \bm \varphi_h \in \mathcal P_h, t \in [0,T]  \Big\rbrace , \quad  \mathcal V_{\bm v_h} = \Big\lbrace \bm w_h : \bm w_h \circ \bm \varphi_h \in \mathcal S_h, t \in [0,T], \bm w_h(\cdot,t) = \bm 0 \mbox{ on } \Gamma_{\bm x}^{g,t} \Big\rbrace .
\end{align*}
With the discrete function defined above, the semi-discrete formulation on the current configuration can be stated as follows. Find $\bm y_h(t) := \left\lbrace  \bm u_h(t), p_h(t), \bm v_h(t)\right\rbrace^T \in \mathcal S_{\bm u_h} \times \mathcal S_{p_h} \times \mathcal S_{\bm v_h}$ such that for $t\in [0, T]$,
\begin{align}
\label{eq:kinematics_current}
& \bm 0 = \mathbf B^k\left( \dot{\bm y}_h, \bm y_h  \right) := \frac{d\bm u_h}{dt} - \bm v_h, \displaybreak[2]\\
\label{eq:mass_current}
& 0 = \mathbf B^p\left( q_h; \dot{\bm y}_h, \bm y_h  \right) := \int_{\Omega_{\bm x}^t} q_h \nabla_{\bm x} \cdot \bm v_h d\Omega_{\bm x}, \displaybreak[2] \\
\label{eq:momentum_current}
& 0 = \mathbf B^m\left( \bm w_{h}; \dot{\bm y}_h, \bm y_h  \right) := \int_{\Omega_{\bm x}^t} \bm w_{h} \cdot \rho(J_h) \frac{d\bm v_h}{dt} + \nabla_{\bm x} \bm w_{h} : \bm \sigma_{\mathrm{dev}} - \nabla_{\bm x} \cdot \bm w_{h} p_h - \bm w_{h} \cdot \rho(J_h)  \bm b d\Omega_{\bm x}  \nonumber \\
& \hspace{3.7cm}  - \int_{\Gamma_{\bm x}^{h,t}} \bm w_{h} \cdot \bm h d\Gamma_{\bm x}, 
\end{align}
for $\forall \left\lbrace q_h, \bm w_{h}\right\rbrace \in \mathcal V_{p_h} \times \mathcal V_{\bm v_h}$, with $\bm y_h(0) := \left\lbrace \bm u_{h0}, p_{h0}, \bm v_{h0} \right\rbrace^T$. Here $\bm u_{h0}$, $p_{h0}$, and $\bm v_{h0}$ are the $\mathcal L^2$ projections of the initial data onto the finite dimensional trial solution spaces. Alternatively, the semi-discrete formulation can be pulled back to the referential configuration, which can be stated as follows. Find $\bm Y_h(t) := \left\lbrace \bm U_h(t), P_h(t), \bm V_h(t)\right\rbrace^T \in \mathcal S_{\bm U_h} \times \mathcal S_{P_h} \times \mathcal S_{\bm V_h}$ such that for $t\in [0, T]$,
\begin{align}
\label{eq:mix_solids_kinematics_ref}
& \bm 0 = \mathbf B^k\left( \dot{\bm Y}_h, \bm Y_h \right) :=  \frac{d\bm U_h}{dt} - \bm V_h, \displaybreak[2]\\
\label{eq:mix_solids_mass_ref}
& 0 = \mathbf B^p\left( Q_h; \dot{\bm Y}_h, \bm Y_h \right) := \int_{\Omega_{\bm X}} Q_h J_h \nabla_{\bm X} \bm V_h : \bm F^{-T}_h d\Omega_{\bm X}, \displaybreak[2] \\
\label{eq:mix_solids_momentum_ref}
& 0 = \mathbf B^m\left( \bm W_h; \dot{\bm Y}_h, \bm Y_h \right) := \int_{\Omega_{\bm X}} \Big( \bm W_h \cdot \rho_0 \frac{d\bm V_h}{dt} + \nabla_{\bm X} \bm W_h : \left( J_h \bm \sigma_{\mathrm{dev}} \bm F^{-T}_h\right) - J_h P_h \nabla_{\bm X} \bm W_h : \bm F^{-T}_h  \displaybreak[2] \nonumber \\
& \hspace{3.9cm} - \bm W_h \cdot \rho_0  \bm B \Big)  d\Omega_{\bm X} - \int_{\Gamma_{\bm X}^{H}} \bm W_h \cdot \bm H d\Gamma_{\bm X}, 
\end{align}
for $\forall \left\lbrace  Q_h, \bm W_h \right\rbrace \in \mathcal V_{P_h} \times \mathcal V_{\bm V_h}$, with $\bm Y_h(0) = \left\lbrace \bm U_{h0}, P_{h0}, \bm V_{h0} \right\rbrace^T$. Here the initial data are related by $\bm U_{h0} = \bm u_{h0} \circ \bm \varphi_t$, $P_{h0} = p_{h0} \circ \bm \varphi_t$, and $\bm V_{h0} = \bm v_{h0} \circ \bm \varphi_t$. It can be shown that the above semi-discrete formulation inherits the dissipation property from the continuum model from the following theorem.

\begin{theorem}[A priori energy stability]
\label{the:energy-stability}
Assuming the Dirichlet boundary data $\bm G$ is time independent, the solutions of the semi-discrete problem \eqref{eq:mix_solids_kinematics_ref}-\eqref{eq:mix_solids_momentum_ref} satisfy
\begin{align}
\label{eq:weak-form-energy-stability}
\frac{d}{dt}\int_{\Omega_{\bm X}} \frac12 \rho_0 \|\bm V_h\|^2 +  G^{\infty}_{\textup{iso}} ( \tilde{\bm C}_h ) + \displaystyle \sum_{\alpha=1}^{m}\Upsilon^{\alpha}(\tilde{\bm C}_h, \bm \Gamma^{\alpha}_h)  d\Omega_{\bm X} =& \int_{\Omega_{\bm X}} \rho_0 \bm V_h \cdot \bm B d\Omega_{\bm X} + \int_{\Gamma_{\bm X}} \bm V_h \cdot \bm H d\Gamma_{\bm X} \displaybreak[2] \nonumber \\
& - \sum_{\alpha=1}^{m} \int_{\Omega_{\bm X}}\frac14 \left(\frac{d}{dt}\bm \Gamma^{\alpha}_h \right) : \mathbb V^{\alpha} : \left(\frac{d}{dt}\bm \Gamma^{\alpha}_h \right) d\Omega_{\bm X}.
\end{align}
\end{theorem}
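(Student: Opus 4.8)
\section*{Proof proposal}

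The plan is to derive the energy identity by testing the semi-discrete momentum equation with the velocity field itself and the semi-discrete mass equation with the pressure, which telescopes the mechanical power balance term by term. Since the Dirichlet data $\bm G$ is time-independent, one has $d\bm G/dt = \bm 0$ on $\Gamma_{\bm X}^{G}$, so $\bm V_h \in \mathcal V_{\bm V_h}$ is an admissible test function in \eqref{eq:mix_solids_momentum_ref}; likewise $P_h \in \mathcal V_{P_h}$ is admissible in \eqref{eq:mix_solids_mass_ref}. First I would set $\bm W_h = \bm V_h$ in \eqref{eq:mix_solids_momentum_ref} and $Q_h = P_h$ in \eqref{eq:mix_solids_mass_ref}. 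The latter shows that the pressure contribution $\int_{\Omega_{\bm X}} J_h P_h \nabla_{\bm X}\bm V_h : \bm F_h^{-T} d\Omega_{\bm X}$ vanishes identically, so the incompressibility constraint removes the volumetric work term. The inertial term becomes $\int_{\Omega_{\bm X}} \bm V_h \cdot \rho_0 (d\bm V_h/dt)\, d\Omega_{\bm X} = (d/dt)\int_{\Omega_{\bm X}} \tfrac12 \rho_0 \|\bm V_h\|^2 d\Omega_{\bm X}$ since $\rho_0$ is time-independent, and the body-force and traction terms already match those on the right-hand side of \eqref{eq:weak-form-energy-stability}.

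The crux is rewriting the stress power $\int_{\Omega_{\bm X}} \nabla_{\bm X}\bm V_h : (J_h \bm\sigma_{\mathrm{dev}} \bm F_h^{-T})\, d\Omega_{\bm X}$ as the rate of stored energy plus dissipation. Using the pull-back identity $J_h \bm\sigma_{\mathrm{dev}}\bm F_h^{-T} = \bm F_h \bm S_{\mathrm{iso}}$ together with the symmetry $\bm S_{\mathrm{iso}} = \bm S_{\mathrm{iso}}^{T}$ and, crucially, the kinematic equation \eqref{eq:mix_solids_kinematics_ref} which supplies $d\bm F_h/dt = \nabla_{\bm X}\bm V_h$, one gets $\nabla_{\bm X}\bm V_h : (\bm F_h \bm S_{\mathrm{iso}}) = \tfrac12 \bm S_{\mathrm{iso}} : d\bm C_h/dt$. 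Invoking $\bm S_{\mathrm{iso}} = J_h^{-2/3}\mathbb P : \tilde{\bm S}$ and the differentiation relation $\partial \tilde{\bm C}/\partial \bm C = J^{-2/3}\mathbb P^{T}$, this reduces to $\tfrac12 \tilde{\bm S} : d\tilde{\bm C}_h/dt$. Finally, applying the chain rule to $W := G^{\infty}_{\mathrm{iso}}(\tilde{\bm C}_h) + \sum_{\alpha} \Upsilon^{\alpha}(\tilde{\bm C}_h, \bm\Gamma^1_h,\dots,\bm\Gamma^m_h)$, using $\tilde{\bm S} = 2\,\partial W/\partial\tilde{\bm C}$ (each $\Upsilon^\alpha$ depending on $\bm\Gamma^\alpha$ only) together with $\bm Q^{\alpha} = -2\,\partial\Upsilon^{\alpha}/\partial\bm\Gamma^{\alpha}$, yields $\tfrac12 \tilde{\bm S} : d\tilde{\bm C}_h/dt = dW/dt + \sum_{\alpha} \bm Q^{\alpha} : \tfrac12\, d\bm\Gamma^{\alpha}_h/dt$. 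Substituting the constitutive relation \eqref{eq:constitutive_Q}, $\bm Q^{\alpha} = \mathbb V^{\alpha} : \tfrac12\, d\bm\Gamma^{\alpha}_h/dt$, which holds pointwise since the internal variables evolve by the constitutive law, converts the last sum into $\sum_{\alpha} \tfrac14 (d\bm\Gamma^{\alpha}_h/dt) : \mathbb V^{\alpha} : (d\bm\Gamma^{\alpha}_h/dt)$. Integrating over $\Omega_{\bm X}$ and collecting all terms produces exactly \eqref{eq:weak-form-energy-stability}.

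The main obstacle is the chain of tensor manipulations in the second step: one must carefully exploit the symmetry of $\bm S_{\mathrm{iso}}$, the identity $d\bm F_h/dt = \nabla_{\bm X}\bm V_h$ furnished by the kinematic equation, the major symmetry and projection properties of $\mathbb P$, and the compatibility between the fictitious second Piola--Kirchhoff stress and the partial derivative of the combined isochoric-plus-configurational free energy established in Section \ref{sec:continuum}. This is precisely where the thermodynamic consistency of the continuum model---namely that every stress entering the weak form descends from $G$ by differentiation---is what makes the telescoping of the stress power into $dW/dt$ plus a manifestly non-negative dissipation integral go through. Note that no inequalities are needed for the stated equality itself; genuine energy decay when the external loading vanishes then follows a posteriori from the positive definiteness of each $\mathbb V^{\alpha}$.

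\noindent\ignorespaces
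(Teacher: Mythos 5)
Your proposal is correct and follows essentially the same route as the paper's proof: test with $Q_h = P_h$ and $\bm W_h = \bm V_h$ so the pressure terms drop out, identify the deviatoric stress power with $\tfrac12\tilde{\bm S}:d\tilde{\bm C}_h/dt$ via the kinematic equation, expand this by the chain rule into $dW/dt + \sum_\alpha \bm Q^\alpha:\tfrac12\,d\bm\Gamma^\alpha_h/dt$, and convert the last sum into the dissipation integral using $\bm Q^\alpha = \mathbb V^\alpha:\tfrac12\,d\bm\Gamma^\alpha_h/dt$. The only cosmetic difference is that the paper writes the stress power as $\tfrac{d\bm F_h}{dt}:\partial(\cdot)/\partial\bm F_h$ rather than passing explicitly through $\bm C_h$ and $\tilde{\bm C}_h$, which is the same computation.
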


\begin{proof}
Since the Dirichlet boundary data $\bm G$ is independent of time, one is allowed to choose $Q_h = P_h$ and $\bm W_h = \bm V_h$, which leads to
\begin{align*}
0 =& \mathbf B^p\left( P_h; \dot{\bm Y}_h, \bm Y_h \right) + \mathbf B^m\left( \bm V_h; \dot{\bm Y}_h, \bm Y_h \right) \nonumber \displaybreak[2] \\
=& \int_{\Omega_{\bm X}} P_h J_h \nabla_{\bm X} \bm V_h : \bm F^{-T}_h d\Omega_{\bm X} + \int_{\Omega_{\bm X}} \bm V_h \cdot \rho_0 \frac{d\bm V_h}{dt} + \nabla_{\bm X} \bm V_h : \left( J_h \bm \sigma_{\textup{dev}} \bm F_h^{-T} \right) - J_h P_h\nabla_{\bm X} \bm V_h : \bm F^{-T}_h  \nonumber \displaybreak[2] \\
&  - \bm V_h \cdot \rho_0  \bm B d\Omega_{\bm x} - \int_{\Gamma_{\bm X}^{H}} \bm V_h \cdot \bm H d\Gamma_{\bm x} \displaybreak[2] \\
=& \frac{d}{dt}\int_{\Omega_{\bm X}} \frac12 \rho_0 \|\bm V_h\|^2 d\Omega_{\bm X} + \int_{\Omega_{\bm X}} \frac{d}{dt}\bm F_h : \frac{\partial \left( G^{\infty}_{\textup{iso}} ( \tilde{\bm C}_h ) + \displaystyle \sum_{\alpha=1}^{m}\Upsilon^{\alpha}(\tilde{\bm C}_h, \bm \Gamma^{\alpha}_h) \right)}{\partial \bm F_h}- \bm V_h \cdot \rho_0  \bm B d\Omega_{\bm X} \nonumber \displaybreak[2]\\
& - \int_{\Gamma_{\bm X}} \bm V_h \cdot \bm H d\Gamma_{\bm X} \displaybreak[2] \\
=&  \frac{d}{dt}\int_{\Omega_{\bm X}} \frac12 \rho_0 \|\bm V_h\|^2 d\Omega_{\bm X} + \frac{d}{dt} \int_{\Omega_{\bm X}} \left( G^{\infty}_{\textup{iso}} ( \tilde{\bm C}_h ) + \displaystyle \sum_{\alpha=1}^{m}\Upsilon^{\alpha}(\tilde{\bm C}_h, \bm \Gamma^{\alpha}_h) \right) d\Omega_{\bm X} + \int_{\Omega_{\bm X}} \sum_{\alpha=1}^{m} \bm Q^{\alpha}_h : \frac12 \frac{d}{dt}\bm \Gamma^{\alpha}_h \displaybreak[2] \\
& - \bm V_h \cdot \rho_0  \bm B d\Omega_{\bm X} - \int_{\Gamma_{\bm X}} \bm V_h \cdot \bm H d\Gamma_{\bm X} \displaybreak[2] \\
=&  \frac{d}{dt}\int_{\Omega_{\bm X}} \frac12 \rho_0 \|\bm V_h\|^2 +  G^{\infty}_{\textup{iso}} ( \tilde{\bm C}_h ) + \displaystyle \sum_{\alpha=1}^{m}\Upsilon^{\alpha}(\tilde{\bm C}_h, \bm \Gamma^{\alpha}_h)  d\Omega_{\bm X} + \int_{\Omega_{\bm X}}  \sum_{\alpha=1}^{m} \frac14 \left(\frac{d}{dt}\bm \Gamma^{\alpha}_h \right) : \mathbb V^{\alpha} : \left(\frac{d}{dt}\bm \Gamma^{\alpha}_h \right) \displaybreak[2] \\
& - \bm V_h \cdot \rho_0  \bm B d\Omega_{\bm X} - \int_{\Gamma_{\bm X}} \bm V_h \cdot \bm H d\Gamma_{\bm X}.
\end{align*}
Rearranging terms in the above equality results in
\begin{align}
\label{eq:dissipation_alternative_form}
\frac{d}{dt}\int_{\Omega_{\bm X}} \frac12 \rho_0 \|\bm V_h \|^2 +  G^{\infty}_{\textup{iso}} ( \tilde{\bm C}_h ) + \displaystyle \sum_{\alpha=1}^{m}\Upsilon^{\alpha}(\tilde{\bm C}_h, \bm \Gamma^{\alpha}_h )  d\Omega_{\bm X} =& \int_{\Omega_{\bm X}} \rho_0 \bm V_h \cdot \bm B d\Omega_{\bm X} + \int_{\Gamma_{\bm X}} \bm V_h \cdot \bm H d\Gamma_{\bm X} \displaybreak[2] \\
& - \sum_{\alpha=1}^{m} \int_{\Omega_{\bm X}}\frac14 \left(\frac{d}{dt}\bm \Gamma^{\alpha}_h \right) : \mathbb V^{\alpha} : \left(\frac{d}{dt}\bm \Gamma^{\alpha}_h \right) d\Omega_{\bm X},
\end{align}
which completes the proof.
\end{proof}
This numerical stability property guarantees that the proposed modeling and computational framework preserve critical structures of the physical system. It is worth emphasizing that the energy stable scheme is constructed based on the thermodynamically consistent continuum model derived in Section \ref{sec:continuum}. To the best of our knowledge, we are unaware of any other numerical scheme with a priori energy stability proved for incompressible viscoelasticity. A straightforward consequence of this stability property is that the energy will be monotonically decreasing for unforced mechanical systems (i.e. $\bm B= \bm 0$ and $\bm H = \bm 0$).

For compressible materials, one may show that a pressure-squared term enters into the definition of the energy, providing boundedness of the pressure field. This justifies the use of equal-order interpolation for compressible materials. For fully incompressible materials, the pressure force has no contribution to the energy, and its stability comes from the inf-sup condition \cite[Section~3.2]{Liu2019a}. The inf-sup stability of the discrete spaces given in Section \ref{subsec:semi_discrete_formulation} has been numerically examined in \cite[Section~4.1]{Liu2019a}, where it was found that $\mathsf b + 1 \leq \mathsf a$ guarantees the element pair to be inf-sup stable. In this work, we use $\mathsf a = 1$ and $\mathsf b = 0$ for the discrete function spaces.

In addition to the energy stability, one may also conveniently show the momentum conservation of the semidiscrete formulatoin, which is stated in the following theorem.
\begin{theorem}[Semidiscrete momentum conservation]
\label{the:momentum-conservation}
If $\Gamma^t_{\bm x} = \Gamma^{h,t}_{\bm x}$, the following conservation properties hold for the semidiscrete formulation \eqref{eq:kinematics_current}-\eqref{eq:momentum_current},
\begin{align*}
\frac{d}{dt}\int_{\Omega_{\bm X}} \rho_0 \bm V_h d\Omega_{\bm X} =& \int_{\Omega_{\bm X}} \rho_0 \bm B d\Omega_{\bm X} + \int_{\Gamma_{\bm X}} \bm H d\Gamma_{\bm X}, \\
\frac{d}{dt}\int_{\Omega_{\bm X}} \rho_0 \bm \varphi_h \times \bm V_h d\Omega_{\bm X} =& \int_{\Omega_{\bm X}} \rho_0 \bm \varphi_h \times \bm B d\Omega_{\bm X} + \int_{\Gamma_{\bm X}} \bm \varphi_h \times \bm H d\Gamma_{\bm X}.
\end{align*}
\end{theorem}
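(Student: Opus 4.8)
The plan is to derive both identities by testing the discrete momentum balance with rigid‑motion test functions, which are admissible precisely because the hypothesis $\Gamma^t_{\bm x}=\Gamma^{h,t}_{\bm x}$ forces $\Gamma^G_{\bm X}=\emptyset$. It is most convenient to argue with the equivalent referential form \eqref{eq:mix_solids_kinematics_ref}--\eqref{eq:mix_solids_momentum_ref}. Two admissibility facts are needed. First, since the NURBS basis is a partition of unity, every constant vector $\bm c$ lies in $\mathcal S_h$. Second, since the geometry map is represented in the same spline space underlying $\mathcal S_h$ (the isogeometric paradigm), the referential identity field $\bm X$ lies in $\mathcal S_h$; hence $\bm\varphi_h=\bm U_h+\bm X\in\mathcal S_h$ and, acting componentwise, $\bm c\times\bm\varphi_h\in\mathcal S_h$ for every constant $\bm c$. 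Because $\Gamma^G_{\bm X}=\emptyset$ there is no Dirichlet constraint to satisfy, so $\bm c$ and $\bm c\times\bm\varphi_h$ both belong to $\mathcal V_{\bm V_h}$.

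For linear momentum I would set $\bm W_h=\bm c$ in \eqref{eq:mix_solids_momentum_ref}; the gradient of a constant vanishes, annihilating the two stress terms, and what remains reads $\bm c\cdot\big(\int_{\Omega_{\bm X}}\rho_0\,\tfrac{d\bm V_h}{dt}\,d\Omega_{\bm X}-\int_{\Omega_{\bm X}}\rho_0\bm B\,d\Omega_{\bm X}-\int_{\Gamma_{\bm X}}\bm H\,d\Gamma_{\bm X}\big)=0$. Since $\bm c$ is arbitrary, and $\Omega_{\bm X}$ and $\rho_0$ are time‑independent so that $\tfrac{d}{dt}\int_{\Omega_{\bm X}}\rho_0\bm V_h=\int_{\Omega_{\bm X}}\rho_0\,\tfrac{d\bm V_h}{dt}$, the first identity follows. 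For angular momentum I would set $\bm W_h=\bm c\times\bm\varphi_h$. The two stress terms of \eqref{eq:mix_solids_momentum_ref} combine into $\nabla_{\bm X}\bm W_h:\bm P_h$ with $\bm P_h:=J_h(\bm\sigma_{\mathrm{dev},h}-P_h\bm I)\bm F_h^{-T}$ the first Piola--Kirchhoff stress, and a short index manipulation shows that $\nabla_{\bm X}(\bm c\times\bm\varphi_h):\bm P_h$ depends only on the skew part of $\bm F_h\bm P_h^{T}$, which vanishes because $\bm F_h\bm P_h^{T}=J_h\bm\sigma_h$ is symmetric — the constitutive Cauchy stress $\bm\sigma_h=\bm\sigma_{\mathrm{dev},h}-P_h\bm I$ is symmetric by construction since $\bm\sigma_{\mathrm{dev},h}=J_h^{-1}\bm F_h\bm S_{\mathrm{iso},h}\bm F_h^{T}$. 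For the inertial term I would invoke the discrete kinematics \eqref{eq:mix_solids_kinematics_ref}, $\tfrac{d\bm\varphi_h}{dt}=\tfrac{d\bm U_h}{dt}=\bm V_h$, so that $\tfrac{d}{dt}(\bm\varphi_h\times\bm V_h)=\bm V_h\times\bm V_h+\bm\varphi_h\times\tfrac{d\bm V_h}{dt}=\bm\varphi_h\times\tfrac{d\bm V_h}{dt}$, together with the cyclic identity $(\bm c\times\bm\varphi_h)\cdot\tfrac{d\bm V_h}{dt}=\bm c\cdot(\bm\varphi_h\times\tfrac{d\bm V_h}{dt})$; the body force and traction terms rewrite analogously as the moments $\bm c\cdot(\bm\varphi_h\times\rho_0\bm B)$ and $\bm c\cdot(\bm\varphi_h\times\bm H)$. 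Collecting the surviving terms and using the arbitrariness of $\bm c$ gives the second identity.

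The only non‑routine point, and thus the main obstacle, is verifying admissibility of the rotational test function $\bm c\times\bm\varphi_h$. This is exactly where both structural hypotheses enter: $\Gamma^t_{\bm x}=\Gamma^{h,t}_{\bm x}$ removes the Dirichlet constraint that $\bm c\times\bm\varphi_h$ would otherwise violate, while the isogeometric representation of the geometry is what keeps $\bm\varphi_h$ — and hence $\bm c\times\bm\varphi_h$ componentwise — inside $\mathcal S_h$; for a discretization in which the geometry is not contained in the displacement space this test function would be inadmissible and discrete angular momentum need not be conserved. Once admissibility is granted, the remaining content is merely the pointwise symmetry of the discrete Cauchy stress (the discrete counterpart of the continuum balance of angular momentum) and the kinematic consistency $\tfrac{d\bm\varphi_h}{dt}=\bm V_h$ built into \eqref{eq:mix_solids_kinematics_ref}.
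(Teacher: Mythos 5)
Your proposal is correct and follows essentially the same route as the paper: the paper's (one-line) proof likewise takes the test function to be a constant unit vector $\bm e_i$ for linear momentum and $\bm e_i \times \bm\varphi_h$ for angular momentum, relying on the symmetry of the Cauchy stress and the kinematic relation $d\bm U_h/dt = \bm V_h$. You have simply spelled out the admissibility of these test functions (partition of unity, isogeometric geometry representation, absence of a Dirichlet boundary) and the index manipulations that the paper leaves implicit.
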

\begin{proof}
The above conservation properties are direct consequences of choosing $\bm w_{h} = \bm e_i$ and $\bm w_{h} = \bm e_i \times \bm \varphi_h$ respectively in \eqref{eq:momentum_current}, where $\bm e_i$ is a unit vector in the $i$-th direction.
\end{proof}

\begin{remark}
It can be shown that the dissipation term in the stability estimate \eqref{eq:weak-form-energy-stability} can be equivalently written as
\begin{align*}
\sum_{\alpha=1}^{m} \int_{\Omega_{\bm X}}\frac14 \left(\frac{d}{dt}\bm \Gamma^{\alpha}_h \right) : \mathbb V^{\alpha} : \left(\frac{d}{dt}\bm \Gamma^{\alpha}_h \right) d\Omega_{\bm X} = \sum_{\alpha=1}^{m} \int_{\Omega_{\bm X}} \bm Q^{\alpha} : \left( \mathbb V^{\alpha} \right)^{-1} : \bm Q^{\alpha} d\Omega_{\bm X}.
\end{align*}
It should be pointed out that the stability is analyzed for the semi-discrete scheme. It remains an intriguing topic to further extend this estimate to the fully discrete regime. The energy-momentum scheme \cite{Simo1992a,Romero2012,Krueger2016} is a promising candidate for this goal.
\end{remark}


\subsection{Time integration algorithm}
In this section, we first introduce a discrete algorithm for updating the stresses. Following that, we state the fully discrete scheme using the generalized-$\alpha$ method.
\subsubsection{Stress update algorithm}
To obtain the stress, we need to perform time integration for the constitutive laws. Let the time interval $(0^+, T]$ be divided into $N$ subintervals of size $\Delta t_n := t_{n+1} - t_n$ delimited by a discrete time vector $\left\lbrace t_n \right\rbrace_{n=0}^{N}$. The approximations to the velocity, pressure, and displacement and their first time derivatives at time $t_n$ are denoted as 
\begin{align*}
\bm Y_n := \left\lbrace \bm V_n, P_n, \bm U_n \right\rbrace^T \quad \mbox{ and } \quad \dot{\bm Y}_n := \left\lbrace \dot{\bm V}_n, \dot{P}_n, \dot{\bm U}_n \right\rbrace^T,
\end{align*}
respectively. Correspondingly, the approximations to the deformation gradient and strain measures at time $t_n$ are represented as
\begin{align*}
\bm F_n = \bm I + \nabla_{\bm X} \bm U_n , \quad J_n = \mathrm{det}(\bm F_n), \quad \bm C_n = \bm F^T_n \bm F_n, \quad \tilde{\bm C}_n = J^{-2/3}_n \bm C_n.
\end{align*}
The approximated projection tensor and elasticity tensor are given by
\begin{align*}
\mathbb P_{n+1} = \mathbb I - \frac13 \bm C^{-1}_{n+1} \otimes \bm C_{n+1}, \quad \tilde{\mathbb C}^{\infty}_{\mathrm{iso} \: n+1} = 4 J^{-\frac43}_{n+1} \frac{\partial^2 G^{\infty}_{\mathrm{iso}}(\tilde{\bm C}_{n+1})}{\partial \tilde{\bm C} \partial \tilde{\bm C}}
\end{align*}
The algorithmic stresses at time $t_{n+1}$ read as
\begin{align*}
& \bm S_{n+1} = \bm S_{\mathrm{iso} \: n+1} + \bm S_{\mathrm{vol} \: n+1}, \quad \bm S_{\mathrm{iso} \: n+1} = J^{-\frac23}_{n+1} \mathbb P_{n+1} : \tilde{\bm S}_{n+1}, \quad \bm S_{\mathrm{vol} \: n+1} = -J_{n+1} P_{n+1} \bm C^{-1}_{n+1}, \displaybreak[2]  \\
& \tilde{\bm S}_{n+1} = \tilde{\bm S}_{\mathrm{iso} \: n+1}^{\infty} + \sum_{\alpha=1}^{m} \tilde{\bm S}^{\alpha}_{\mathrm{neq} \: n+1}, \quad \tilde{\bm S}_{\mathrm{iso} \: n+1}^{\infty} = 2 \left( \frac{\partial G^{\infty}_{\mathrm{iso}}}{\partial \tilde{\bm C}} \right)_{n+1}, \quad \tilde{\bm S}^{\alpha}_{\mathrm{neq} \: n+1} = \frac{J^{\frac43}_{n+1} \beta^{\infty}_{\alpha}}{2\mu^{\alpha}} \tilde{\mathbb C}^{\infty}_{\mathrm{iso} \: n+1} : \bm Q^{\alpha}_{n+1}.
\end{align*}
To evaluate the stresses, we need to provide an algorithmic way to evaluate $\bm Q^{\alpha}_{n+1}$ based on the hereditary integral \eqref{eq:linear_hereditary_integral}. Following the notation introduced in \cite{Simo2006,Holzapfel2000}, we first introduce a dimensionless parameter $\xi^{\alpha}:= -\Delta t_n / 2\tau^{\alpha}$. The approximation to the variable $\bm Q^{\alpha}$ at time $t_{n+1}$ is given by
\begin{align}
\label{eq:Q-alpha-np1-recursive-formula}
\bm Q^{\alpha}_{n+1} =& \exp\left( - t_{n+1} / \tau^{\alpha} \right) \bm Q^{\alpha}_0 + \int^{t_{n+1}}_{0^+} \exp\left( -(t_{n+1}-s)/\tau^{\alpha} \right) \beta_{\alpha}^{\infty} \frac{d}{ds} \tilde{\bm S}^{\infty}_{\mathrm{iso}} ds \displaybreak[2] \nonumber \\
=& \exp(2\xi^{\alpha})\exp(-t_n/\tau^{\alpha}) \bm Q^{\alpha}_0 + \exp(2\xi^{\alpha}) \int^{t_{n}}_{0^+} \exp\left( -(t_n-s)/\tau^{\alpha} \right) \beta_{\alpha}^{\infty} \frac{d}{ds} \tilde{\bm S}^{\infty}_{\mathrm{iso}} ds \nonumber \\
& + \int^{t_{n+1}}_{t_n} \exp\left( -(t_{n+1}-s)/\tau^{\alpha} \right) \beta_{\alpha}^{\infty} \frac{d}{ds} \tilde{\bm S}^{\infty}_{\mathrm{iso}} ds \displaybreak[2] \nonumber \\
= & \exp(2\xi^{\alpha}) \bm Q^{\alpha}_n + \int^{t_{n+1}}_{t_n} \exp\left( -(t_{n+1}-s)/\tau^{\alpha} \right) \beta_{\alpha}^{\infty} \frac{d}{ds} \tilde{\bm S}^{\infty}_{\mathrm{iso}} ds \displaybreak[2] \nonumber \\
\approx & \exp(2\xi^{\alpha}) \bm Q^{\alpha}_n + \exp(\xi^{\alpha}) \beta^{\infty}_{\alpha} \int^{t_{n+1}}_{t_n} \frac{d}{ds} \tilde{\bm S}^{\infty}_{\mathrm{iso}} ds \displaybreak[2] \nonumber \\
=& \beta^{\infty}_{\alpha} \exp(\xi^{\alpha}) \tilde{\bm S}^{\infty}_{\mathrm{iso} \: n+1} + \exp(\xi^{\alpha}) \left(\exp(\xi^{\alpha}) \bm Q^{\alpha}_n - \beta^{\infty}_{\alpha} \tilde{\bm S}^{\infty}_{\mathrm{iso} \: n}\right).
\end{align}
In the second-to-last step of \eqref{eq:Q-alpha-np1-recursive-formula}, a mid-point rule is applied for the exponential term in the time integral to obtain an approximation \cite{Simo2006,Holzapfel2000}. Error analysis shows that the above approximation is second-order accurate, and the relation \eqref{eq:Q-alpha-np1-recursive-formula} is often referred to as the recurrence update formula for $\bm Q^{\alpha}$. There exists an alternate second-order accurate recurrence update formula by applying the mid-point rule to the stress rate, rather than the exponential kernel term, in the time integral. Interested readers are referred to \cite[p.~355]{Simo2006} for details.

\begin{remark}
We mention that the recursive formula \eqref{eq:Q-alpha-np1-recursive-formula} is obtained by making use of the semigroup property of the kernel of the hereditary integral, and there exist other recursive formulas \cite[Chapter~10]{Simo2006}. The recursive formula \eqref{eq:Q-alpha-np1-recursive-formula} is second-order accurate, and it is feasible to achieve higher-order accuracy with more involved update formulas for the internal state variables \cite{Eidel2011}. In fractional-order viscoelasticity models, however, a different recursive formula can be derived by invoking the fast convolution method \cite{Yu2016}.
\end{remark}

\begin{remark}
In the analysis calculations, the values of $\bm Q^{\alpha}_{n+1}$ can be conveniently initialized and stored at quadrature points. In the constitutive routine, their values at the quadrature points are updated by  \eqref{eq:Q-alpha-np1-recursive-formula}, which are utilized as the input for the calculation of stresses at the quadrature points. Yet, in the postprocessing (such as visualization), the stresses are typically not sampled or interpolated at the quadrature points used for analysis. The global smoothing procedure \cite{Hinton1974,Oden1971} can be invoked to recover the values of $\bm Q^{\alpha}_{n+1}$ and subsequently the values of stresses.
\end{remark}

\subsubsection{Fully discrete scheme}
With the time discrete stress given above, we may state the fully discrete algorithm by invoking the generalized-$\alpha$ method \cite{Chung1993,Jansen2000}. At time $t_n$, given $\bm Y_n$ and $\dot{\bm Y}_n$, the time step size $\Delta t_n$, and the parameters $\alpha_m$, $\alpha_f$, and $\gamma$, find $\bm Y_{n+1}$ and $\dot{\bm Y}_{n+1}$, such that for $\forall \left\lbrace Q_h, \bm W_h \right\rbrace \in \mathcal V_{P_h} \times \mathcal V_{\bm V_h}$,
\begin{align}
\label{eq:gen_a_solids_kinematics_ref}
& \bm 0 = \mathbf B^k\left( \dot{\bm Y}_{n+\alpha_m}, \bm Y_{n+\alpha_f} \right), \displaybreak[2]\\
\label{eq:gen_a_solids_mass_ref}
& 0 = \mathbf B^p\left( Q_h; \dot{\bm Y}_{n+\alpha_m}, \bm Y_{n+\alpha_f} \right), \displaybreak[2] \\
\label{eq:gen_a_solids_momentum_ref}
& 0 = \mathbf B^m\left( \bm W_h; \dot{\bm Y}_{n+\alpha_m}, \bm Y_{n+\alpha_f} \right), \displaybreak[2] \\
& \bm Y_{n+1} = \bm Y_n + \Delta t_n \dot{\bm Y}_n + \gamma \Delta t_n \left( \dot{\bm Y}_{n+1} - \dot{\bm Y}_n \right), \displaybreak[2] \\
& \dot{\bm Y}_{n+\alpha_m} = \dot{\bm Y}_n + \alpha_m \left( \dot{\bm Y}_{n+1} - \dot{\bm Y}_n \right), \displaybreak[2] \\
\label{eq:gen_a_Y_n_plus_alpha_f}
& \bm Y_{n+\alpha_f} = \bm Y_n + \alpha_f \left( \bm Y_{n+1} - \bm Y_n \right).
\end{align}
Let $\varrho_{\infty} \in [0,1]$ denote the spectral radius of the amplification matrix at the highest mode. The following choice of the parameters ensures second-order accuracy, unconditional stability, and controllable high-frequency dissipation for linear first-order ordinary differential equations \cite{Jansen2000},
\begin{align*}
\alpha_m = \frac12 \left( \frac{3-\varrho_{\infty}}{1+\varrho_{\infty}} \right), \quad \alpha_f = \frac{1}{1+\varrho_{\infty}}, \quad \gamma = \frac{1}{1+\varrho_{\infty}}.
\end{align*}

\begin{remark}
It is known that the many different temporal schemes may be recovered by the generalized-$\alpha$ scheme via distinct choices of the parameters. For example, choosing $\varrho_{\infty} = 0.0$ renders a scheme that is spectrally equivalent to the second-order backward difference method \cite{Jansen2000}; choosing $\varrho_{\infty} = 1.0$ recovers the mid-point rule. It is also worth pointing out that the generalized-$\alpha$ scheme has been conventionally applied to second-order structural dynamics. Recent work shows that the generalized-$\alpha$ method applied to a first-order structural dynamic system does not suffer from the `overshoot' phenomenon \cite{Kadapa2017}, and thus possesses many desirable properties of implicit schemes, as noted by Hilber and Hughes \cite{Hilber1978}. Writing the structural dynamics problem as a first-order system introduces three additional velocity degrees of freedom per node. It can be shown that these additional degrees of freedom can be solved in a segregated manner in a consistent Newton-Raphson algorithm \cite{Liu2018,Liu2019,Liu2019a,Scovazzi2016}. The additional cost induced by the velocity degrees of freedom is merely the memory for storing them and an explicit update formula, which are thus marginal.
\end{remark}

\begin{remark}
The stability of the generalized-$\alpha$ scheme was analyzed based on linear problems \cite{Chung1993,Jansen2000} and remains unclear for nonlinear problems. In fact, it is known that the mid-point rule, an instantiation of the generalized-$\alpha$ scheme with $\varrho_{\infty}=1.0$, is often energetically unstable for nonlinear structural dynamics \cite{Ortigosa2018,Betsch2016}. It remains an interesting topic to construct fully discrete schemes that are provably stable in energy, and the family of energy-momentum methods serves as a promising candidate in this role \cite{Simo1992a,Ortigosa2018,Betsch2016,Kuhl1999}
\end{remark}

\begin{remark}
It is worth pointing out that the generalized-$\alpha$ scheme may achieve the claimed second-order accuracy if all unknowns are collocated at the intermediate time step, as is done in \eqref{eq:gen_a_solids_kinematics_ref}-\eqref{eq:gen_a_Y_n_plus_alpha_f}. In a very popular approach, the pressure is collocated at the time step $t_{n+1}$ with the rest variables collocated at the intermediate time step following the rule of the generalized-$\alpha$ scheme. It was found recently that the claimed second-order accuracy is lost in that approach \cite{Liu2021}.
\end{remark}

\subsection{Elasticity tensor}
Here we provide the isochoric elasticity tensors that are used in the consistent linearization of the model. We first define the elasticity tensors approximated at time $t_{n+1}$ as
\begin{align*}
\mathbb C_{\mathrm{iso} \: n+1} := \left( 2\frac{\partial \bm S_{\mathrm{iso}}}{\partial \bm C}\right)_{n+1} , \quad \mathbb C_{\mathrm{iso} \: n+1}^{\infty} := \left( 2 \frac{\partial \bm S^{\infty}_{\mathrm{iso}}}{\partial \bm C} \right)_{n+1} , \quad \mbox{and} \quad \mathbb C_{\mathrm{neq} \: n+1}^{\alpha} := \left( 2\frac{\partial \bm S^{\alpha}_{\mathrm{neq}}}{\partial \bm C} \right)_{n+1}.
\end{align*}
Based on the additive split structure of the stress, the isochoric elasticity tensor $\mathbb C_{\mathrm{iso} \: n+1}$ can be expressed as
\begin{align*}
\mathbb C_{\mathrm{iso} \: n+1} = \left( 2 \frac{\partial \bm S^{\infty}_{\mathrm{iso}}}{\partial \bm C} \right)_{n+1} + \sum_{\alpha=1}^{m} \left(2\frac{\partial \bm S^{\alpha}_{\mathrm{neq}}}{\partial \bm C} \right)_{n+1} = \mathbb C^{\infty}_{\mathrm{iso} \: n+1} + \sum_{\alpha=1}^{m} \mathbb C^{\alpha}_{\mathrm{neq} \: n+1}.
\end{align*}
It can be shown that 
\begin{align*}
& \mathbb C^{\infty}_{\mathrm{iso} \: n+1} = \mathbb P_{n+1} : \tilde{\mathbb C}^{\infty}_{\mathrm{iso} \: n+1} : \mathbb P^T_{n+1} + \frac23 \mathrm{Tr}\left( J^{-\frac23}_{n+1} \tilde{\bm S}^{\infty}_{\mathrm{iso} \: n+1} \right) \tilde{\mathbb P}_{n+1} - \frac23 \left( \bm C^{-1}_{n+1} \otimes \bm S^{\infty}_{\mathrm{iso} \: n+1} + \bm S^{\infty}_{\mathrm{iso} \: n+1} \otimes \bm C^{-1}_{n+1} \right), \\
& \tilde{\mathbb C}^{\infty}_{\mathrm{iso} \: n+1} := 4 J^{-\frac43}_{n+1} \left( \frac{\partial^2 G^{\infty}_{\mathrm{iso}}}{\partial \tilde{\bm C} \partial \tilde{\bm C}} \right)_{n+1}, \quad \mathrm{Tr}\left( \cdot \right) = \left( \cdot \right) : \bm C_{n+1}, \quad \tilde{\mathbb P}_{n+1} := \bm C^{-1}_{n+1} \odot \bm C^{-1}_{n+1} - \frac13 \bm C^{-1}_{n+1} \otimes \bm C^{-1}_{n+1}.
\end{align*}
The derivation of the formula can be found in \cite[p.~255]{Holzapfel2000}. Following similar steps, we have
\begin{align*}
\mathbb C^{\alpha}_{\mathrm{neq} \: n+1} = \mathbb P_{n+1} : \tilde{\mathbb C}^{\alpha}_{\mathrm{neq} \: n+1} : \mathbb P^{T}_{n+1} + \frac23 \mathrm{Tr}\left( J^{-\frac23} \tilde{\bm S}^{\alpha}_{\mathrm{neq} \: n+1} \right) \tilde{\mathbb P} - \frac23 \left( \bm C^{-1}_{n+1} \otimes \bm S^{\alpha}_{\mathrm{neq} \: n+1} + \bm S^{\alpha}_{\mathrm{neq} \: n+1} \otimes \bm C^{-1}_{n+1} \right),
\end{align*}
where
\begin{align*}
\tilde{\mathbb C}^{\alpha}_{\mathrm{neq} \: n+1} := 2 J^{-\frac43}_{n+1} \left( \frac{\partial \tilde{\bm S}^{\alpha}_{\mathrm{neq}}}{\partial \tilde{\bm C}} \right)_{n+1}.
\end{align*}
Based on \eqref{eq:Q-alpha-np1-recursive-formula}, we have
\begin{align*}
2 \left( \frac{\partial \bm Q^{\alpha}}{\partial \bm C} \right)_{n+1} = \delta_{\alpha} J^{\frac43}_{n+1} \tilde{\mathbb C}^{\infty}_{\mathrm{iso} \: n+1},
\end{align*}
with $\delta_{\alpha} := \beta_{\alpha}^{\infty} \exp(\xi^{\alpha})$. We may now express $\tilde{\mathbb C}^{\alpha}_{\mathrm{neq} \: n+1}$ as
\begin{align*}
\tilde{\mathbb C}^{\alpha}_{\mathrm{neq} \: n+1} =& \frac{4\beta^{\infty}_{\alpha} J^{-\frac43}_{n+1}}{\mu^{\alpha}} \left(\frac{\partial^3 G^{\infty}_{\mathrm{iso}}}{\partial \tilde{\bm C} \partial \tilde{\bm C} \partial \tilde{\bm C}} \right)_{n+1} : \bm Q^{\alpha}_{n+1} + \frac{2 \delta_{\alpha} \beta^{\infty}_{\alpha} }{\mu^{\alpha}} \left( \frac{\partial^2 G^{\infty}_{\mathrm{iso}}}{\partial \tilde{\bm C} \partial \tilde{\bm C} } \right)_{n+1} : \tilde{\mathbb C}^{\infty}_{\mathrm{iso} \: n+1} \\
=& \frac{4\beta^{\infty}_{\alpha} J^{-\frac43}_{n+1}}{\mu^{\alpha}} \left(\frac{\partial^3 G^{\infty}_{\mathrm{iso}}}{\partial \tilde{\bm C} \partial \tilde{\bm C} \partial \tilde{\bm C}} \right)_{n+1} : \bm Q^{\alpha}_{n+1} + \frac{\delta_{\alpha} \beta^{\infty}_{\alpha} J^{\frac43}_{n+1} }{2\mu^{\alpha}} \tilde{\mathbb C}^{\infty}_{\mathrm{iso} \: n+1} : \tilde{\mathbb C}^{\infty}_{\mathrm{iso} \: n+1}.
\end{align*}
The algorithms for calculating the stresses and elasticity tensors for the IPC, HS, and MIPC models are documented in \ref{sec:algorithm-IPC}, \ref{sec:algorithm-HS}, and \ref{sec:algorithm-MIPC}, respectively.

\section{Numerical results}
\label{sec:numerical_results}
In this section, we investigate the proposed viscoelastic model by a suite of numerical examples using the numerical scheme proposed in the previous section. Unless otherwise specified, we use $\mathsf p+\mathsf a+1$ Gauss quadrature points in each direction. Also recall that we have fixed $\mathsf a = 1$ and $\mathsf b = 0$ in the construction of discrete function spaces. We take $\bm Q^{\alpha}_0 = \bm O$ in all numerical studies, which implies $\hat{\bm S}^{\alpha}_{0} = \tilde{\bm S}^{\alpha}_{\mathrm{iso} \: 0}$.

\begin{table}[h]
  \centering
  \begin{tabular}[t]{ m{.35\textwidth}   m{.45\textwidth} }
    \hline
    \begin{minipage}{.35\textwidth}
      \includegraphics[angle=0, trim=145 270 270 190, clip=true, scale = 0.6]{./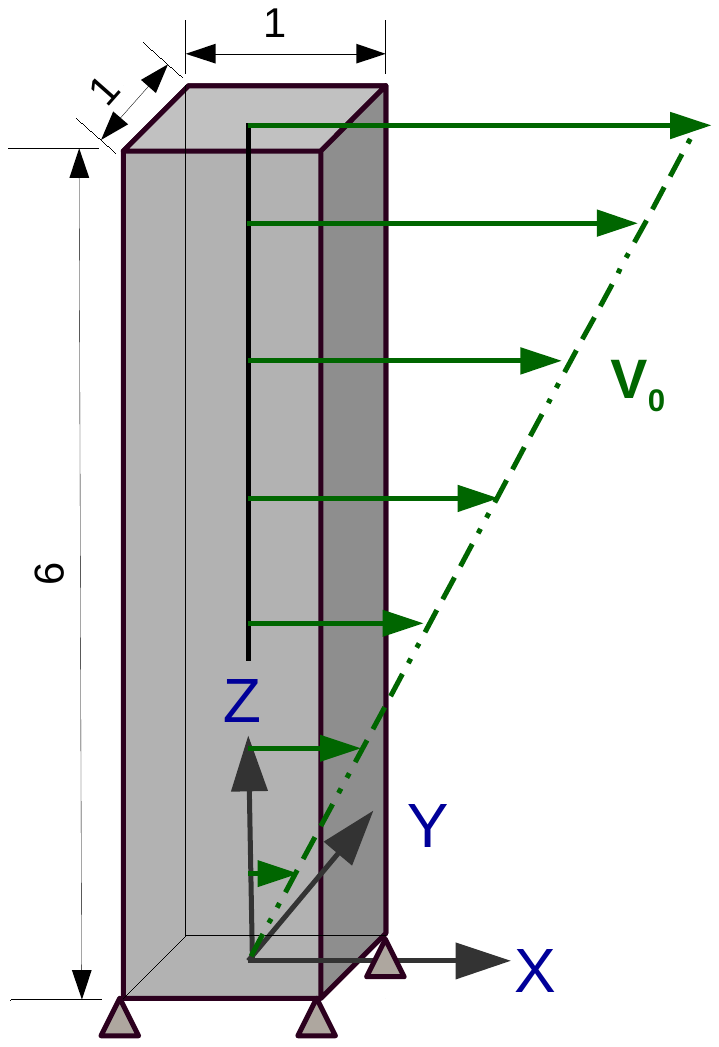}
    \end{minipage}
&    
    \begin{minipage}{.45\textwidth}
      \begin{itemize}
      \item[] Material properties:
      \item[] $G^{\infty}_{\mathrm{iso}} = \frac{c_1}{2} \left( \tilde{I}_1 - 3 \right) + \frac{c_2}{2} \left( \tilde{I}_2 - 3 \right)$,    
\item[] $\rho_0 = 1.1 \times 10^3$ kg/m$^3$,  
      \item[] $E = 1.7\times 10^7$ Pa, $c_1 = c_2 = E/6$,
      \item[] $\beta_1^{\infty} = 1.0$, $\mu^1 = 10c_1$, $\tau^1 = 1$ s.
      \item[] Reference scales: 
      \item[] $L_0 = 1$ m, $M_0 = 1$ kg, $T_0 = 1$ s. 
      \end{itemize}
    \end{minipage}   
    \\ 
    \hline
  \end{tabular}
  \caption{Three-dimensional beam bending: problem setting, boundary conditions, initial conditions, and material properties. Notice that the parameter $\beta_1^{\infty}$ is only used in the IPC and MIPC models.} 
\label{table:3d_beam_bending_benchmark_geometry}
\end{table}

\begin{figure}[htbp]
\begin{center}
\begin{tabular}{c}
\includegraphics[angle=0, trim=85 85 120 80, clip=true, scale = 0.39]{./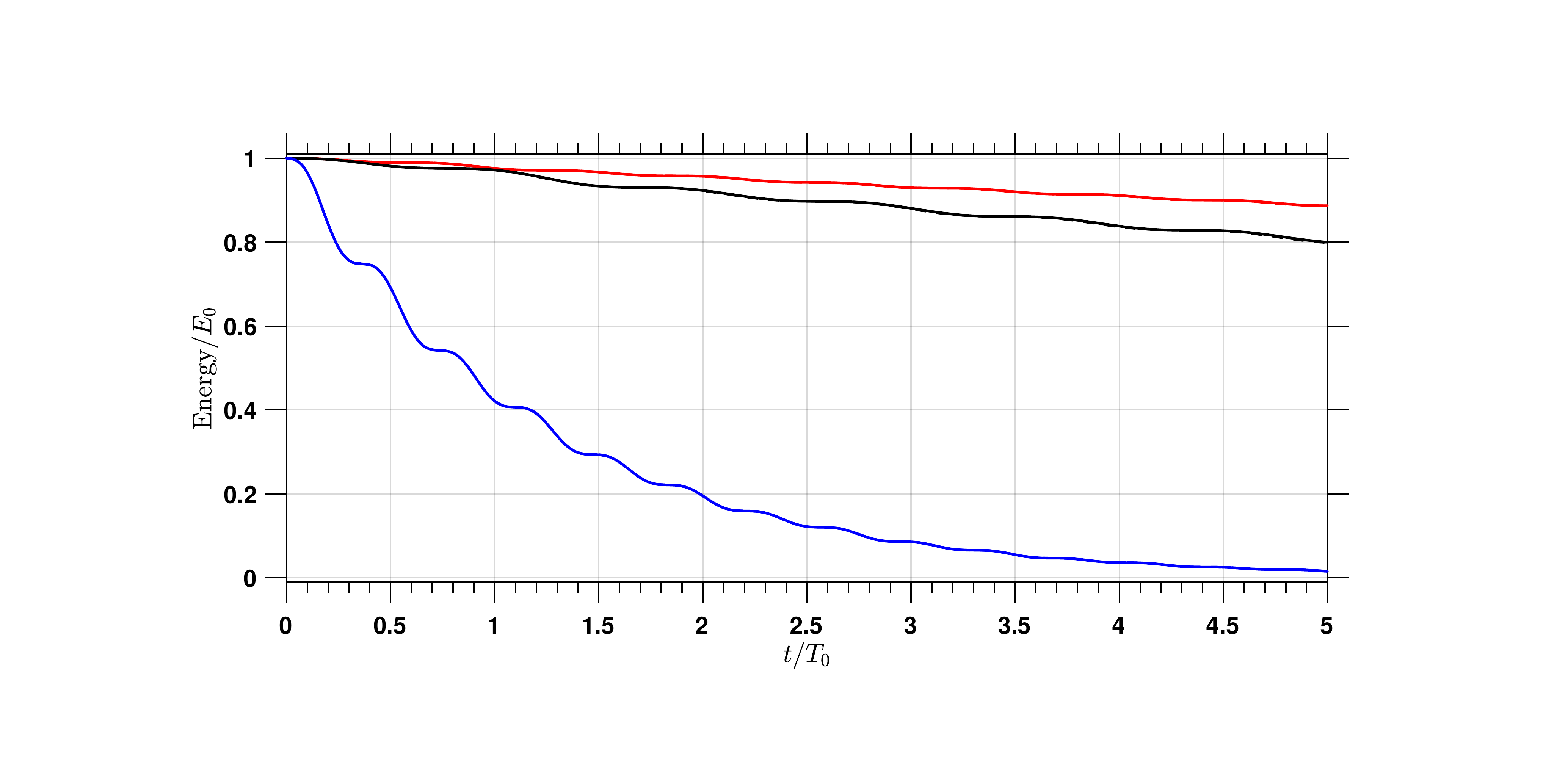}\\
(a) Total energies \\
\includegraphics[angle=0, trim=85 85 120 80, clip=true, scale = 0.39]{./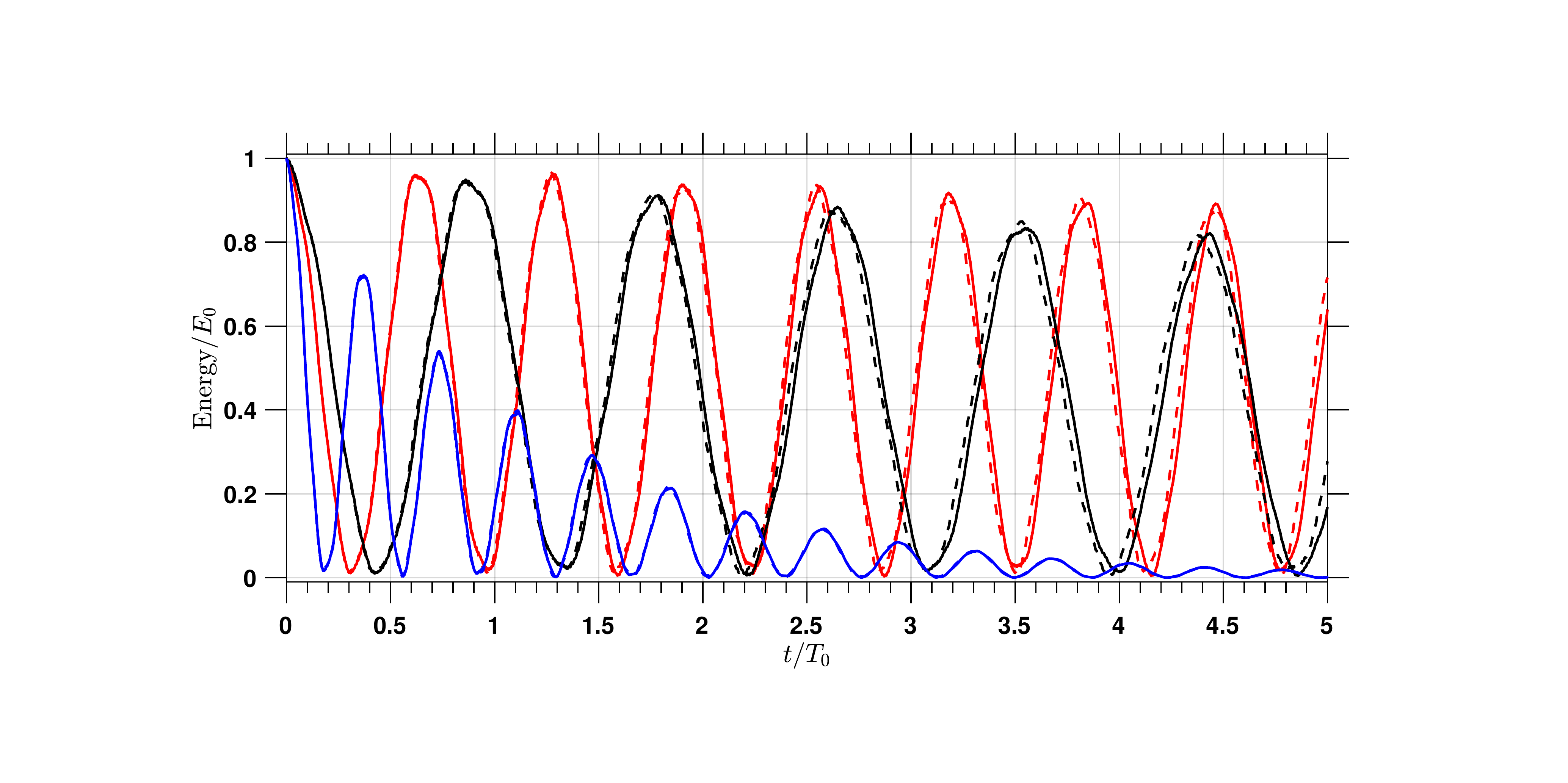} \\
(b) Kinetic energies \\
\includegraphics[angle=0, trim=85 85 120 80, clip=true, scale = 0.39]{./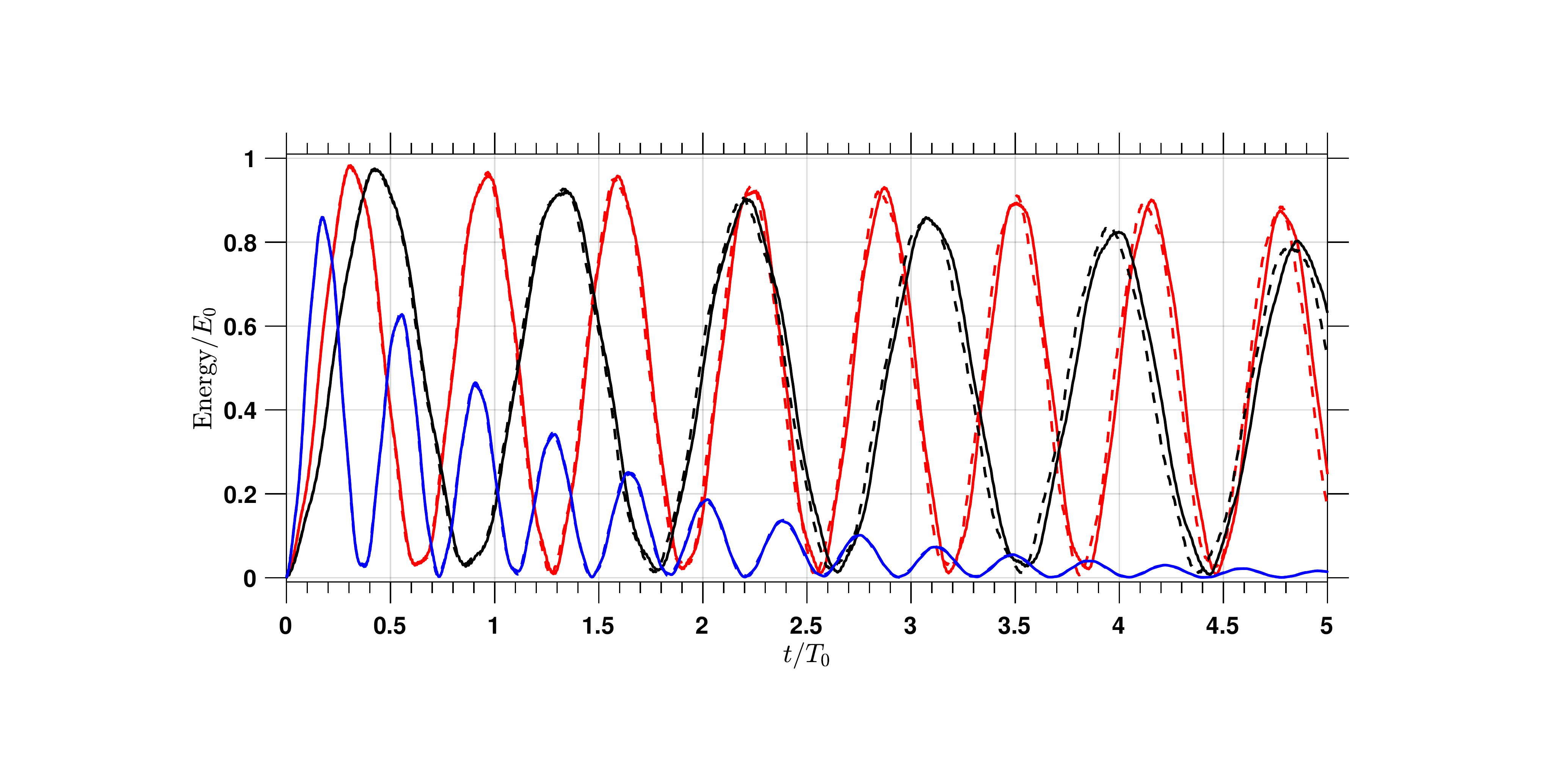}\\
(c) Potential energies
\end{tabular}
\caption{The total, kinetic, and potential energies (i.e., $G^{\infty}_{\mathrm{iso}}(\tilde{\bm C}_h) + \Upsilon^1(\tilde{\bm C}_h, \bm \Gamma^1_h)$) of the IPC (red), HS (blue), and MIPC (black) models over time. The simulations are performed with a fixed time step size $\Delta t/T_0 = 1\times 10^{-3}$. The solid lines illustrate results obtained from a spatial mesh with $\mathsf p = 2$, $\mathsf a=1$, $\mathsf b = 0$, and $5 \times 5 \times 30$ elements; the dashed lines illustrate results obtained from a spatial mesh with $\mathsf p = 1$, $\mathsf a=1$, $\mathsf b = 0$, and $1 \times 1 \times 6$ elements. The reference value of the total energy $E_0$ is chosen to be the total energy at time $t=0$, which is $1.1\times 10^5$ kg m$^2$/s$^2$.} 
\label{fig:beam_energy}
\end{center}
\end{figure}

\begin{figure}[!htbp]
\begin{tabular}{ c c c c c c }
\multicolumn{6}{c}{
\includegraphics[angle=0, trim=0 0 280 750, clip=true, scale = 0.26]{./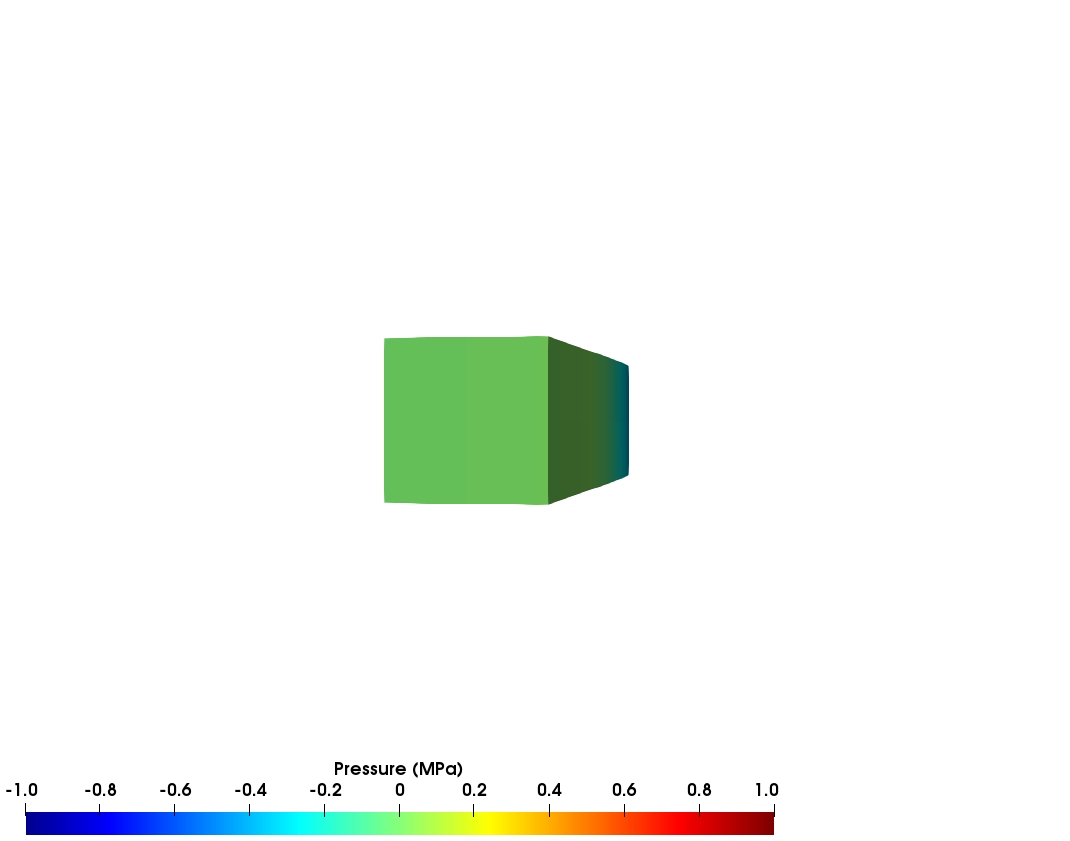}
} \\
\includegraphics[angle=0, trim=320 0 450 0, clip=true, scale = 0.21]{./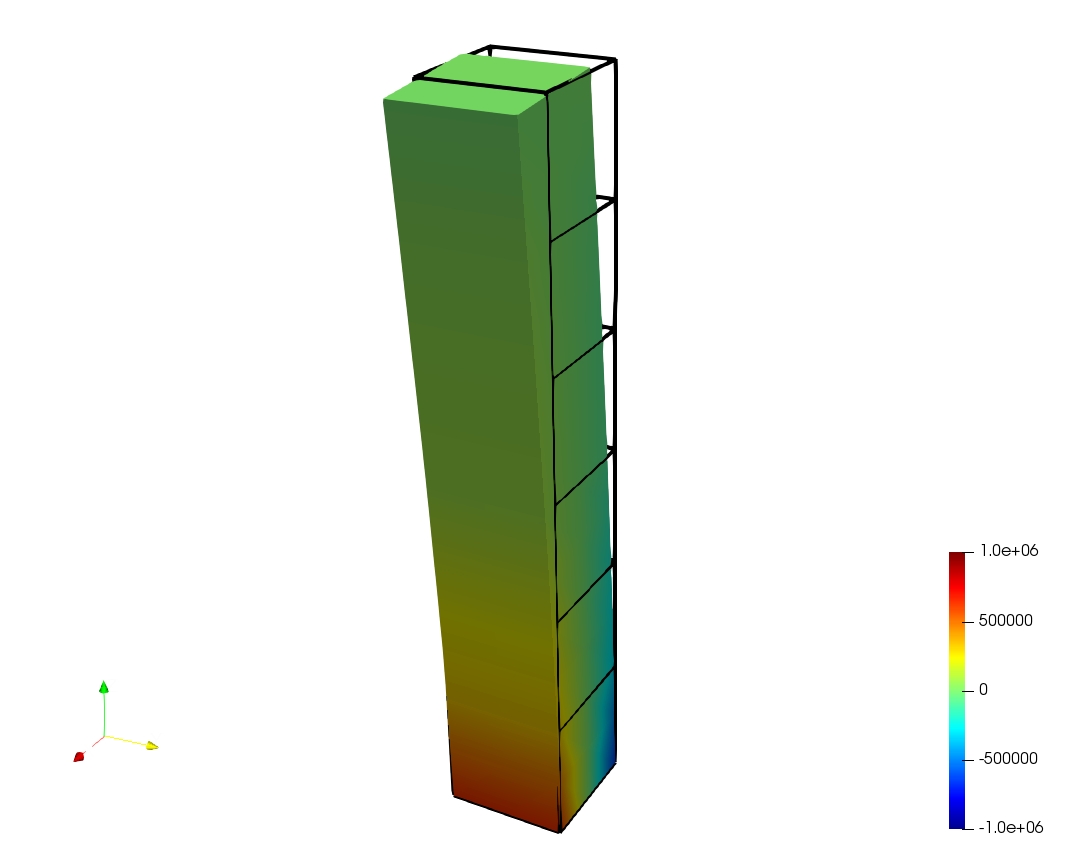} &
\includegraphics[angle=0, trim=320 0 450 0, clip=true, scale = 0.21]{./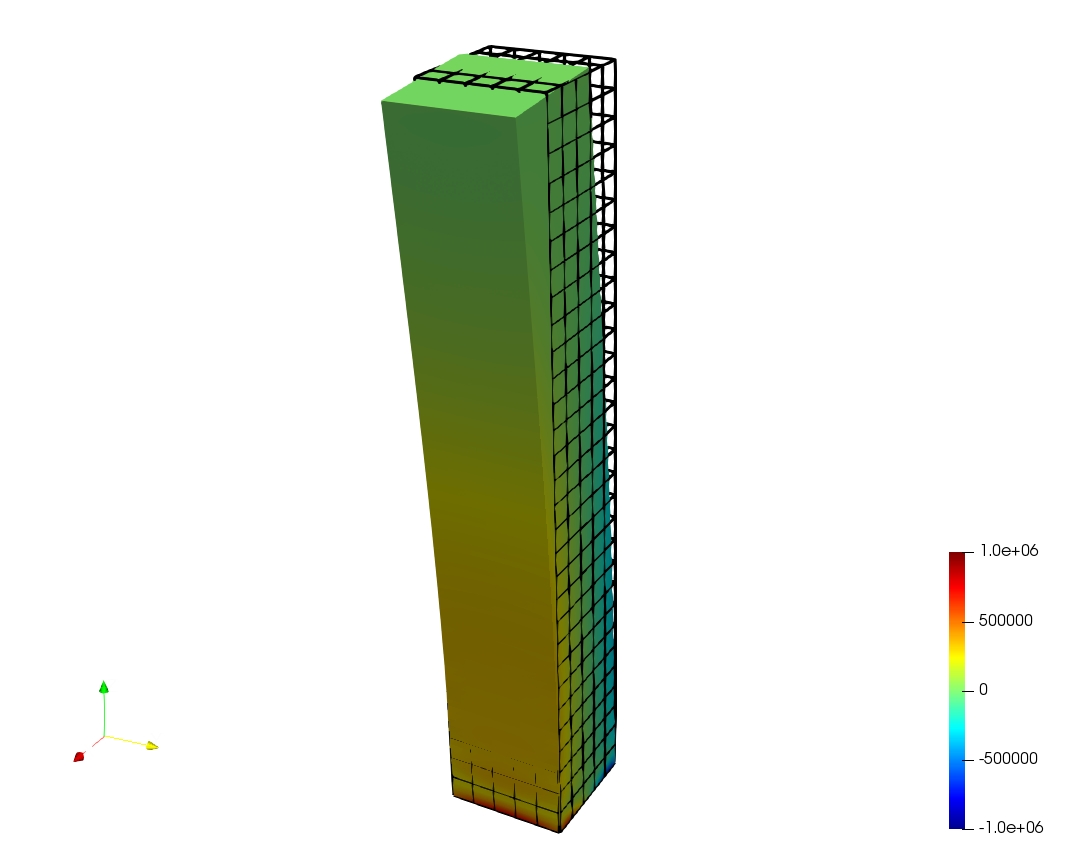} &
\includegraphics[angle=0, trim=280 0 450 0, clip=true, scale = 0.21]{./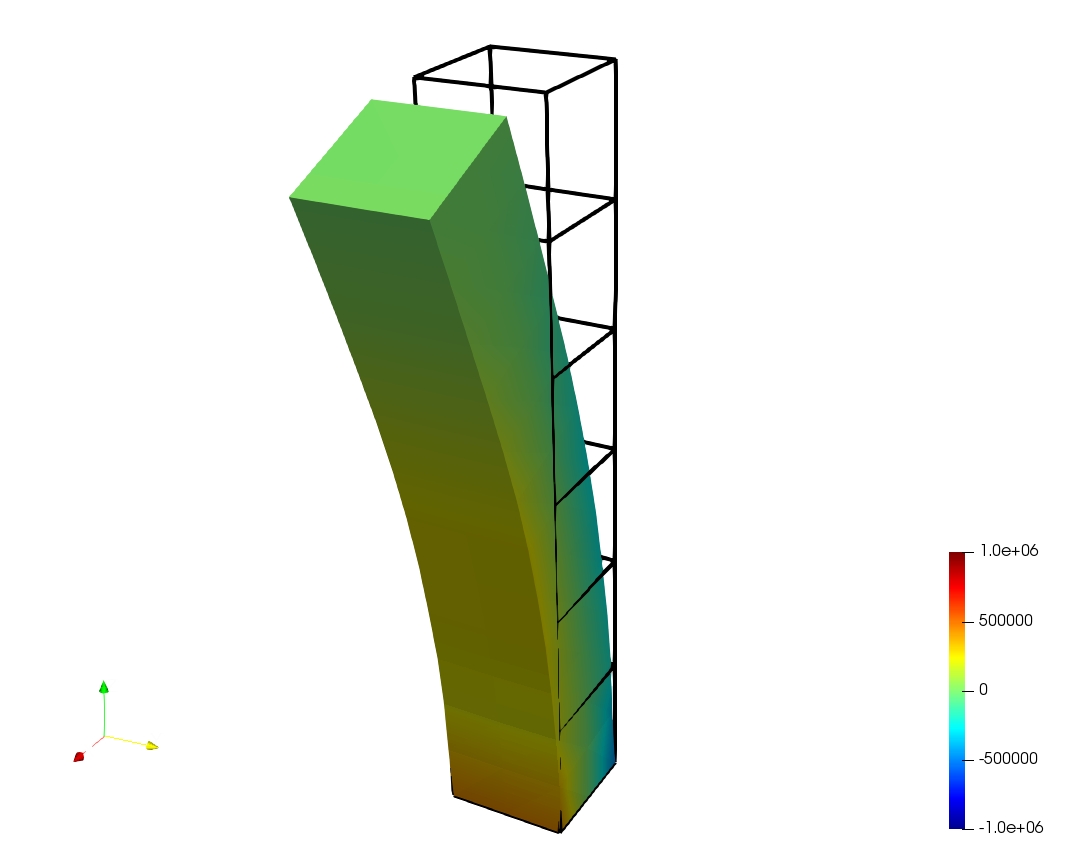} &
\includegraphics[angle=0, trim=280 0 450 0, clip=true, scale = 0.21]{./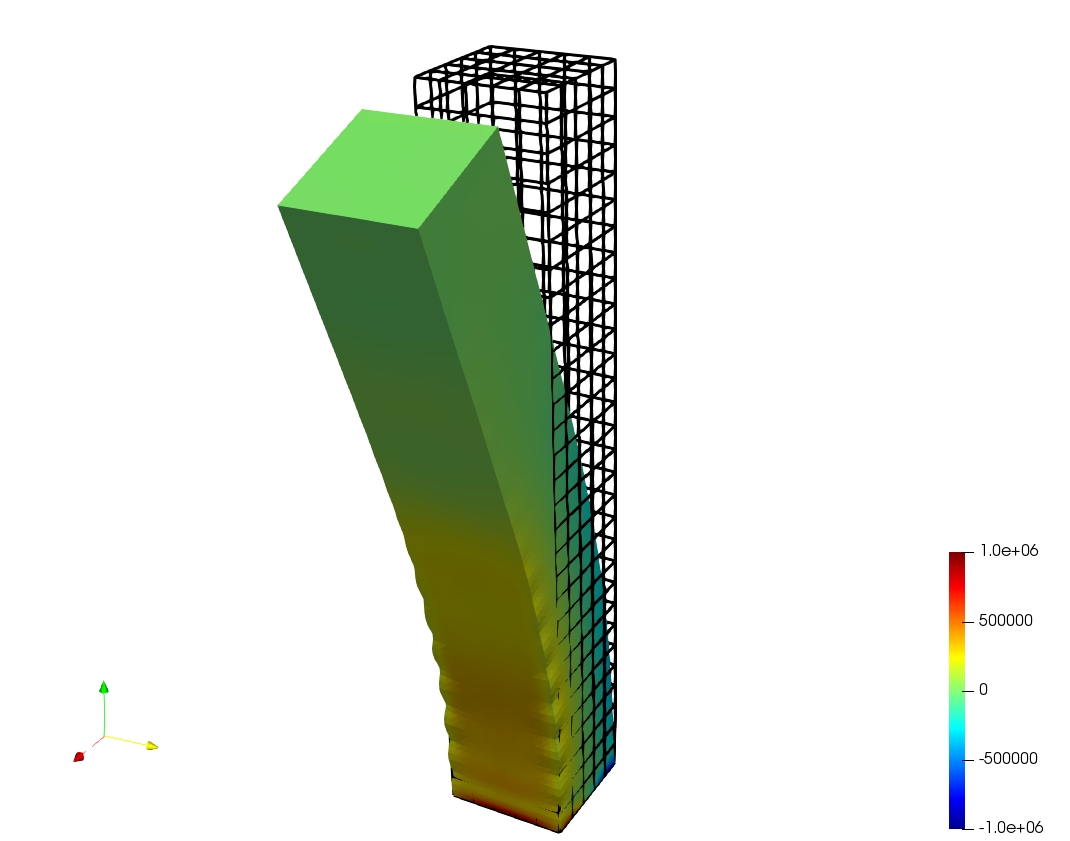} &
\includegraphics[angle=0, trim=360 0 420 0, clip=true, scale = 0.21]{./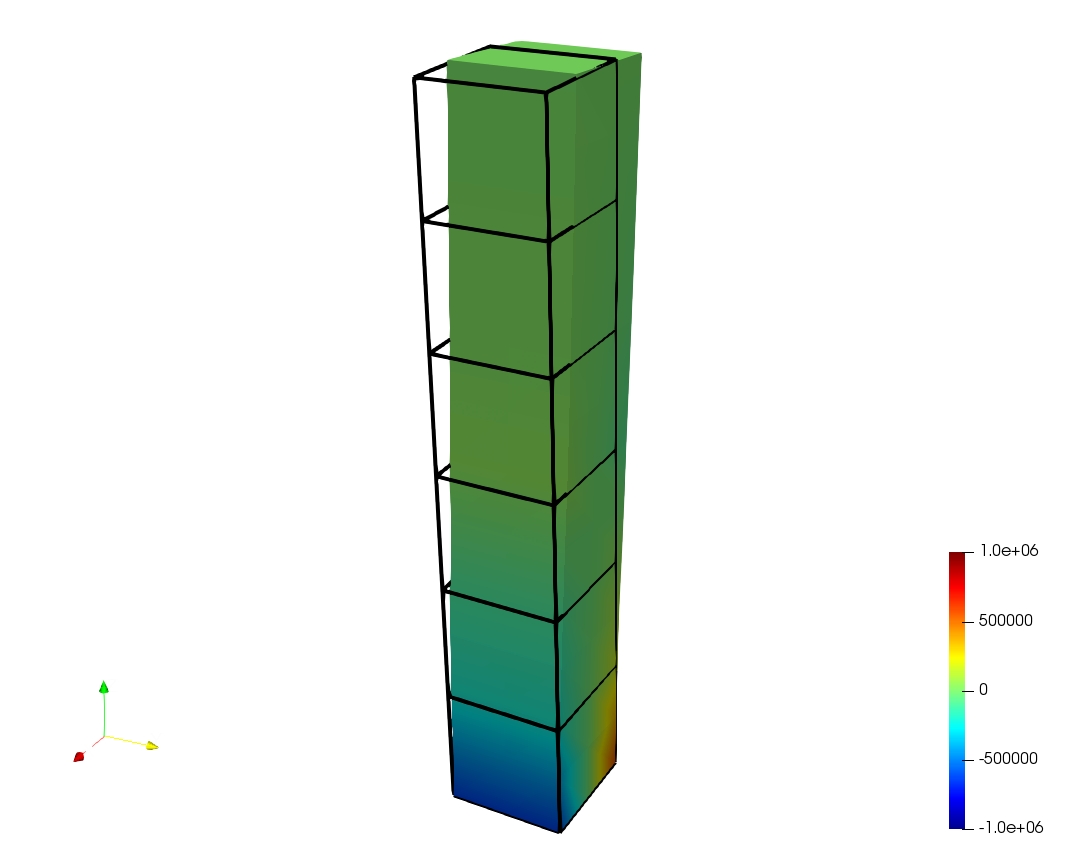} &
\includegraphics[angle=0, trim=360 0 420 0, clip=true, scale = 0.21]{./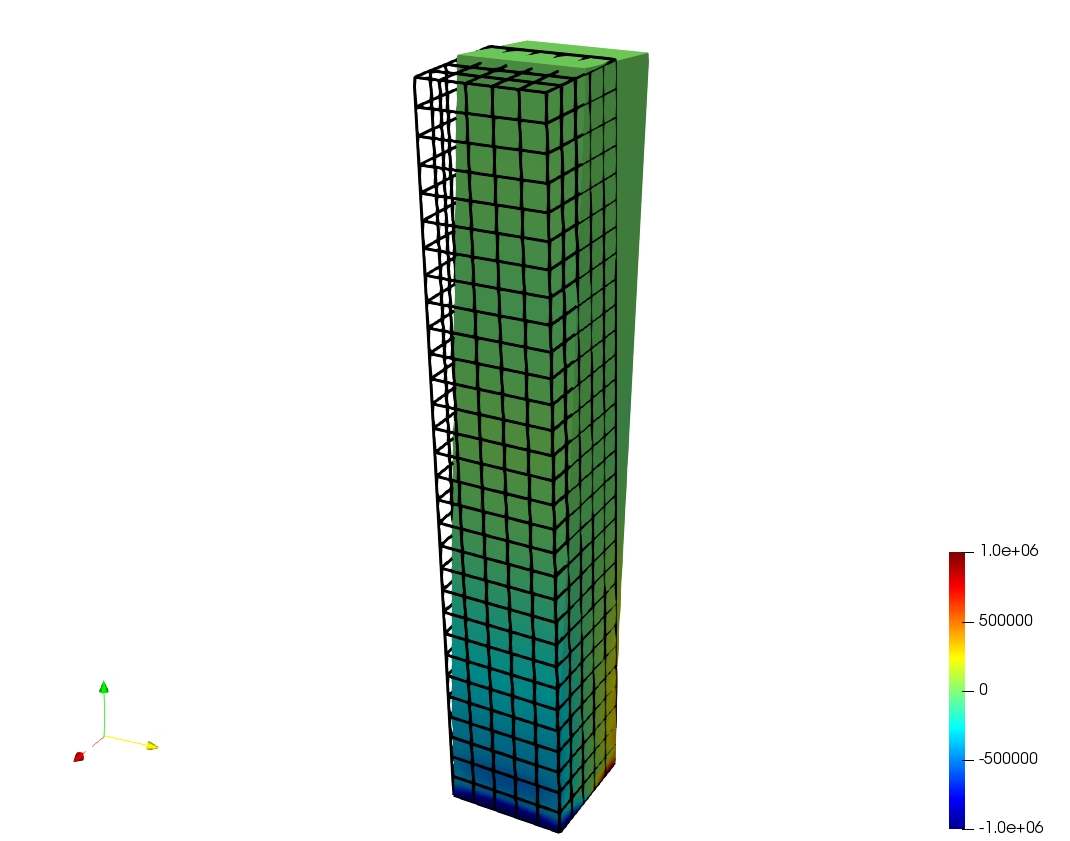} \\
\multicolumn{2}{c}{HS} & \multicolumn{2}{c}{MIPC} & \multicolumn{2}{c}{IPC}
\end{tabular}
\caption{The snapshots of the pressure fields at time $t/T_0 = 2.5$ of the three models with a fixed time step size $\Delta t/T_0 = 1 \times 10^{-3}$ plotted on the deformed configuration. Two sets of spatial meshes are used: a spatial mesh with $\mathsf p = 2$, $\mathsf a=1$, $\mathsf b = 0$, and $5 \times 5 \times 30$ elements and a spatial mesh with $\mathsf p = 1$, $\mathsf a=1$, $\mathsf b = 0$, and $1 \times 1 \times 6$ elements. The meshes at the initial time are plotted as the black grid.} 
\label{fig:beam_pressure_snapshot}
\end{figure}

\subsection{Beam bending}
In this example, we consider a three-dimensional beam vibration problem with bending dominated deformation. The problem setting and the material properties are defined in Table \ref{table:3d_beam_bending_benchmark_geometry}. In particular, the background elastic material is characterized by the Mooney-Rivlin model. The bottom surface of the beam is fully clamped while the rest boundary surfaces are specified with zero traction boundary conditions. The body is initially in a stress-free condition with zero displacements. The initial velocity is given by
\begin{align*}
\bm V(\bm X, 0) = \left( V_0 \frac{Z}{L_0}, 0, 0 \right)^T, \quad V_0 = \frac53 \textup{m}/\textup{s},
\end{align*}
which initiates the vibration. For this specific problem, the energy stability given by Theorem \ref{the:energy-stability} suggests that the total energy monotonically decreases with respect to time, i.e.,
\begin{align*}
\frac{d}{dt}\int_{\Omega_{\bm X}} \frac12 \rho_0 \|\bm V_h\|^2 +  G^{\infty}_{\textup{iso}} ( \tilde{\bm C}_h ) + \Upsilon^{1}(\tilde{\bm C}_h, \bm \Gamma^{1}_h)  d\Omega_{\bm X} =& -  \int_{\Omega_{\bm X}}\frac14 \left(\frac{d}{dt}\bm \Gamma^{1}_h \right) : \mathbb V^{1} : \left(\frac{d}{dt}\bm \Gamma^{1}_h \right) d\Omega_{\bm X} \nonumber \\
=& - \int_{\Omega_{\bm X}} \bm Q^{1} : \left( \mathbb V^{1} \right)^{-1} : \bm Q^{1} d\Omega_{\bm X} \leq 0.
\end{align*}
In the first set of simulations, we investigate the problem with two spatial meshes and integrate in time untill $5T_0$. The generalized-$\alpha$ method is utilized with a fixed time step size and $\varrho_{\infty} = 1.0$, which recovers the mid-point rule. The evolutions of the total, kinetic, and potential energies are illustrated in Figure \ref{fig:beam_energy}. We can observe the energy decay of all three material models with different dissipation rates. The HS model leads to the fastest dissipation of the energy, while the energies of the IPC and MIPC models dissipate relatively slower. In Figure \ref{fig:beam_pressure_snapshot}, the snapshots of the pressure field on the deformed configuration at time $t/T_0 = 2.5$ are depicted. It is worth pointing out that the pressure field calculated based on the coarse mesh is very close to that of the finer mesh, suggesting the spline-based technology can accurately capture the stress with a relatively coarse mesh. Yet, we also observe excessive oscillations for higher-order methods, as can be seen in the results of MIPC model shown in Figure \ref{fig:beam_pressure_snapshot}. In Figure \ref{fig:beam_energy_gen_alpha_parameter}, the impact of the numerical dissipation from the generalized-$\alpha$ scheme is illustrated. The generalized-$\alpha$ schemes with $\Delta t/T_0 =1.0 \times 10^{-3}$ and $\varrho_{\infty} = 0.0$, $0.5$, and $1.0$ are compared against the results of the mid-point rule with $\Delta t/T_0 = 1.0 \times 10^{-4}$. The numerical dissipation introduced by the time-stepping algorithm is negligible in comparison with the physical dissipation generated by the model from the energy evolution. In the detailed view shown in the three bottom figures of Figure \ref{fig:beam_energy_gen_alpha_parameter}, we see that, as expected, choosing a smaller value of $\varrho_{\infty}$ induces more numerical dissipation since this parameter dictates the dissipation on the high-frequency modes according to the analysis of linear problems. The differences between the two simulations using $\varrho_{\infty} = 1.0$ with different time step sizes are indistinguishable. The HS model leads to the fastest dissipation of the energy.

\begin{figure}[!htbp]
\begin{tabular}{ c c c }
\multicolumn{3}{c}{
\includegraphics[angle=0, trim=150 80 150 100, clip=true, scale = 0.45]{./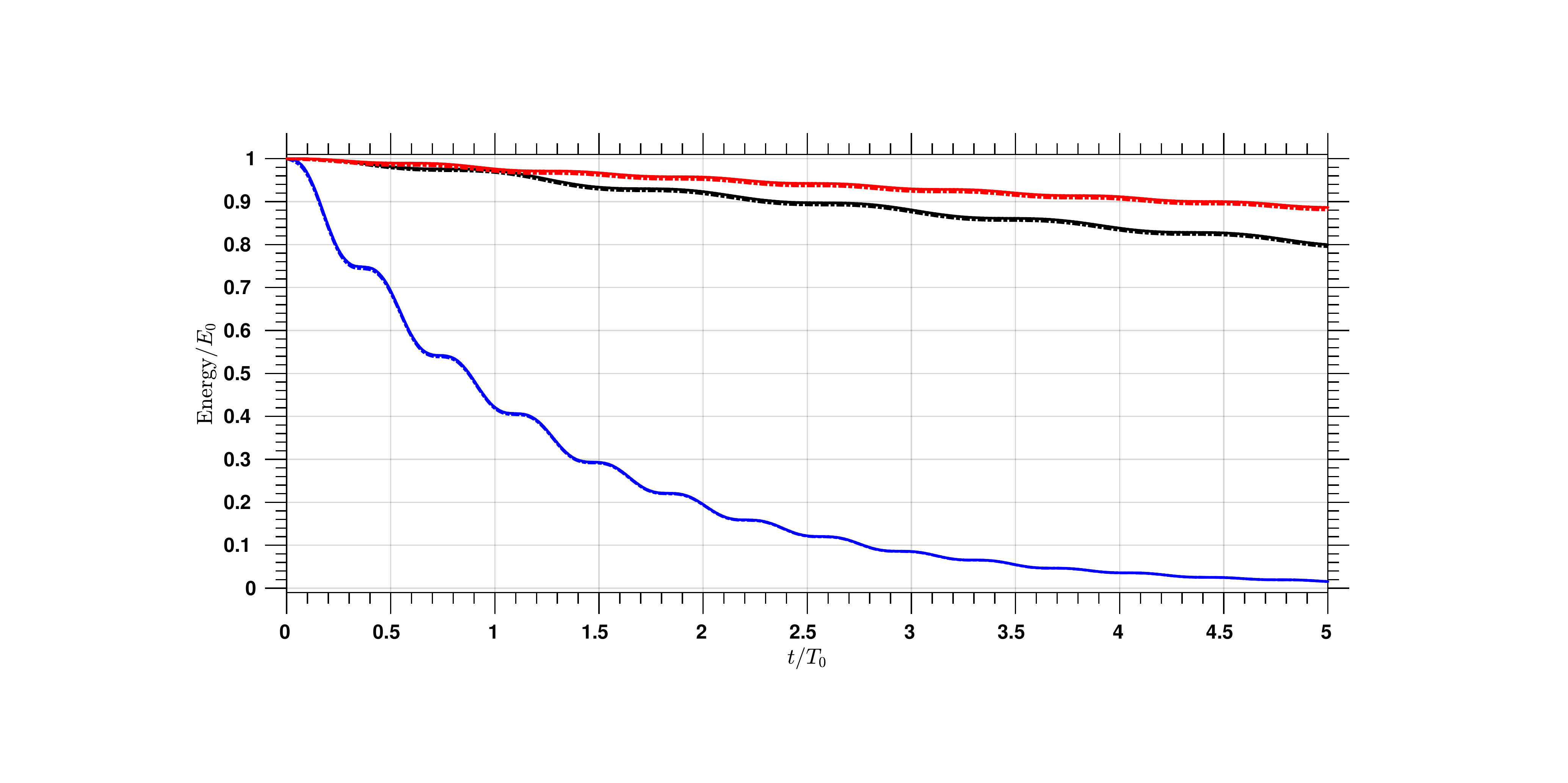} } \\
\includegraphics[angle=0, trim=80 80 150 100, clip=true, scale = 0.21]{./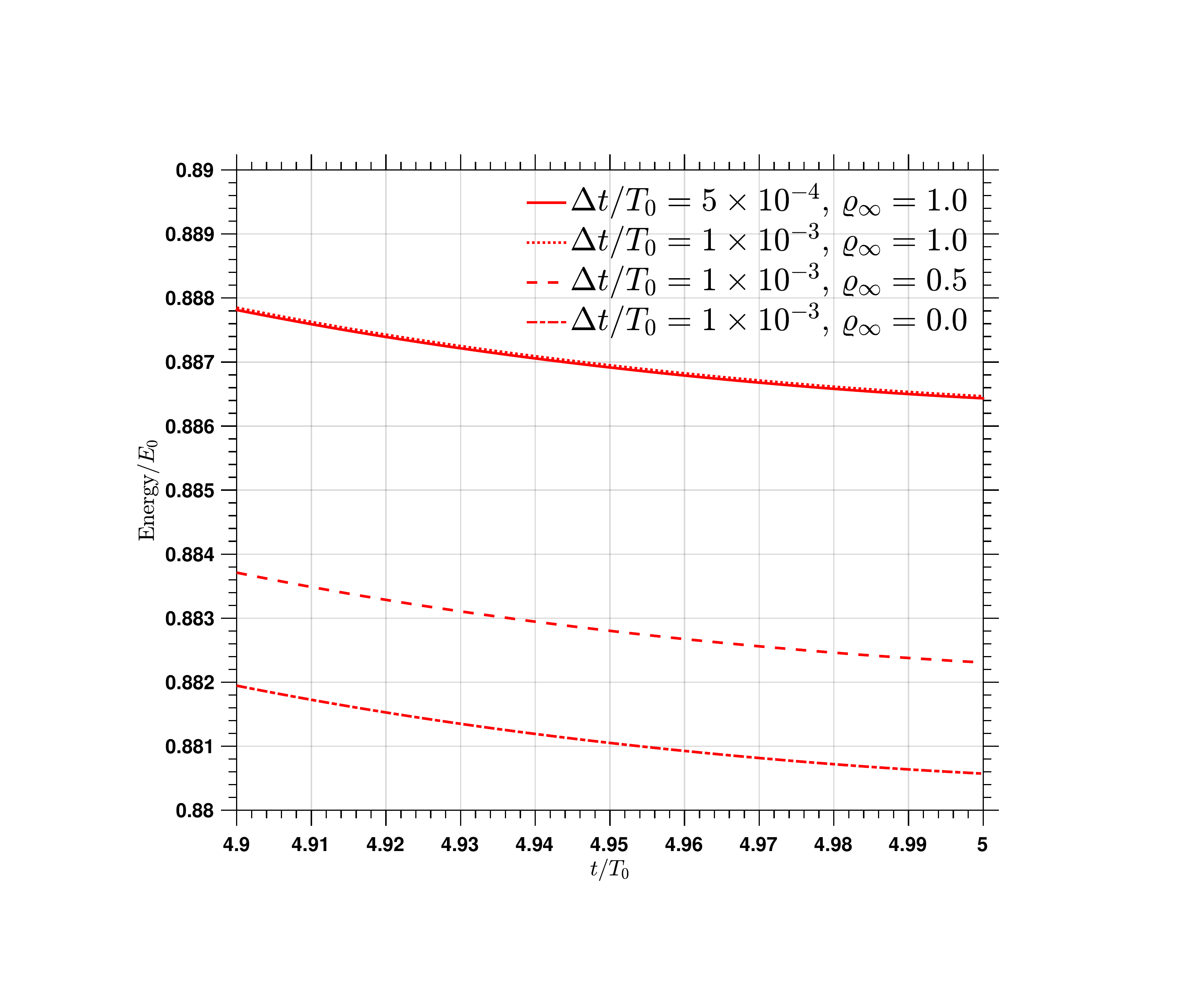} &
\includegraphics[angle=0, trim=80 80 150 100, clip=true, scale = 0.21]{./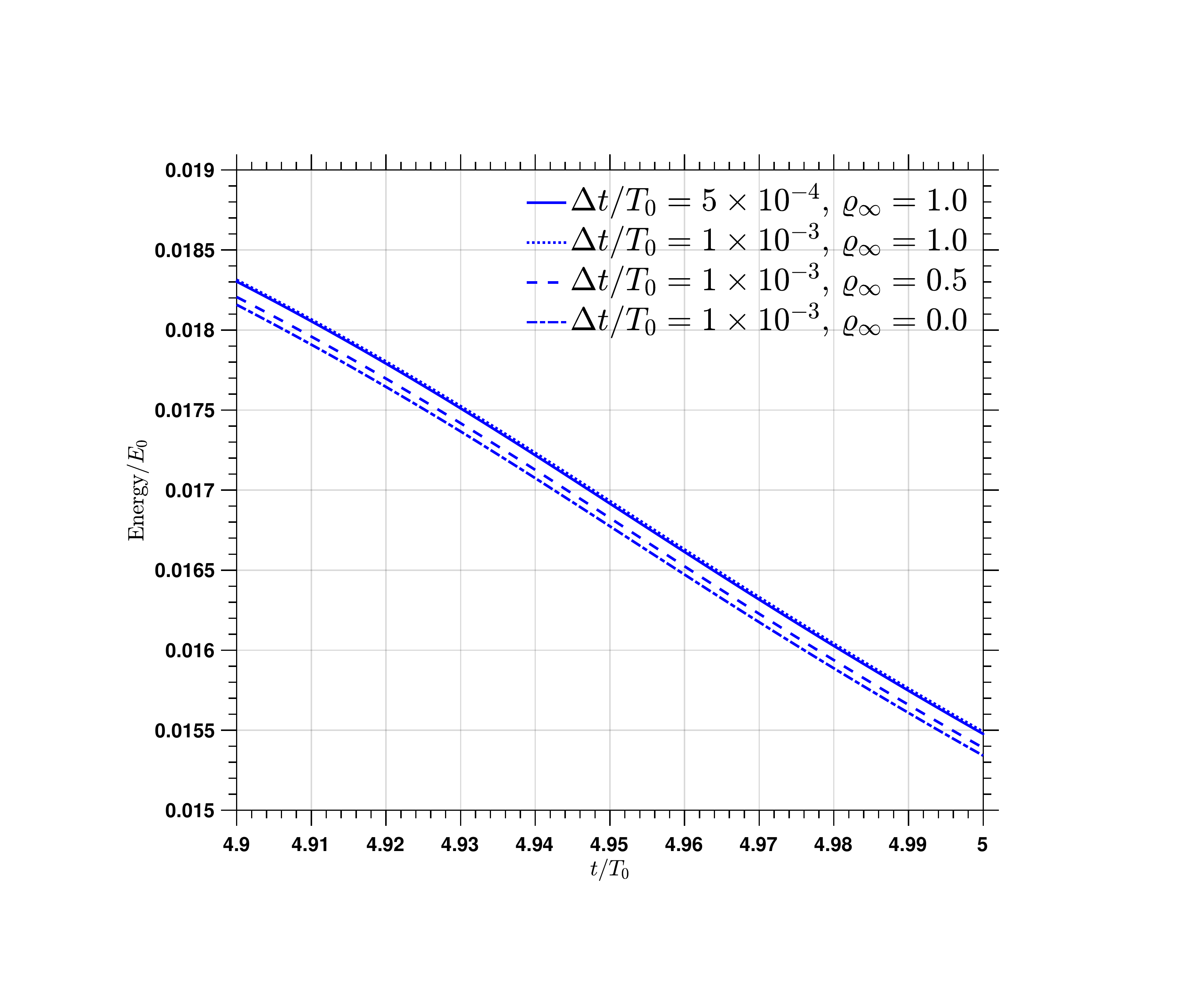} &
\includegraphics[angle=0, trim=80 80 150 100, clip=true, scale = 0.21]{./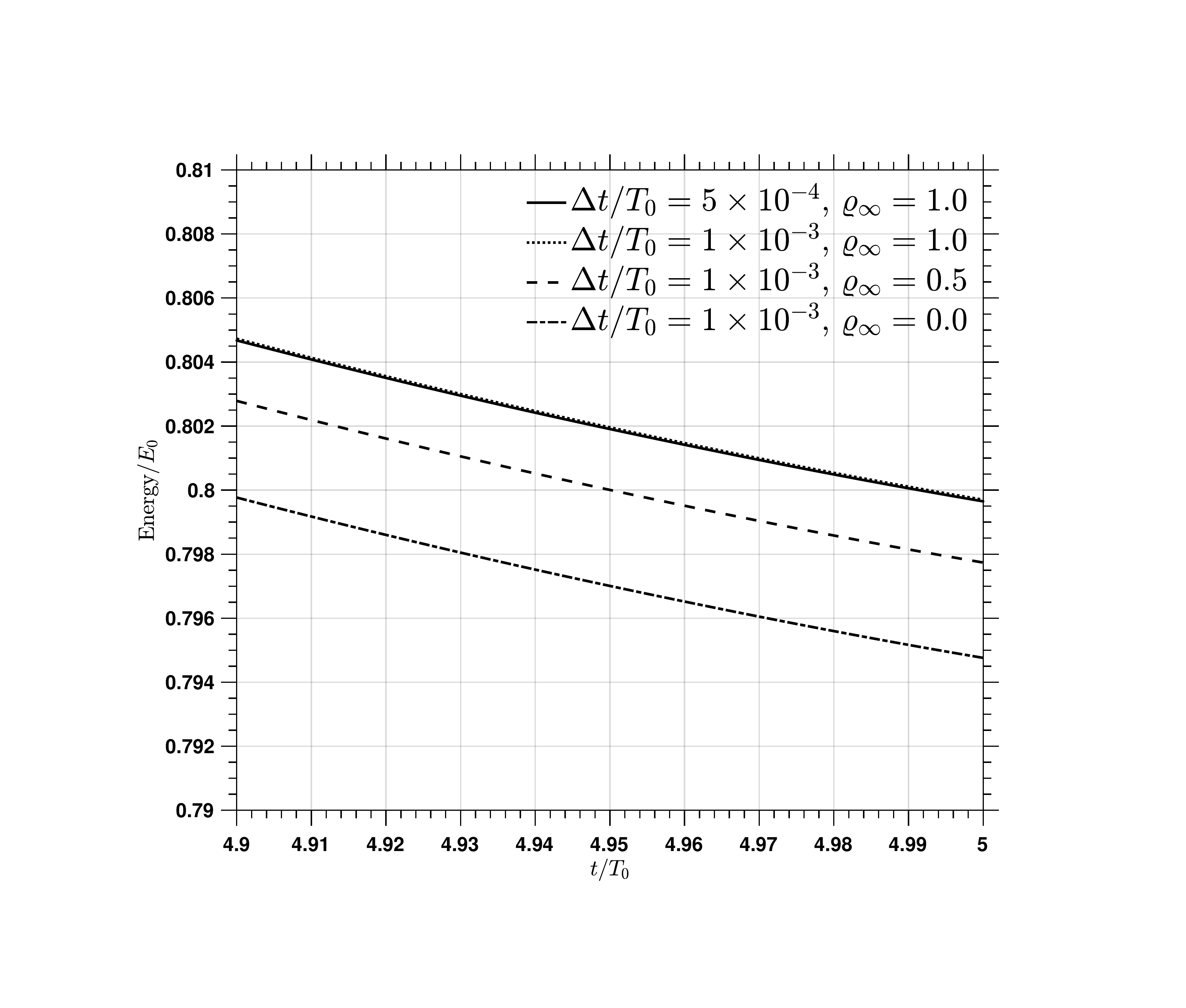} \\
IPC & HS & MIPC
\end{tabular}
\caption{The total energies of the IPC (red), HS (blue), and MIPC (black) models over time. The simulations are performed with a spatial mesh with $\mathsf p = 2$, $\mathsf a=1$, $\mathsf b = 0$, and $5 \times 5 \times 30$ elements. The reference value $E_0$ is chosen to be the total energy at time $t/T_0=0$, which is $1.1\times 10^5$ kg m$^2$/s$^2$. Detailed view of the energies in the vicinity of $t/T_0=5$ is depicted in the bottom.} 
\label{fig:beam_energy_gen_alpha_parameter}
\end{figure}

\begin{figure}[!htbp]
\begin{tabular}{ c c c c c c c c c }
\multicolumn{9}{c}{
\includegraphics[angle=0, trim=0 0 280 750, clip=true, scale = 0.26]{./figures/t2d5-pressure-color-bar-horizontal.jpeg}
} \\
\includegraphics[angle=0, trim=530 0 600 0, clip=true, scale = 0.11]{./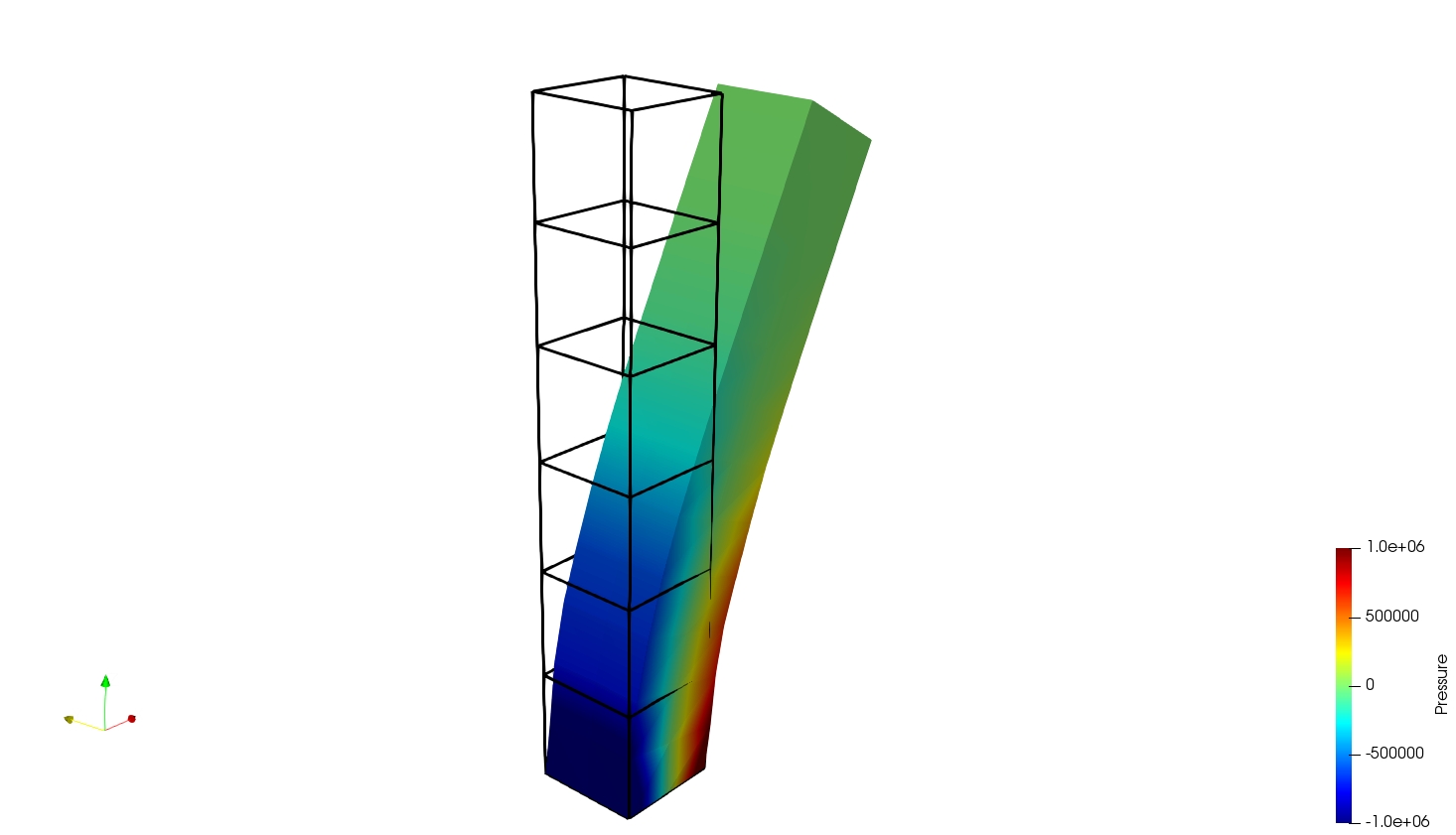} &
\includegraphics[angle=0, trim=530 0 600 0, clip=true, scale = 0.11]{./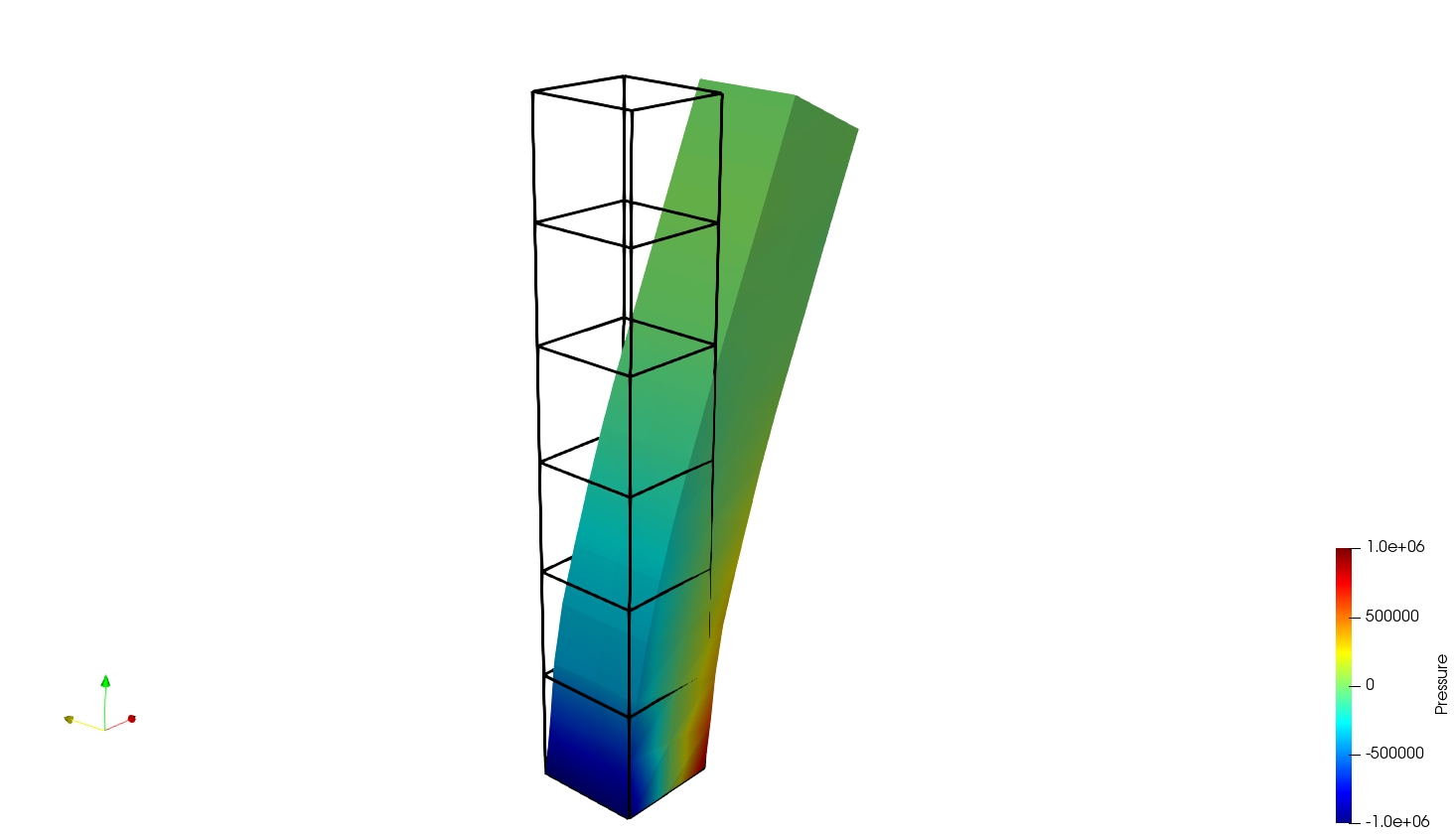} &
\includegraphics[angle=0, trim=530 0 600 0, clip=true, scale = 0.11]{./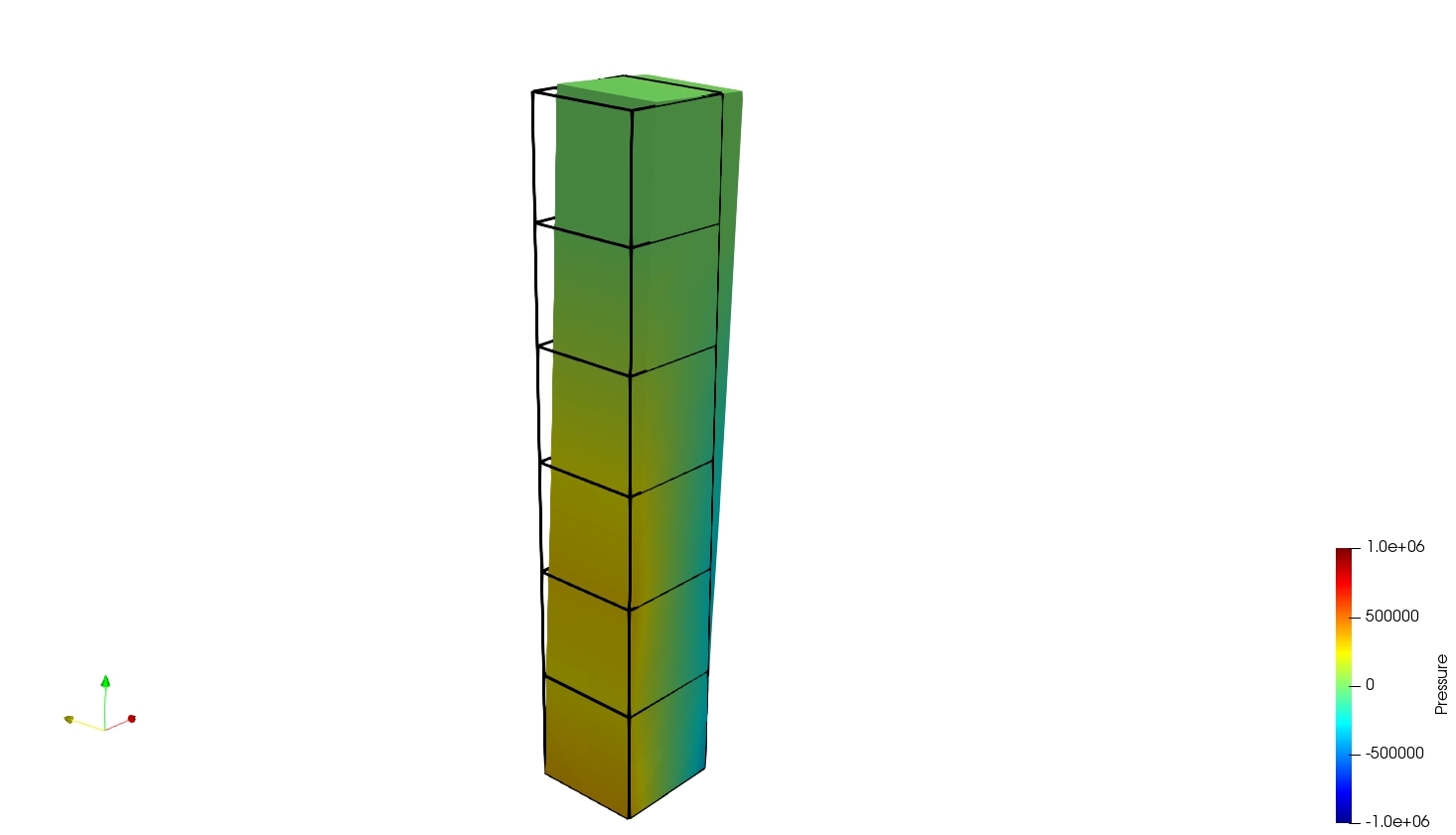} &
\includegraphics[angle=0, trim=470 0 680 0, clip=true, scale = 0.11]{./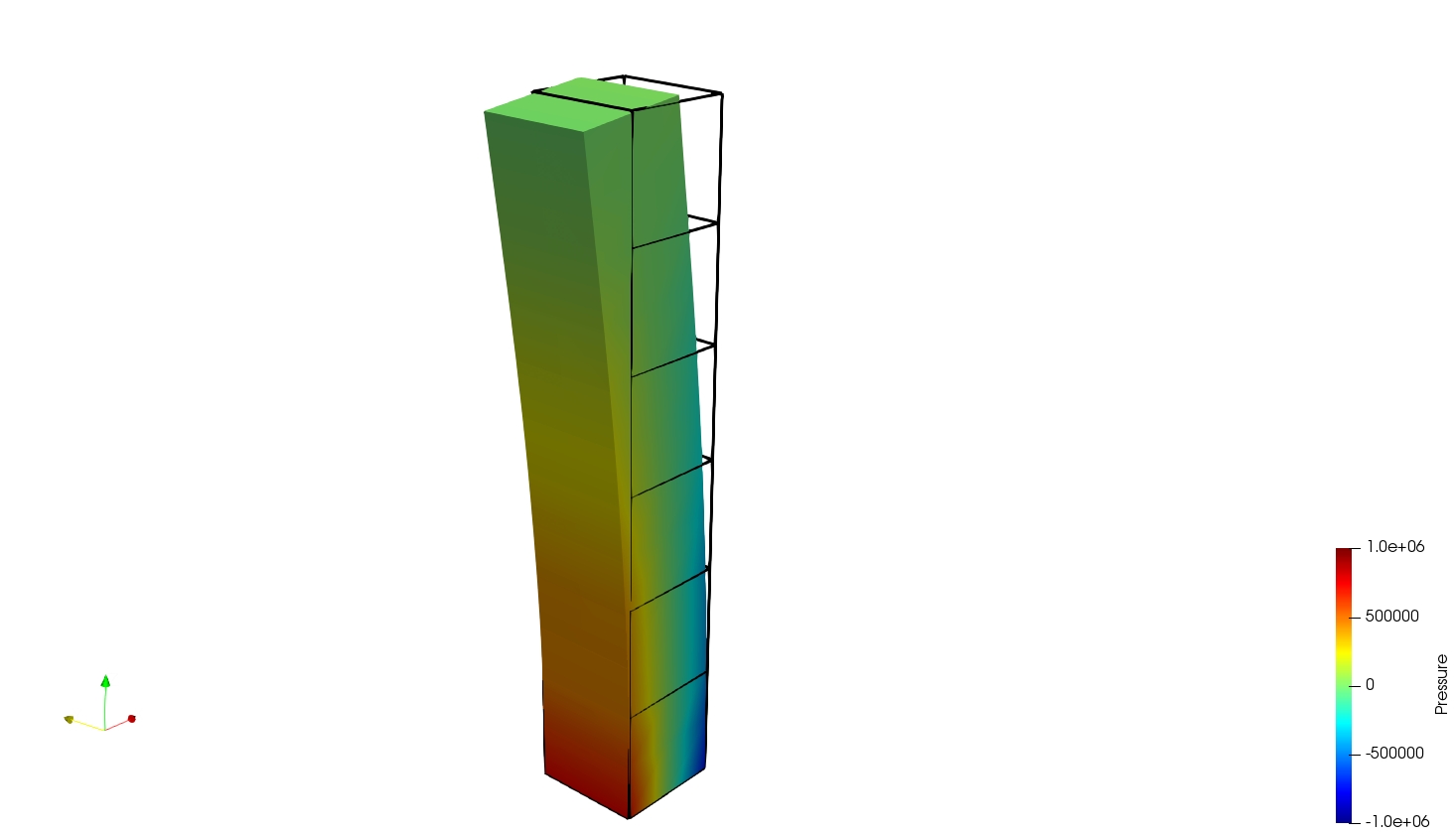} &
\includegraphics[angle=0, trim=530 0 600 0, clip=true, scale = 0.11]{./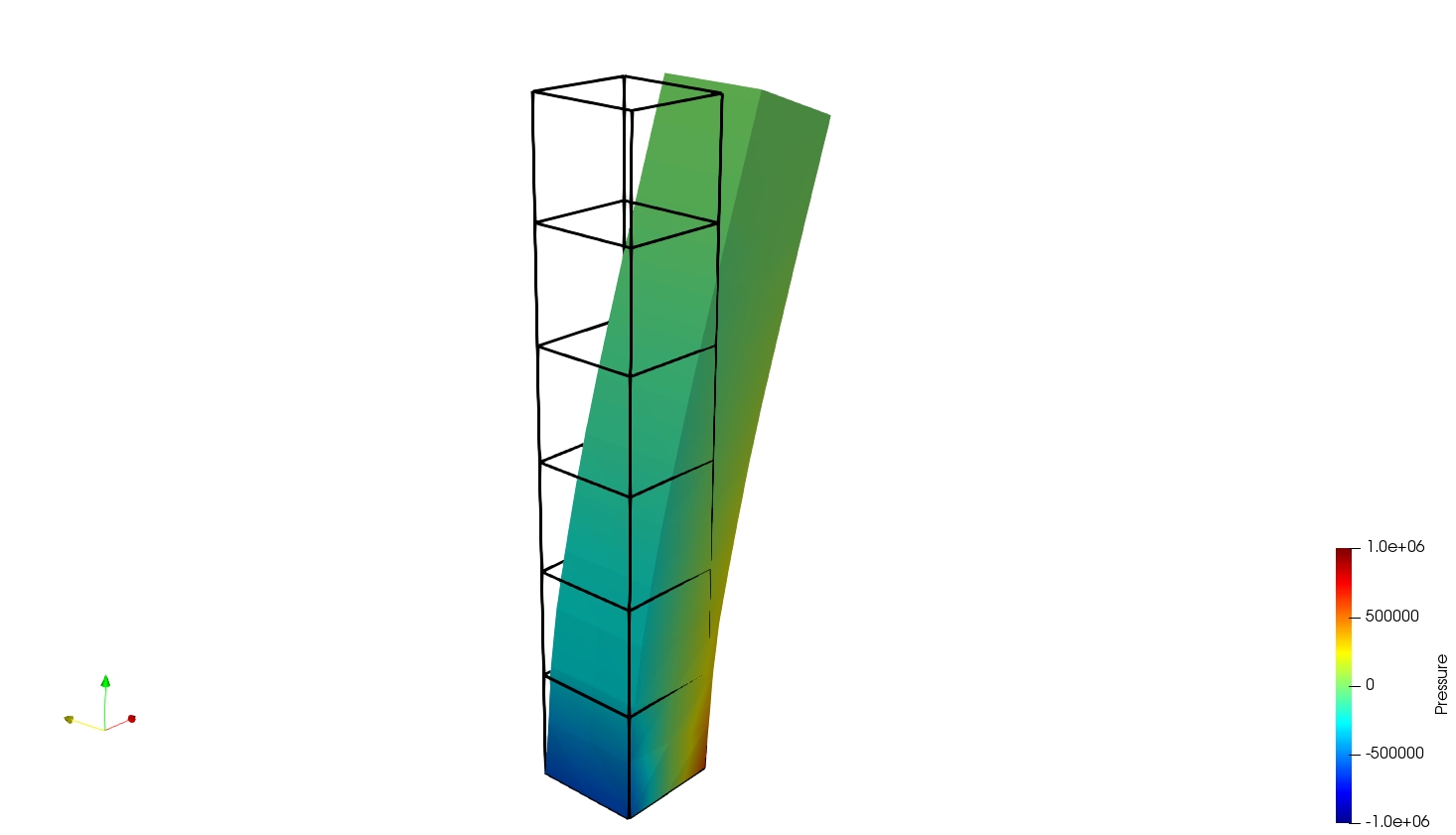} &
\includegraphics[angle=0, trim=530 0 600 0, clip=true, scale = 0.11]{./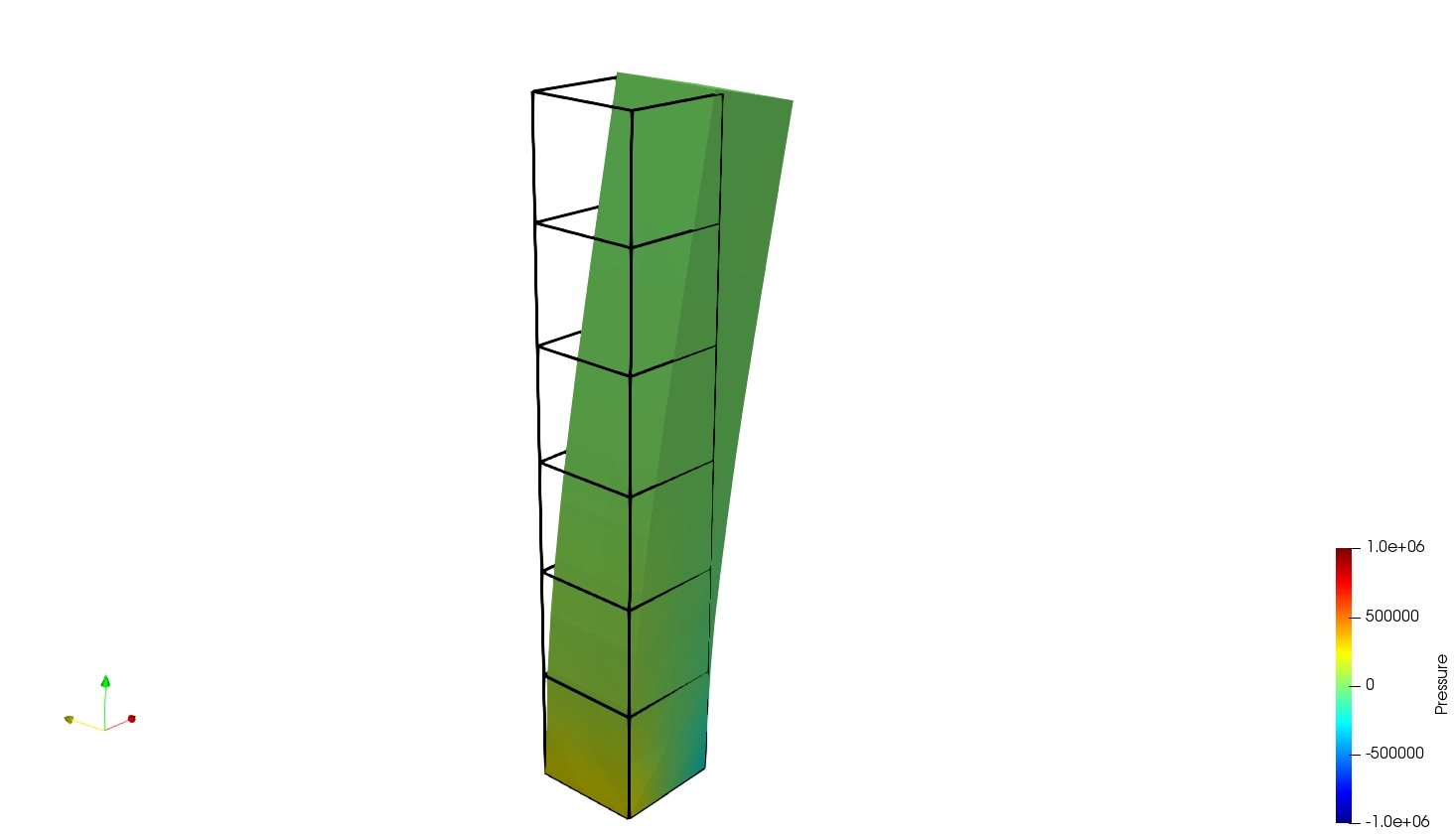} &
\includegraphics[angle=0, trim=530 0 600 0, clip=true, scale = 0.11]{./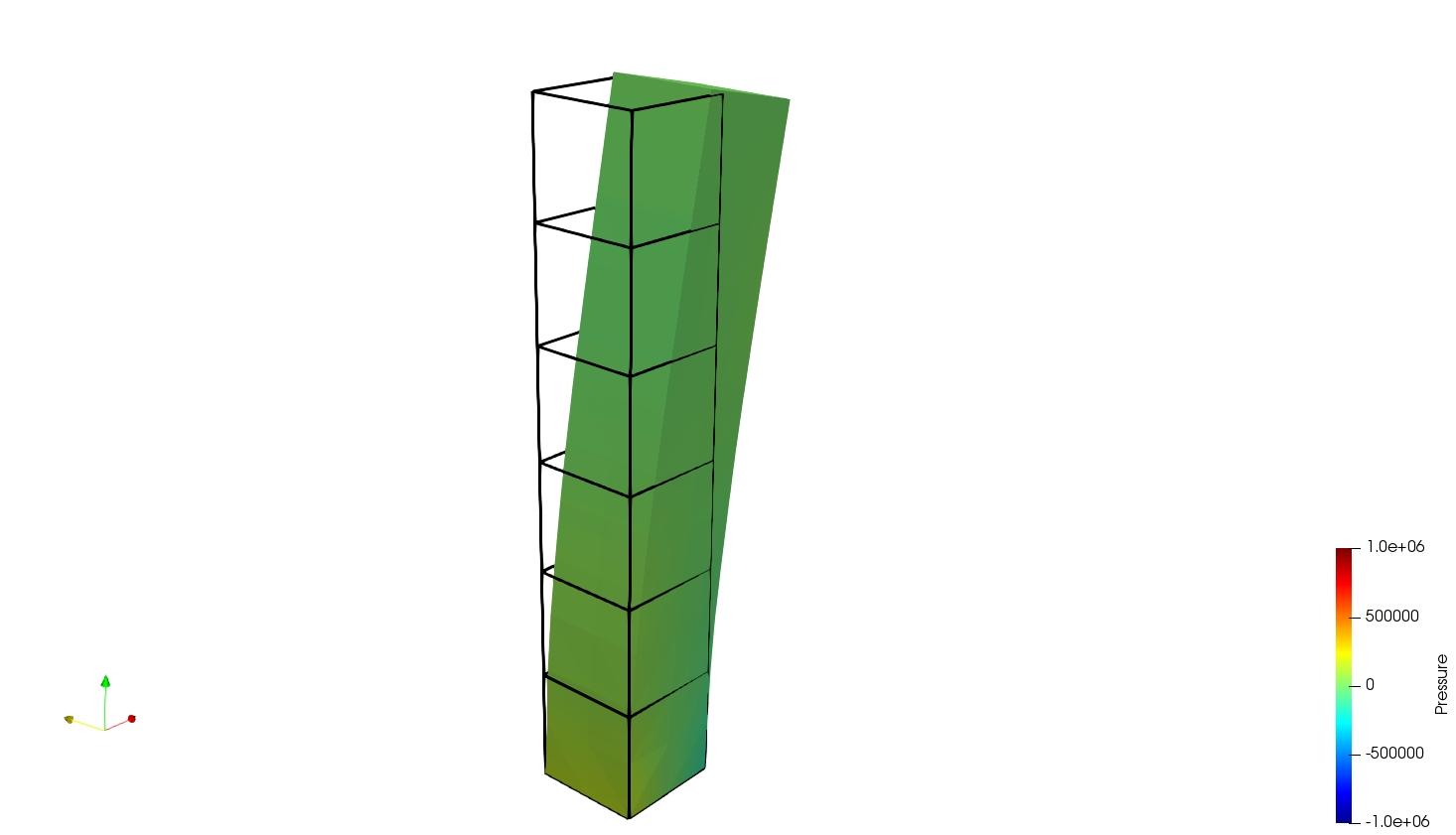} &
\includegraphics[angle=0, trim=530 0 600 0, clip=true, scale = 0.11]{./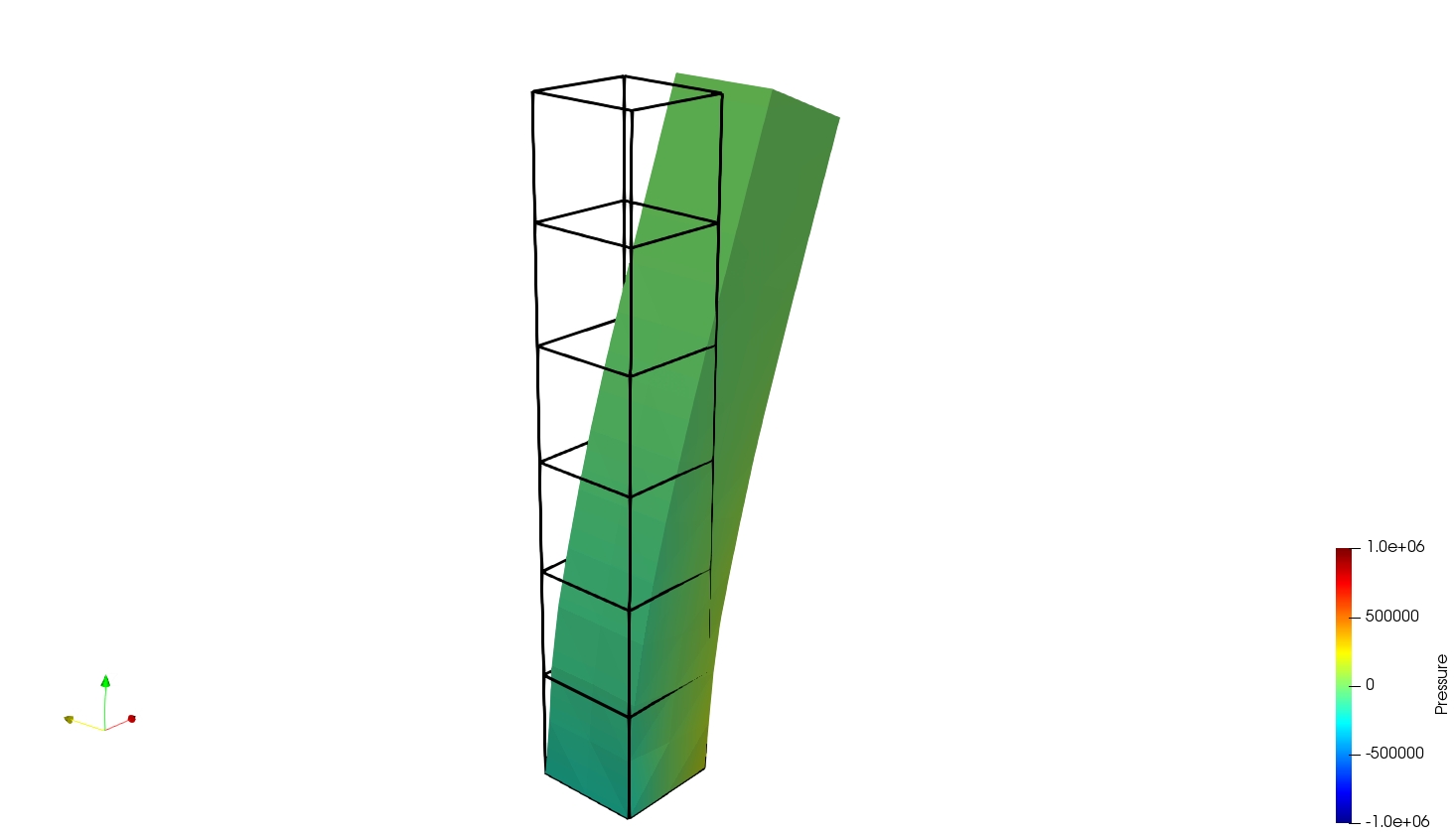} &
\includegraphics[angle=0, trim=530 0 600 0, clip=true, scale = 0.11]{./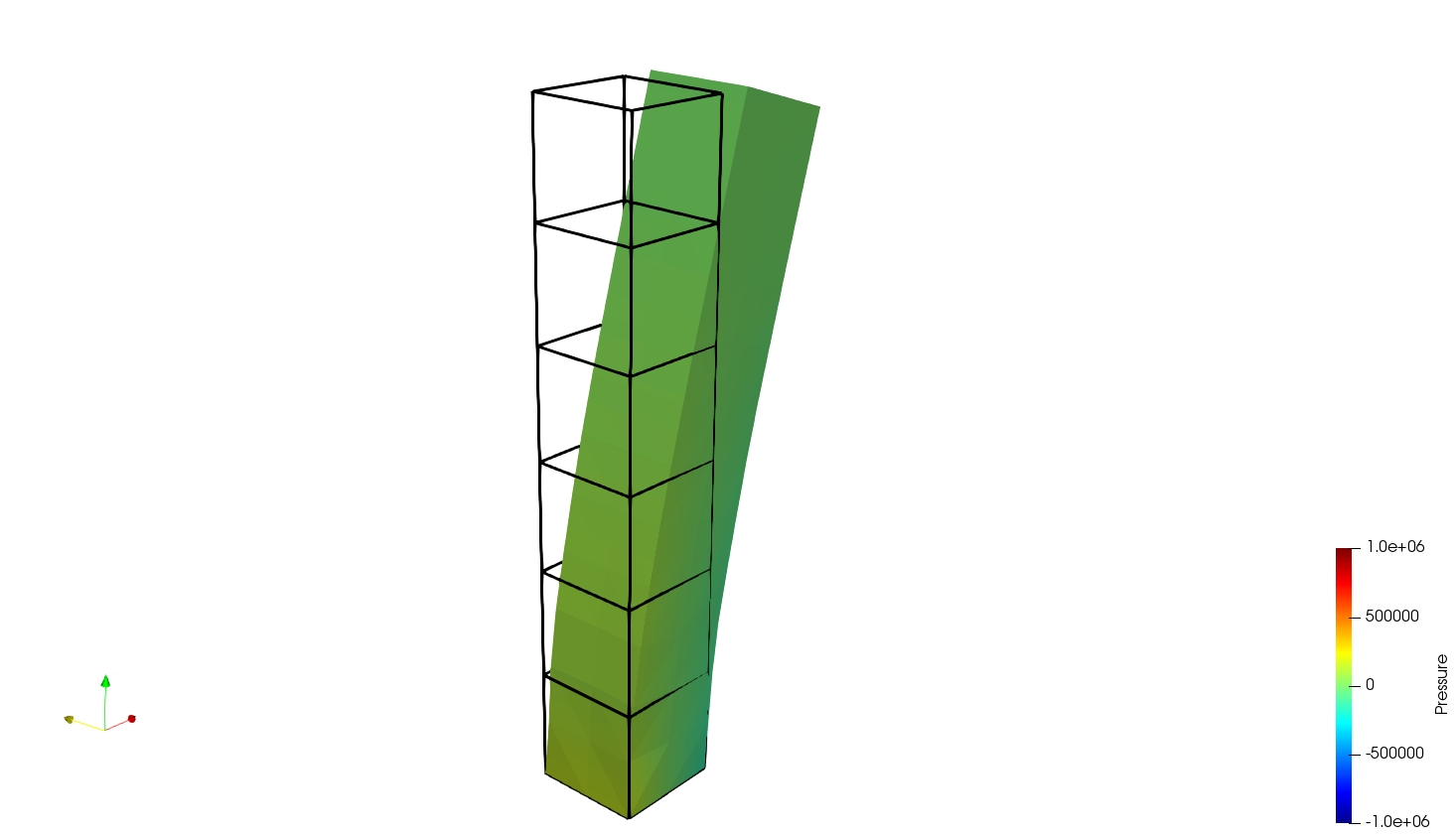} \\
\includegraphics[angle=0, trim=530 0 600 0, clip=true, scale = 0.11]{./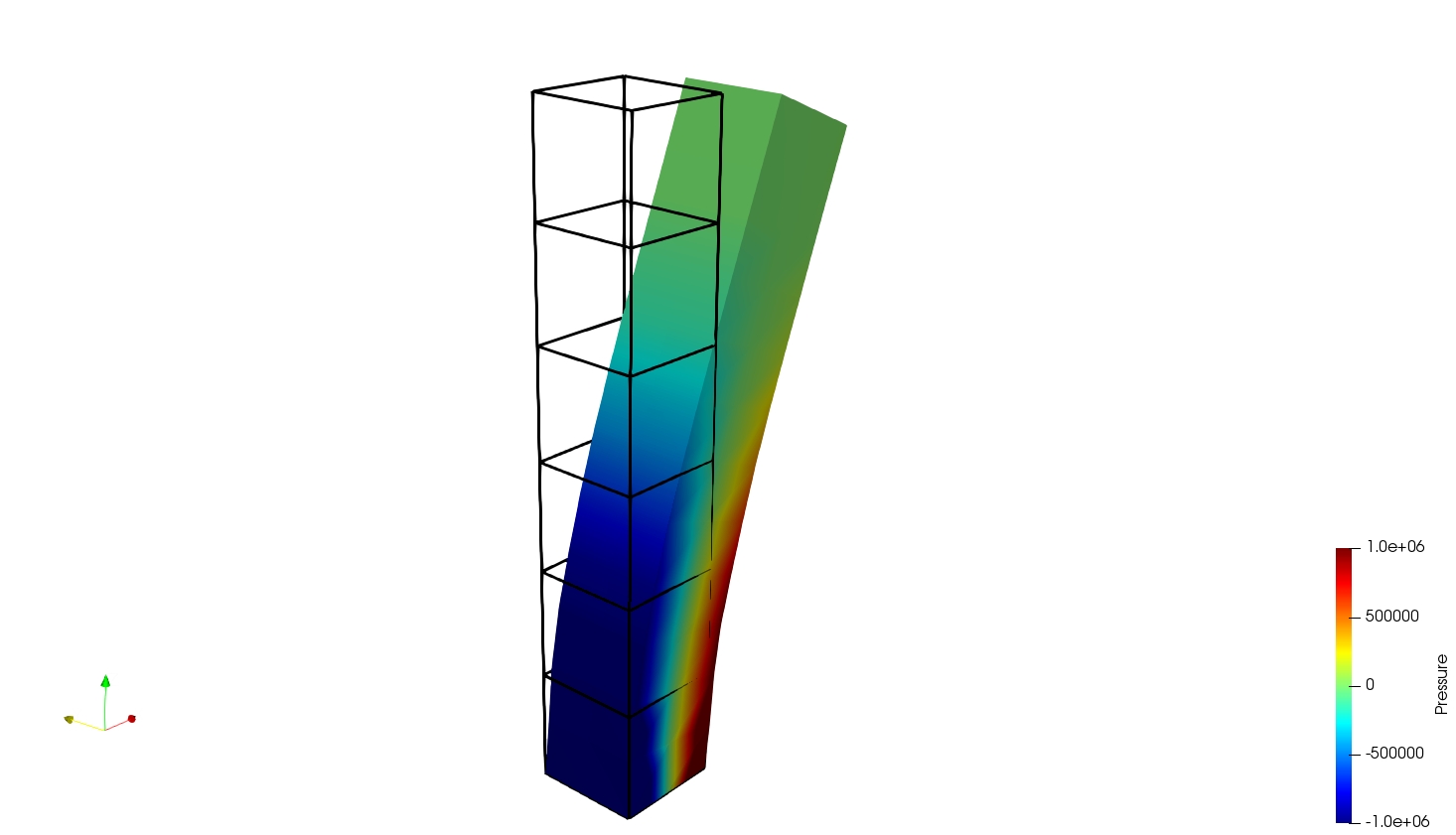} &
\includegraphics[angle=0, trim=530 0 720 0, clip=true, scale = 0.11]{./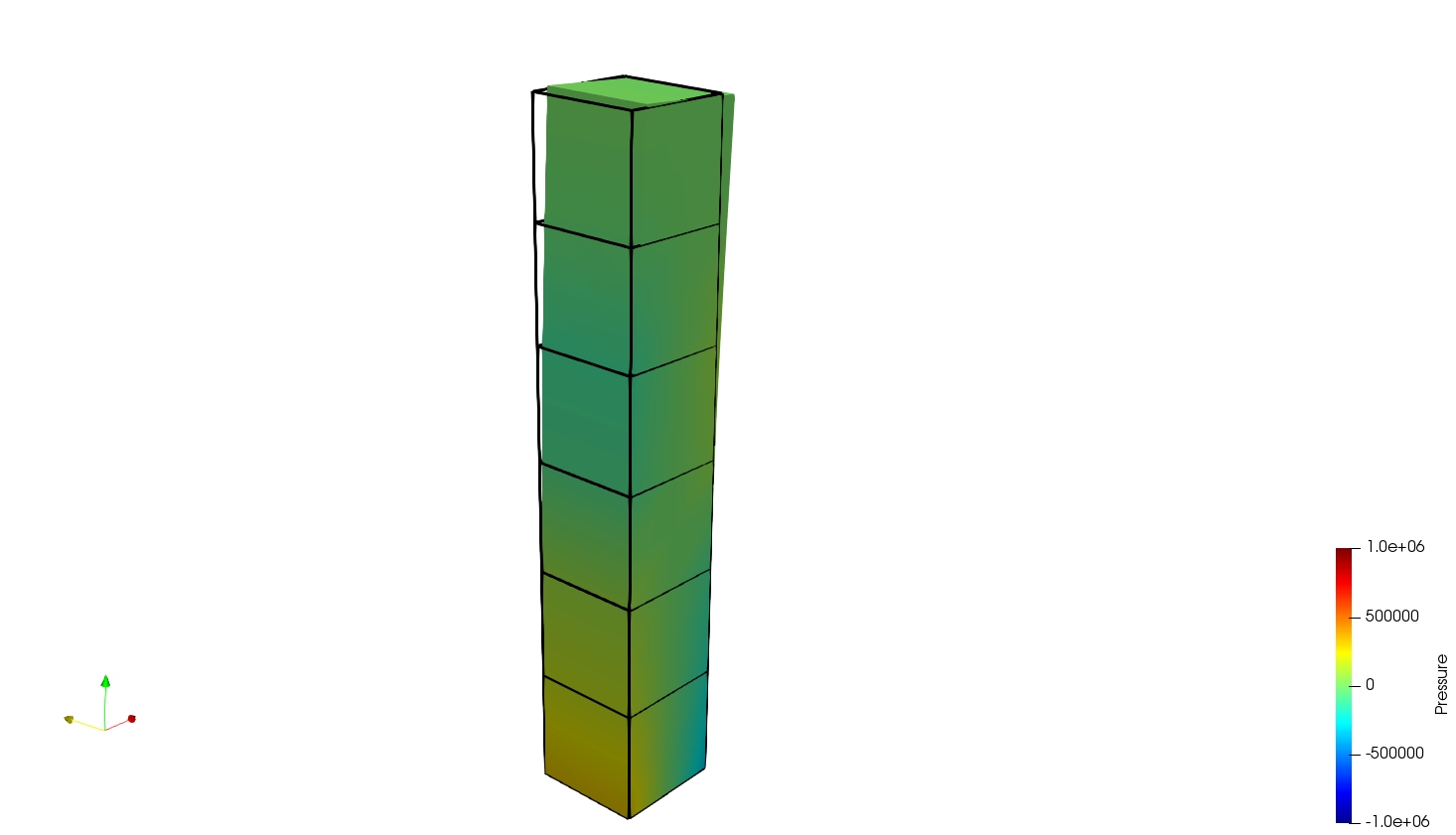} &
\includegraphics[angle=0, trim=400 0 720 0, clip=true, scale = 0.11]{./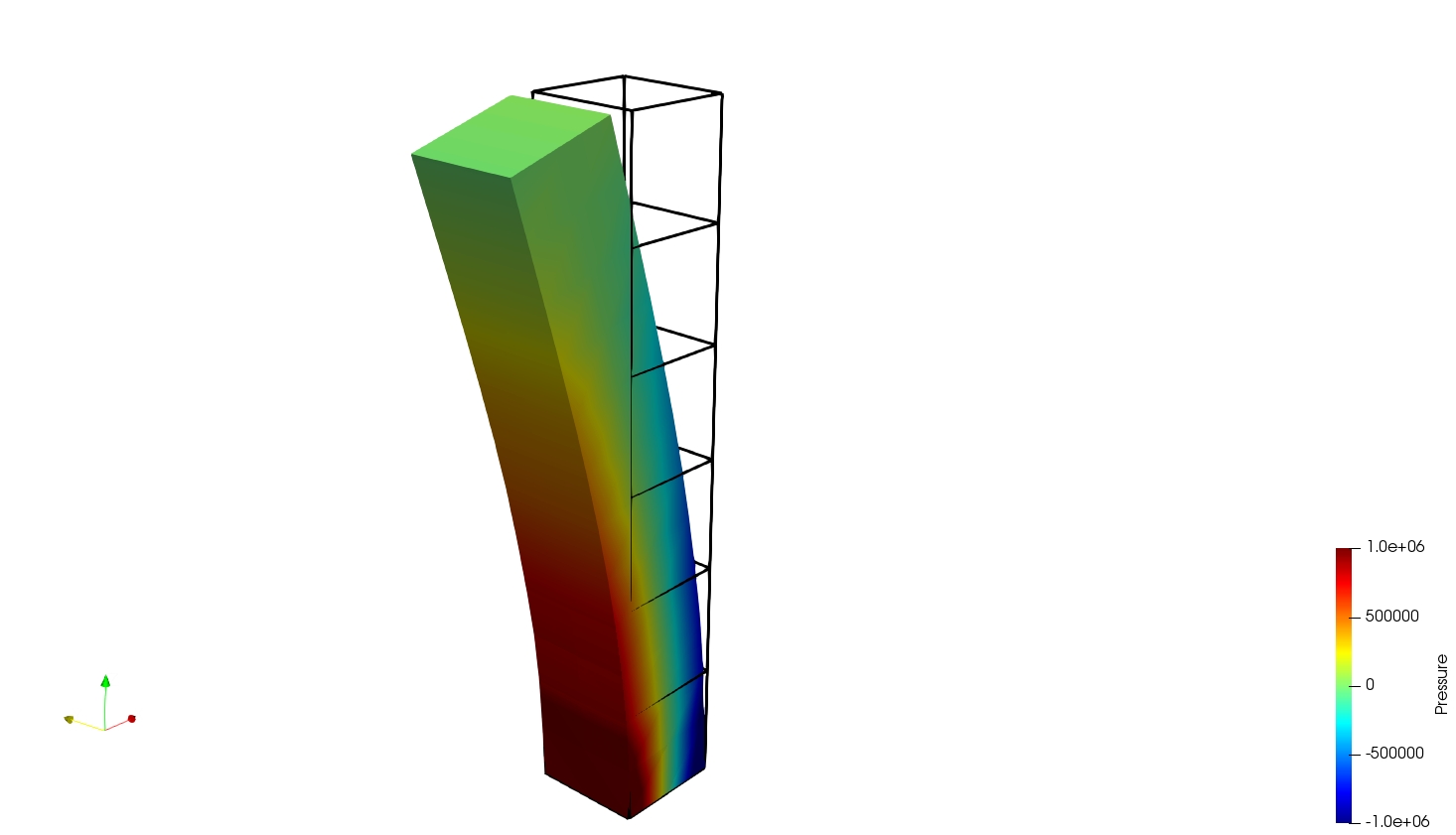} &
\includegraphics[angle=0, trim=530 0 680 0, clip=true, scale = 0.11]{./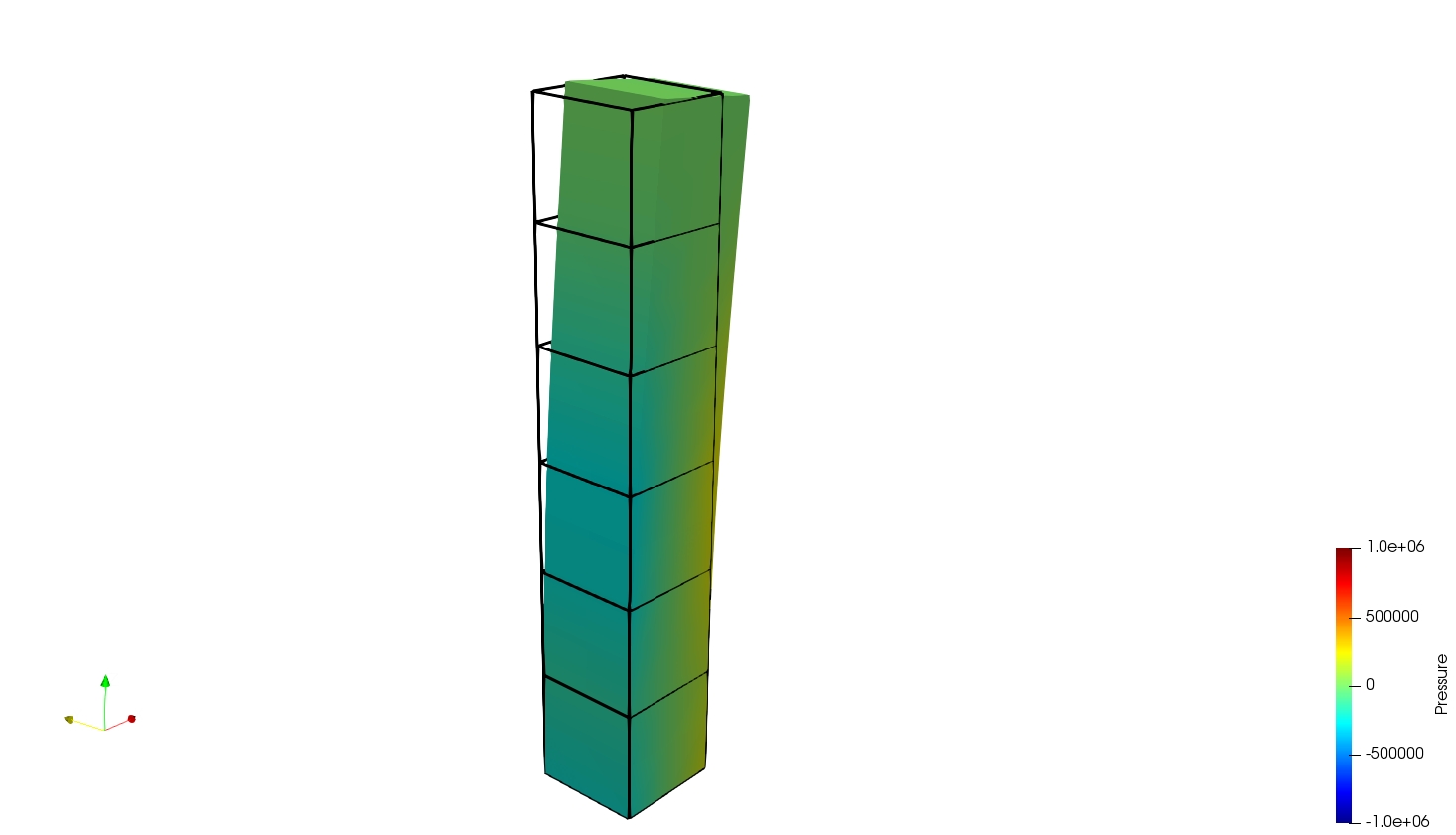} &
\includegraphics[angle=0, trim=530 0 700 0, clip=true, scale = 0.11]{./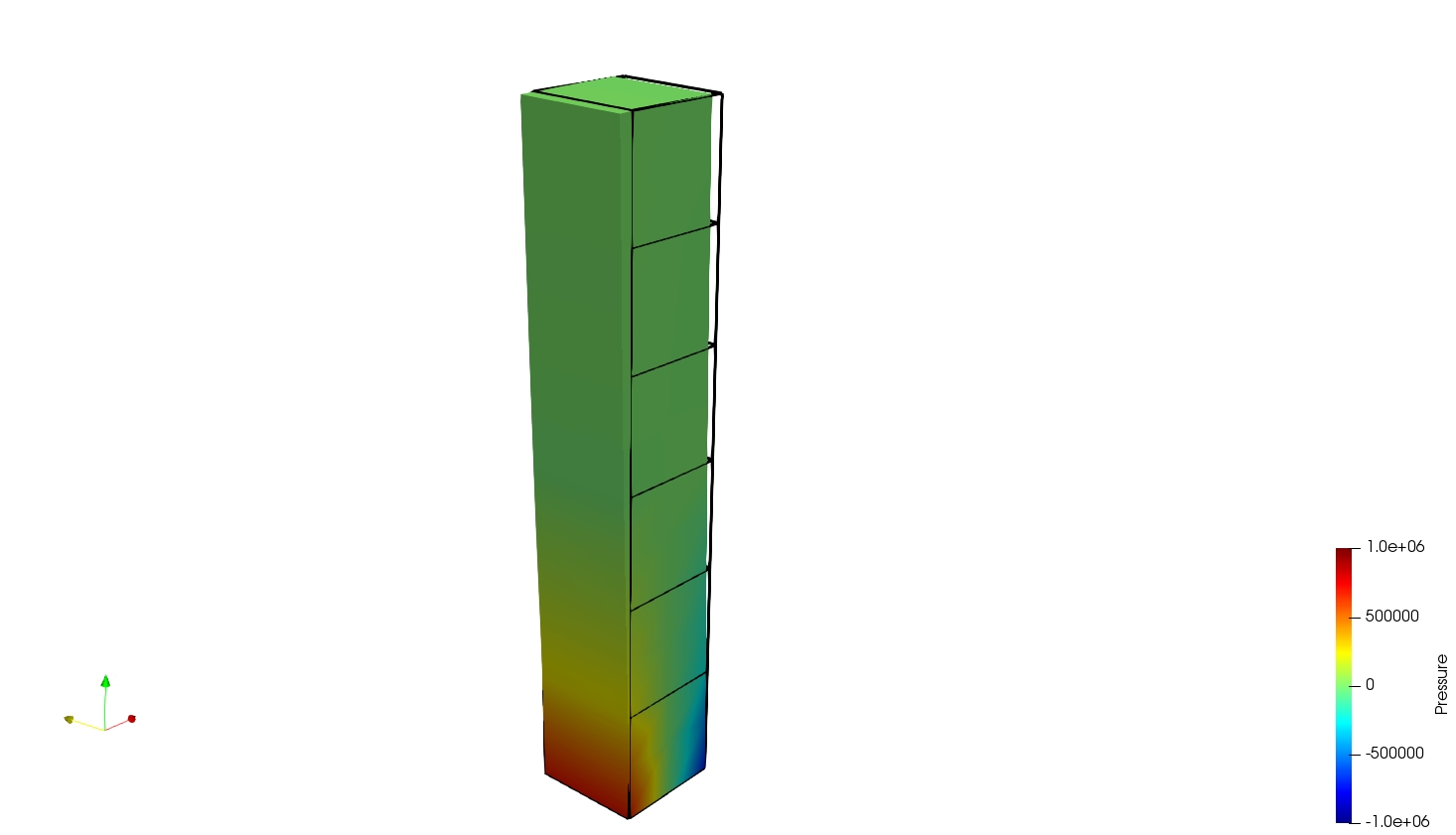} &
\includegraphics[angle=0, trim=530 0 680 0, clip=true, scale = 0.11]{./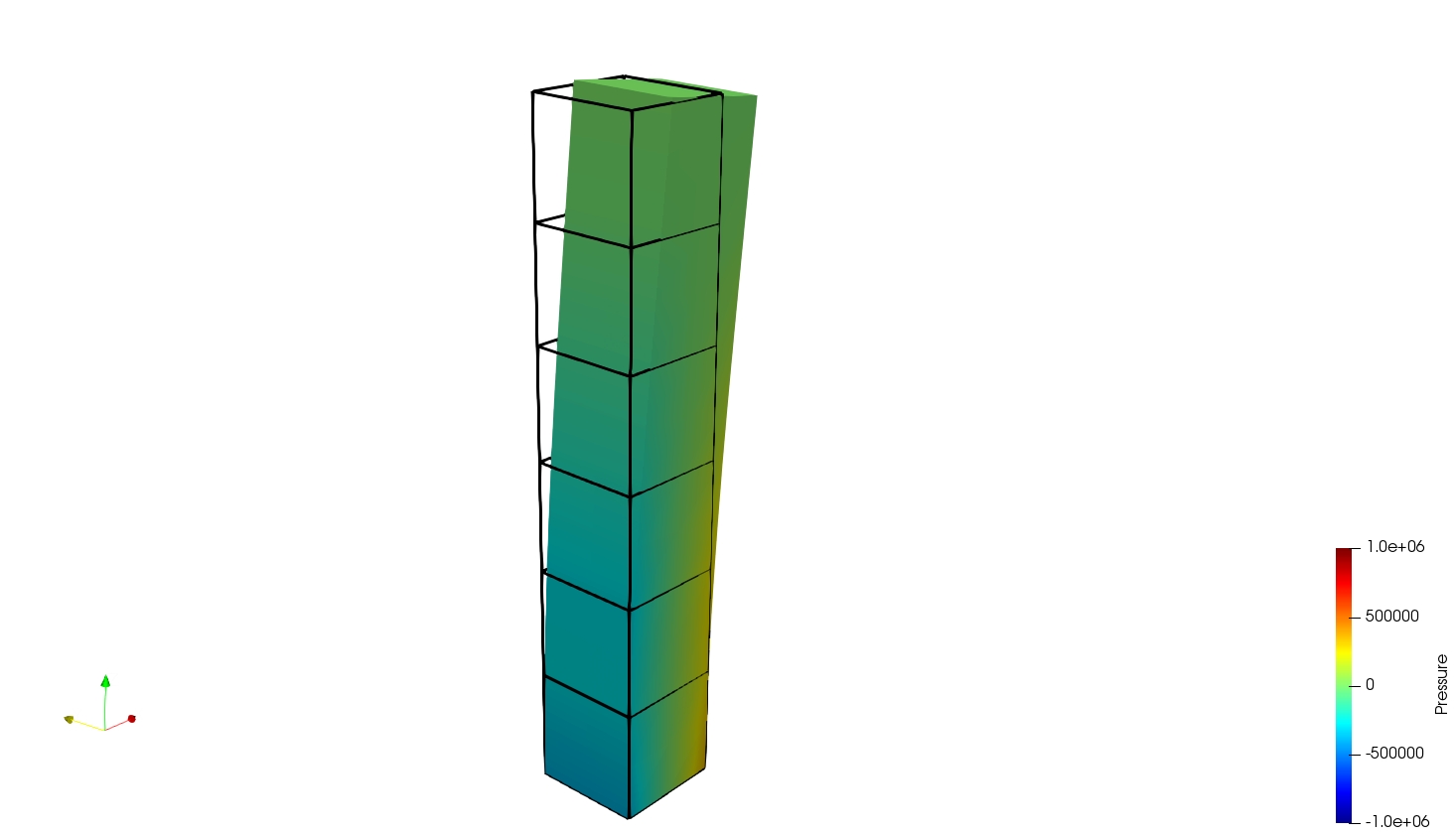} &
\includegraphics[angle=0, trim=480 0 680 0, clip=true, scale = 0.11]{./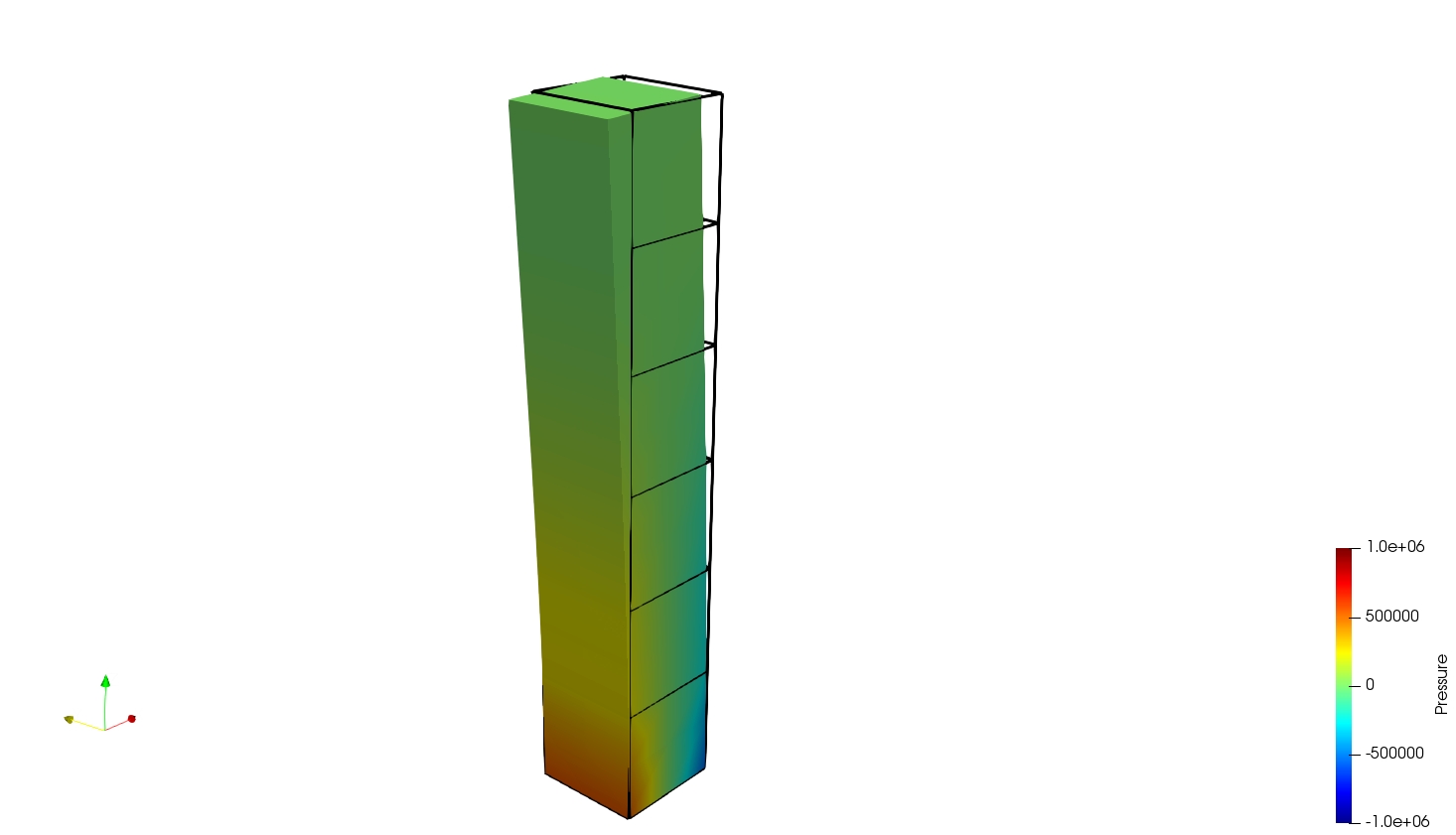} &
\includegraphics[angle=0, trim=530 0 680 0, clip=true, scale = 0.11]{./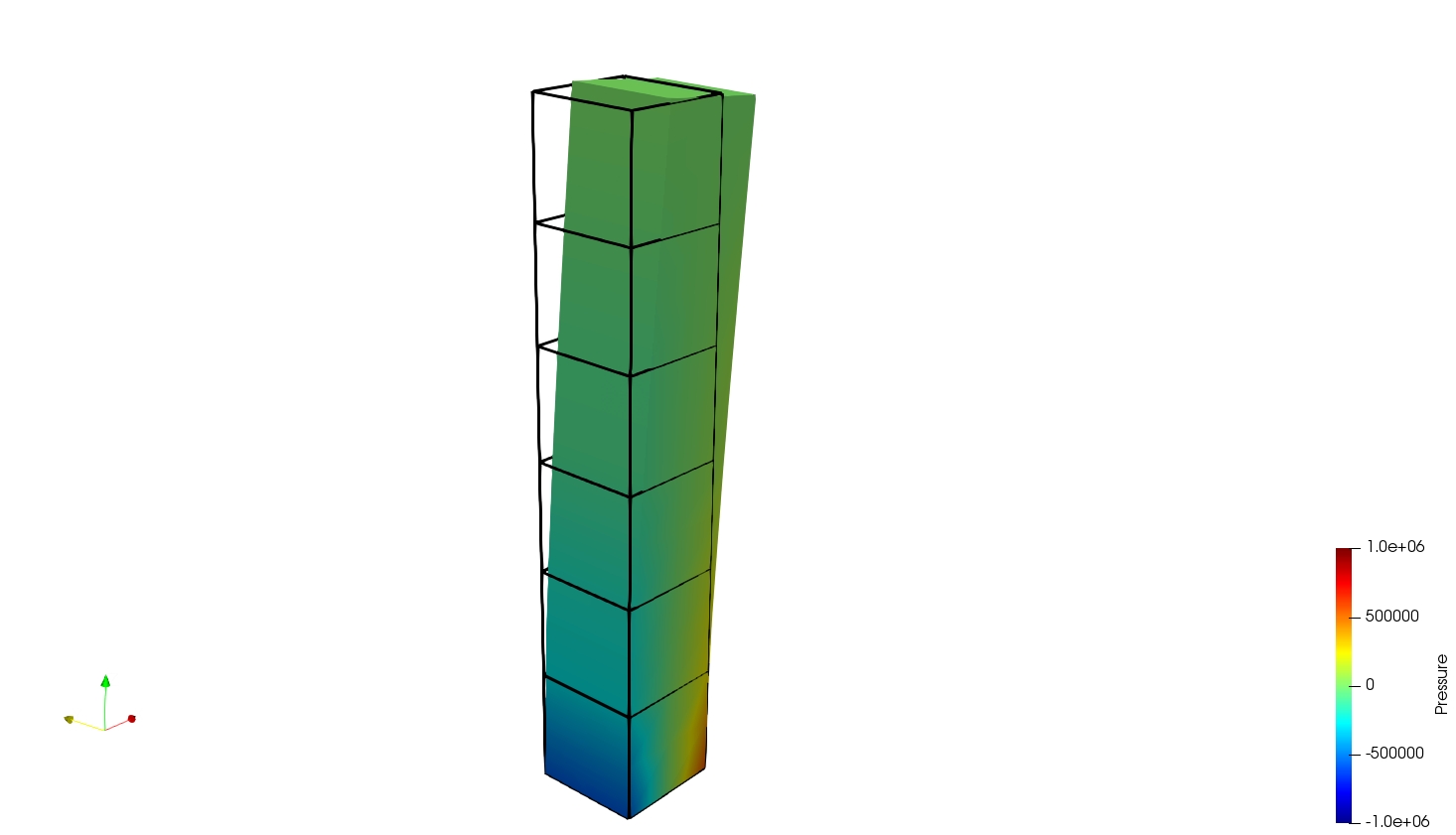} &
\includegraphics[angle=0, trim=480 0 680 0, clip=true, scale = 0.11]{./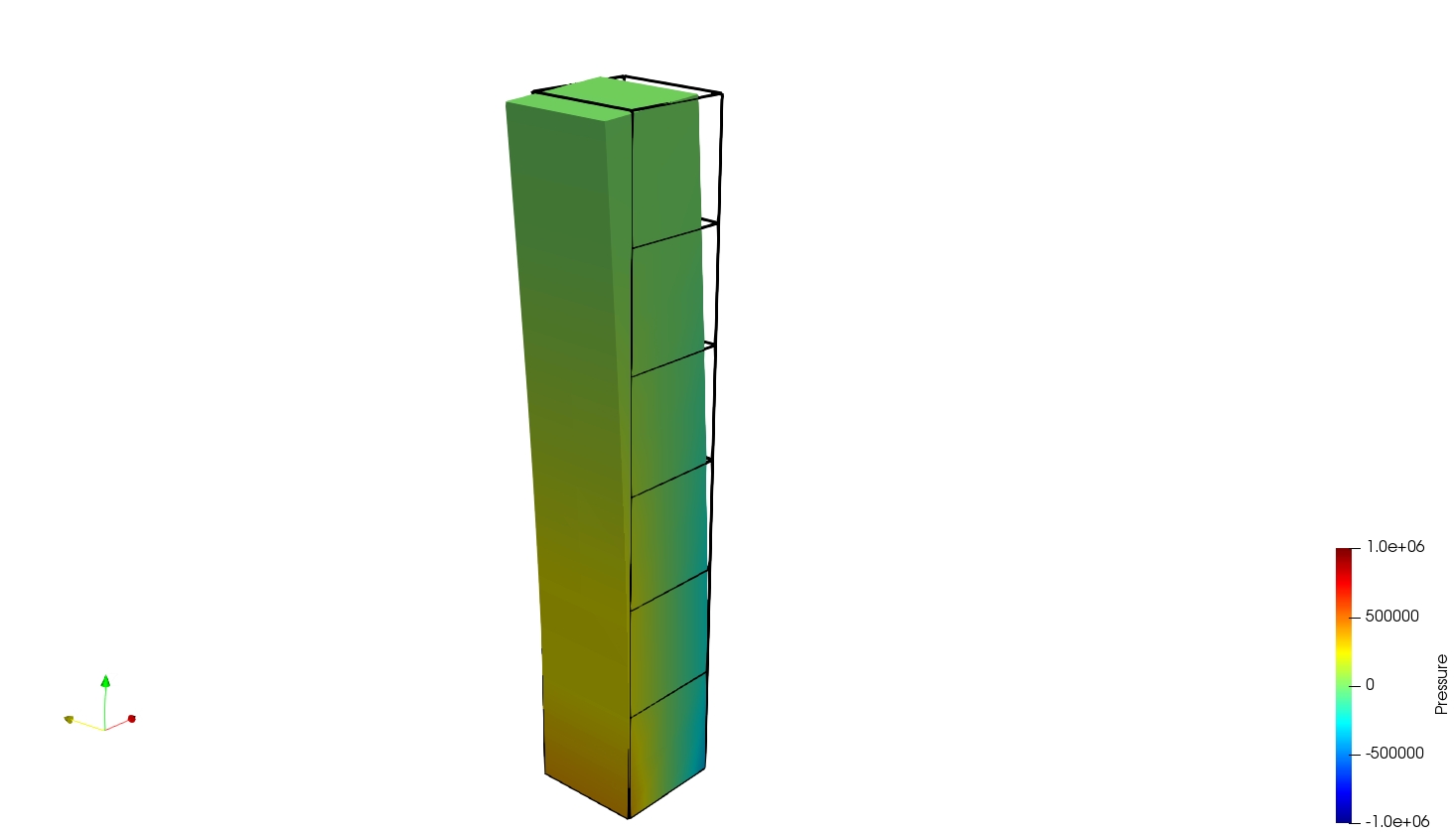} \\
$\frac{t}{T_0}=0.25$ & $0.50$ & $0.75$ & $1.00$ & $1.50$ & $2.00$ & $2.50$ & $3.00$ & $3.50$
\end{tabular}
\caption{The snapshots of the pressure fields at nice time instances of the IPC (top) and MIPC (bottom) models with a fixed time step size $\Delta t/T_0 = 1 \times 10^{-3}$ plotted on the deformed configuration. The material parameters are the same as those reported in Table \ref{table:3d_beam_bending_benchmark_geometry}, except $\mu^1 = 0.2 c_1$ here. The simulations are performed using a spatial mesh with $\mathsf p = 1$, $\mathsf a=1$, $\mathsf b = 0$, and $1 \times 1 \times 6$ elements. The meshes at the initial time are plotted as the black grid.} 
\label{fig:compare-IPC-MIPC}
\end{figure}

\begin{figure}[!htbp]
\begin{center}
\begin{tabular}{c}
\includegraphics[angle=0, trim=85 90 130 80, clip=true, scale = 0.45]{./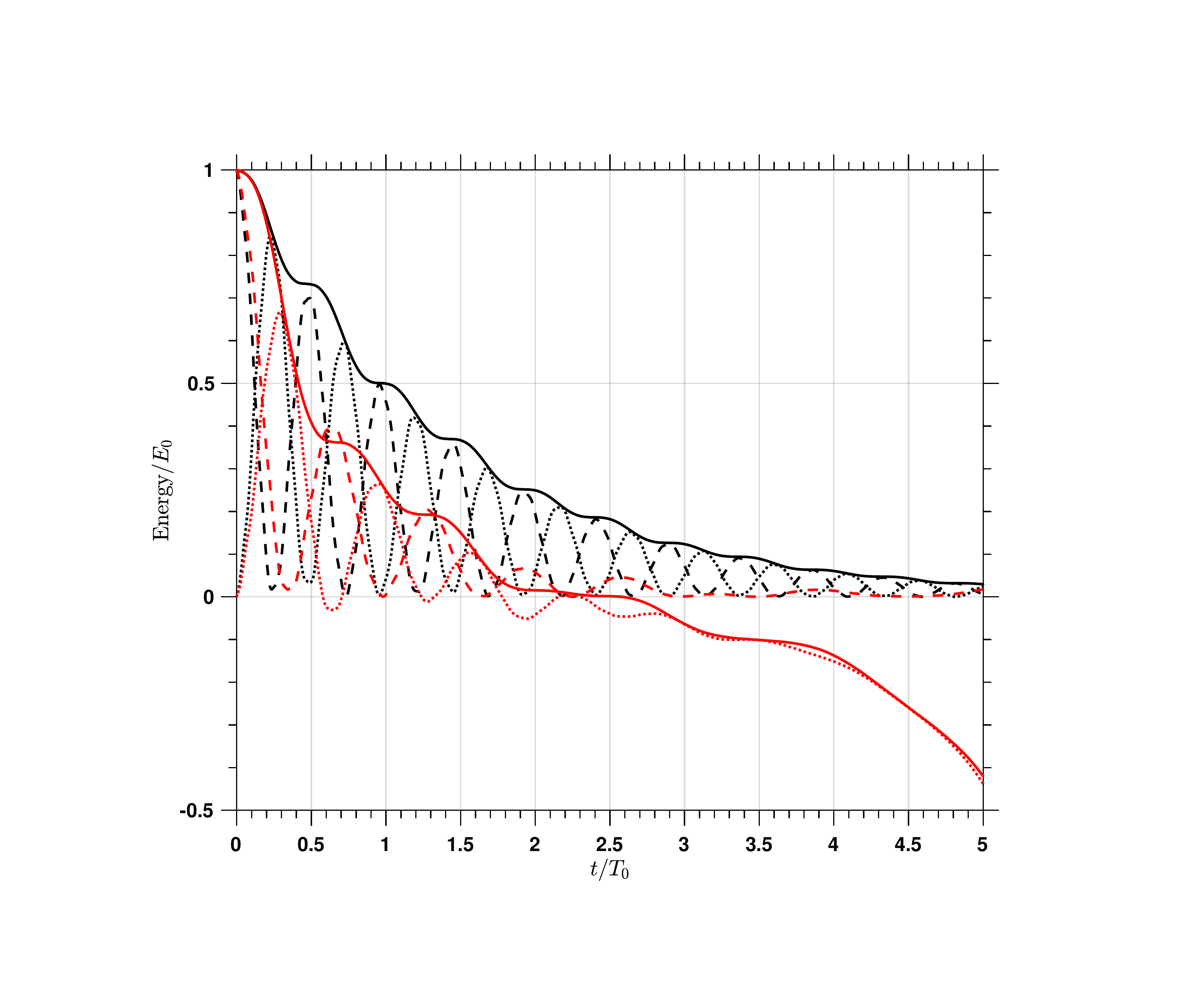}
\end{tabular}
\caption{The total, kinetic, and potential energies of the IPC (red) and MIPC (black) models over time. The simulations are performed with $\mu^1 = 0.2 c_1$, a fixed time step size $\Delta t/T_0 = 1\times 10^{-3}$, and a spatial mesh with $\mathsf p = 1$, $\mathsf a=1$, $\mathsf b = 0$, and $1 \times 1 \times 6$ elements. The solid, dashed, and dotted lines represent the total, kinetic, and potential energies, respectively. The reference value of the total energy $E_0$ is chosen to be the total energy at time $t=0$, which is $1.1\times 10^5$ kg m$^2$/s$^2$.} 
\label{fig:compare-IPC-MPIC-energy}
\end{center}
\end{figure}

We also report a suite of simulation results of the IPC and MIPC models with the modulus $\mu^1 = 0.2 c_1$, with all other settings equal to those in Table \ref{table:3d_beam_bending_benchmark_geometry}. The snapshots of the deformation states and the pressure field are depicted in Figure \ref{fig:compare-IPC-MIPC}. For the IPC model, the beam does not return to the original state as it reaches equilibrium. Instead, the beam is bent after $t/T_0 = 1.5$ and vibrates slightly around that bent configuration. The bent configuration is an unstable equilibrium configuration and eventually the simulation diverged. For the MIPC model, the beam eventually returns to the initial configuration with the energy dissipated due to the viscous effects. In Figure \ref{fig:compare-IPC-MPIC-energy}, the evolution of the energies is illustrated. The total energy of the IPC model reaches zero at $t/T_0 = 2.2$, after which the potential and total energies quickly become negative. Indeed, the IPC model cannot guarantee the boundedness of the configurational free energy (see, e.g. \eqref{eq:configuration-free-energy-thermo-limit}). Thus, the evolution to unbounded negative energy is indeed possible for this very model, as is discovered in this case. This unstable behavior of the IPC model again suggests the original IPC model, which results in non-vanishing non-equilibrium stresses, is materially unstable and may produce non-physical results.

\begin{table}[!htbp]
  \centering
  \begin{tabular}[t]{ m{.35\textwidth}   m{.45\textwidth} }
    \hline
    \begin{minipage}{.35\textwidth}
      \includegraphics[angle=0, trim=110 270 270 190, clip=true, scale = 0.6]{./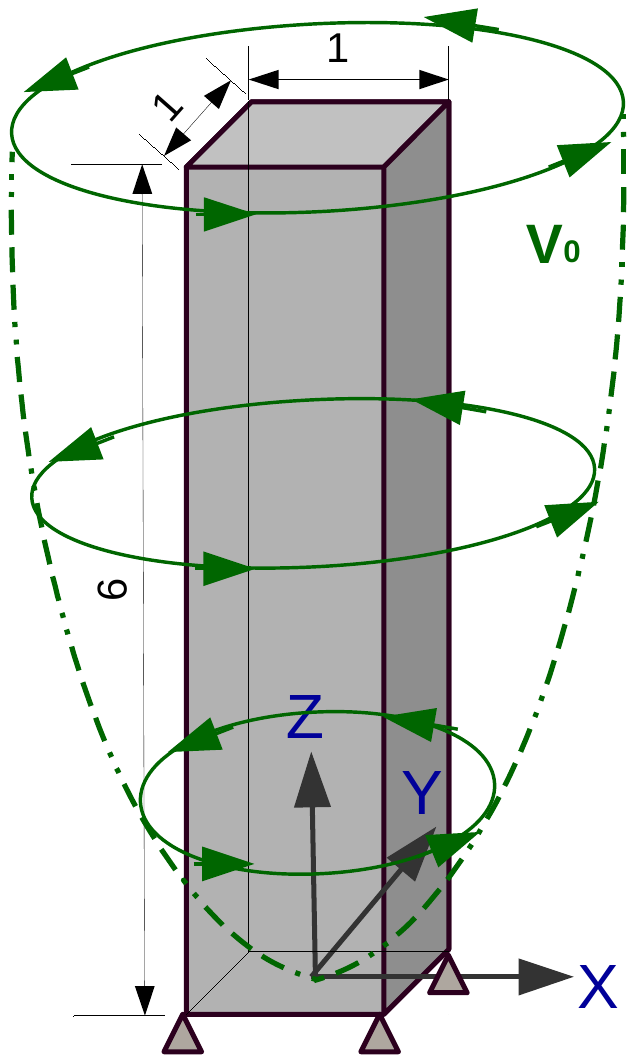}
    \end{minipage}
    &
    \begin{minipage}{.45\textwidth}
      \begin{itemize}
      \item[] Material properties:
      \item[] $G^{\infty}_{\mathrm{iso}} = \frac{c_1}{2} \left( \tilde{I}_1 - 3 \right) + \frac{c_2}{2} \left( \tilde{I}_2 - 3 \right)$,    
\item[] $\rho_0 = 1.1 \times 10^3$ kg/m$^3$,  
      \item[] $E = 1.7\times 10^7$ Pa, $c_1 = c_2 = E/6$,
      \item[] $\beta_1^{\infty} = 0.7$, $\mu^1 = c_1$, $\tau^1 = 0.1$ s,
      \item[] $\beta_2^{\infty} = 0.2$, $\mu^2 = c_1$, $\tau^2 = 0.5$ s,
      \item[] $\beta_3^{\infty} = 0.1$, $\mu^3 = c_1$, $\tau^3 = 1.0$ s,
      \item[] Reference scales: 
      \item[] $L_0 = 1$ m, $M_0 = 1$ kg, $T_0 = 1$ s. 
      \end{itemize}
	     
    \end{minipage}   
    \\ 
    \hline
  \end{tabular}
  \caption{Three-dimensional beam torsion: problem setting, boundary conditions, initial conditions, and material properties. Notice that the parameters $\beta_1^{\infty}$, $\beta_2^{\infty}$, and $\beta_3^{\infty}$ are only used in the IPC and MIPC models.} 
\label{table:3d_beam_torsion_benchmark_geometry}
\end{table}

\begin{figure}[!htbp]
\begin{center}
\begin{tabular}{c}
\includegraphics[angle=0, trim=85 85 120 80, clip=true, scale = 0.39]{./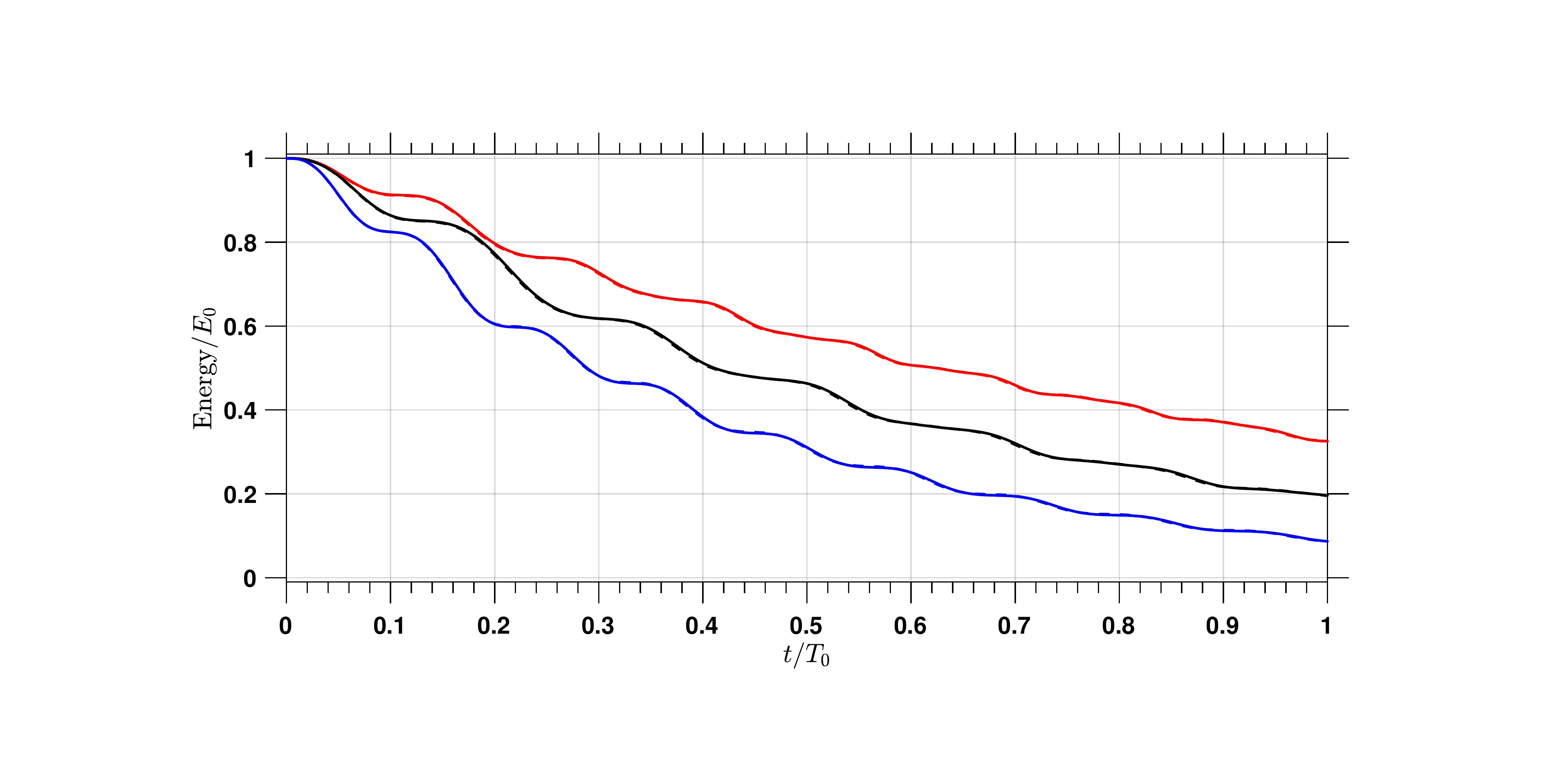}\\
(a) Total energies \\
\includegraphics[angle=0, trim=85 85 120 80, clip=true, scale = 0.39]{./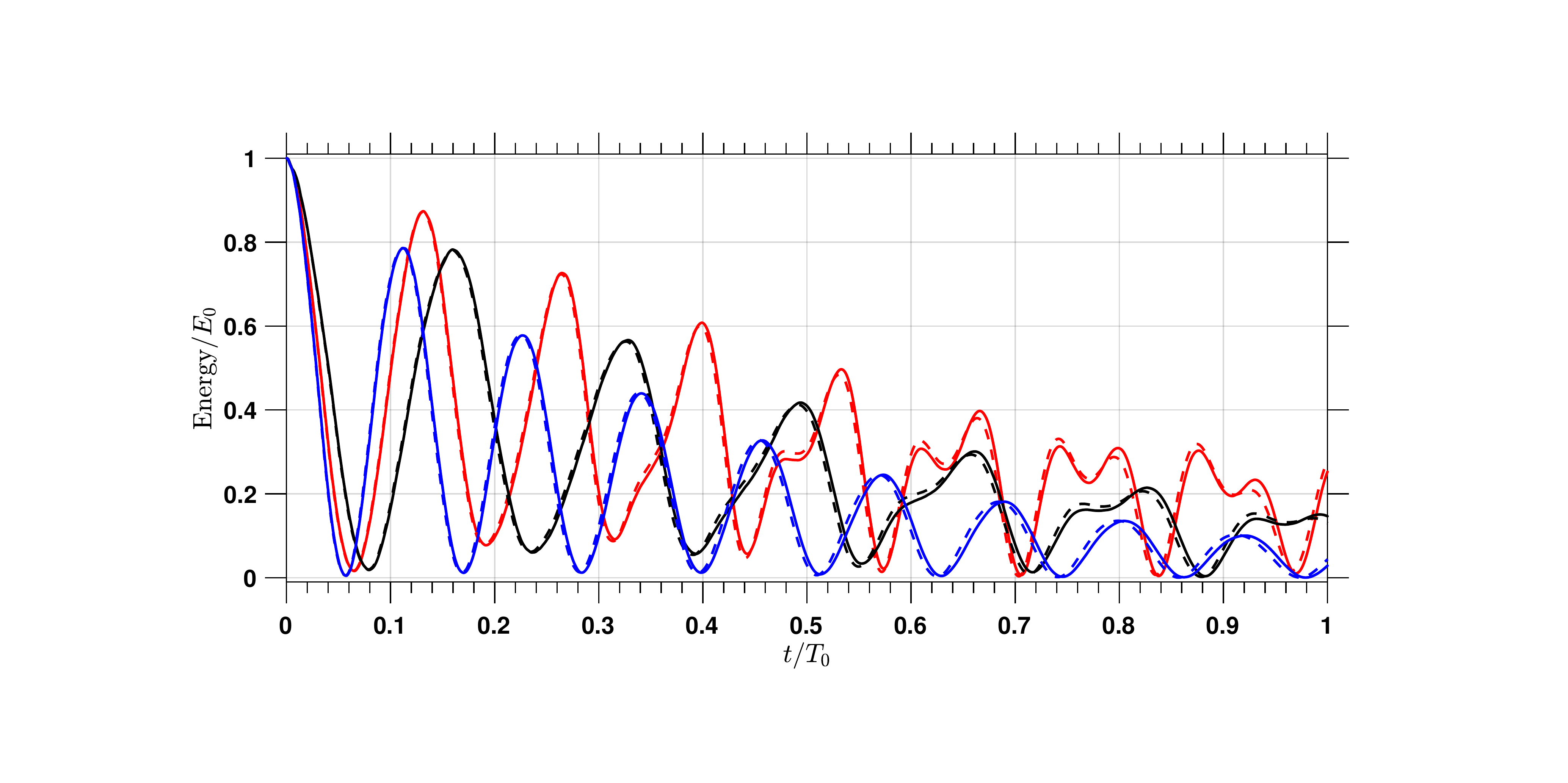} \\
(b) Kinetic energies \\
\includegraphics[angle=0, trim=85 85 120 80, clip=true, scale = 0.39]{./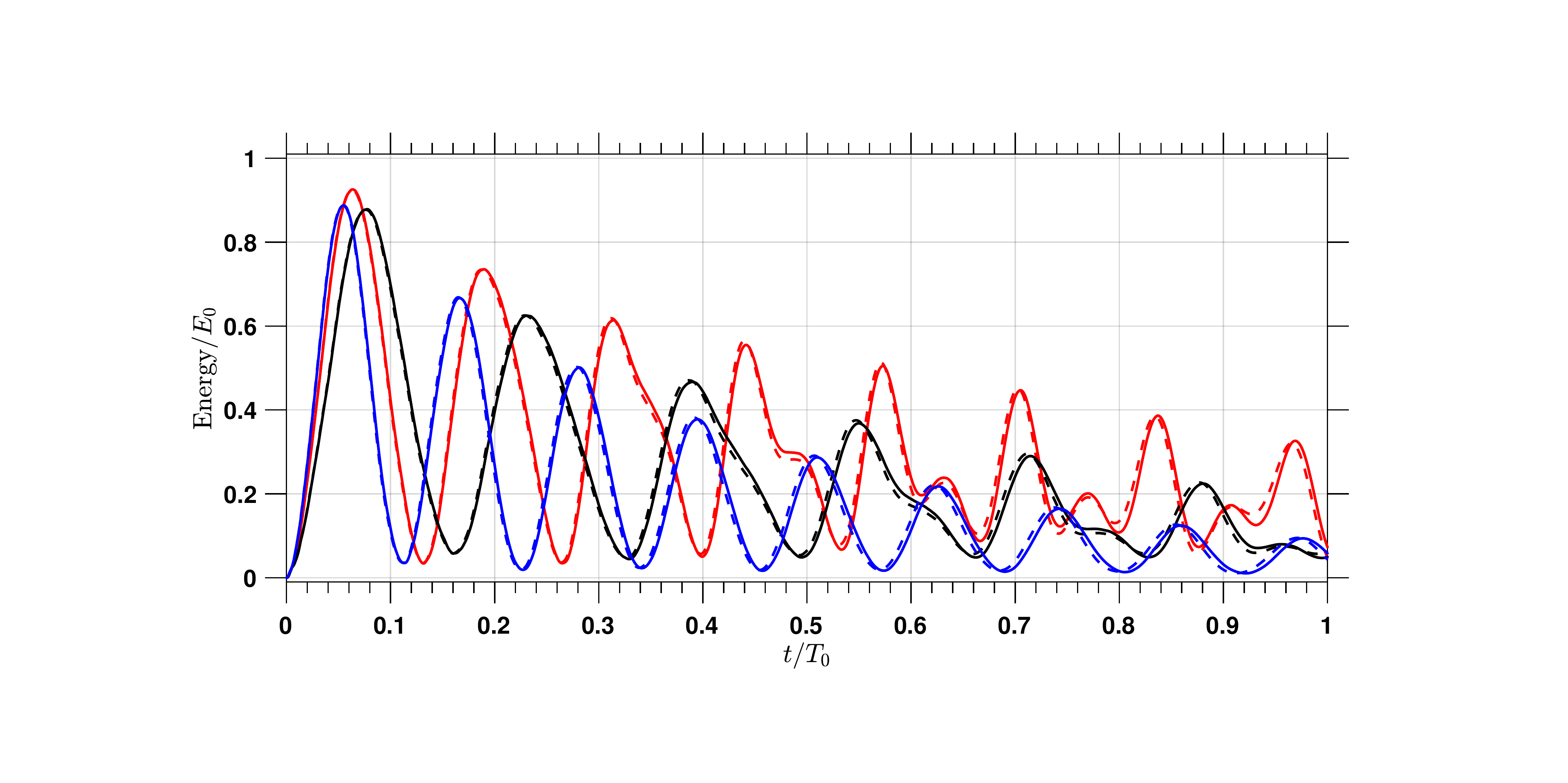}\\
(c) Potential energies
\end{tabular}
\caption{The total, kinetic, and potential energies (i.e., $G^{\infty}_{\mathrm{iso}}(\tilde{\bm C}_h) + \Upsilon^1(\tilde{\bm C}_h, \bm \Gamma^1_h)$) of the IPC (red), HS (blue), and MIPC (black) models over time for the beam torsion problem. The solid and dashed lines illustrate results obtained from the fine and coarse spatiotemporal discretizations. The reference value of the total energy $E_0$ is chosen to be the total energy at time $t=0$, which is $2.75\times 10^6$ kg m$^2$/s$^2$.} 
\label{fig:beam_torsion_energy}
\end{center}
\end{figure}

\begin{figure}[!htbp]
\begin{tabular}{ c c c c c c }
\multicolumn{6}{c}{
\includegraphics[angle=0, trim=0 0 280 750, clip=true, scale = 0.26]{./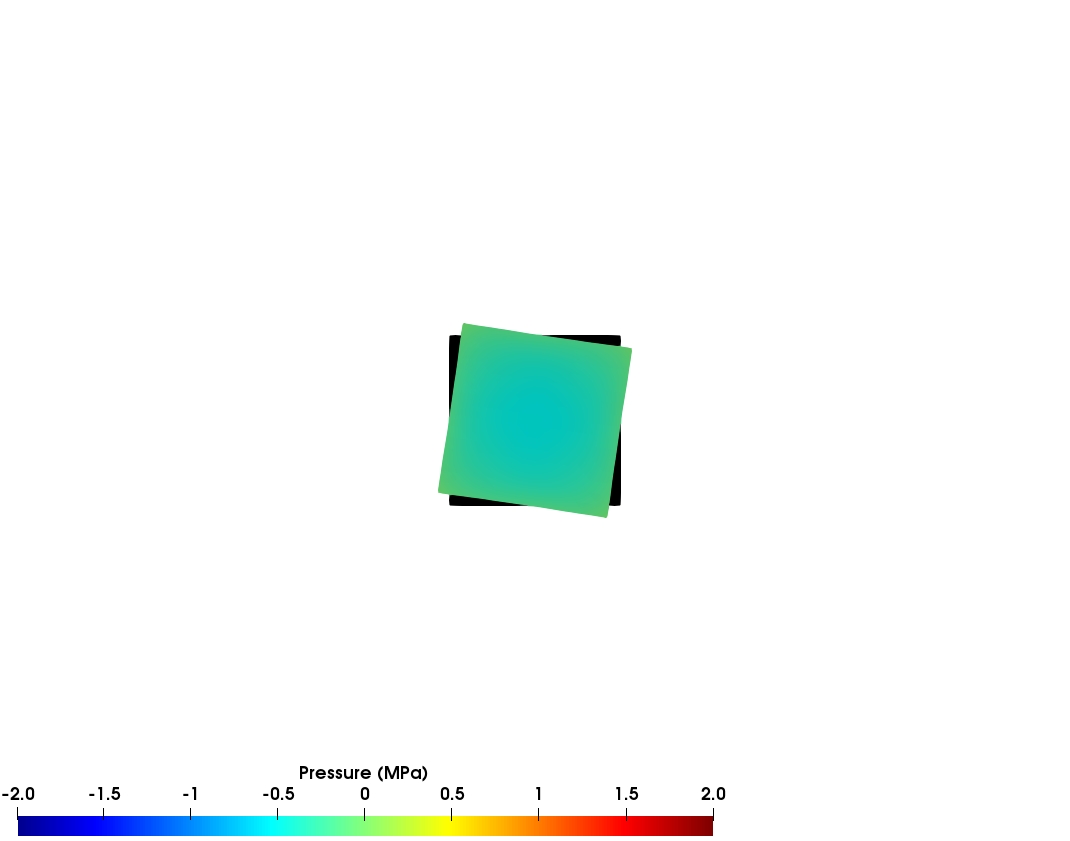}
} \\
\includegraphics[angle=0, trim=380 0 380 0, clip=true, scale = 0.21]{./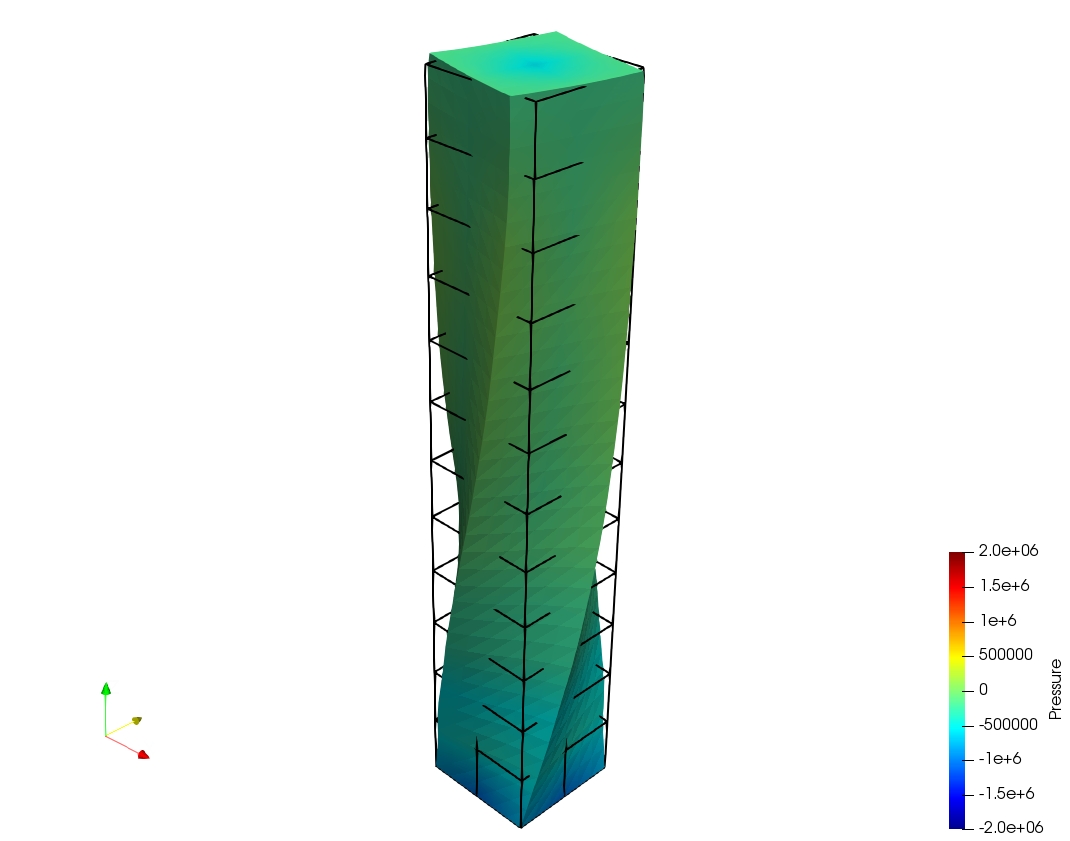} &
\includegraphics[angle=0, trim=380 0 380 0, clip=true, scale = 0.21]{./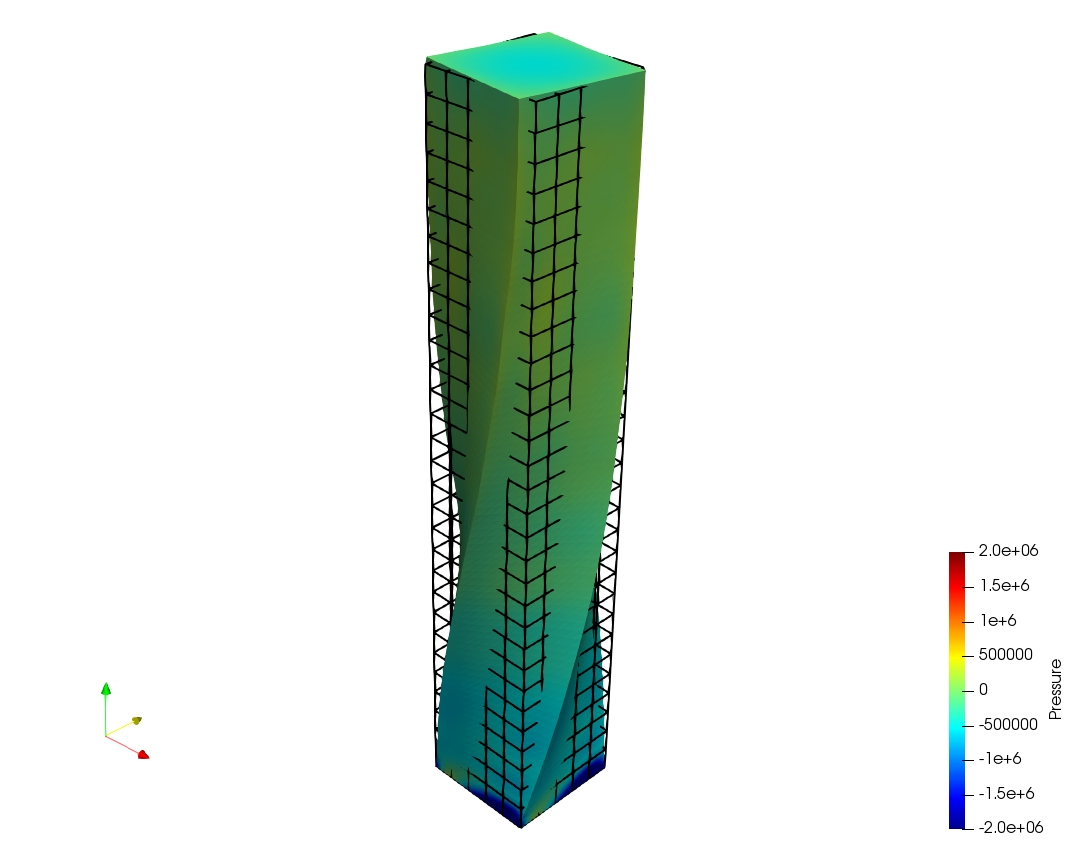} &
\includegraphics[angle=0, trim=380 0 380 0, clip=true, scale = 0.21]{./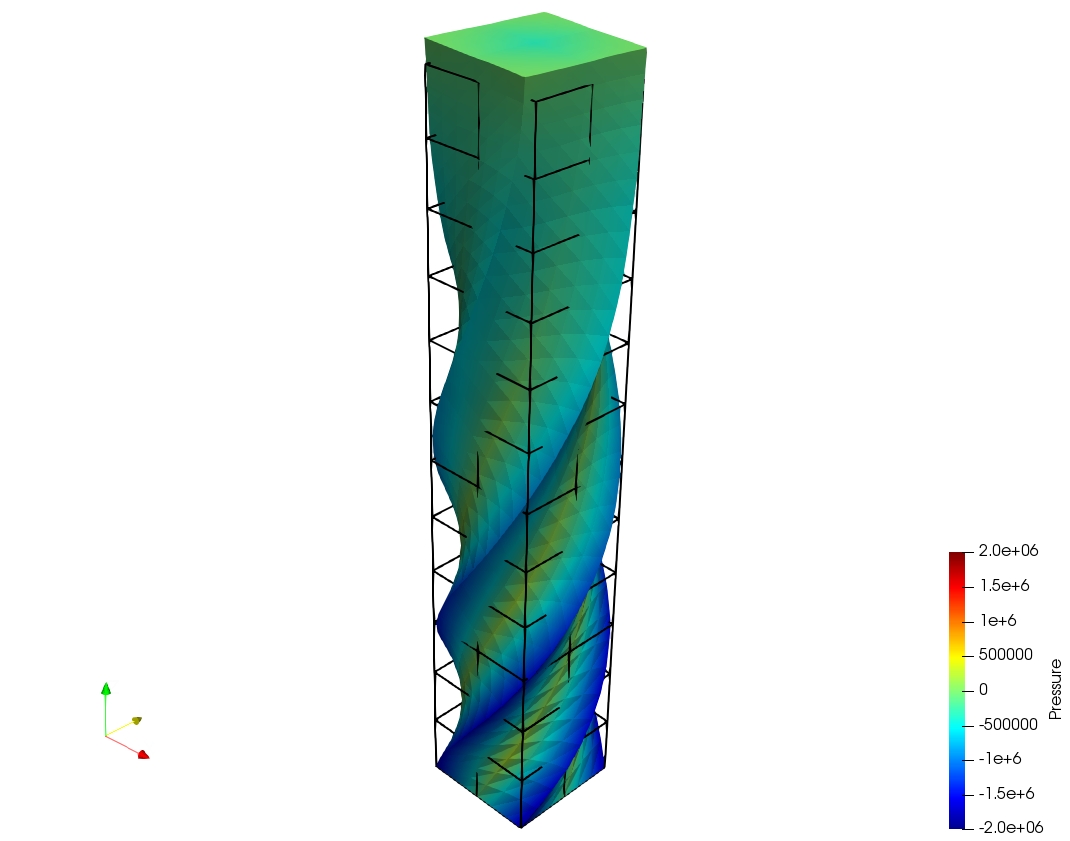} &
\includegraphics[angle=0, trim=380 0 380 0, clip=true, scale = 0.21]{./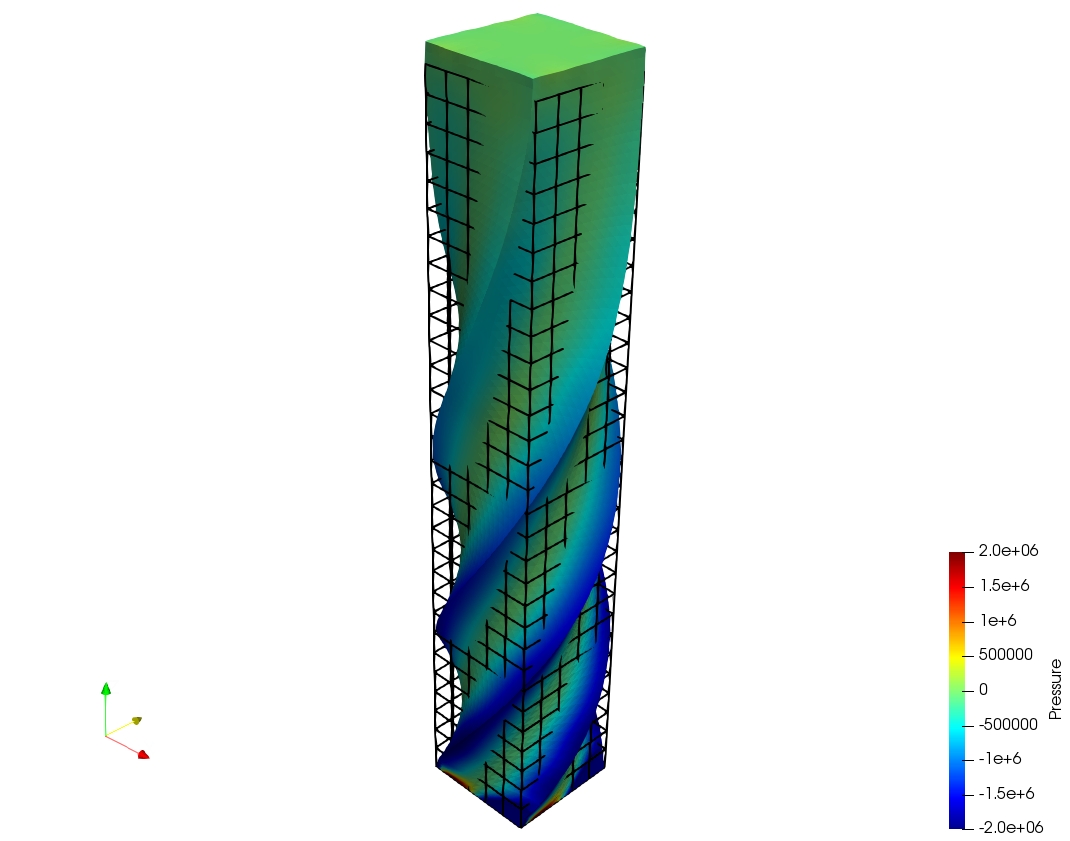} &
\includegraphics[angle=0, trim=380 0 380 0, clip=true, scale = 0.21]{./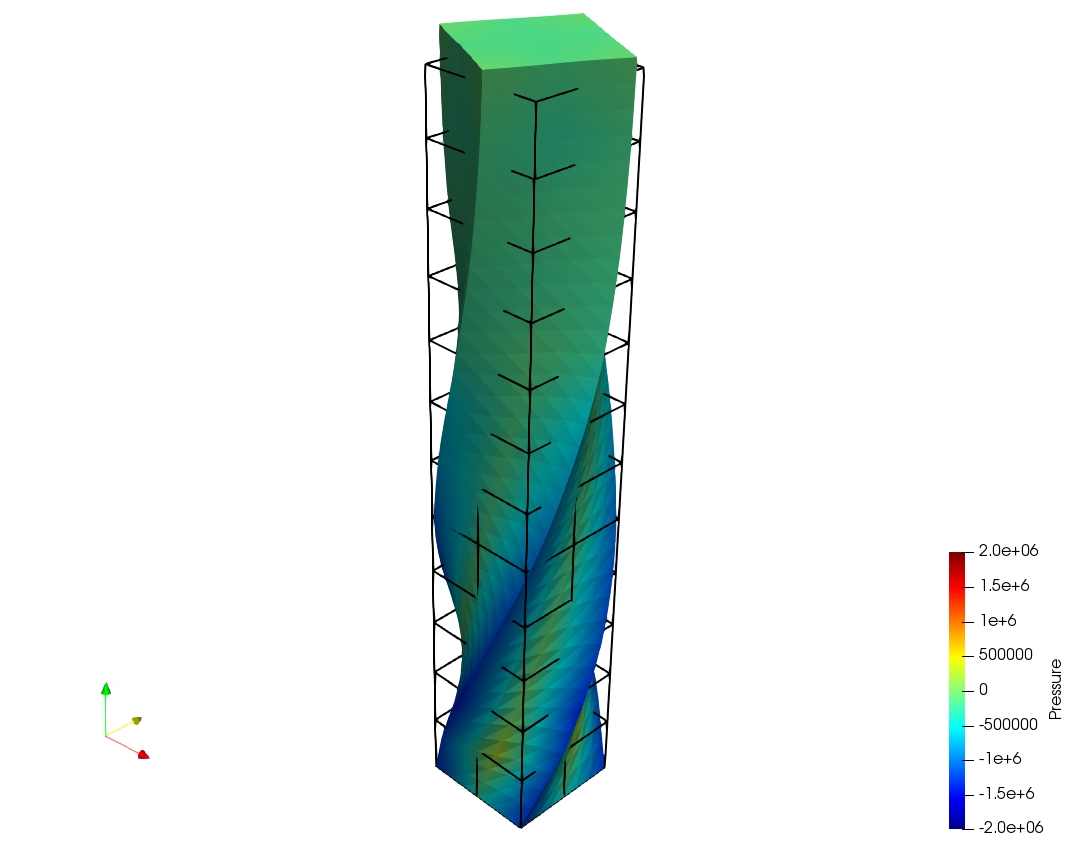} &
\includegraphics[angle=0, trim=380 0 380 0, clip=true, scale = 0.21]{./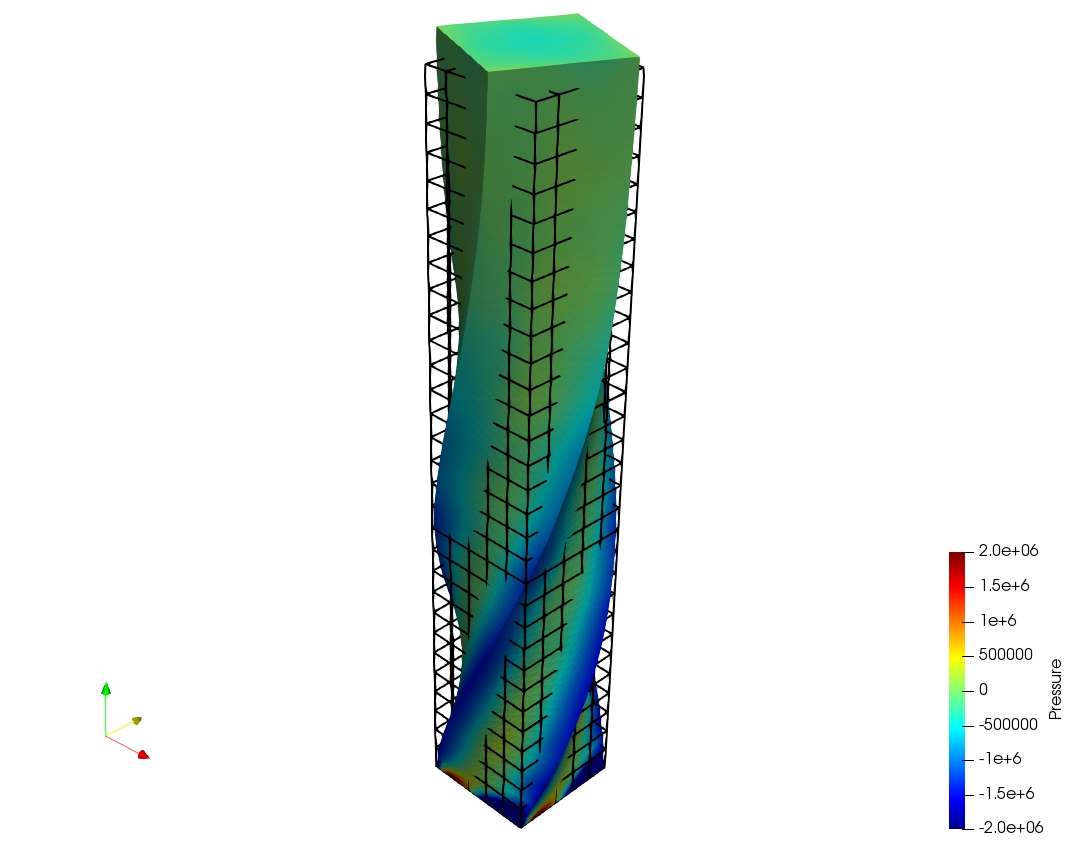} \\
\multicolumn{2}{c}{HS} & \multicolumn{2}{c}{MIPC} & \multicolumn{2}{c}{IPC}
\end{tabular}
\caption{The snapshots of the pressure fields plotted on the deformed configuration at time $t/T_0 = 0.1$ of the three models for the beam torsion problem. The results from the coarse and fine discretizations are shown, with the meshes at the initial time plotted as the black grid.} 
\label{fig:torsion_pressure_snapshot}
\end{figure}


\begin{figure}[!htbp]
\begin{tabular}{ c c c c c c }
\multicolumn{6}{c}{
\includegraphics[angle=0, trim=0 0 280 730, clip=true, scale = 0.26]{./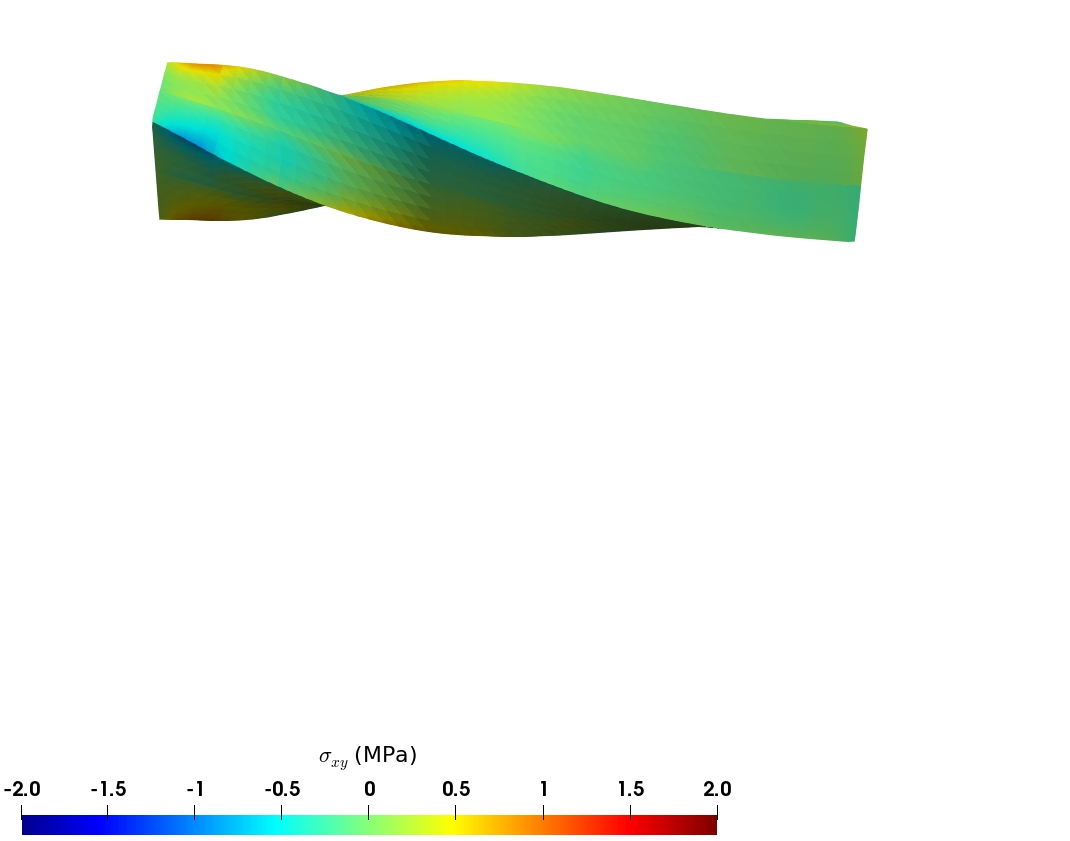}
} \\
\includegraphics[angle=0, trim=380 0 380 0, clip=true, scale = 0.21]{./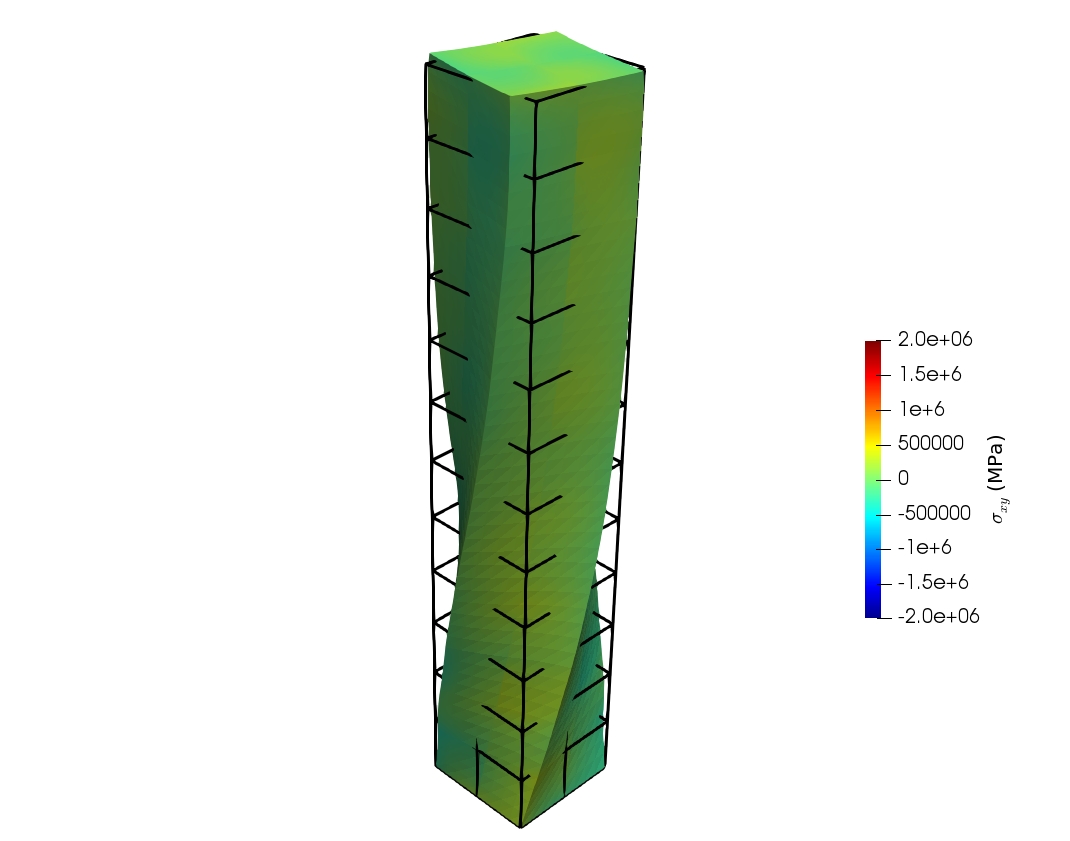} &
\includegraphics[angle=0, trim=380 0 380 0, clip=true, scale = 0.21]{./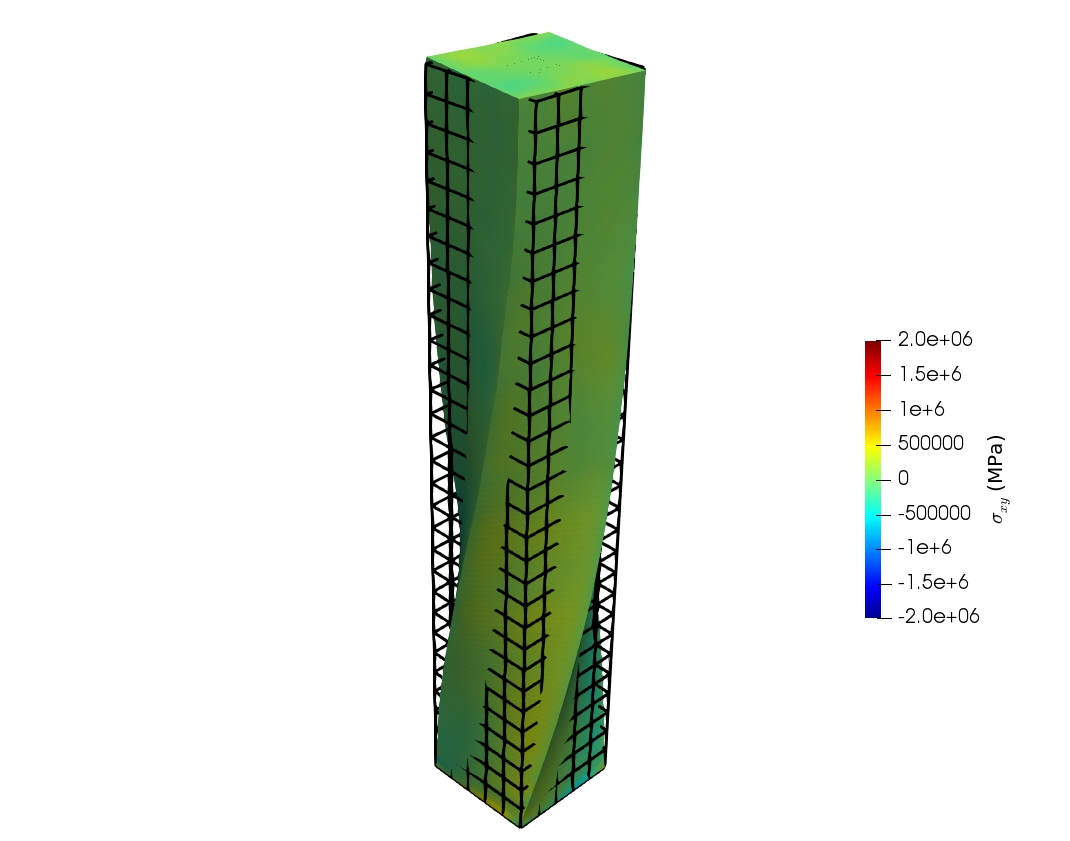} &
\includegraphics[angle=0, trim=380 0 380 0, clip=true, scale = 0.21]{./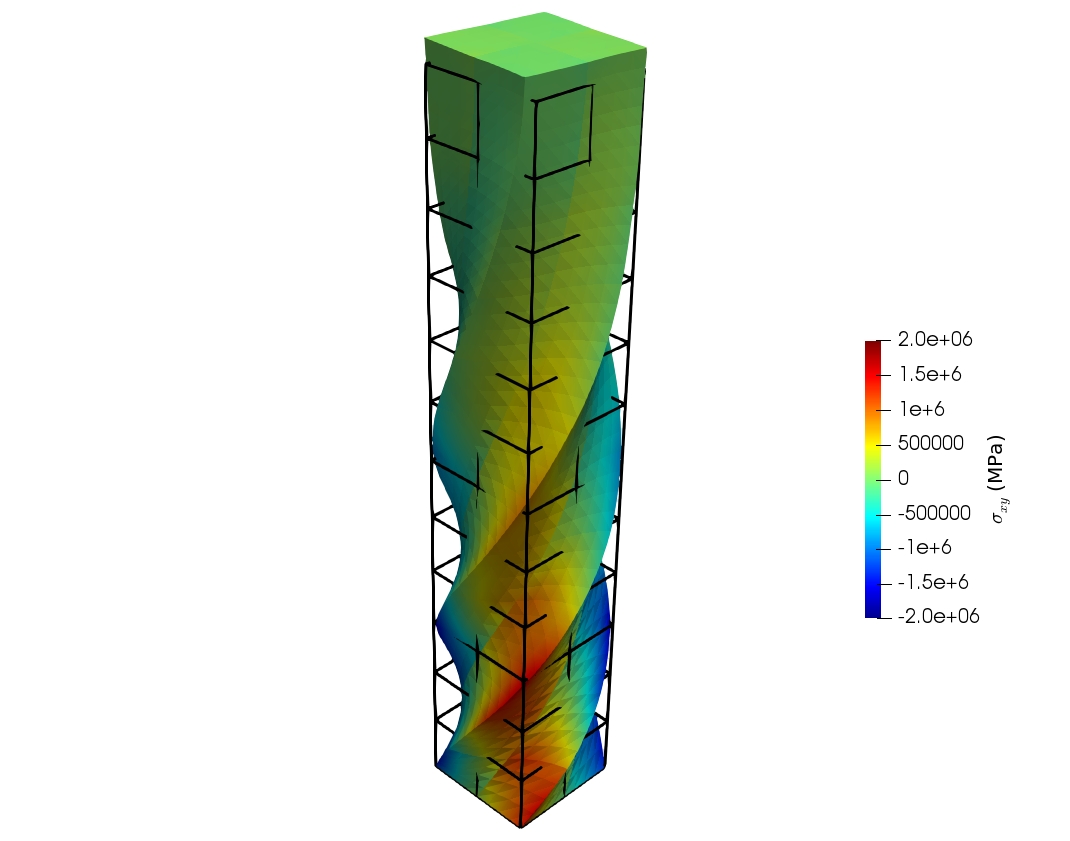} &
\includegraphics[angle=0, trim=380 0 380 0, clip=true, scale = 0.21]{./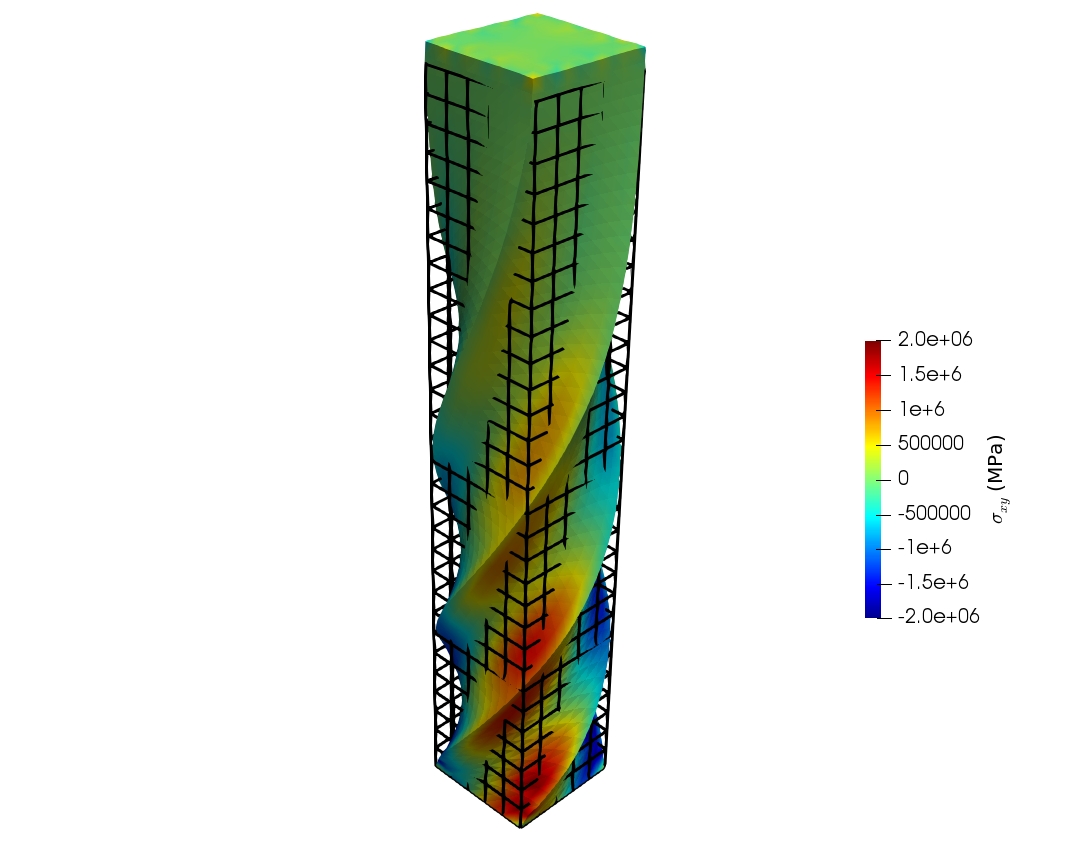} &
\includegraphics[angle=0, trim=380 0 380 0, clip=true, scale = 0.21]{./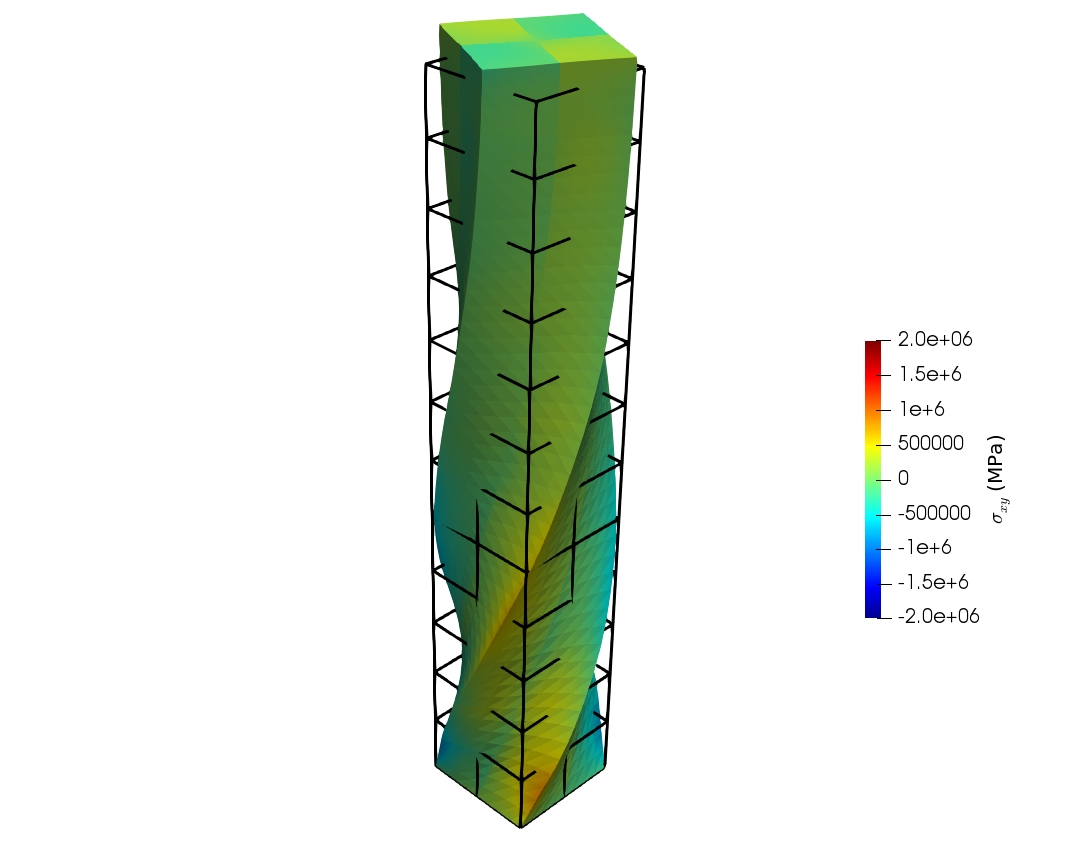} &
\includegraphics[angle=0, trim=380 0 380 0, clip=true, scale = 0.21]{./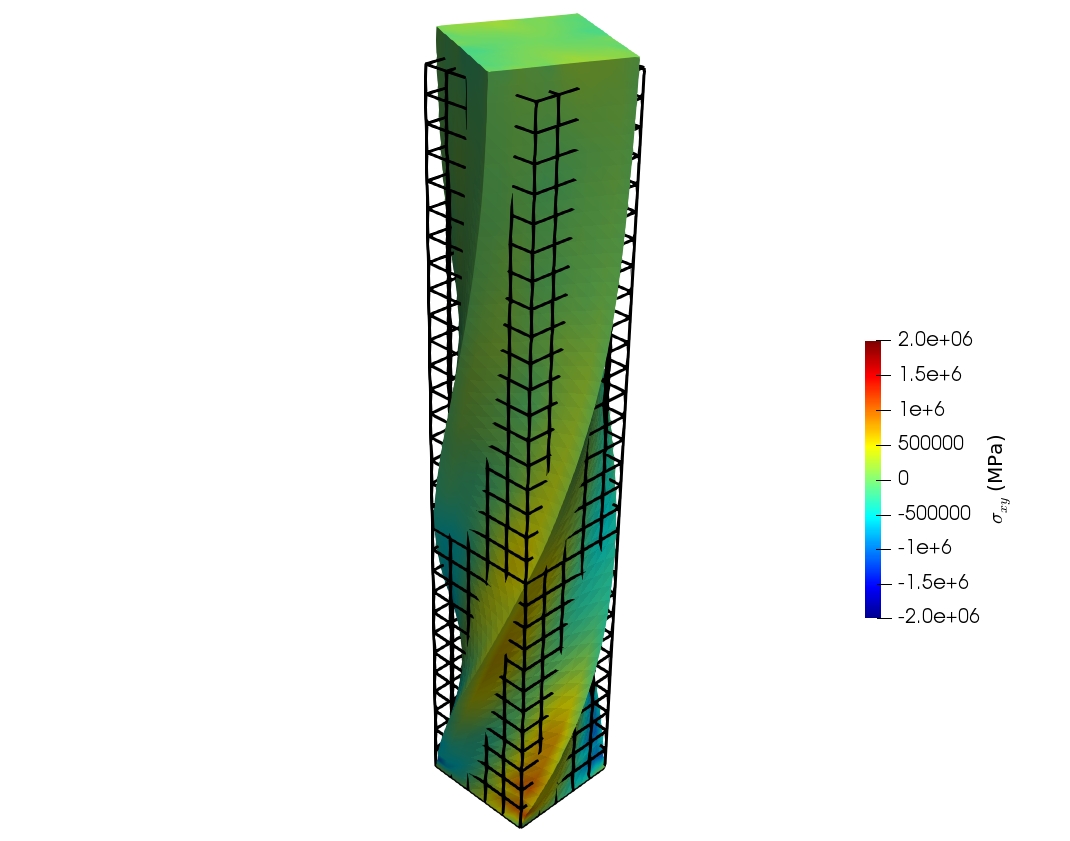} \\
\multicolumn{2}{c}{HS} & \multicolumn{2}{c}{MIPC} & \multicolumn{2}{c}{IPC}
\end{tabular}
\caption{The snapshots of the stress component $\sigma_{xy}$ plotted on the deformed configuration at time $t/T_0 = 0.1$ of the three models for the beam torsion problem. The results from the coarse and fine discretizations are shown, with the meshes at the initial time plotted as the black grid.} 
\label{fig:torsion_cauchy_xy_snapshot}
\end{figure}

\subsection{Beam torsion}
In this second example, we consider the torsion of a three-dimensional beam. The problem setting and material properties are illustrated in Table \ref{table:3d_beam_torsion_benchmark_geometry}. In particular, the boundary conditions are identical to those of the beam bending example, and the body is initially stress-free with zero displacements. The initial velocity is given by
\begin{align*}
\bm V(\bm X, 0) = V_0 \left( -\frac{Y}{L_0}, \frac{X}{L_0}, 0 \right)^T, \quad V_0 = 100 \sin\left( \frac{\pi Z}{12L_0} \right) \textup{m}/\textup{s}.
\end{align*}
Again, the problem constitutes an unforced mechanical system with the total energy monotonically decreasing with time. We numerically investigate this problem using two sets of spatiotemporal discretizations. In a coarse discretization, we use a fixed time step size $\Delta t/T_0 = 5.0 \times 10^{-4}$ and a spatial mesh with $\mathsf p = 1$ and $2 \times 2 \times 12$ elements; in a fine discretization, we use a fixed time step size $\Delta t/T_0 = 1.0 \times 10^{-4}$ and a spatial mesh with $\mathsf p = 2$ and $5 \times 5 \times 30$ elements. The time integration is performed with the mid-point rule (i.e., $\varrho_{\infty} = 1.0$). The evolutions of the total, kinetic, and potential energies are illustrated in Figure \ref{fig:beam_torsion_energy}, from which we may observe that the coarse and fine meshes produce indistinguishable results in terms of the energies. Again the total energy is justified to be monotonically decreasing with time from the numerical results, which corroborates the estimate made in Theorem \ref{the:energy-stability}. In Figures \ref{fig:torsion_pressure_snapshot} and \ref{fig:torsion_cauchy_xy_snapshot}, the snapshots of the pressure and the $xy$-component of the Cauchy stress are plotted on the current configuration at time $t/T_0 = 0.1$. For comparison purposes, the results of the three models calculated by the two discretizations are illustrated. We observe that both the pressure and the Cauchy stress are well resolved by the coarse mesh and are indistinguishable. This again demonstrates the superior capability of the chosen element technology in resolving the stresses.

\subsection{Clamped cylindrical support}
In the third example, we consider a clamped cylindrical support characterized by viscoelastic material behaviors with its inner surface connected to a vibrating device. This is a benchmark problem designed for examing the rate-dependent behavior with hysteresis loops \cite{Zeng2017,Fancello2006}. The geometrical setting of the cylindrical support together with its material properties are illustrated in Table \ref{table:3d_tube_support_benchmark_geometry}. The vibration of the device is described by
\begin{align*}
\bm U(\bm X, t) = \left( U_0 \sin\left( \omega t \right), 0, 0 \right)^T, \quad U_0 = 5.625 \times 10^{-3} \textup{m},
\end{align*}
which represents a translation along the $x$-direction, and, in this study, the strain rate $\omega$ takes the values of $5$ s$^{-1}$, $10$ s$^{-1}$, and $20$ s$^{-1}$, respectively. The outer surface of the support is fixed while traction-free boundary conditions are prescribed on the two end annular surfaces. The geometry of this cylindrical support can be exactly represented via NURBS. We use $24$ elements in the circumferential direction, $6$ elements in the radial direction, and $5$ elements in the axial direction. We choose $\mathsf p = 2$ and $3$ for the discrete pressure function space, respectively, to generate two sets of meshes. For the given strain rate, the period of one cycle is $2\pi / \omega$, and we simulate the problems for three cycles. For the mesh with $\mathsf p = 2$, the time step size is $\Delta t = 2.0 \times 10^{-4}$ s; for the mesh with $\mathsf p = 3$, the time step size is $\Delta t=1.0 \times 10^{-4}$. In this problem, we choose $\varrho_{\infty} = 0.0$ to achieve better algorithm robustness.

\begin{table}[!htbp]
  \centering
  \begin{tabular}{ m{.45\textwidth}   m{.45\textwidth} }
    \hline
    \begin{minipage}{.45\textwidth}
      \includegraphics[angle=0, trim=0 0 0 190, clip=true, scale = 0.3]{./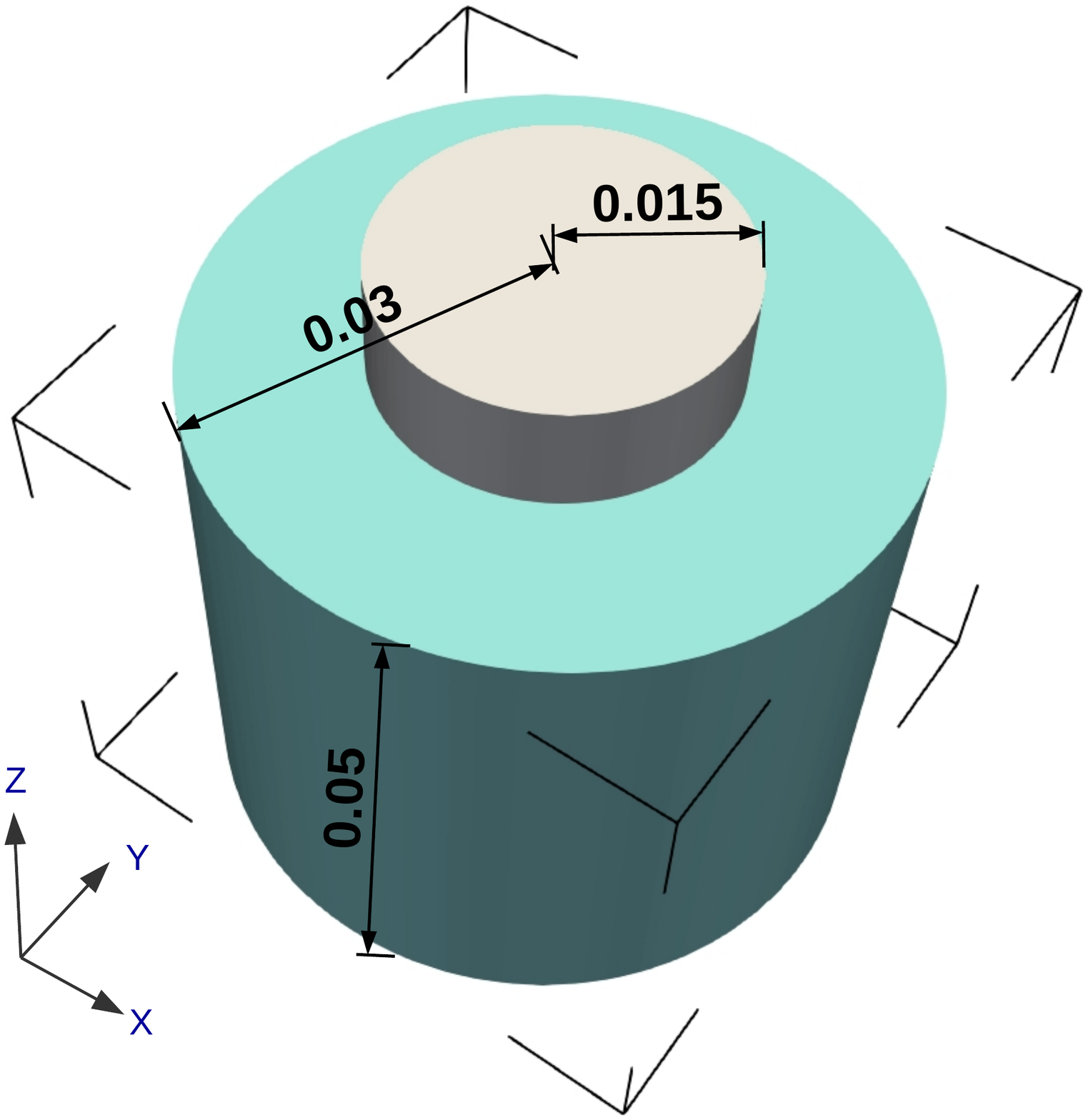}
    \end{minipage}
    &
    \begin{minipage}{.45\textwidth}
      \begin{itemize}
      \item[] Material properties:
      \item[] $G^{\infty}_{\mathrm{iso}} = \frac{c_1}{2} \left( \tilde{I}_1 - 3 \right) + \frac{c_2}{2} \left( \tilde{I}_2 - 3 \right)$,    
\item[] $\rho_0 = 1.1 \times 10^3$ kg/m$^3$,  
      \item[] $E = 1.2\times 10^5$ Pa, $c_1 = c_2 = E/6$,
      \item[] $\beta_1^{\infty} = 1.0$, $\mu^1 = 4c_1$, $\tau^1 = 0.4$ s,
      \item[] Reference scales: 
      \item[] $L_0 = 1$ m, $M_0 = 1$ kg, $T_0 = 1$ s. 
      \end{itemize}
    \end{minipage}   
    \\ 
    \hline
  \end{tabular}
  \caption{Three-dimensional clamped cylindrical support: problem setting, boundary conditions, initial conditions, and material properties. Notice that the parameter $\beta_1^{\infty}$ is only used in the MIPC models.} 
\label{table:3d_tube_support_benchmark_geometry}
\end{table}

\begin{figure}[!htbp]
\begin{center}
\begin{tabular}{ c c }
\includegraphics[angle=0, trim=120 100 160 100, clip=true, scale = 0.23]{./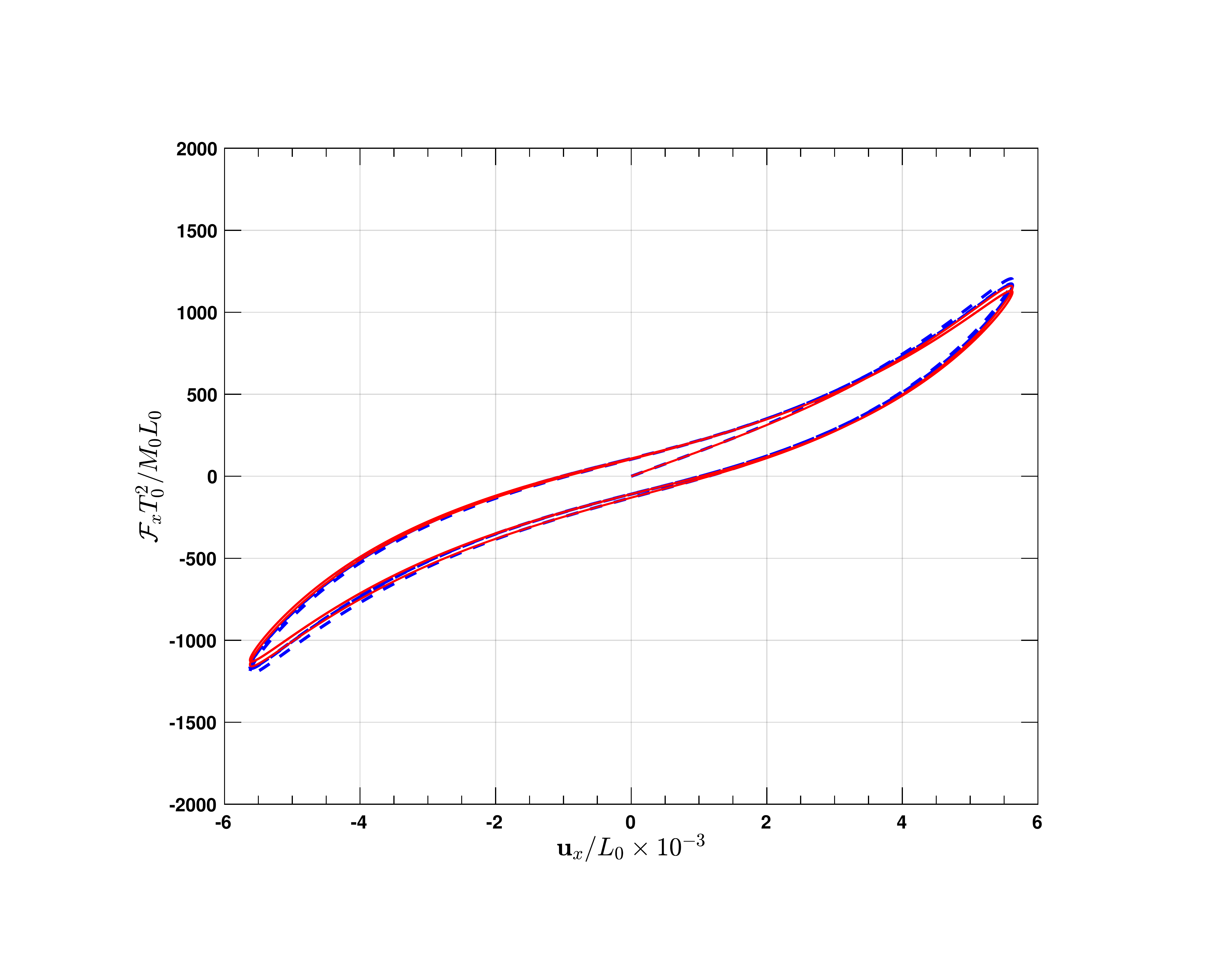} &
\includegraphics[angle=0, trim=120 100 160 100, clip=true, scale = 0.23]{./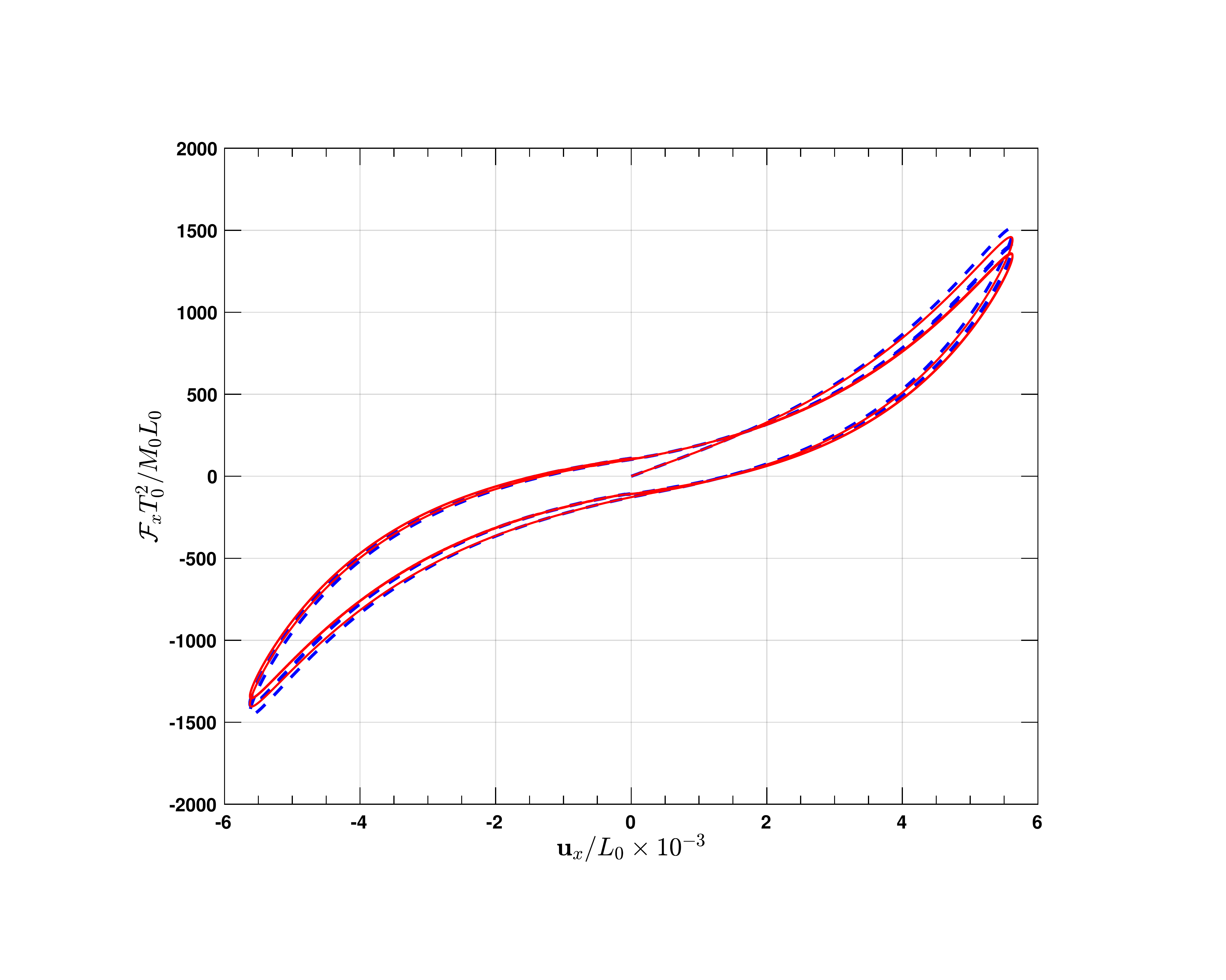} \\
\includegraphics[angle=0, trim=120 100 160 100, clip=true, scale = 0.23]{./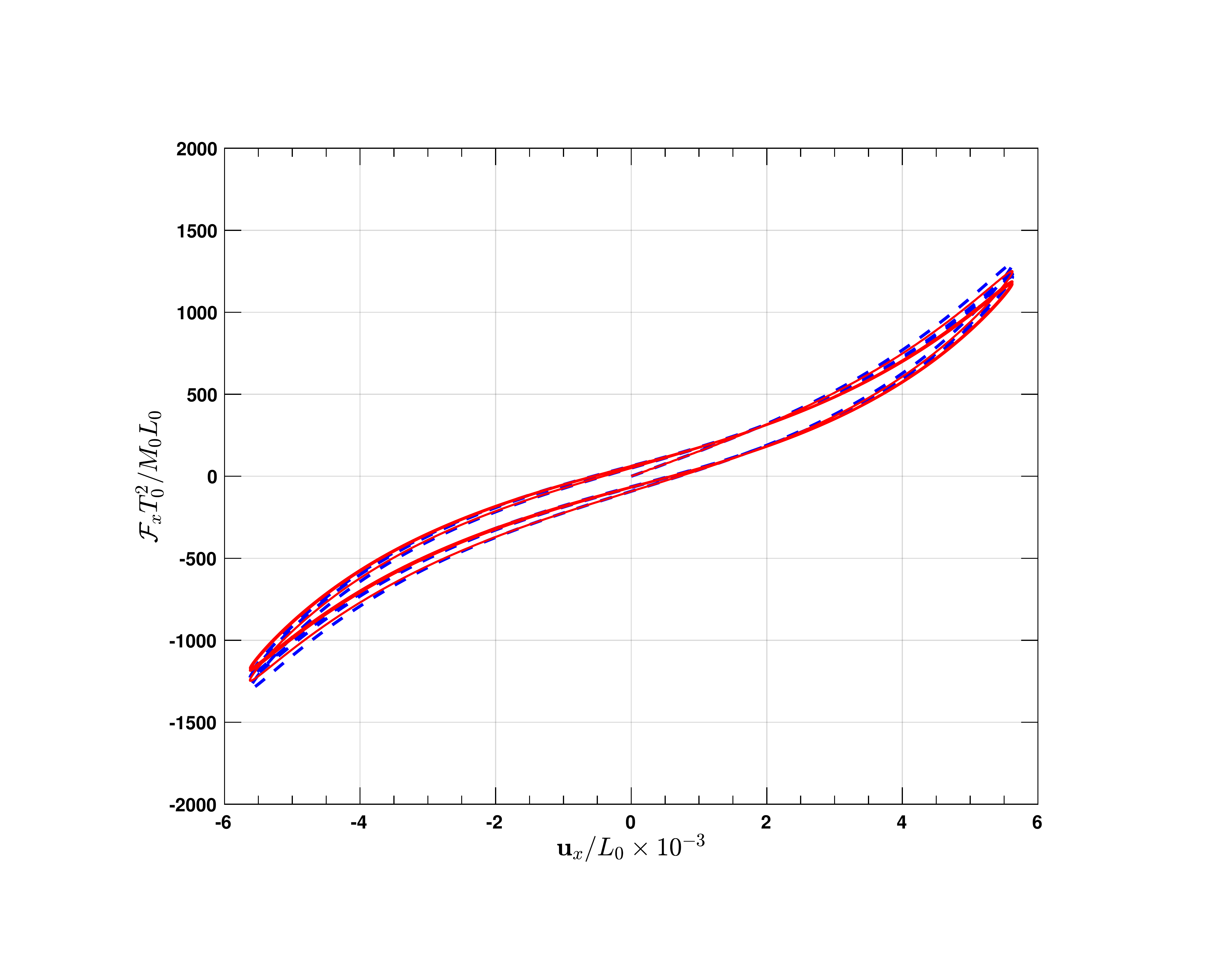} &
\includegraphics[angle=0, trim=120 100 160 100, clip=true, scale = 0.23]{./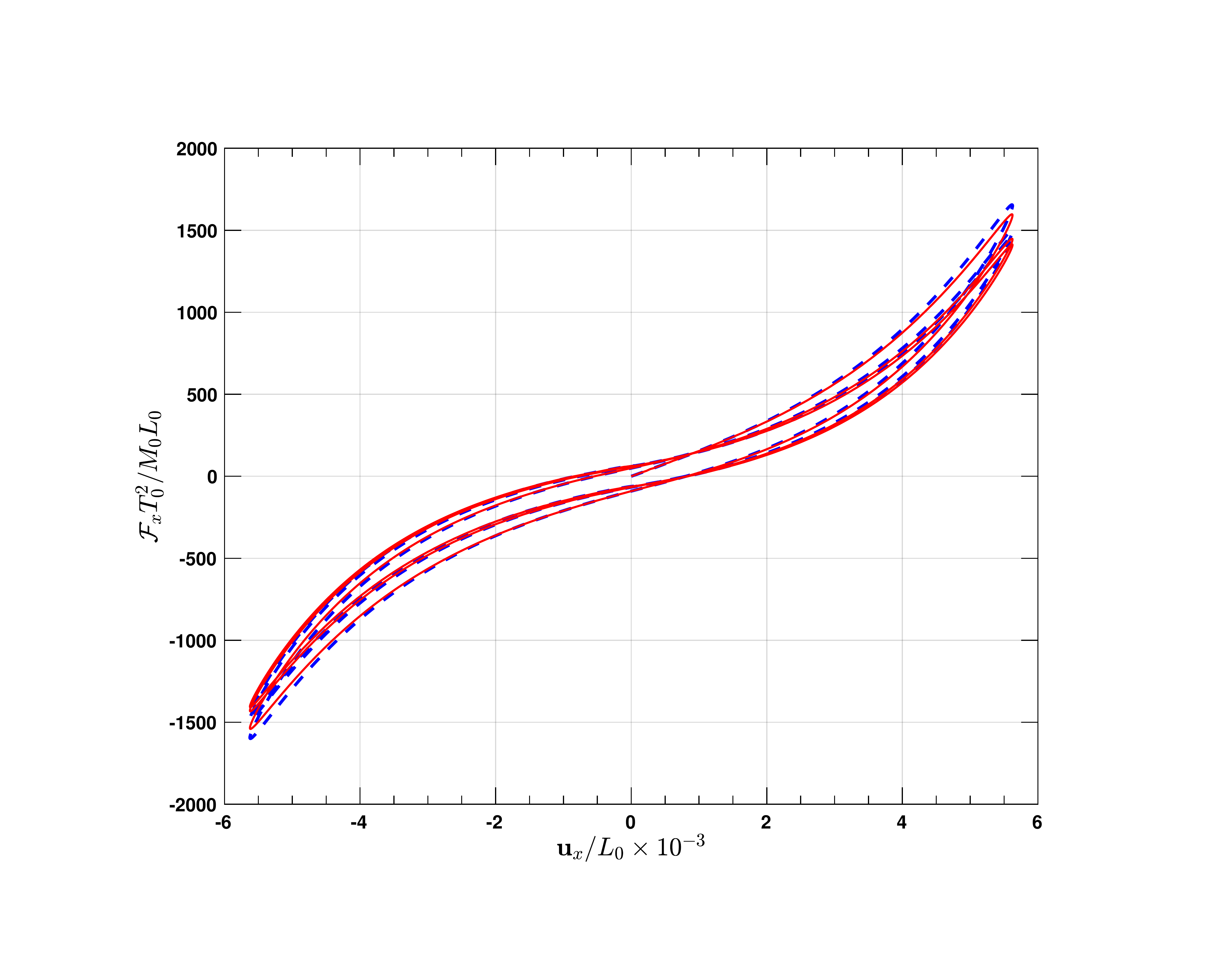} \\
\includegraphics[angle=0, trim=120 100 160 100, clip=true, scale = 0.23]{./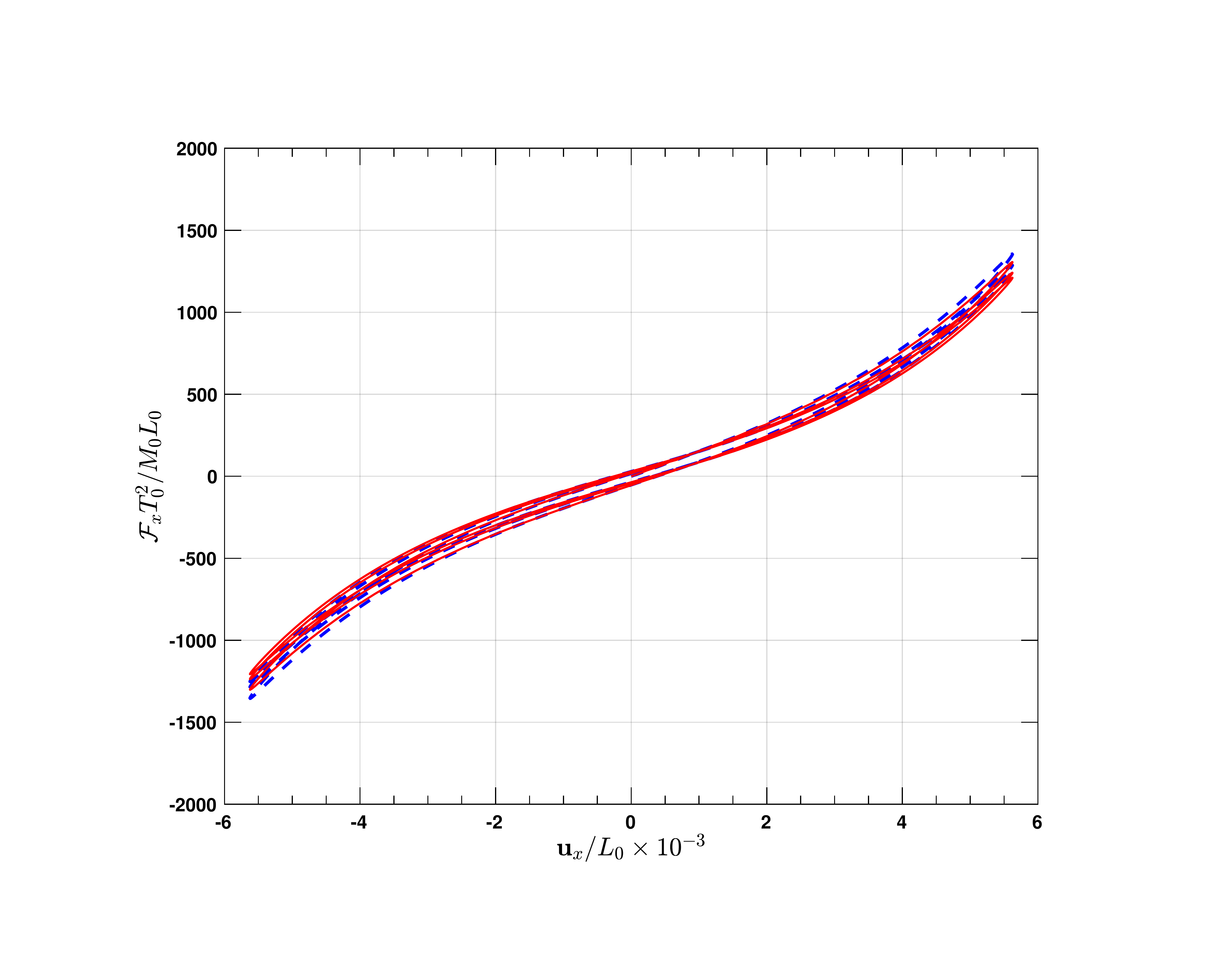} &
\includegraphics[angle=0, trim=120 100 160 100, clip=true, scale = 0.23]{./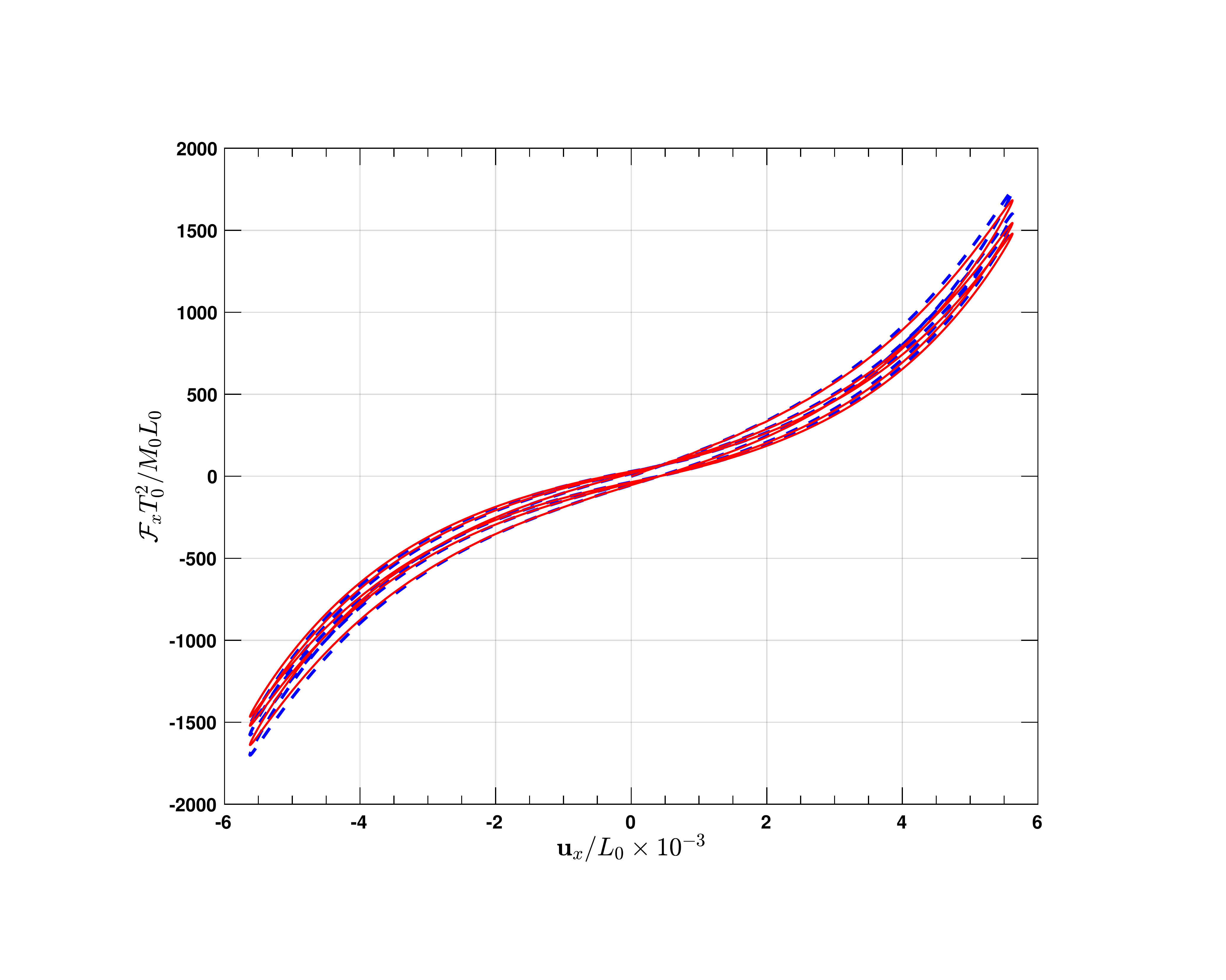} \\
HS & MIPC \\
\multicolumn{2}{c}{
\includegraphics[angle=0, trim=450 150 450 400, clip=true, scale = 0.36]{./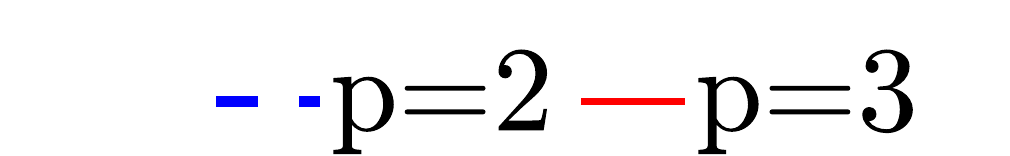}
}
\end{tabular}
\caption{The lateral force $\bm{\mathcal F}_x$ on $\Gamma_{\mathrm{inner}}$ plotted against the lateral displacement $\bm U_{x}$ for $\omega = 5$s$^{-1}$ (top row), $\omega = 10$s$^{-1}$ (middle row), $\omega = 20$s$^{-1}$ (bottom row). In the left column, the configurational free energy is given by the HS model while on the right column it is given by the MIPC model.} 
\label{fig:hysteresis}
\end{center}
\end{figure}

\begin{figure}[!htbp]
\begin{tabular}{ c c c c }
\multicolumn{2}{c}{
HS
} & 
\multicolumn{2}{c}{
MIPC
} \\
\includegraphics[angle=0, trim=130 30 160 30, clip=true, scale = 0.09]{./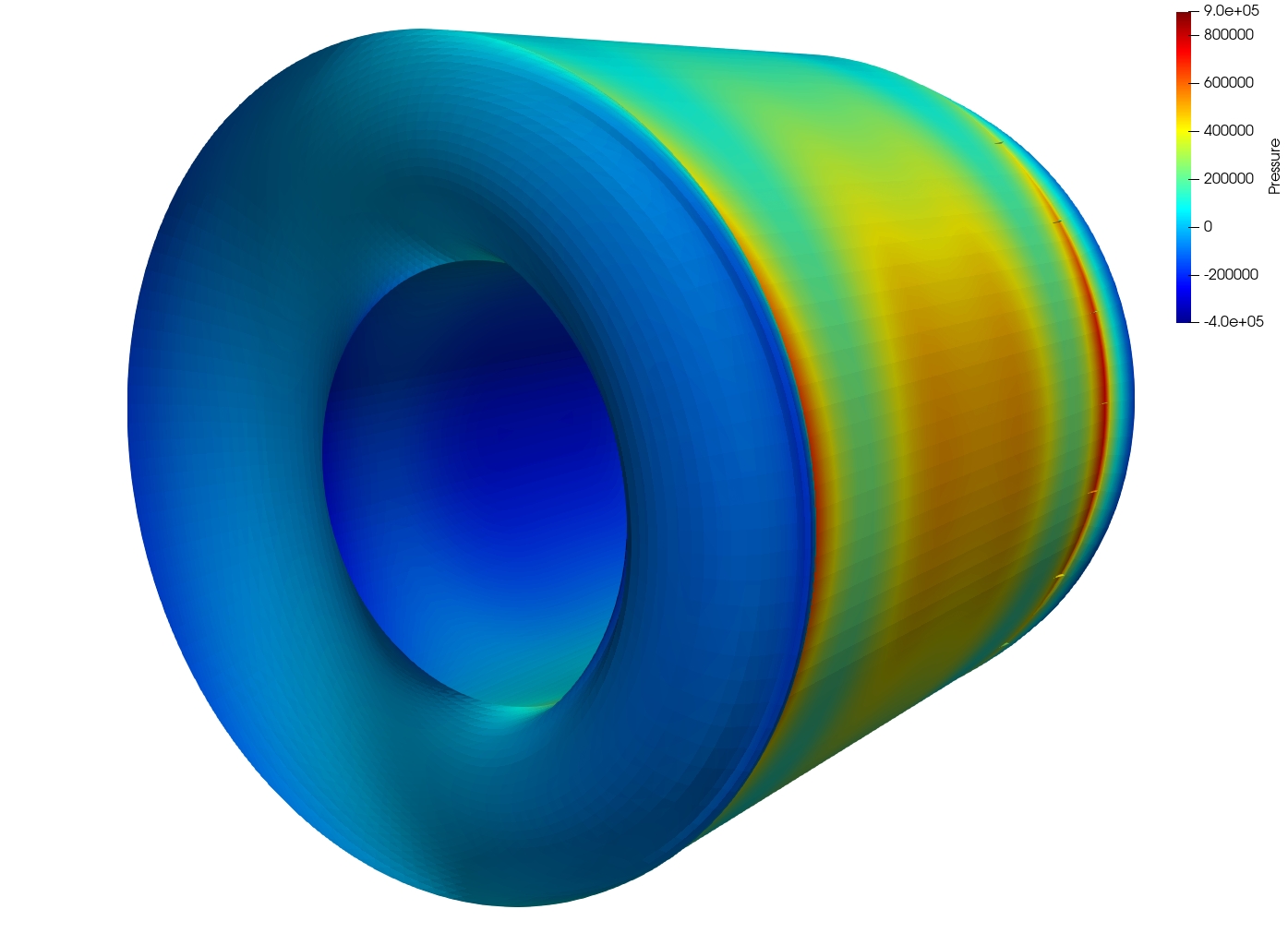} &
\includegraphics[angle=0, trim=130 30 160 30, clip=true, scale = 0.09]{./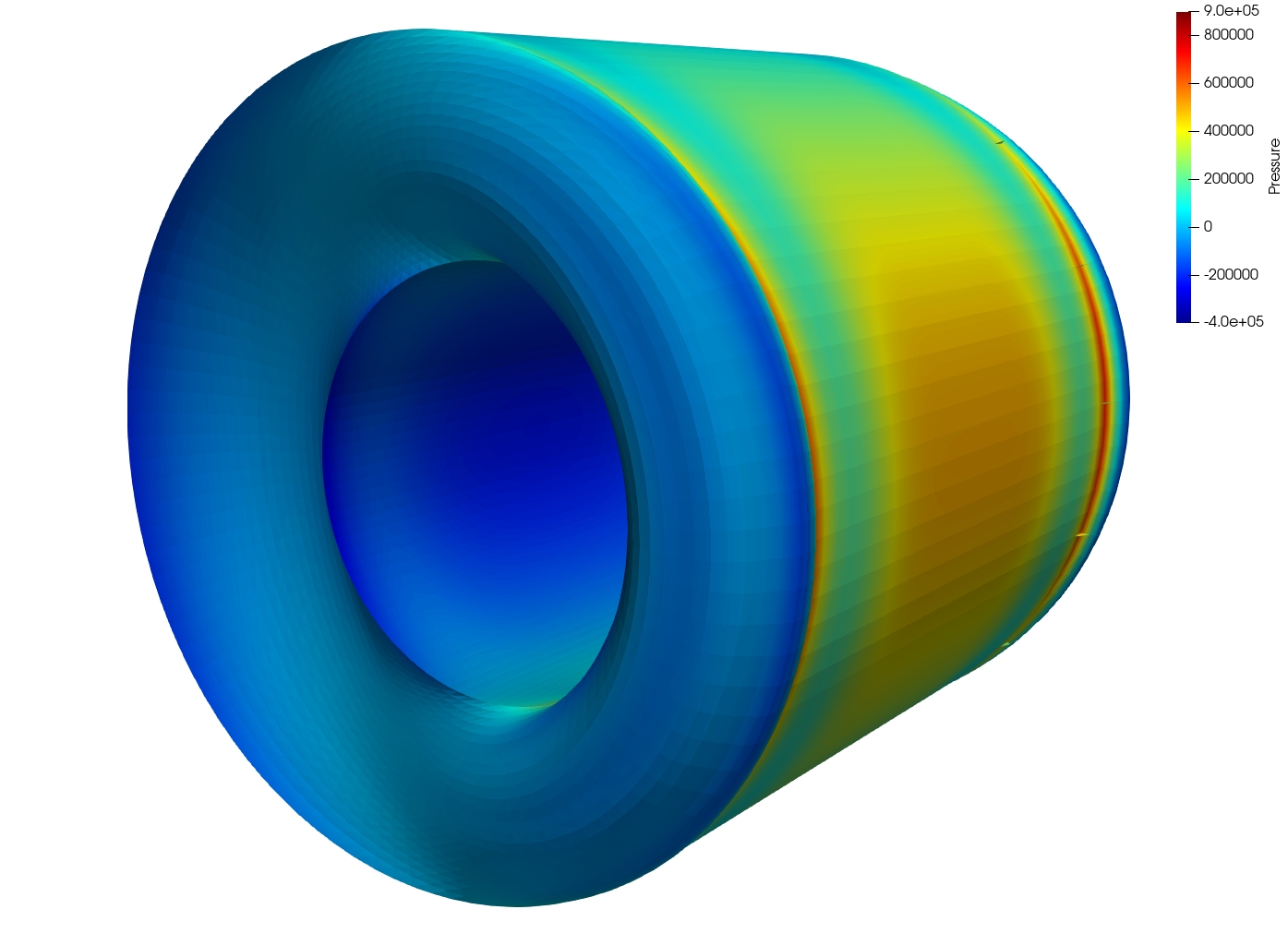} &
\includegraphics[angle=0, trim=130 30 160 30, clip=true, scale = 0.09]{./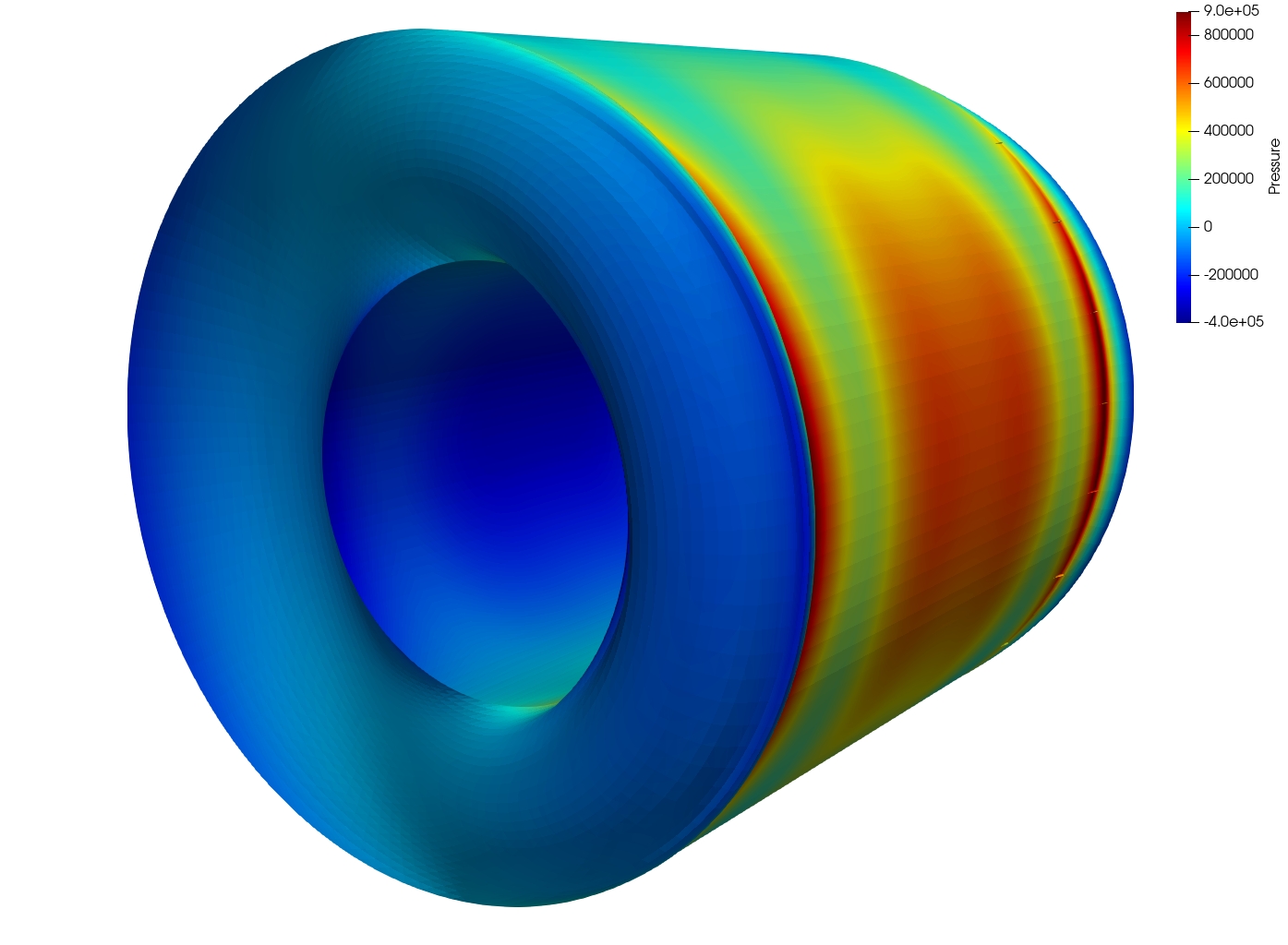} &\includegraphics[angle=0, trim=130 30 160 30, clip=true, scale = 0.09]{./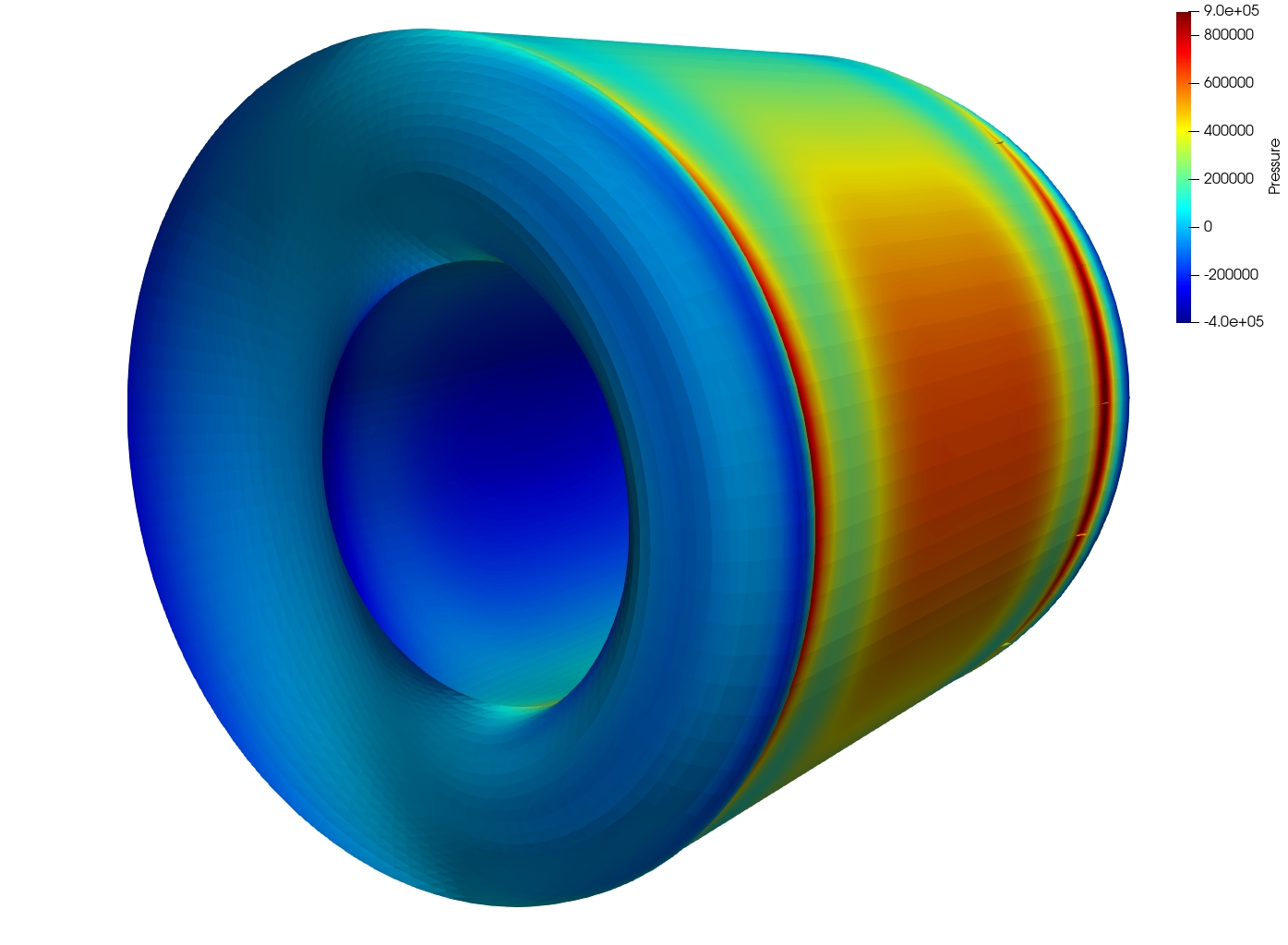} \\
$\omega = 5$ s$^{-1}$, $\mathsf p = 2$ & $\omega = 5$ s$^{-1}$, $\mathsf p = 3$ & $\omega = 5$ s$^{-1}$, $\mathsf p = 2$ & $\omega = 5$ s$^{-1}$, $\mathsf p = 3$ \\
\includegraphics[angle=0, trim=130 10 160 10, clip=true, scale = 0.09]{./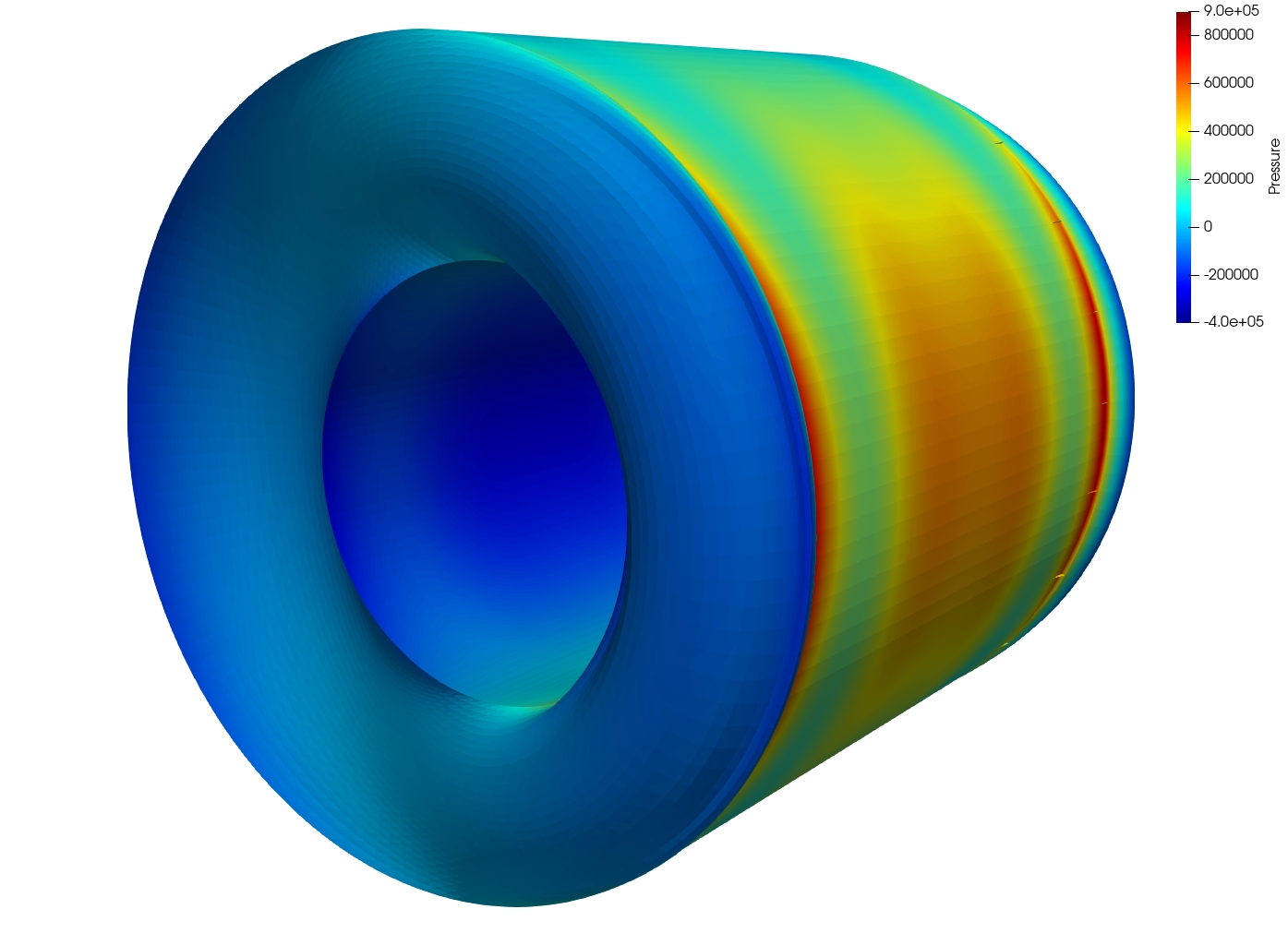} &
\includegraphics[angle=0, trim=130 10 160 10, clip=true, scale = 0.09]{./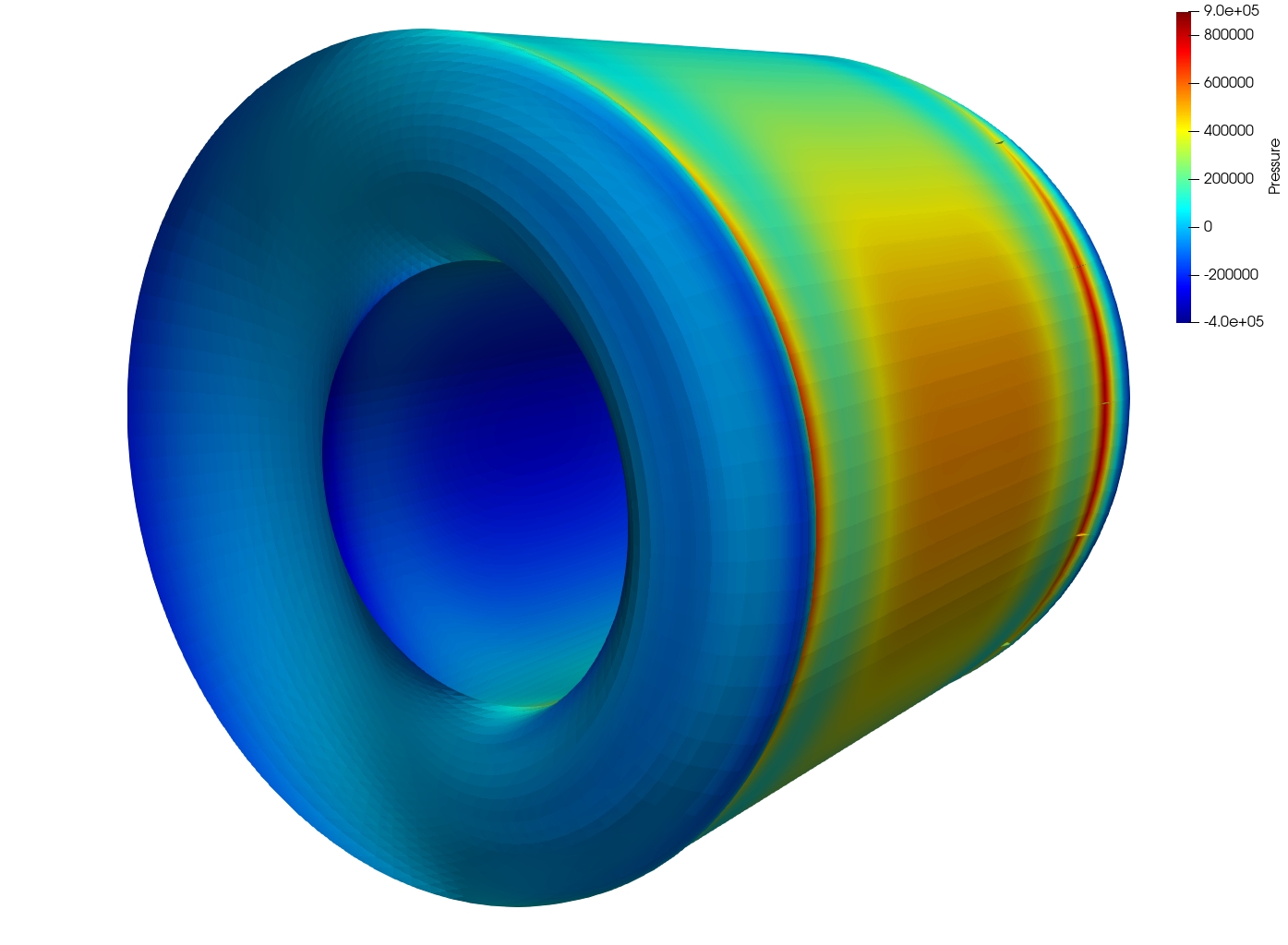} &
\includegraphics[angle=0, trim=130 10 160 10, clip=true, scale = 0.09]{./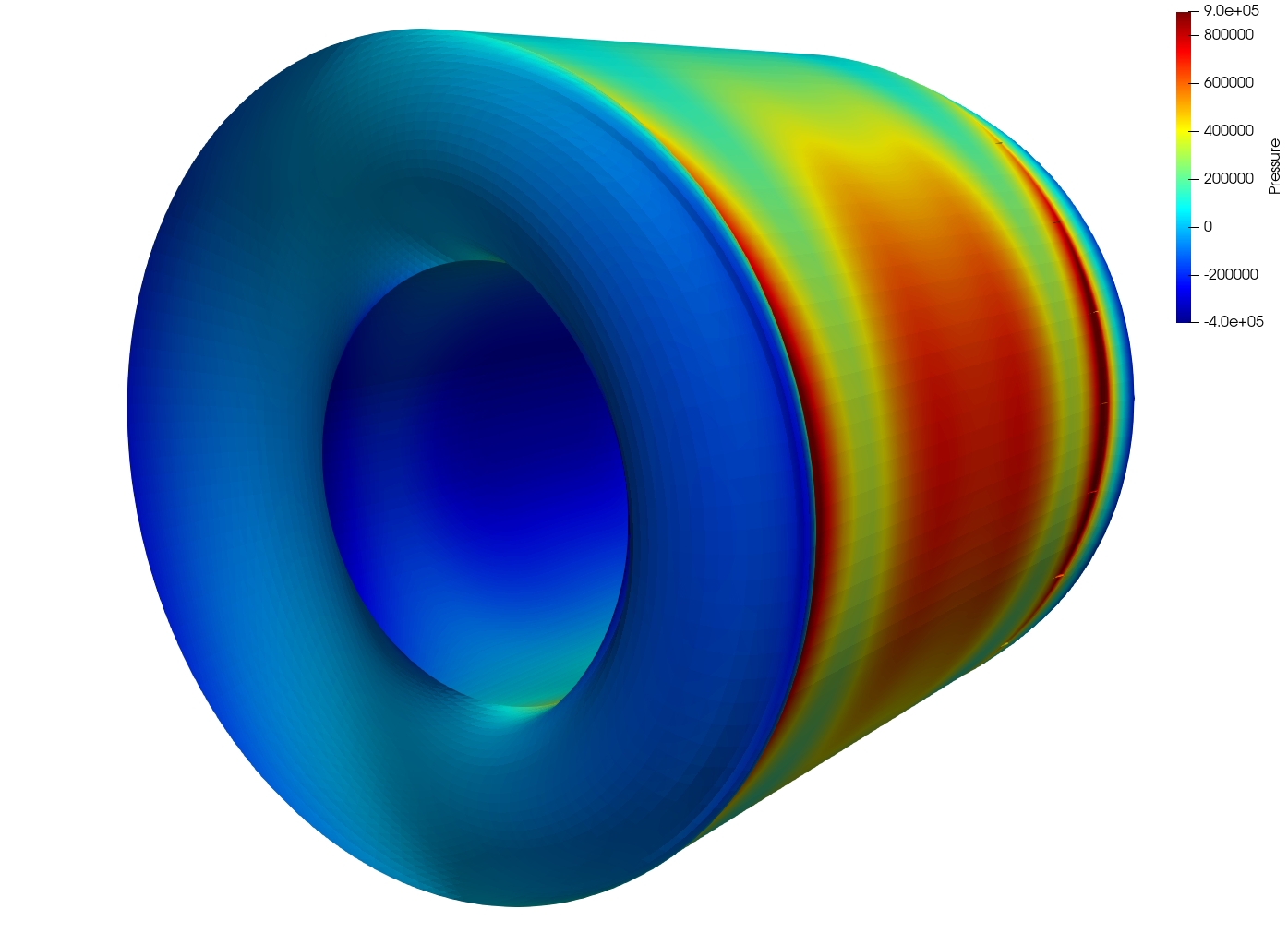} &\includegraphics[angle=0, trim=130 10 160 10, clip=true, scale = 0.09]{./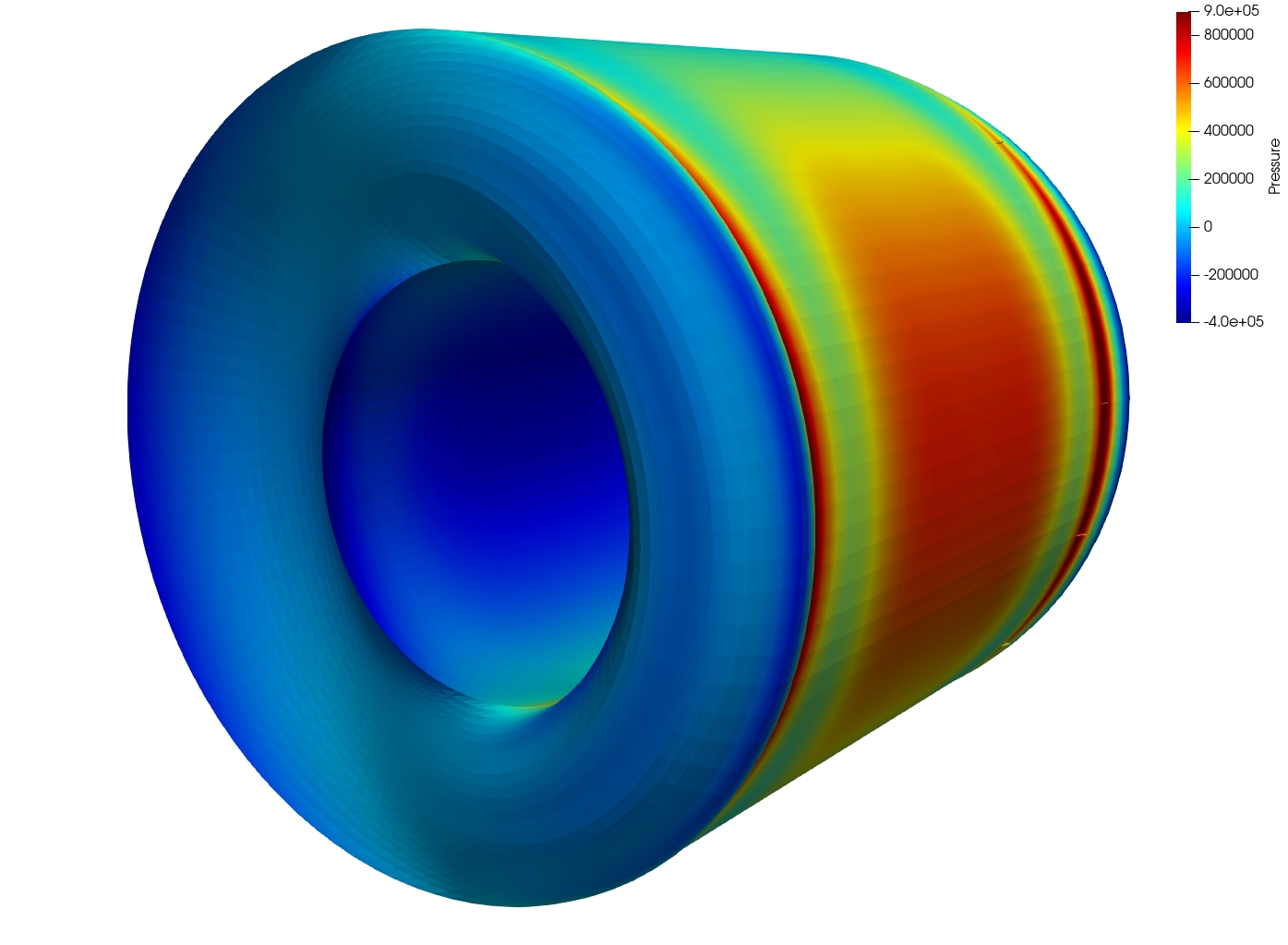} \\
$\omega = 10$ s$^{-1}$, $\mathsf p = 2$ & $\omega = 10$ s$^{-1}$, $\mathsf p = 3$ & $\omega = 10$ s$^{-1}$, $\mathsf p = 2$ & $\omega = 10$ s$^{-1}$, $\mathsf p = 3$ \\
\includegraphics[angle=0, trim=130 10 160 10, clip=true, scale = 0.09]{./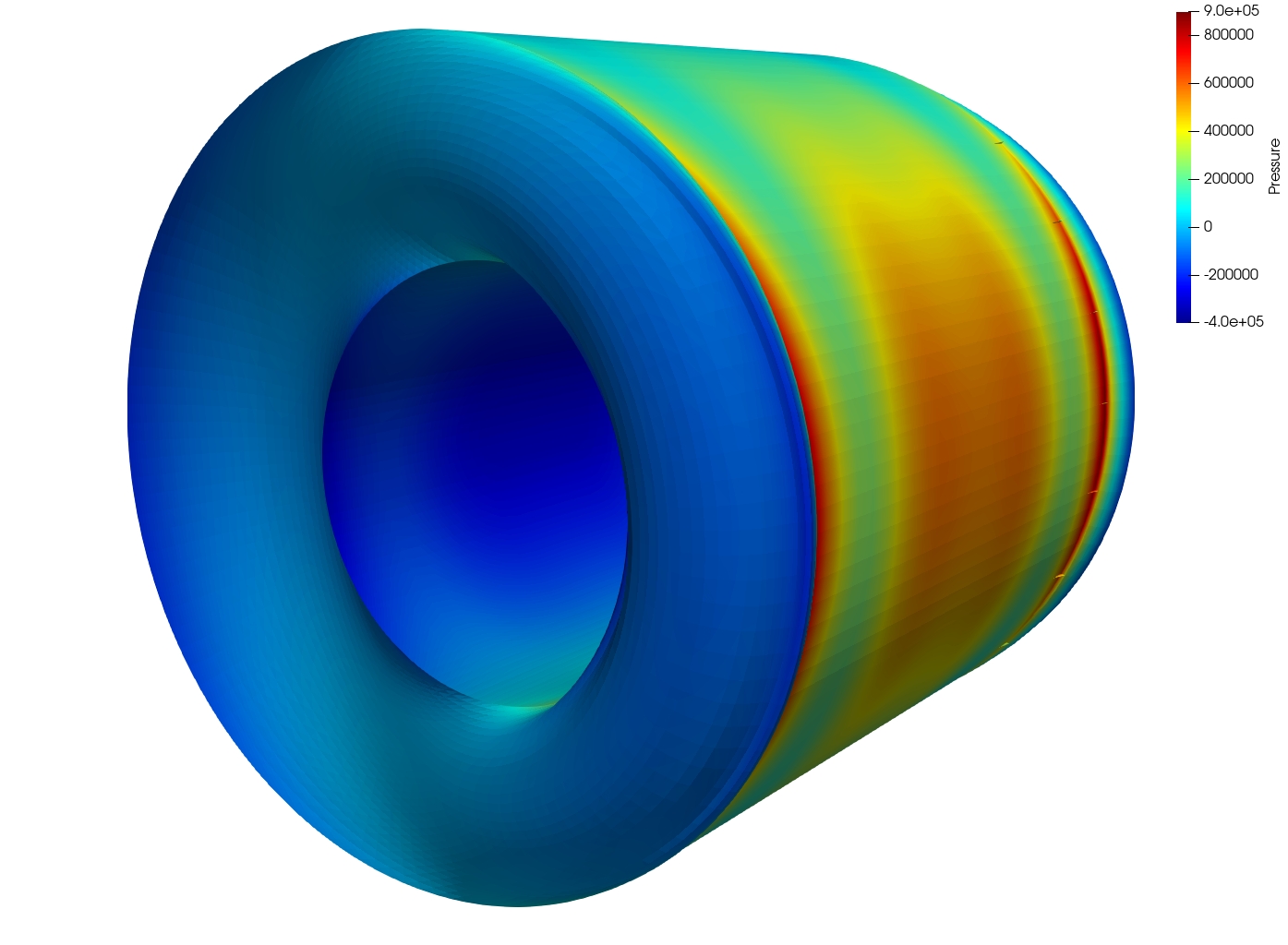} &
\includegraphics[angle=0, trim=130 10 160 10, clip=true, scale = 0.09]{./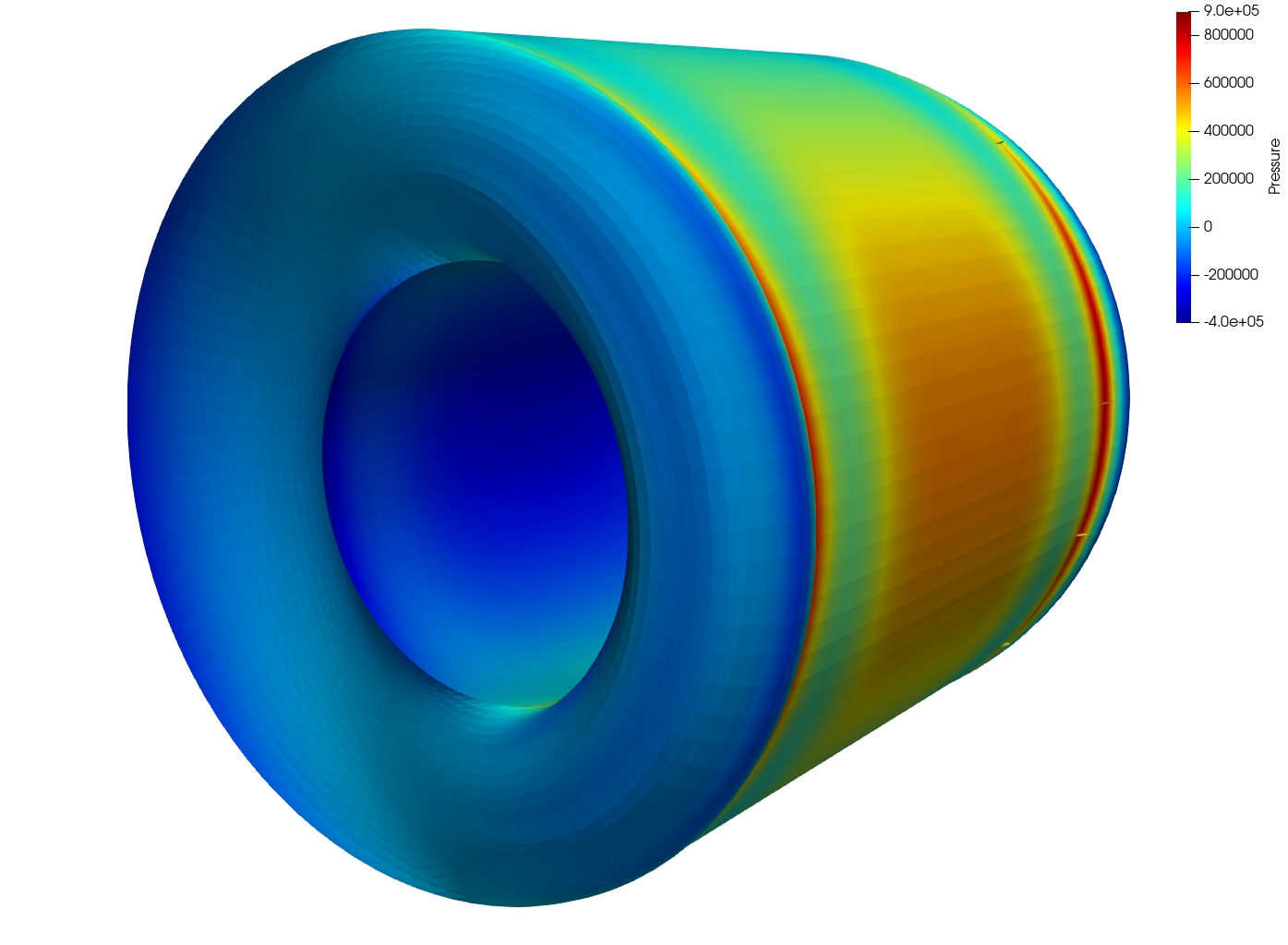} &
\includegraphics[angle=0, trim=130 10 160 10, clip=true, scale = 0.09]{./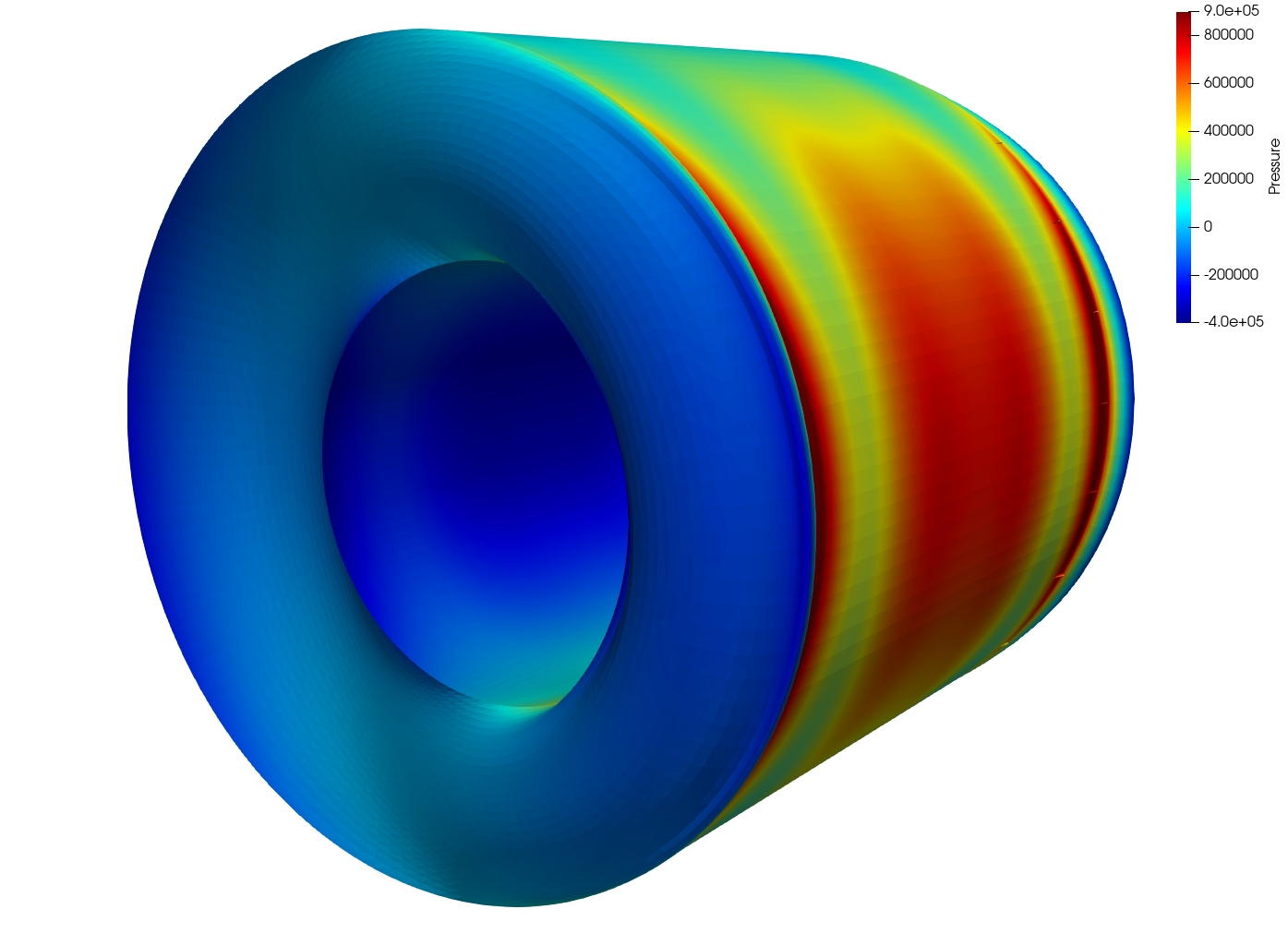} &\includegraphics[angle=0, trim=130 10 160 10, clip=true, scale = 0.09]{./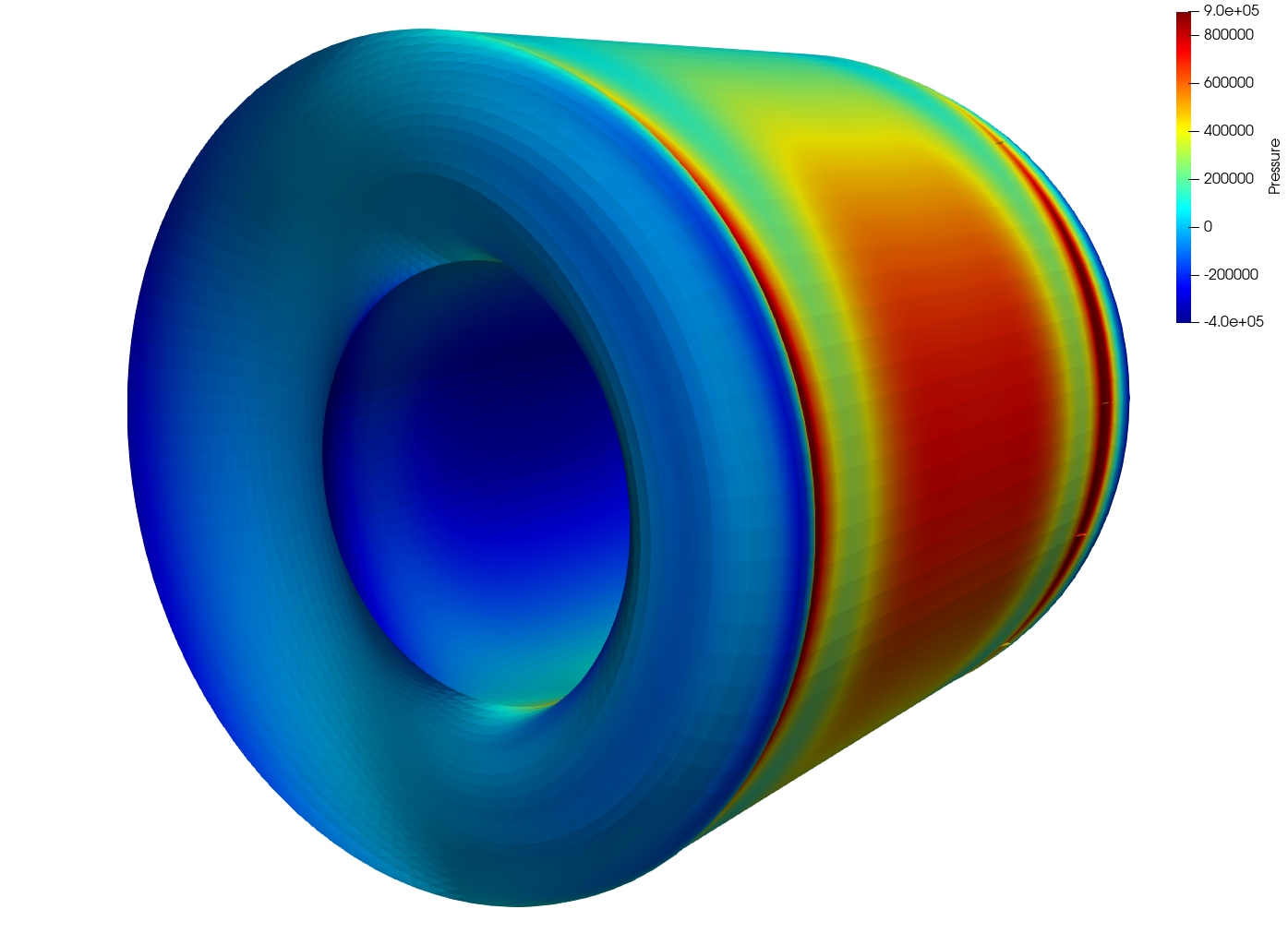} \\
$\omega = 20$ s$^{-1}$, $\mathsf p = 2$ & $\omega = 20$ s$^{-1}$, $\mathsf p = 3$ & $\omega = 20$ s$^{-1}$, $\mathsf p = 2$ & $\omega = 20$ s$^{-1}$, $\mathsf p = 3$ \\
\multicolumn{4}{c}{
\includegraphics[angle=0, trim=430 0 0 930, clip=true, scale = 0.3]{./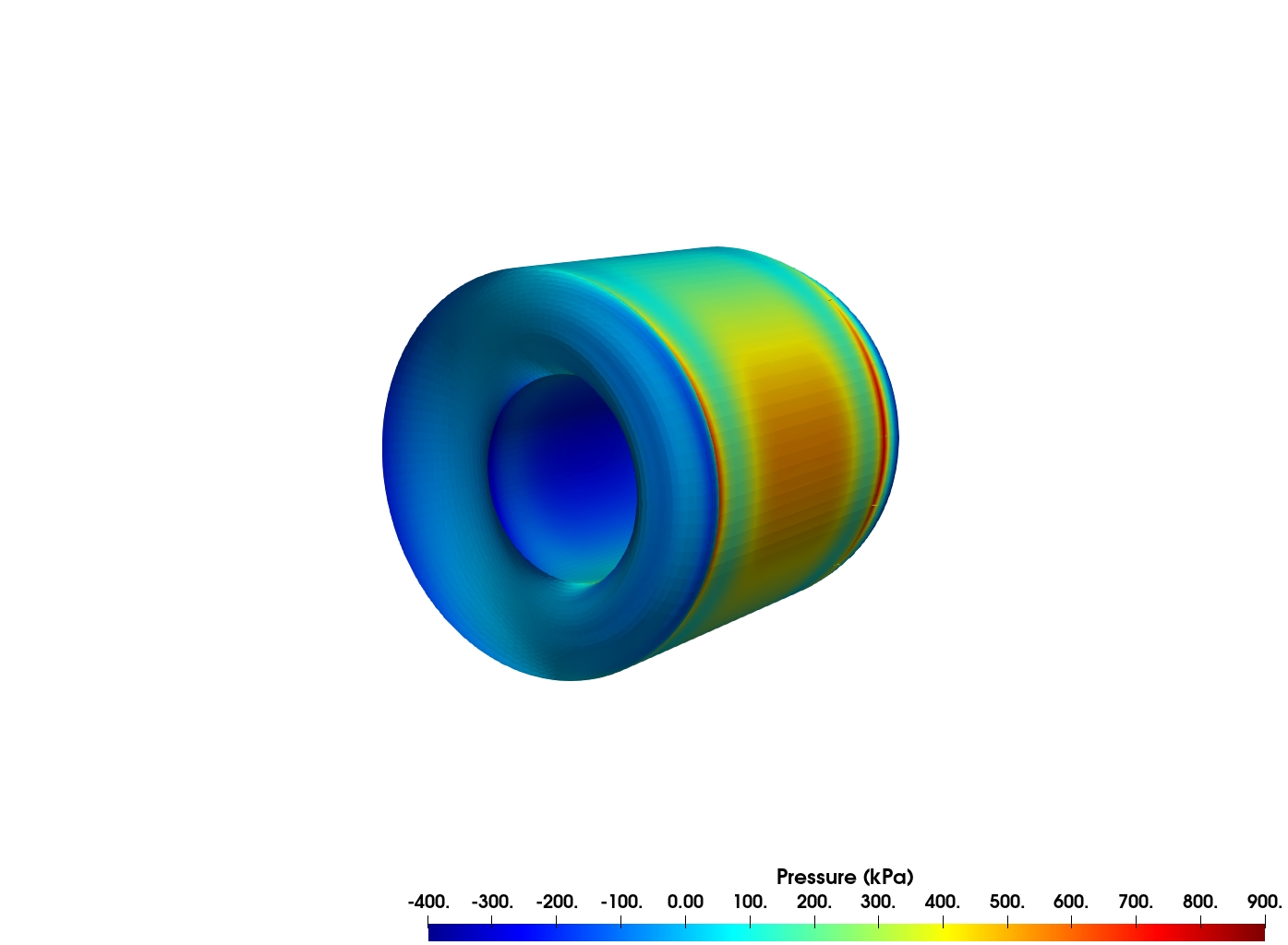}
} \\
\end{tabular}
\caption{The snapshots of the pressure plotted on the deformed configuration at time $t = \pi/2\omega$ of the HS and MIPC models for the clamped cylindrical support problem. The results from the meshes with $\mathsf p = 2$ and $3$ are shown for comparison.} 
\label{fig:hysteresis_pressure_snapshots}
\end{figure}

To investigate the viscous dissipative effect, we calculate the force on the inner surface $\Gamma_{\mathrm{inner}}$ as
\begin{align*}
\bm{\mathcal F} := \int_{\Gamma_{\mathrm{inner}}} \bm P \bm N d\Gamma.
\end{align*}
The lateral component in the $x$-direction $\bm{\mathcal F}_x$ is plotted against the lateral displacement $\bm U_{x}$ for the HS and MIPC models in Figure \ref{fig:hysteresis} for both meshes. The hysteretical curves are almost indistinguishable, suggesting the results are mesh independent. Comparing the results between the HS and MIPC models, we may observe that the difference in the configurational free energy engenders differences in the hysteretical loop, with the MIPC model giving higher values of the maximum force. With the increase of the strain rate $\omega$, both material models need more cycles to reach equilibrium; the increase in strain rate leads to an increase in force on the inner surface at the maximum strain; larger strain rates make the viscous effect less significant, making the material behavior closer to hyperelasticity that includes both equilibrium and non-equilibrium contributions. Those observations match well with prior studies \cite{Reese1998,Latorre2015}. The pressure fields on the deformed configuration when the support reaches the maximum strain are depicted in Figure \ref{fig:hysteresis_pressure_snapshots}. The results from the discretization with $\mathsf p =2$ and $3$ show good agreement for both models under the three strain rates. This again demonstrates the superior stress resolving property of the proposed discretization method \cite{Liu2019}.

\section{Conclusion}
\label{sec:conclusion}
In this work, we start by considering a general continuum theory for viscoelasticity with the viscous deformation characterized by a set of internal state variables. With the incompressible constraint as a common material property in mind, we choose to develop a theory based on the Gibbs free energy \cite{Liu2018,Liu2019a}, which leads to a pressure primitive variable formulation. A set of nonlinear evolution equations is derived for the viscous deformation. With that, we consider a special form of the configurational free energy, which leads to the definition of the finite linear viscoelasticity. This viscoelasticity theory is endowed with a set of linear evolution equations, which is amenable to finite element implementation and anisotropic material modeling. It is revealed through the derivation that the original viscoelasticity model developed in \cite{Simo1987,Simo2006} can be rectified to maintain thermodynamic consistency. In particular, the following rectifications need to be pointed out. 
\begin{enumerate}
\item The right-hand side of the evolution equations \eqref{eq:evolution-eqn-Upsilon-special} is driven by the \textit{fictitious} second Piloa-Kirchhoff stress $\tilde{\bm S}^{\alpha}_{\mathrm{iso}}$. In contrast, in the original model, the right-hand side is driven by $J^{-\frac23} \mathbb P : \tilde{\bm S}^{\alpha}_{\mathrm{iso}}$ \cite{Simo1987,Simo2006}.
\item The non-equilibrium stresses $\bm S^{\alpha}_{\mathrm{neq}}$ and the conjugate variables $\bm Q^{\alpha}$ are two different quantities. They become identical only when the configurational free energy takes a special form \eqref{eq:HS-model-G-alpha-form}.
\item The configurational free energy needs to satisfy the normalization condition \eqref{eq:Gibbs_free_energy_normalization_condition}; the relaxation of the non-equilibrium stresses poses an additional constraint on the form of the configurational free energy (see Section \ref{subsubsec:condition-vanish-non-equilibrium-stress}). In particular, the identical polymer chain model considered in \cite{Holzapfel1996} does not guarantee the vanishment of the viscous stress in the equilibrium limit, and we have identified an unstable solution of that model (see Figures \ref{fig:compare-IPC-MIPC} and \ref{fig:compare-IPC-MPIC-energy}).
\end{enumerate}
Indeed, the original finite linear viscoelasticity model has been criticized for the lack of a thermomechanical foundation, and a finite time blow-up solution has been previously identified. The above three points could be the potential source of the blow-up phenomenon observed by S. Govindjee, et al. in \cite{Govindjee2014}. Additionally, in one instantiation of the model, the free energy indicates that there is an \textit{additive} split of the elastic and viscous strain (see \eqref{eq:HS-model-configurational-energy-additive-split}), which makes this theory analogous to the elastoplasticity theory proposed by A. Green and P. Naghdi \cite{Green1965,Green1971}. Based on the consistent continuum theory, we construct a numerical formulation that inherits the stability to the semidiscrete formulation. We invoke the smooth generalization of the Taylor-Hood element based on NURBS for the spatial discretization. The well-known recurrence formula is utilized to integrate the constitutive relation, and the dynamic integration is performed by the generalized-$\alpha$ scheme. A variety of benchmark examples are presented and corroborate the properties of the continuum and numerical formulations in different deformation states. The superior stress accuracy of the adopted NURBS basis function is demonstrated as well. 

Based on this work, we will extend the proposed viscoelasticity theory to fiber-reinforced materials with a particular focus on arterial wall modeling and vascular fluid-structure interaction. It is also of interest to investigate the approximation of the viscoelastic constitutive relation directly from the hereditary integral, which may open the door for the numerical modeling of biological tissue growth and remodeling \cite{Humphrey2002}.

\section*{Acknowledgements}
We want to thank Prof. Jay D. Humphrey at Yale University for many helpful discussions. This work is supported by the National Institutes of Health under the award numbers 1R01HL121754, 1R01HL123689, R01EB01830204, the startup grant provided by the Southern University of Science and Technology under the award number Y01326127, the Guangdong-Hong Kong-Macao Joint Laboratory for Data-Driven Fluid Mechanics and Engineering Applications under the award number 2020B1212030001, the computational resources from the Center for Computational Science and Engineering at Southern University of Science and Technology, the Stanford Research Computing Center, and the Extreme Science and Engineering Discovery Environment supported by the National Science Foundation grant ACI-1053575.

\appendix

\section{An analysis of the null space of $\mathbb P$}
\label{appendix:an-analysis-of-the-null-space-of-P}
In this section, we show that a fictitious stress living in the null space of $\mathbb P$ has to be a zero stress. By definition, this stress satisfies
\begin{align*}
\bm O = \mathbb P : \tilde{\bm S} = \tilde{\bm S} - \frac13 \left( \tilde{\bm S} : \bm C \right) \bm C^{-1}.
\end{align*}
From the above definition, one has
\begin{align}
\label{eq:appendix-fictitious-stress-special-form}
\tilde{\bm S} = \frac13 \left( \tilde{\bm S} : \bm C \right) \bm C^{-1} = \frac13 \left( \tilde{\bm S} : \tilde{\bm C} \right) \tilde{\bm C}^{-1} = a \tilde{\bm C}^{-1}, \quad \mbox{ with } a := \frac13 \left( \tilde{\bm S} : \tilde{\bm C} \right).
\end{align}
Recall that the fictitious stress is given by an isochoric energy $G_{\mathrm{iso}}(\tilde{\bm C}, \Theta, \bm \Gamma^1, \cdots, \bm \Gamma^m)$,
\begin{align*}
\tilde{\bm S} = 2 \frac{\partial G_{\mathrm{iso}}}{\partial \tilde{\bm C} }.
\end{align*} 
Now, let $\bm C(t)$ be an arbitrary Green-Lagrange tensor that smoothly varies over time $t$. We consider a free energy $G$ defined for $\bm C(t)$ with a conjugate stress satisfying the relation \eqref{eq:appendix-fictitious-stress-special-form}. Therefore, one has
\begin{align*}
G(\bm C(t)) =& G(\bm C(t)) - G(\bm I) = \frac12 \int_{0}^t \bm S(\bm C(s)) : \dot{\bm C}(s) ds = \frac12 \int_{0}^t a \bm C^{-1}(s) : \dot{\bm C}(s) ds \nonumber \\
=& \int_{0}^t a \bm C^{-1}(s) : \bm F^T \bm d \bm F ds = \int_{0}^t a \mathrm{tr}[\bm d] ds,
\end{align*}
with $\bm d$ being the rate of deformation tensor.
Now we consider that the Green-Lagrange tensor characterizes an isochoric motion, implying $\mathrm{tr}[\bm d] = 0$. Therefore, we have $G(\tilde{\bm C}) = 0$ for $\tilde{\bm C}$ characterizing volume-preserving deformations. This suggests that for a fictitious stress satisfying the relation \eqref{eq:appendix-fictitious-stress-special-form}, it has to be a zero stress,
\begin{align*}
\tilde{\bm S} = 2\frac{\partial G_{\mathrm{iso}}}{\partial \tilde{\bm C}} = \bm O.
\end{align*}
In other words, an isochoric free energy cannot induce a hydrostatic fictitious stress.

\section{Algorithm for the stress and elasticity tensor in the identical polymer chain model with $G^{\alpha} = F^{\alpha}$}
\label{sec:algorithm-IPC}
In the original model based on the identical polymer chain assumption, it is taken that $G^{\alpha}(\tilde{\bm C},\Theta) = F^{\alpha}(\tilde{\bm C},\Theta) = \beta^{\infty}_{\alpha} G^{\infty}_{\mathrm{iso}}(\tilde{\bm C},\Theta)$ \cite{Holzapfel1996}. Then the configurational free energy takes the following form.
\begin{align*}
\Upsilon^{\alpha}\left(\tilde{\bm C}, \Theta, \bm \Gamma^1, \cdots, \bm \Gamma^m \right) =& H^{\alpha}(\bm \Gamma^{\alpha},\Theta) + \left( \hat{\bm S}^{\alpha}_0 - 2 \frac{\partial G^{\alpha}(\tilde{\bm C}, \Theta)}{\partial \tilde{\bm C}} \right) : \frac{\bm \Gamma^{\alpha} - \bm I}{2} + G^{\alpha}( \tilde{\bm C}, \Theta ), \\
=& \mu^{\alpha}(\Theta) \left\lvert \frac{\bm \Gamma^{\alpha} - \bm I}{2} \right\rvert^2 + \left( \hat{\bm S}^{\alpha}_0 - \beta^{\infty}_{\alpha} \tilde{\bm S}^{\infty}_{\mathrm{iso}} \right) : \frac{\bm \Gamma^{\alpha} - \bm I}{2} + \beta^{\infty}_{\alpha} G^{\infty}_{\mathrm{iso}}( \tilde{\bm C}, \Theta ). 
\end{align*}
To the best of our knowledge, numerical analysis for that model has not been performed, probably due to the appearance of a six-order tensor in the definition of the elasticity tensor. Here, we provide the algorithm for computing the stress as well as the elasticity tensor of this model. Given the time step $\Delta t_n$, and the value of $\tilde{\bm S}^{\infty}_{\mathrm{iso} \: n}$ and $\bm Q^{\alpha}_{n}$ at each quadrature points, proceed through the following steps to compute the isochoric part of the second Piola-Kirchhoff stress and the isochoric part of the elasticity tensor.
\begin{enumerate}
\item Calculate the deformation gradient and the strain measures based on the displacement $\bm U_{n+1}$,
\begin{align*}
\bm F_{n+1} = \bm I + \nabla_{\bm X} \bm U_{n+1}, \quad J_{n+1} = \mathrm{det}\left( \bm F_{n+1} \right), \quad \bm C_{n+1} = \bm F_{n+1}^T \bm F_{n+1}, \quad \tilde{\bm C}_{n+1} = J_{n+1}^{-2/3} \bm C_{n+1}.
\end{align*}
\item Calculate the fictitious second Piola-Kirchhoff stress
\begin{align*}
\tilde{\bm S}^{\infty}_{\mathrm{iso} \: n+1} = 2 \left( \frac{\partial G^{\infty}_{\mathrm{iso}}}{\partial \tilde{\bm C}} \right)_{n+1}.
\end{align*}
\item Calculate the fictitious elasticity tensor
\begin{align*}
\tilde{\mathbb C}^{\infty}_{\mathrm{iso} \: n+1} = 4 J^{-\frac{4}{3}}_{n+1} \left( \frac{\partial^2 G^{\infty}_{\mathrm{iso}}}{\partial \tilde{\bm C} \partial \tilde{\bm C}} \right)_{n+1}.
\end{align*}
\item For $\alpha = 1, \cdots, m$, calculate
\begin{align*}
\bm Q^{\alpha}_{n+1} = \beta^{\infty}_{\alpha} \mathrm{exp}\left( \xi^{\alpha} \right) \tilde{\bm S}^{\infty}_{\mathrm{iso} \: n+1} + \mathrm{exp}\left( \xi^{\alpha} \right) \left( \mathrm{exp}\left( \xi^{\alpha} \right) \bm Q^{\alpha}_{n} - \beta^{\infty}_{\alpha} \tilde{\bm S}^{\infty}_{\mathrm{iso} \: n} \right).
\end{align*}
\item For $\alpha = 1, \cdots, m$, calculate
\begin{align*}
\tilde{\bm S}^{\alpha}_{\mathrm{neq} \: n+1} = \beta^{\infty}_{\alpha} \tilde{\bm S}^{\infty}_{\mathrm{iso} \: n+1} - \frac{\beta^{\infty}_{\alpha} }{2\mu^{\alpha}}J^{\frac43}_{n+1} \tilde{\mathbb C}^{\infty}_{\mathrm{iso} \: n+1} : \left( \beta^{\infty}_{\alpha}\tilde{\bm S}^{\infty}_{\mathrm{iso} \: n+1} - \hat{\bm S}^{\alpha}_0 - \bm Q^{\alpha}_{n+1} \right).
\end{align*}
\item Calculate the fictitious stress
\begin{align*}
\tilde{\bm S}_{n+1} = \tilde{\bm S}^{\infty}_{\mathrm{iso} \: n+1} + \sum_{\alpha=1}^{m} \tilde{\bm S}^{\alpha}_{\mathrm{neq} \: n+1}.
\end{align*}
\item Calculate the projection tensor
\begin{align*}
\mathbb P_{n+1} = \mathbb I - \frac13 \bm C^{-1}_{n+1} \otimes \bm C_{n+1}.
\end{align*}
\item Calculate the isochoric part of the second Piola-Kirchhoff stress
\begin{align*}
\bm S_{\mathrm{iso} \: n+1} = J_{n+1}^{-\frac23} \mathbb P_{n+1} : \tilde{\bm S}_{n+1}.
\end{align*}
\item Calculate the fictitious elasticity tensors for the non-equilibrium part,
\begin{align*}
\tilde{\mathbb C}_{\mathrm{neq} \: n+1}^{\alpha} =& \beta^{\infty}_{\alpha} \tilde{\mathbb C}^{\infty}_{\mathrm{iso} \: n+1} - \frac{4 \beta^{\infty}_{\alpha}}{\mu^{\alpha}} J^{-\frac43}_{n+1} \left( \frac{\partial^3 G^{\infty}_{\mathrm{iso}}}{\partial \tilde{\bm C} \partial \tilde{\bm C} \partial \tilde{\bm C}} \right)_{n+1} : \left( \beta^{\infty}_{\alpha} \tilde{\bm S}^{\infty}_{\mathrm{iso} \: n+1} - \hat{\bm S}^{\alpha}_0 - \bm Q^{\alpha}_{n+1} \right) \\
&- \frac{\beta^{\infty}_{\alpha}}{2\mu^{\alpha}}J^{\frac43}_{n+1}\left( 1 - \delta_{\alpha} \right) \tilde{\mathbb C}^{\infty}_{\mathrm{iso} \: n+1} : \tilde{\mathbb C}^{\infty}_{\mathrm{iso} \: n+1}.
\end{align*}
\item Calculate the fictitious elasticity tensor 
\begin{align*}
\tilde{\mathbb C}_{n+1} = \tilde{\mathbb C}_{\mathrm{iso} \: n+1}^{\infty} + \sum_{\alpha=1}^{m} \tilde{\mathbb C}_{\mathrm{neq} \: n+1}^{\alpha}.
\end{align*}
\item Calculate the fourth-order tensor 
\begin{align*}
\tilde{\mathbb P}_{n+1} = \bm C^{-1}_{n+1} \odot \bm C^{-1}_{n+1} - \frac13 \bm C^{-1}_{n+1} \otimes \bm C^{-1}_{n+1}.
\end{align*}
\item Calculate the isochoric part of the elasticity tensor
\begin{align*}
\mathbb C_{\mathrm{iso} \: n+1} = \mathbb P_{n+1} : \tilde{\mathbb C}_{n+1} : \mathbb P^T_{n+1} + \frac23 \mathrm{Tr}\left( J^{-\frac23}_{n+1} \tilde{\bm S}_{n+1} \right) \tilde{\mathbb P}_{n+1} - \frac23 \left( \bm C^{-1}_{n+1} \otimes \bm S_{\mathrm{iso} \: n+1} + \bm S_{\mathrm{iso} \: n+1} \otimes \bm C^{-1}_{n+1} \right).
\end{align*}
\end{enumerate}

\section{Algorithm for the stress and elasticity tensor in the Holzapfel-Simo model}
\label{sec:algorithm-HS}
The Holzapfel-Simo model presented in Section \ref{sec:HS-model} is different from the classical one documented in classical works \cite[Chapter~6.10]{Holzapfel2000}. Here, we provide the algorithm for computing the stresses as well as the elasticity tensor. Given the time step $\Delta t_n$, and the value of $\tilde{\bm S}^{\infty}_{\mathrm{iso} \: n}$ and $\bm Q^{\alpha}_{n}$ at each quadrature points, proceed through the following steps to compute the isochoric part of the second Piola-Kirchhoff stress and the isochoric part of the elasticity tensor.
\begin{enumerate}
\item Calculate the deformation gradient and the strain measures based on the displacement $\bm U_{n+1}$,
\begin{align*}
\bm F_{n+1} = \bm I + \nabla_{\bm X} \bm U_{n+1}, \quad J_{n+1} = \mathrm{det}\left( \bm F_{n+1} \right), \quad \bm C_{n+1} = \bm F_{n+1}^T \bm F_{n+1}, \quad \tilde{\bm C}_{n+1} = J_{n+1}^{-2/3} \bm C_{n+1}.
\end{align*}
\item Calculate 
\begin{align*}
\tilde{\bm S}^{\infty}_{\mathrm{iso} \: n+1} = 2 \left( \frac{\partial G^{\infty}_{\mathrm{iso}}}{\partial \tilde{\bm C}} \right)_{n+1}.
\end{align*}
\item For $\alpha = 1, \cdots, m$, calculate
\begin{align*}
\bm Q^{\alpha}_{n+1} = \mu^{\alpha} \mathrm{exp}\left( \xi^{\alpha} \right) \tilde{\bm C}_{n+1} + \mathrm{exp}\left( \xi^{\alpha} \right) \left( \mathrm{exp}\left( \xi^{\alpha} \right) \bm Q^{\alpha}_{n} - \mu^{\alpha} \tilde{\bm C}_n \right).
\end{align*}
\item Calculate the fictitious stress
\begin{align*}
\tilde{\bm S}_{n+1} = \tilde{\bm S}^{\infty}_{\mathrm{iso} \: n+1} + \sum_{\alpha=1}^{m} \bm Q^{\alpha}_{n+1}.
\end{align*}
\item Calculate the projection tensor
\begin{align*}
\mathbb P_{n+1} = \mathbb I - \frac13 \bm C^{-1}_{n+1} \otimes \bm C_{n+1}.
\end{align*}
\item Calculate the isochoric part of the second Piola-Kirchhoff stress
\begin{align*}
\bm S_{\mathrm{iso} \: n+1} = J_{n+1}^{-\frac23} \mathbb P_{n+1} : \tilde{\bm S}_{n+1}.
\end{align*}
\item Calculate the fictitious elasticity tensors for the equilibrium and non-equilibrium parts,
\begin{align*}
\tilde{\mathbb C}_{\mathrm{iso} \: n+1}^{\infty} = 4 J^{-\frac43}_{n+1} \left( \frac{\partial^2 G^{\infty}_{\mathrm{iso}}}{\partial \tilde{\bm C} \partial \tilde{\bm C}} \right)_{n+1}, \quad \mbox{ and } \quad  \tilde{\mathbb C}_{\mathrm{neq} \: n+1}^{\alpha} = 2  \mu^{\alpha} \mathrm{exp}(\xi^{\alpha}) J_{n+1}^{-\frac43} \mathbb I, \quad \mbox{ for } \alpha = 1, \cdots, m.
\end{align*}
\item Calculate the fictitious elasticity tensor 
\begin{align*}
\tilde{\mathbb C}_{n+1} = \tilde{\mathbb C}_{\mathrm{iso} \: n+1}^{\infty} + \sum_{\alpha=1}^{m} \tilde{\mathbb C}_{\mathrm{neq} \: n+1}^{\alpha}.
\end{align*}
\item Calculate the fourth-order tensor 
\begin{align*}
\tilde{\mathbb P}_{n+1} = \bm C^{-1}_{n+1} \odot \bm C^{-1}_{n+1} - \frac13 \bm C^{-1}_{n+1} \otimes \bm C^{-1}_{n+1}.
\end{align*}
\item Calculate the isochoric part of the elasticity tensor
\begin{align*}
\mathbb C_{\mathrm{iso} \: n+1} = \mathbb P_{n+1} : \tilde{\mathbb C}_{n+1} : \mathbb P^T_{n+1} + \frac23 \mathrm{Tr}\left( J^{-\frac23}_{n+1} \tilde{\bm S}_{n+1} \right) \tilde{\mathbb P}_{n+1} - \frac23 \left( \bm C^{-1}_{n+1} \otimes \bm S_{\mathrm{iso} \: n+1} + \bm S_{\mathrm{iso} \: n+1} \otimes \bm C^{-1}_{n+1} \right).
\end{align*}
\end{enumerate}

\section{Algorithm for the stress and elasticity tensor in the modified identical polymer chain model}
\label{sec:algorithm-MIPC}
The modified identical polymer chain model presented in Section \ref{sec:MIPC-model} rectifies the non-physical behavior of the original identical polymer chain model in the thermodynamic equilibrium. Here we provide the algorithm for computing the stresses as well as the elasticity tensor of this model. Given the time step $\Delta t_n$, and the value of $\tilde{\bm S}^{\infty}_{\mathrm{iso} \: n}$ and $\bm Q^{\alpha}_{n}$ at each quadrature points, proceed through the following steps to compute the isochoric part of the second Piola-Kirchhoff stress and the isochoric part of the elasticity tensor.
\begin{enumerate}
\item Calculate the deformation gradient and the strain measures based on the displacement $\bm U_{n+1}$,
\begin{align*}
\bm F_{n+1} = \bm I + \nabla_{\bm X} \bm U_{n+1}, \quad J_{n+1} = \mathrm{det}\left( \bm F_{n+1} \right), \quad \bm C_{n+1} = \bm F_{n+1}^T \bm F_{n+1}, \quad \tilde{\bm C}_{n+1} = J_{n+1}^{-2/3} \bm C_{n+1}.
\end{align*}
\item Calculate the fictitious second Piola-Kirchhoff stress
\begin{align*}
\tilde{\bm S}^{\infty}_{\mathrm{iso} \: n+1} = 2 \left( \frac{\partial G^{\infty}_{\mathrm{iso}}}{\partial \tilde{\bm C}} \right)_{n+1}.
\end{align*}
\item Calculate the fictitious elasticity tensor
\begin{align*}
\tilde{\mathbb C}^{\infty}_{\mathrm{iso} \: n+1} = 4 J^{-\frac{4}{3}}_{n+1} \left( \frac{\partial^2 G^{\infty}_{\mathrm{iso}}}{\partial \tilde{\bm C} \partial \tilde{\bm C}} \right)_{n+1}.
\end{align*}
\item For $\alpha = 1, \cdots, m$, calculate
\begin{align*}
\bm Q^{\alpha}_{n+1} = \beta^{\infty}_{\alpha} \mathrm{exp}\left( \xi^{\alpha} \right) \tilde{\bm S}^{\infty}_{\mathrm{iso} \: n+1} + \mathrm{exp}\left( \xi^{\alpha} \right) \left( \mathrm{exp}\left( \xi^{\alpha} \right) \bm Q^{\alpha}_{n} - \beta^{\infty}_{\alpha} \tilde{\bm S}^{\infty}_{\mathrm{iso} \: n} \right).
\end{align*}
\item For $\alpha = 1, \cdots, m$, calculate
\begin{align*}
\tilde{\bm S}^{\alpha}_{\mathrm{neq} \: n+1} = \frac{ J^{\frac43}_{n+1} \beta^{\infty}_{\alpha} }{2\mu^{\alpha}} \tilde{\mathbb C}^{\infty}_{\mathrm{iso} \: n+1} : \bm Q^{\alpha}_{n+1}.
\end{align*}
\item Calculate the fictitious stress
\begin{align*}
\tilde{\bm S}_{n+1} = \tilde{\bm S}^{\infty}_{\mathrm{iso} \: n+1} + \sum_{\alpha=1}^{m} \tilde{\bm S}^{\alpha}_{\mathrm{neq} \: n+1}.
\end{align*}
\item Calculate the projection tensor
\begin{align*}
\mathbb P_{n+1} = \mathbb I - \frac13 \bm C^{-1}_{n+1} \otimes \bm C_{n+1}.
\end{align*}
\item Calculate the isochoric part of the second Piola-Kirchhoff stress
\begin{align*}
\bm S_{\mathrm{iso} \: n+1} = J_{n+1}^{-\frac23} \mathbb P_{n+1} : \tilde{\bm S}_{n+1}.
\end{align*}
\item Calculate the fictitious elasticity tensors for the non-equilibrium part,
\begin{align*}
\tilde{\mathbb C}^{\alpha}_{\mathrm{neq} \: n+1} = \frac{4\beta^{\infty}_{\alpha} J^{-\frac43}_{n+1}}{\mu^{\alpha}} \left(\frac{\partial^3 G^{\infty}_{\mathrm{iso}}}{\partial \tilde{\bm C} \partial \tilde{\bm C} \partial \tilde{\bm C}} \right)_{n+1} : \bm Q^{\alpha}_{n+1} + \frac{\delta_{\alpha} \beta^{\infty}_{\alpha} J^{\frac43}_{n+1} }{2\mu^{\alpha}} \tilde{\mathbb C}^{\infty}_{\mathrm{iso} \: n+1} : \tilde{\mathbb C}^{\infty}_{\mathrm{iso} \: n+1}.
\end{align*}
\item Calculate the fictitious elasticity tensor 
\begin{align*}
\tilde{\mathbb C}_{n+1} = \tilde{\mathbb C}_{\mathrm{iso} \: n+1}^{\infty} + \sum_{\alpha=1}^{m} \tilde{\mathbb C}_{\mathrm{neq} \: n+1}^{\alpha}.
\end{align*}
\item Calculate the fourth-order tensor 
\begin{align*}
\tilde{\mathbb P}_{n+1} = \bm C^{-1}_{n+1} \odot \bm C^{-1}_{n+1} - \frac13 \bm C^{-1}_{n+1} \otimes \bm C^{-1}_{n+1}.
\end{align*}
\item Calculate the isochoric part of the elasticity tensor
\begin{align*}
\mathbb C_{\mathrm{iso} \: n+1} = \mathbb P_{n+1} : \tilde{\mathbb C}_{n+1} : \mathbb P^T_{n+1} + \frac23 \mathrm{Tr}\left( J^{-\frac23}_{n+1} \tilde{\bm S}_{n+1} \right) \tilde{\mathbb P}_{n+1} - \frac23 \left( \bm C^{-1}_{n+1} \otimes \bm S_{\mathrm{iso} \: n+1} + \bm S_{\mathrm{iso} \: n+1} \otimes \bm C^{-1}_{n+1} \right).
\end{align*}
\end{enumerate}

\bibliographystyle{elsarticle-num}
\bibliography{viscoelaticity-stableFE.bib}

\end{document}